\newtheorem{theorem}{Theorem}[section]
\newtheorem{corollary}[theorem]{Corollary}
\newtheorem{lemma}[theorem]{Lemma}
\newtheorem{proposition}[theorem]{Proposition}
\theoremstyle{remark}
\newtheorem{remark}[theorem]{Remark}
\numberwithin{equation}{section}
\newcommand{\pfrak}{\mathfrak{p}}
\newcommand{\qfrak}{\mathfrak{q}}
\newcommand{\gfrak}{\mathfrak{g}}
\newcommand{\mfrak}{\mathfrak{m}}
\newcommand{\efrak}{\mathfrak{e}}
\newcommand{\ffrak}{\mathfrak{f}}
\newcommand{\Acal}{\mathscr{A}}
\newcommand{\Ecal}{\mathscr{E}}
\newcommand{\Fcal}{\mathscr{F}}
\newcommand{\Hcal}{\mathscr{H}}
\newcommand{\Ical}{\mathscr{I}}
\newcommand{\Ocal}{\mathscr{O}}
\newcommand{\Pcal}{\mathscr{P}}
\newcommand{\Scal}{\mathscr{S}}
\newcommand{\Tcal}{\mathscr{T}}
\newcommand{\Vcal}{\mathscr{V}}
\newcommand{\Xcal}{\mathscr{X}}
\newcommand{\Ycal}{\mathscr{Y}}
\newcommand{\Zcal}{\mathscr{Z}}
\newcommand{\Z}{\mathbb{Z}}
\newcommand{\C}{\mathbb{C}}
\newcommand{\F}{\mathbb{F}}
\newcommand{\Q}{\mathbb{Q}}
\newcommand{\R}{\mathbb{R}}
\newcommand{\N}{\mathbb{N}}
\newcommand{\A}{\mathbb{A}}
\newcommand{\kk}{\mathbf{k}}
\newcommand{\nn}{\mathbf{n}}
\newcommand{\xx}{\mathbf{x}}
\newcommand{\yy}{\mathbf{y}}
\newcommand{\ttt}{\mathbf{t}}
\newcommand{\vv}{\mathbf{v}}
\newcommand{\uu}{\mathbf{u}}
\newcommand{\ord}{\mathrm{ord}}
\newcommand{\End}{\mathrm{End}}
\newcommand{\Spec}{\mathrm{Spec}\,}
\newcommand{\Lie}{\mathrm{Lie}}
\newcommand{\rk}{\mathrm{rank}\,}
\newcommand{\supp}{\mathrm{supp}}
\newcommand{\Exp}{\mathrm{Exp}}
\newcommand{\red}{\mathrm{red}}
\newcommand{\Log}{\mathrm{Log}}
\newcommand{\divi}{\mathrm{div}}
\newcommand{\mult}{\mathrm{mult}}
\newcommand{\GL}{\mathrm{GL}}
\newcommand{\tr}{\mathrm{tr}}
\newcommand{\Spf}{\mathrm{Spf}\,}
\newcommand{\Id}{\mathrm{Id}}
\newcommand{\im}{\mathrm{im}}
\newcommand{\cdeg}{\,\mathrm{cdeg}}
\newcommand{\Sbf}{\mathbf{S}}
  \DeclareFontFamily{U}{wncy}{}
    \DeclareFontShape{U}{wncy}{m}{n}{<->wncyr10}{}
    \DeclareSymbolFont{mcy}{U}{wncy}{m}{n}
    \DeclareMathSymbol{\Sha}{\mathord}{mcy}{"58}
\begin{document}
\title{A Chabauty-Coleman bound for surfaces}

\author{Jerson Caro}
\address{ Departamento de Matem\'aticas,
Pontificia Universidad Cat\'olica de Chile.
Facultad de Matem\'aticas,
4860 Av.\ Vicu\~na Mackenna,
Macul, RM, Chile}
\email[J. Caro]{jocaro@uc.cl }%

\author{Hector Pasten}
\address{ Departamento de Matem\'aticas,
Pontificia Universidad Cat\'olica de Chile.
Facultad de Matem\'aticas,
4860 Av.\ Vicu\~na Mackenna,
Macul, RM, Chile}
\email[H. Pasten]{hpasten@gmail.com}%

\thanks{J.C. was supported by ANID Doctorado Nacional 21190304. H.P. was supported by FONDECYT Regular grant 1190442. }

\date{\today}
\subjclass[2010]{Primary 11G35; Secondary 11G25, 14G05, 14J20} %
\keywords{Chabauty, Coleman, surfaces, rational points}%
%\dedicatory{}
%11 = NumbTh, 14= AlgGeo
%11G35  	Varieties over global fields
%14J20  	Arithmetic ground fields for surfaces or higher-dimensional varieties
%14G05  	Rational points
%14K20  	Analytic theory of abelian varieties; abelian integrals and differentials
%11G25  	Varieties over finite and local fields

%%%%%%%%%
%%%%%%%%%
%%%%%%%%%
%%%%%%%%%

%%%%%%%%%
%%%%%%%%%
%%%%%%%%%
%%%%%%%%%
\begin{abstract}  Building on work by Chabauty from 1941, Coleman proved in 1985 an explicit bound for the number of rational points of a curve $C$ of genus $g\ge 2$ defined over a number field $F$, with Jacobian of rank at most $g-1$. Namely, in the case $F=\mathbb{Q}$, if $p>2g$ is a prime of good reduction, then the number of rational points of $C$ is at most the number of $\mathbb{F}_p$-points plus a contribution coming from the canonical class of $C$.  We prove a result analogous to Coleman's bound in the case of a hyperbolic surface $X$ over a number field, embedded in an abelian variety $A$ of rank at most one, under suitable conditions on the reduction type at the auxiliary prime. This provides the first extension of Coleman's explicit bound beyond the case of curves. The main innovation in our approach is a new method to study the intersection of a $p$-adic analytic subgroup with a subvariety of $A$ by means of overdetermined systems of differential equations in positive characteristic. 
\end{abstract}

%%%%%%%%%
%%%%%%%%%
%%%%%%%%%
%%%%%%%%%

\maketitle

\setcounter{tocdepth}{1}
\tableofcontents

%%%%%%%%%%%%%%%%%%%%%%%%%%%%%%%%%%%%%%
%%%%%%%%%%%%%%%%%%%%%%%%%%%%%%%%%%%%%%
%%%%%%%%%%%%%%%%%%%%%%%%%%%%%%%%%%%%%%
%%%%%%%%%%%%%%%%%%%%%%%%%%%%%%%%%%%%%%
%%%%%%%%%%%%%%%%%%%%%%%%%%%%%%%%%%%%%%
%%%%%%%%%%%%%%%%%%%%%%%%%%%%%%%%%%%%%%

\section{Introduction}

\subsection{The Chabauty-Coleman bound} Let $C$ be a smooth, geometrically irreducible, projective curve of genus $g\ge 2$ defined over a number field $F$, with Jacobian $J$. In the direction of Mordell's conjecture, Chabauty \cite{Chabauty} proved in 1941 that if $\rk J(F)\le g-1$, then the set $C(F)$ of $F$-rational points of $C$ is finite. Let us recall the main ideas for $F=\Q$: After embedding $C$ into $J$, one has $C(\Q)\subseteq C(\Q_p)\cap \Gamma$ where $p$ is an auxiliary prime and $\Gamma$ is the $p$-adic closure of $J(\Q)$ in $J(\Q_p)$. Then $\dim \Gamma \le \rk J(\Q)<g=\dim J(\Q_p)$ which leads to the finiteness of $C(\Q_p)\cap \Gamma$ because $C(\Q_p)$ is not contained in a lower dimensional $p$-adic analytic subgroup of $J(\Q_p)$.

 Coleman \cite{Coleman} proved in 1985 a celebrated explicit version of Chabauty's theorem, which we recall over $\Q$: With the same conditions, if $p>2g$ is a prime of good reduction for $C$, then 
\begin{equation}\label{EqnColeman}
\#C(\Q)\le  \#C'(\F_p) +2g-2
\end{equation} 
where $C'$ is the reduction of $C$ modulo $p$. While Chabauty's finiteness result was superseded by Faltings's proof of Mordell's conjecture \cite{Faltings1}, the method of Chabauty and Coleman has led to a number of striking developments: Explicit determination of the rational points in suitable curves (see \cite{MPsurvey} for an introduction and references), improvements on \eqref{EqnColeman} such as \cite{LorenziniTucker, MPsurvey, Stollr, KatzZB}, progress towards the Caporaso-Harris-Mazur conjecture  \cite{Stoll, KRZ}, and explicit versions of Kim's non-abelian approach \cite{Kim} such as \cite{BBM, BD1, BDeffChKim, BD2,  BDMTV,  EdixhovenLido}. 

%%%
%%%

\subsection{Beyond curves} Despite all these remarkable developments over the last few decades, the problem of proving a version of \eqref{EqnColeman} for the rational points of a higher dimensional variety $X$ contained in an abelian variety $A$ has remained out of reach. Our main results provide such an extension of \eqref{EqnColeman} when $X$ is a hyperbolic surface contained in an abelian variety $A$ of dimension $n\ge 3$, both defined over a number field $F$, under the assumption $\rk A(F)\le 1$. Although we work over number fields (see Sections \ref{SecMainResult} and \ref{SecApplications}), let us keep the discussion over $\Q$ in this introduction to simplify the exposition. Our results, when applicable,  will imply
\begin{equation}\label{EqnRoughly}
\# X(\Q) \le \# X'(\F_p) + 4p\cdot c_1^2(X)
\end{equation}
where $c_1^2(X)$ is the first Chern number of the surface $X$ (the self-intersection of a canonical divisor),  $p$ is a prime of good reduction satisfying some technical assumptions, and  $X'$ is the reduction of $X$ modulo $p$. The coefficient $4p$ in \eqref{EqnRoughly} is in fact a simplification of a slightly more complicated expression that gives a better estimate. See  Theorem \ref{ThmMainIntro} and Remark \ref{RmkRH4p}. 

We observe that Coleman's bound \eqref{EqnColeman} can be written as
$$
\#C(\Q)\le  \#C'(\F_p) +c_1(C)
$$
where $c_1(C)=2g-2$ is the first Chern number of $C$, i.e. the degree of a canonical divisor. Also, we recall that hyperbolic projective curves are precisely those of genus $g\ge 2$. We hope that these remarks clarify the analogy between our results for surfaces and Coleman's bound for curves.

 Although the \emph{shape} of our bound \eqref{EqnRoughly} is analogous to \eqref{EqnColeman}, the methods of proof are quite different. Let $X$ be a subvariety of an abelian variety $A$ over $\Q$. Let  $\Gamma$ be the $p$-adic closure of the group $A(\Q)$ in $A(\Q_p)$. Then $X(\Q)$ is contained in  $\Gamma \cap X(\Q_p)$ and one tries to bound the latter.
 
 In the case of curves ($X=C$ embedded in $A=J$) Coleman used his theory of $p$-adic integration to construct $p$-adic analytic functions on $C(\Q_p)$ that vanish on $C(\Q_p)\cap \Gamma$ and then he bounded the number of zeros of the relevant one-variable $p$-adic power series on residue disks. 

On the other hand, when $d=\dim X>1$, the analogous approach using $p$-adic analytic functions on $X(\Q_p)$ quickly encounters difficulties. Such functions are locally given by power series in $d$ variables. To get finiteness of $\Gamma\cap X(\Q_p)$ one needs to consider at least $d$ different $p$-adic analytic functions on $X(\Q_p)$ whose zero sets (usually, $p$-adic analytic sets of dimension $d-1$) meet properly, and then there is the problem of giving an upper bound for the number of common zeros. So far, this approach has not succeeded in proving a version of \eqref{EqnColeman} other than in the case of curves.

We take a different route instead. When $\rk A(\Q)=1$ we can give a $p$-adic analytic parametrization of $\Gamma$ by power series in one variable. Upon composing with suitably chosen local equations for the surface $X$ in $A$ and working on residue disks, the problem of bounding $\# \Gamma\cap X(\Q_p)$ is reduced to bounding the number of zeros of a certain power series $h(z)=\sum_j c_jz^j\in \Q_p[[z]]$ on a disk. The key difficulty in doing so (and in the whole approach) it so prove the existence of a small $N$ such that $|c_N|\ge 1$.  We achieve this by developing a method based on overdetermined $\omega$-integrality, which in our case is essentially a study of overdetermined systems of differential equations in positive characteristic. See Section \ref{SecIntroMethods} for a more detailed description.

Prior to our work, the efforts on extending \eqref{EqnColeman} to the higher dimensional setting  have focused on the special case when $A=J$ is the Jacobian of a curve $C$  and $X=W_d$ is the image in $J$ of the $d$-th symmetric power of $C$ via the addition map.  Klassen \cite{Klassen} obtained some partial results for the varieties $W_d$, later improved by Siksek \cite{Siksek}. Although Siksek's work does not give an explicit  bound for $\# W_d(\Q)$ such as \eqref{EqnColeman}, it gives a practical method that in many cases computes the set of rational points of $W_d$.  Park \cite{Park} used tropical geometry to obtain a weak analogue of \eqref{EqnColeman} for $W_d$ (at least if $\rk J(\Q)\le 1$),  but the result turns out to be conditional on an unproved technical assumption as explained in \cite{GM}. Uniform extensions of Park's result are studied in \cite{VW} for $W_2$, but these are also conditional due to the same issue discussed in \cite{GM}.

%%%%%%%%%
%%%%%%%%%
%%%%%%%%%

\subsection{Results}\label{SecIntroResults} Our main result is Theorem \ref{ThmMain}, which  proves an analogue of the Chabauty-Coleman bound  \eqref{EqnColeman} for surfaces over finite extensions of $\Q_p$. For simplicity, let us first state here a version just over $\Q_p$ which follows from the more general Theorem \ref{ThmMain}, see Remark \ref{RmkMainToIntro}. As usual, $K_X$ denotes a canonical divisor of a smooth projective variety $X$. We recall that $X$ is said to be  of general type if $K_X$ is big.  For every field $F$, we choose an algebraic closure and denote it by $F^{alg}$.
\begin{theorem}[Main result, case over $\Q_p$]\label{ThmMainIntro} Let $p$ be a prime. Let $X$ be a smooth, geometrically irreducible, projective surface contained in an abelian variety $A$ of dimension $n\ge 3$, both defined over $\Q_p$ and having good reduction. Let $X'$ and $A'$ be the corresponding reductions modulo $p$. Let $G\le A(\Q_p)$ be a finitely generated group with $\rk G\le 1$ and let $\Gamma$ be its $p$-adic closure in $A(\Q_p)$.  Suppose that either of the following conditions holds:
\begin{itemize}
\item[(i)] $n=3$, $X$ is of general type, $X'$ contains no elliptic curves over $\F_p^{alg}$, and $p>(128/9)c_1^2(X)^2$.  
\item[(ii)] $A'$ is simple over $\F_p^{alg}$, $p>3c_1^2(X)+2$, and there is an ample divisor $H$ on $A$ such that
$$
p> \frac{n!\cdot (3\deg(H^{2}.X) + \deg(H.K_X))^n}{n^n\cdot \deg(H^n)}.
$$
\end{itemize}
Then $X(\Q_p)\cap \Gamma$ is finite and we have
\begin{equation}\label{EqnMainIntro}
 \# X(\Q_p)\cap \Gamma\le \# X'(\F_p) + \frac{p-1}{p-2}\cdot \left(p+4p^{1/2} + 3\right)\cdot c_1^2(X).
 \end{equation}
\end{theorem}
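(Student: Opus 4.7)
Following the strategy outlined in the introduction, I would decompose $X(\Q_p)\cap\Gamma$ along the reduction map $\pi\colon X(\Q_p)\to X'(\F_p)$ and bound the contribution of each residue disk separately. The term $\#X'(\F_p)$ in \eqref{EqnMainIntro} should arise, as in Coleman's original proof, by accounting for at most one unavoidable point per disk, while the second term will bound the additional zeros coming from the vanishing of a one-variable $p$-adic power series in each disk.

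Inside a fixed residue disk, the hypothesis $\rk G\le 1$ is decisive: after replacing $G$ by a subgroup of finite index and translating by a point of $G$, the closure $\Gamma$ becomes (a coset of) the image of a one-parameter analytic homomorphism $z\mapsto \exp(zv)$, where $v$ lies in the Lie algebra of $A$ over $\Q_p$. Pulling back a local equation of $X\subset A$ at the reduction point through this homomorphism produces a power series $h(z)=\sum_{j\ge 0}c_jz^j\in \Z_p[[z]]$ whose zeros in the open unit $p$-adic disk govern the size of $\Gamma\cap X(\Q_p)$ in that residue disk. A standard Weierstrass preparation / Newton-polygon estimate bounds such a count linearly in $N$ as soon as one produces an index $N$ with $v_p(c_N)$ under explicit control, with the factor $(p-1)/(p-2)$ in \eqref{EqnMainIntro} accounting for the standard loss when passing from the closed to the open disk.

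The heart of the argument is the construction of such a small $N$, and this is where the method announced in Section \ref{SecIntroMethods} enters. If $v_p(c_j)$ were large for all $j\le N_0$, with $N_0$ of the order of $c_1^2(X)\cdot (p+4p^{1/2}+3)$, then reducing modulo $p$ the relations satisfied by these coefficients (essentially the derivatives of $h$ at the origin, rewritten via the chain rule in terms of $v$ and the defining equations) would force the formal image of $\exp(zv)$ to lie on $X'$ to very high order. Reinterpreted as an overdetermined system of differential equations on $X'$ in characteristic $p$, this would produce a positive-dimensional formal translate of a one-parameter subgroup of $A'$ contained in $X'$. Under hypothesis (i) this translate would exponentiate to an elliptic curve on the surface $X'$, contradicting the no-elliptic-curves assumption; under hypothesis (ii) it would force the existence of a proper nonzero abelian subvariety of $A'$, contradicting the simplicity of $A'$. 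The explicit lower bounds on $p$ in (i) and (ii), phrased in terms of $c_1^2(X)$ in the first case and of the intersection numbers $\deg(H^n)$, $\deg(H^{2}.X)$, $\deg(H.K_X)$ in the second, are precisely what is needed to make the resulting degree comparison effective.

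\textbf{Main obstacle.} The step I expect to be by far the hardest is producing the effective $N$: converting ``high $p$-adic order of vanishing of $h$'' into a concrete geometric condition on $X'$ that can be contradicted either from the no-elliptic-curves hypothesis of (i) or from the simplicity of $A'$ in (ii), and doing so with a sharp enough numerical dependence on $c_1^2(X)$ and on $p$ to yield exactly the bound \eqref{EqnMainIntro}. Everything else---the disk decomposition, the one-parameter parametrization of $\Gamma$, and the Newton-polygon endgame---is comparatively standard; it is the positive-characteristic overdetermined-system step that carries the new content.
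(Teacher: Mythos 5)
Your high-level architecture (residue disk decomposition, one-parameter parametrization of $\Gamma$ through the $p$-adic exponential, reduction to zero-counting of a one-variable power series $h$, Newton-polygon endgame) matches the paper's strategy, but the two central steps are misidentified, and as stated they would not close.

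First, the mechanism by which hypotheses (i) and (ii) enter is not the one you describe. You propose that a long formal jet of $\exp(zv)$ lying on $X'$ would ``exponentiate to'' an elliptic curve in $X'$ (case (i)) or a proper abelian subvariety of $A'$ (case (ii)). There is no such formal-to-global principle in positive characteristic: an arc $\Spec \F_p^{alg}[z]/(z^{m+1})\to X'$ of large but finite order does not integrate to any curve, and the conclusion you want simply does not follow from it. The paper instead works entirely at the infinitesimal level. It introduces, for differentials $u_1,u_2$ obtained by restricting to $X'$ the reductions of two forms in the Chabauty hyperplane $\Hcal$, the quantity $m(x)$ = the largest $m$ such that some closed immersion $V_m^k\to X'$ at $x$ is $\omega$-integral for both $u_1$ and $u_2$; Theorem \ref{ThmOver} bounds $m(x)$ in terms of the divisor $D=\divi(u_1\wedge u_2)$ and the orders of vanishing of $\nu_j^\bullet(\omega_0)$ along the branches of $D$ at $x$. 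Hypotheses (i) and (ii) are then used not to exponentiate anything, but to guarantee, by intersection-theoretic degree comparisons with $p$ (Lemmas \ref{LemmaInjCurves}, \ref{LemmaSemiInj}, \ref{LemmaInjSurfaces}, \ref{Lemma2forms}), that the relevant restriction maps of $1$-forms to $X'$ and to curves in $\supp D$ are injective (or have $1$-dimensional kernel), so that $u_1\wedge u_2\neq 0$ and $\nu_j^\bullet(w)\neq 0$. Without these non-vanishing statements the overdetermined bound is vacuous.

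Second, your accounting is off in a way that would not produce the stated inequality. You posit a uniform per-disk cutoff $N_0$ of order $c_1^2(X)\cdot(p+4p^{1/2}+3)$; summing the resulting per-disk bound over $X'(\F_p)$ would give something like $\#X'(\F_p)\cdot\bigl(1+N_0\cdot\tfrac{p-1}{p-2}\bigr)$, which is multiplicative in $\#X'(\F_p)$ rather than additive as in \eqref{EqnMainIntro}. In the paper, the per-disk bound is $1+m(x)\cdot\tfrac{p-1}{p-2}$ with $m(x)$ depending on $x$ and equal to $0$ for $x\notin\supp D$ (Lemma \ref{Lemmamx0}, Proposition \ref{PropKeyU}); the additive structure of \eqref{EqnMainIntro} comes from the key fact that the sum $\sum_{x\in X'(\F_p)} m(x)$ is controlled geometrically, via adjunction (Lemma \ref{LemmaCanonicalGenusBd}) and a Weil-type estimate for the possibly singular and non-reduced curve $D$ (Lemma \ref{LemmaWeilBound}), by $(q+4q^{1/2}+3)\,c_1^2(X)$. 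You would need to replace the uniform cutoff by a pointwise $m(x)$ and supply the global estimate for $\sum_x m(x)$; this is precisely where the canonical genus bound and the modified zeta function computation of Section \ref{SecZeta} do the work, and neither appears in your outline. As a smaller point, $h$ is not a priori in $\Z_p[[z]]$: Lemma \ref{LemmaCvExp} only gives $|c_{j,\alpha}|\le p^{(\|\alpha\|-1)/(p-1)}$, and these growth bounds are exactly what feeds the zero-counting lemma (Lemma \ref{LemmaMVzeros}) and yields the $\tfrac{p-1}{p-2}$ factor; asserting integrality skips the step that produces that constant.
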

\begin{remark}\label{RmkRHbd} By the Riemann Hypothesis for surfaces over finite fields \cite{Deligne} we have the estimate $|\# X'(\F_p) - (p^2+1)|\le b_3p^{3/2} + b_2 p+b_1p^{1/2}$ where $b_j$ is the $j$-th Betti number of $X(\C)$. In particular, one can use $\# X'(\F_p)\le p^2+b_3p^{3/2} + b_2 p+b_1p^{1/2}+1$  in \eqref{EqnMainIntro} to get a more uniform bound.
\end{remark}
\begin{remark}\label{RmkRH4p} When Theorem \ref{ThmMainIntro} applies, $p\ge 7$ and \eqref{EqnMainIntro} implies $ \# X(\Q_p)\cap \Gamma< \# X'(\F_p) + 4p\cdot c_1^2(X)$. Also, Remark \ref{RmkRHbd} shows  that $\# X'(\F_p)$ is roughly of size $p^2$, say, for large $p$ and fixed Betti numbers of $X$. In this way, $\# X'(\F_p)$ can be seen as the main term in the upper bound \eqref{EqnMainIntro}.
\end{remark}
\begin{remark} As it can be relevant in practical applications, we mention that Proposition \ref{PropKeyU} gives a precise upper bound on the number of points of $\Gamma\cap X(\Q_p)$ in a given residue disk, as well as a criterion for this number to be at most $1$. 
\end{remark}

The following two results on rational points of surfaces are deduced from Theorem \ref{ThmMainIntro} by base change to $\Q_p$ and choosing $G$ as the group of $\Q$-rational points of the corresponding abelian variety. We also obtain similar results over any number field, not just $\Q$; see Section \ref{SecApp1}.
\begin{theorem}\label{ThmIntroA} Let $X$ be a smooth, geometrically irreducible, projective surface of general type contained in an abelian threefold $A$, both defined over $\Q$. Let $p> (128/9)\cdot  c_1^2(X)^2$ be a prime of good reduction for $X$ and $A$, and let $X'$ be the reduction of $X$ modulo $p$.  If $\rk A(\Q)\le 1$ and $X'$ contains no elliptic curves over $\F_p^{alg}$, then $X(\Q)$ is finite and
$$
\#X(\Q) \le \# X'(\F_p) + \frac{p-1}{p-2}\cdot \left(p+4p^{1/2} + 3\right)\cdot c_1^2(X).
$$ 
\end{theorem}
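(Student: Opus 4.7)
The plan is to deduce this statement directly from Theorem \ref{ThmMainIntro}, case (i), by base change to $\Q_p$.

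First, I would base change $X$ and $A$ from $\Q$ to $\Q_p$, yielding a smooth, geometrically irreducible, projective surface $X_{\Q_p}$ of general type embedded in the abelian threefold $A_{\Q_p}$. Good reduction at $p$ is preserved by this base change, and the reductions modulo $p$ remain $X'$ and $A'$. The self-intersection $c_1^2(X)$ of the canonical class is invariant under field extension, so the numerical hypothesis $p > (128/9)\cdot c_1^2(X)^2$ continues to hold.

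Next, I would take $G = A(\Q)$, viewed as a subgroup of $A(\Q_p)$ via the natural inclusion. By the Mordell--Weil theorem, $G$ is finitely generated, and by hypothesis $\rk G = \rk A(\Q) \le 1$. Letting $\Gamma \subseteq A(\Q_p)$ denote its $p$-adic closure, we have $A(\Q) \subseteq \Gamma$, and since $X$ is a closed subscheme of $A$,
$$
X(\Q) = X(\Q_p) \cap A(\Q) \subseteq X(\Q_p) \cap \Gamma.
$$
I would then apply Theorem \ref{ThmMainIntro}(i) to $X_{\Q_p} \subseteq A_{\Q_p}$ with this choice of $G$. All four hypotheses are in force: $n = 3$, $X_{\Q_p}$ is of general type, $X'$ contains no elliptic curves over $\F_p^{alg}$, and $p > (128/9)\cdot c_1^2(X)^2$. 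The theorem then gives the finiteness of $X(\Q_p) \cap \Gamma$ together with the bound on the right-hand side of \eqref{EqnMainIntro}. Combined with the inclusion above, this yields the stated bound for $\# X(\Q)$ and its finiteness.

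Since the entire substantive content is carried by Theorem \ref{ThmMainIntro}, I expect no genuine obstacle at this stage. The only items requiring verification are the routine facts that good reduction, smoothness, geometric irreducibility, being of general type, and the self-intersection $c_1^2$ of the canonical class are all preserved under base change from $\Q$ to $\Q_p$, together with the Mordell--Weil finite generation of $A(\Q)$.
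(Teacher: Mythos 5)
Your proposal is correct and follows exactly the paper's route: base change to $\Q_p$, take $G = A(\Q)$ (finitely generated by Mordell--Weil with $\rk G \le 1$), and invoke Theorem \ref{ThmMainIntro}(i). The paper's own deduction in Section \ref{SecApp1} (via the more general Theorem \ref{ThmFinalMain}) is the same argument over an arbitrary number field.
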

\begin{remark}\label{RmkHyp1} A compact complex manifold $M$ is \emph{hyperbolic} if every holomorphic map $f:\C\to M$ is constant. If a complex projective surface is hyperbolic, then it is of general type (see Lemma \ref{LemmaGenTypeHyp}). So, in Theorem \ref{ThmIntroA} we may require that $X(\C)$ be hyperbolic instead of requiring general type ---depending on the application, this might be easier to check. In fact, there is no loss of generality in doing so: Under the assumptions of Theorem \ref{ThmIntroA}, the surface $X$ contains no elliptic curves over $\Q^{alg}$, hence, over  $\C$ (by specialization). Furthermore, $X$ is not an abelian surface because it is of general type. Since $X\subseteq A$, a result of Green \cite{Green} implies that $X(\C)$ is hyperbolic.
\end{remark}

\begin{theorem}\label{ThmIntroB} Let $X$ be a smooth, geometrically irreducible, projective surface contained in an abelian variety $A$ of dimension $n\ge 3$, both defined over $\Q$.  Let $H$ be an ample divisor on $A$ and let $p$ be a prime of good reduction for $X$ and $A$ satisfying
$$
p> \max\left\{3 c_1^2(X)+2,\frac{n!\cdot (3\deg(H^{2}.X) + \deg(H.K_X))^n}{n^n\cdot \deg(H^n)}\right\}.
$$
Let $X'$ and $A'$ be the corresponding reductions modulo $p$ of $X$ and $A$.  If $\rk A(\Q)\le 1$ and $A'$ is simple over $\F_p^{alg}$, then $X(\Q)$ is finite and
$$
\#X(\Q) \le \# X'(\F_p) + \frac{p-1}{p-2}\cdot \left(p+4p^{1/2} + 3\right)\cdot c_1^2(X).
$$ 
\end{theorem}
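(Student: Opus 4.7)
The plan is to deduce Theorem \ref{ThmIntroB} directly from Theorem \ref{ThmMainIntro}(ii) by base change from $\Q$ to $\Q_p$. I fix an embedding $\Q\hookrightarrow \Q_p$ and let $X_{\Q_p}$ and $A_{\Q_p}$ denote the corresponding base changes; the closed embedding $X\subseteq A$ is preserved by flat base change. Good reduction is also preserved, and the special fibers $X'$, $A'$ coincide with the reductions of the $\Q$-models, so the simplicity of $A'$ over $\F_p^{alg}$ carries over. Intersection numbers such as $c_1^2(X)$, $\deg(H^n)$, $\deg(H^2.X)$, and $\deg(H.K_X)$ are stable under flat base change, and ampleness of $H$ is preserved, so the two numerical inequalities on $p$ assumed in Theorem \ref{ThmIntroB} are exactly the inequalities required by hypothesis (ii) of Theorem \ref{ThmMainIntro}.

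Next I would take $G := A(\Q)$, viewed as a subgroup of $A(\Q_p)$ via the chosen embedding. The Mordell--Weil theorem guarantees that $G$ is finitely generated with $\rk G = \rk A(\Q)\le 1$. Let $\Gamma$ denote its $p$-adic closure in $A(\Q_p)$. Every rational point of $X$ produces, through $X\hookrightarrow A$, an element of $G\subseteq \Gamma$, and $X(\Q)\subseteq X_{\Q_p}(\Q_p)$ under the base change, so we obtain the key containment
$$
X(\Q)\;\subseteq\; X_{\Q_p}(\Q_p)\cap \Gamma.
$$

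Having verified all the hypotheses of Theorem \ref{ThmMainIntro}(ii), I would apply it to conclude that $X_{\Q_p}(\Q_p)\cap \Gamma$ is finite with
$$
\# X_{\Q_p}(\Q_p)\cap \Gamma \;\le\; \# X'(\F_p) + \frac{p-1}{p-2}\cdot \bigl(p+4p^{1/2} + 3\bigr)\cdot c_1^2(X),
$$
and combining this with the containment above yields the desired bound on $\#X(\Q)$. Since Theorem \ref{ThmMainIntro} is already stated and does the real work, there is no substantive obstacle: the deduction is a routine base-change plus Mordell--Weil argument, and the only point requiring attention is the stability of the numerical and geometric hypotheses (good reduction, ampleness of $H$, simplicity of $A'$, intersection numbers) under $\Q\hookrightarrow\Q_p$.
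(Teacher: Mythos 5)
Your proof is correct and follows exactly the route the paper takes: the paper states in Section 1.3 that Theorems \ref{ThmIntroA} and \ref{ThmIntroB} "are deduced from Theorem \ref{ThmMainIntro} by base change to $\Q_p$ and choosing $G$ as the group of $\Q$-rational points," and Section \ref{SecApp1} carries this out (in the slightly more general number-field setting of Theorem \ref{ThmFinalMain}) by invoking Mordell--Weil and the stability of the hypotheses under base change. Your verification of the hypotheses of Theorem \ref{ThmMainIntro}(ii) and the containment $X(\Q)\subseteq X_{\Q_p}(\Q_p)\cap\Gamma$ are the essential points, and you have them.
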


\begin{remark} Since $A'$ is geometrically simple, so is $A$. Thus, $X(\C)$ is hyperbolic  by \cite{Green} as it was in Theorem \ref{ThmIntroA} (see Remark \ref{RmkHyp1}). Deep conjectures by Bombieri and Lang predict that if $V$ is a smooth projective variety over $\Q$ such that $V(\C)$ is hyperbolic, then $V(\Q)$ is finite. For curves this is Faltings's theorem since hyperbolic projective curves are precisely those of genus $g\ge 2$. When $V$ is contained in an abelian variety and $V(\C)$ is hyperbolic, finiteness of $V(\Q)$ was proved by Faltings \cite{Faltings2} extending methods of Vojta \cite{VojtaCompact}.  Hence, hyperbolicity of $X(\C)$ is natural in our context.
\end{remark}

\begin{remark}\label{RmkRk} It is expected that the rank of abelian varieties over $\Q$ of a fixed positive dimension is $0$ or $1$ a positive proportion of the time each ---ordering the abelian varieties, for instance, by (Faltings or Theta) height--- and this is proved for elliptic curves \cite{BhargavaShankar, BhargavaSkinner}. Thus, one can expect that the rank assumption in Theorems \ref{ThmIntroA} and \ref{ThmIntroB} is often satisfied in examples.
\end{remark}

\begin{remark} For a variety $X$  contained in an abelian variety $A$ over $\Q$, heuristically, one sees that the limit of applicability of an analogue of Chabauty's classical approach should be $\dim X+ \dim \Gamma \le \dim A$ where $\Gamma$ is the $p$-adic closure of $A(\Q)$ in $A(\Q_p)$. In our results in the case $\dim A=3$, this limit rank condition is in fact reached. 
\end{remark}

A simple case where our results are applicable is given by the following.
\begin{corollary}\label{CoroDim3} Let $A$ be an abelian threefold over $\Q$ with $\rk A(\Q)\le 1$ and $\End (A_\C) = \Z$. There is a set of primes $\Pcal$ of density $1$ in the primes  such that the following holds:

Let $X$ be a smooth, geometrically irreducible, projective surface defined over $\Q$ and contained in $A$, and let $p\in \Pcal$ be a prime of good reduction for $X$ with $p>(128/9) \cdot c_1^2(X)^2$. Let $X'$ be the reduction of $X$ modulo $p$. Then
$$
\#X(\Q) \le \# X'(\F_p) + \frac{p-1}{p-2}\cdot \left(p+4p^{1/2} + 3\right)\cdot c_1^2(X).
$$
\end{corollary}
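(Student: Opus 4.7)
The plan is to reduce Corollary \ref{CoroDim3} to Theorem \ref{ThmIntroA}. Define $\Pcal$ to be the set of primes $p$ of good reduction for $A$ for which the reduction $A' := A \bmod p$ is geometrically simple over $\F_p^{alg}$. The key claim is that $\Pcal$ has density $1$. Since $\dim A = 3$ is odd and $\End(A_\C) = \Z$, Serre's results on Galois representations attached to abelian varieties with trivial endomorphism ring imply that for every prime $\ell$, the image of $\rho_\ell \colon \Gal(\Q^{alg}/\Q) \to \GL(T_\ell A)$ is open in $\mathrm{GSp}_6(\Z_\ell)$. Combined with a Chebotarev density argument in the style of Chavdarov and Zywina, this shows that for a density-$1$ set of good primes $p$, the characteristic polynomial of $\rho_\ell(\Frob_p)$ is sufficiently generic (irreducible, with splitting field realizing the full Weyl group) that no geometric isogeny decomposition of $A'$ is possible, so $A'$ is geometrically simple and $p \in \Pcal$.

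Next, for such $p$ and any surface $X \subset A$ satisfying the hypotheses of the corollary, I verify the hypotheses of Theorem \ref{ThmIntroA}. For general type: the stabilizer $\mathrm{Stab}(X) \subseteq A_{\Q^{alg}}$ is an abelian subvariety, and by geometric simplicity of $A$ (which follows from $\End(A_\C) = \Z$) its identity component is either trivial or all of $A$; the latter would force $X = A$, contradicting $\dim X = 2 < 3 = \dim A$. Hence $\mathrm{Stab}(X)$ is finite, so by Ueno's structure theorem for subvarieties of abelian varieties, $X$ is of general type. For absence of elliptic curves in $X'$: any elliptic curve $E \subseteq X'_{\F_p^{alg}}$ would, after translation by a point of $E(\F_p^{alg})$, yield a one-dimensional abelian subvariety of the geometrically simple $A'_{\F_p^{alg}}$, a contradiction. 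The remaining hypotheses of Theorem \ref{ThmIntroA} (good reduction of $X$ and $A$, the rank condition $\rk A(\Q) \le 1$, and $p > (128/9)\, c_1^2(X)^2$) are either assumed in the corollary or follow from the definition of $\Pcal$. Theorem \ref{ThmIntroA} then gives the claimed bound.

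The main obstacle is establishing the density-$1$ statement for $\Pcal$: it requires both the largeness of the $\ell$-adic Galois image (via Serre's theorems for odd-dimensional abelian varieties with trivial endomorphism ring) and the passage from equidistribution of Frobenius characteristic polynomials to geometric simplicity of the reduction, the latter being the delicate input of Chavdarov--Zywina type. The rest of the argument is routine once geometric simplicity of both $A$ and $A'$ is in hand.
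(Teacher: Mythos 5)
Your proposal is correct and follows the same strategy as the paper: define $\Pcal$ as the set of good primes where the reduction $A'$ is geometrically simple, use Chavdarov's density-$1$ result to justify this, observe that geometric simplicity of $A'$ rules out elliptic curves in $X'$, and conclude via Theorem \ref{ThmIntroA}. The main difference is presentational: the paper simply cites Chavdarov's theorem as Lemma \ref{LemmaCha} (stated precisely for odd-dimensional $A$ over $\Q$ with $\End(A_\C)=\Z$), whereas you sketch a re-derivation through Serre's big-image theorem and a Chebotarev argument. Your sketch identifies the right ingredients, though a fully rigorous version of that step would in effect be reproving Chavdarov, so the citation route is cleaner. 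One genuine value-add in your write-up is that you explicitly verify the general type hypothesis of Theorem \ref{ThmIntroA}: since $\End(A_\C)=\Z$ forces $A$ to be geometrically simple, $X$ contains no translates of positive-dimensional abelian subvarieties and hence is of general type (the paper's Lemma \ref{LemmaGenTypeTranslate}, or your equivalent via Ueno's structure theorem applied to the finite stabilizer). The paper's two-sentence proof leaves this step implicit, so spelling it out is a worthwhile clarification even though the argument is the standard one.
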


This is deduced in Section \ref{SecApp2} from Theorem \ref{ThmIntroA} and results of Chavdarov \cite{Chavdarov} on absolutely simple reduction of abelian varieties. Chavdarov's results together with Theorem \ref{ThmIntroB} imply analogous corollaries when $\dim A\ge 3$ is odd. Further extensions are discussed in Section \ref{SecApp2}.

\begin{remark} Given $n\ge 1$, a general abelian variety $B$ over $\C$ of  dimension $n$ satisfies $\End(B)\simeq \Z$. In view of Remark \ref{RmkRk}, abelian threefolds satisfying the conditions in Corollary \ref{CoroDim3} should be rather common. And in fact, they are easy to find; the Jacobian of the genus $3$ hyperelliptic curve $y^2=x^7-x-1$ is such an example with Mordell-Weil rank $1$.
\end{remark}

\begin{remark} For any abelian threefold as in Corollary \ref{CoroDim3}, our bounds for the number of rational points apply to \emph{any} smooth surface contained in $A$, e.g. by embedding $A$ in a projective space and intersecting with a general hyperplane (by Bertini's theorem). This gives plenty of examples.
\end{remark}

For a curve $C$ over a field, we let $C^{(n)}$ be its $n$-th symmetric power. If $C$ is defined over $\Q$, the $\Q$-rational points of $C^{(2)}$ are in bijection with Galois orbits of quadratic points and unordered pairs of $\Q$-rational points. If $C$ is a hyperelliptic curve over $\Q$, then we certainly have that $C^{(2)}(\Q)$ is infinite. As a direct application of our results, we can bound $\# C^{(2)}(\Q)$ for non-hyperelliptic curves  whose Jacobian has rank $0$ or  $1$, under some conditions on the reduction type at $p$. 

\begin{corollary}\label{CoroQuadratic} Let $C$ be a smooth, geometrically irreducible, projective curve over $\Q$ of genus $g\ge 3$ which is not hyperelliptic over $\Q^{alg}$ and such that its Jacobian $J$ has $\rk J(\Q)\le 1$.  Let  $p>(8g-10)^g$ be a prime of good reduction for $C$. Let $C'$ and $J'$ denote the reduction of $C$ and $J$ modulo $p$ respectively. Suppose that $C'$ is not hyperelliptic over $\F_p^{alg}$ and that $J'$ is geometrically simple. Then  $C^{(2)}(\Q)$ is finite and
$$
\# C^{(2)}(\Q)\le \# (C')^{(2)}(\F_p) +  \frac{p-1}{p-2}\cdot \left(p+4p^{1/2} + 3\right)\cdot (4g-9)(g-1).
$$
\end{corollary}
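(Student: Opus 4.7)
The plan is to reduce the problem to Theorem \ref{ThmIntroB} by viewing $C^{(2)}$ as a smooth surface in the Jacobian $J$. If $C^{(2)}(\Q) = \emptyset$ there is nothing to prove; otherwise I would fix $D_0 \in C^{(2)}(\Q)$ and use the Abel--Jacobi map $u : C^{(2)} \to J$, $D \mapsto [D - D_0]$, which is defined over $\Q$. Since $C$ is non-hyperelliptic over $\Q^{\mathrm{alg}}$ and $g \geq 3$, every effective divisor of degree $2$ on $C$ is nonspecial, so $u$ is a closed immersion. Let $X$ denote its image, a smooth, geometrically irreducible, projective surface in $J$ (a translate of $W_2$). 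Then $u$ induces a bijection $C^{(2)}(\Q) \xrightarrow{\sim} X(\Q)$, and the analogous statement at $p$ (using non-hyperellipticity of $C'$ over $\F_p^{\mathrm{alg}}$) gives $(C')^{(2)}(\F_p) \cong X'(\F_p)$.

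The core of the argument is to compute the invariants of $X$ appearing in the numerical hypotheses of Theorem \ref{ThmIntroB} with $A = J$, $n = g$, and $H = \theta$ the principal polarization. I would use the standard degree-two cover $q : C \times C \to C^{(2)}$ branched along the diagonal $\Delta$ and work in the basis $f_1, f_2, \Delta$ of divisor classes on $C \times C$ (with $f_i^2 = 0$, $f_1 \cdot f_2 = 1$, $f_i \cdot \Delta = 1$, $\Delta^2 = 2 - 2g$). From Riemann--Hurwitz, $q^* K_X = (2g-2)(f_1 + f_2) - \Delta$, and squaring yields $c_1^2(X) = (g-1)(4g-9)$. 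The class $q^* u^* \theta$ is symmetric, so equals $a(f_1 + f_2) + c\Delta$ for some integers $a, c$, pinned down by two degree computations: restriction to a horizontal fiber $\{P_0\} \times C$ gives $a + c = g$ (since $\theta$ restricts to degree $g$ on an Abel--Jacobi embedded $C$), and restriction to the diagonal $\Delta \cong C$ gives $2a + (2-2g)c = 4g$, using that $(u \circ q)|_\Delta$ factors as the doubling map on $J$ composed with the Abel--Jacobi embedding (so the theta restriction has degree $4g$). Solving yields $q^* u^* \theta = (g+1)(f_1 + f_2) - \Delta$. Poincar\'e's formula $[W_2] = \theta^{g-2}/(g-2)!$ then gives $\deg(\theta^2 \cdot X) = g(g-1)$, and intersecting $q^* u^* \theta$ with $q^* K_X$ yields $\deg(\theta \cdot K_X) = 2g(g-2)$.

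I would then verify that the single hypothesis $p > (8g-10)^g$ implies both numerical conditions of Theorem \ref{ThmIntroB}. The inequality $p > 3c_1^2(X) + 2 = 3(4g-9)(g-1) + 2$ is immediate since $(8g-10)^g$ grows exponentially in $g$. Substituting the intersection numbers into the second condition gives
\[
\frac{g! \cdot (3 \deg(\theta^2 \cdot X) + \deg(\theta \cdot K_X))^g}{g^g \cdot \deg(\theta^g)} \;=\; \frac{g!\,(3g(g-1) + 2g(g-2))^g}{g^g \cdot g!} \;=\; (5g-7)^g,
\]
which is bounded by $(8g-10)^g$ for all $g \geq 2$. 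Combined with $\dim J = g \geq 3$, $\rk J(\Q) \leq 1$, geometric simplicity of $J'$, and good reduction at $p$, every hypothesis of Theorem \ref{ThmIntroB} is satisfied for $X \subseteq J$. Substituting $c_1^2(X) = (4g-9)(g-1)$ and transporting the bound via the identifications $X(\Q) = C^{(2)}(\Q)$ and $X'(\F_p) = (C')^{(2)}(\F_p)$ yields the stated inequality. The main obstacle is the determination of $q^* u^* \theta$ in the second step: its coefficients hinge on the factor $4g$ arising when $\theta$ is pulled back along the doubling map restricted to the diagonal, and without this precise computation the value of $\deg(\theta \cdot K_X)$, and hence the explicit threshold on $p$, could not be obtained.
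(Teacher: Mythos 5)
Your proposal is structurally the same as the paper's: reduce to Theorem \ref{ThmIntroB} with $A=J$, $n=g$, and $X=W_2\simeq C^{(2)}$, then compute the relevant intersection numbers on $C\times C$ and verify the numerical threshold on $p$. Your intersection-theoretic computations are correct — $q^*u^*\Theta\equiv (g+1)(f_1+f_2)-\Delta$, $\deg(\Theta^2.X)=g(g-1)$, $\deg(\Theta.K_X)=2g(g-2)$, $c_1^2(X)=(g-1)(4g-9)$ — and match Lemmas \ref{LemmaKSymm2}, \ref{LemmaThetaC2}, \ref{LemmaThetaK} of the paper.

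The gap is your choice $H=\Theta$. Theorem \ref{ThmIntroB} (and Theorem \ref{ThmFinalMain}) require $H$ to be an ample divisor on $A$ over $\Q$, and the theta divisor $\Theta$ on $J$, as an actual divisor and not just a numerical class, is generally only defined over an extension of $\Q$: rationality of $\Theta$ amounts to the existence of a $\Q$-rational theta characteristic, which is not guaranteed. The paper's Lemma \ref{LemmaTheta} gets around this by using the effective degree-$2$ divisor $d_0$ (which exists because $C^{(2)}(\Q)\neq\emptyset$, a reduction you also make) to construct a $\Q$-rational divisor $H$ with $H\equiv 2\Theta$; feeding $H=2\Theta$ into the formula produces the stated threshold $(8g-10)^g$, whereas $H=\Theta$ gives $(5g-7)^g$. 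Since $(5g-7)^g<(8g-10)^g$ for $g\ge 3$ your final numerics are fine under the stated hypothesis, but you should either replace $\Theta$ by the paper's $H\equiv 2\Theta$ (the safe route), or explicitly justify that the numerical condition in Theorem \ref{ThmIntroB} may be verified with $H$ defined only over $\Q^{\mathrm{alg}}$ — something the paper's statement does not assert and which would require tracing through the proof of Theorem \ref{ThmMain} (the specialization arguments in Lemma \ref{LemmaHyps} and the applications of Lemma \ref{Lemma2forms}).
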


This is directly obtained from Theorem \ref{ThmIntroB} in Section \ref{SecApp3} after some computations in intersection theory. In a similar fashion, we will prove the following strengthening of Corollary \ref{CoroQuadratic} for curves of genus $3$, by applying Theorem \ref{ThmIntroA} instead.

\begin{corollary}\label{CoroQuadratic3} Let $C$ be a smooth, geometrically irreducible, projective curve over $\Q$ of genus $3$ which is not hyperelliptic over $\Q^{alg}$ and such that its Jacobian $J$ has $\rk J(\Q)\le 1$.  Let  $p\ge 521$ be a prime of good reduction for $C$ and denote by $C'$ the reduction of $C$ modulo $p$. Suppose that $C'$ is not hyperelliptic over $\F_p^{alg}$ and that $(C')^{(2)}$ does not contain elliptic curves over $\F_p^{alg}$. Then  $C^{(2)}(\Q)$  is finite and
$$
\# C^{(2)}(\Q)\le \# (C')^{(2)}(\F_p) +  6\cdot \frac{p-1}{p-2}\cdot \left(p+4p^{1/2} + 3\right) <  \# (C')^{(2)}(\F_p) + 7.1\cdot p.
$$
\end{corollary}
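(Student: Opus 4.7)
The plan is to deduce the corollary from Theorem~\ref{ThmIntroA} applied with $X = C^{(2)}$ and $A = J$, viewing $X$ as a surface inside the abelian threefold $J$. Since $C$ is not hyperelliptic over $\Q^{alg}$, the curve $C$ carries no $g^1_2$, so the Abel--Jacobi map $\sigma\colon C^{(2)} \to \Pic^2(C)\cong J$ is a closed immersion, and its image $W_2$ is (a translate of) the theta divisor of $J$. Thus $X \cong W_2$ is a smooth, geometrically irreducible, ample divisor on $J$. The same argument applied over $\F_p$ (using that $C'$ is not hyperelliptic) shows that $X' = (C')^{(2)}$ embeds as a theta divisor in $J'$, and good reduction of $X$ and $J$ at $p$ is inherited from good reduction of $C$.

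The next step is to compute $c_1^2(X)$. Since $K_J = 0$, adjunction gives $K_X = W_2|_{W_2}$ on $X \cong W_2$. In particular $K_X$ is ample, so $X$ is of general type, and intersection-theoretically
\[
c_1^2(X) = K_X^2 = W_2 \cdot W_2 \cdot W_2 = \theta^3 = 3! = 6
\]
by the Poincar\'e formula for the theta divisor on a Jacobian of genus $3$. The threshold condition $p > (128/9)\cdot c_1^2(X)^2$ of Theorem~\ref{ThmIntroA} thus reduces to $p > 512$, which among primes is equivalent to $p \ge 521$.

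The remaining hypotheses of Theorem~\ref{ThmIntroA} are now in place: $\rk J(\Q) \le 1$ and the absence of elliptic curves in $X' = (C')^{(2)}$ over $\F_p^{alg}$ are assumed, while general type for $X$ was established above. Applying Theorem~\ref{ThmIntroA} yields the finiteness of $C^{(2)}(\Q)$ and the first claimed inequality with coefficient $c_1^2(X) = 6$. For the final estimate $< 7.1\,p$, one studies the ratio $\varphi(p) := 6 \cdot \tfrac{p-1}{p-2}\cdot \tfrac{p+4p^{1/2}+3}{p}$; both $\tfrac{p-1}{p-2}$ and $1 + 4p^{-1/2}+3p^{-1}$ are strictly decreasing in $p$, hence so is $\varphi$, and a direct numerical check at $p = 521$ gives $\varphi(521) \approx 7.0999 < 7.1$.

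The main conceptual obstacle is the clean identification $C^{(2)} \cong W_2$ as a theta divisor in $J$, together with the intersection computation $K_X^2 = 6$; once these are in hand, the corollary is a direct invocation of Theorem~\ref{ThmIntroA}, with the tight but routine numerical verification at $p = 521$ yielding the sharpened constant $7.1$.
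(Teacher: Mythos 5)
Your proposal is correct and follows essentially the same route as the paper: apply Theorem~\ref{ThmIntroA} with $A = J$ and $X = W_2 \simeq C^{(2)}$, compute $c_1^2(X) = 6$, and observe that $(128/9)\cdot 36 = 512$ forces $p \ge 521$. The paper reaches $c_1^2(C^{(2)}) = 6$ by specializing the general formula $c_1^2(C^{(2)}) = (4g-9)(g-1)$ from its Lemma~\ref{LemmaKSymm2} at $g=3$, whereas you compute it directly as $W_2^3 = \theta^3 = 3!$ via adjunction in the threefold and Poincar\'e's formula; both are equivalent (Lemma~\ref{LemmaDegXA} confirms $c_1^2(X) = \deg(X^3)_A$ for a smooth surface in an abelian threefold). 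You also make the final numerical inequality $< 7.1\,p$ explicit via monotonicity of the ratio and evaluation at $p=521$, which the paper leaves to the reader.

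One small point that deserves a sentence: to identify $\Pic^2(C)$ with $J$ over $\Q$ (equivalently, to have the embedding $j\colon C^{(2)}\to J$ defined over $\Q$) one needs a $\Q$-rational degree-$2$ divisor class on $C$. The paper disposes of this by remarking that if no such class exists then $C^{(2)}(\Q)=\varnothing$ and the bound is trivial; your write-up implicitly assumes the identification is available. This is a cosmetic omission rather than a gap.
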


Finally, we mention that the finiteness aspect of Theorem \ref{ThmMainIntro} (and more generally, Theorem \ref{ThmMain}) does not follow from Faltings's theorem for subvarieties of abelian varieties \cite{Faltings2}, as $\Gamma$ is not a finite rank group when $\rk(G)=1$.  Regarding bounds for the number of rational points, the Diophantine approximation method of Vojta \cite{VojtaCompact} and Faltings \cite{Faltings2, Faltings3} led to explicit bounds such as \cite{Remond1, Remond2} for subvarieties of abelian varieties over number fields, outside the special set. However, as it is the case for the classical Chabauty-Coleman method on curves compared to Diophantine approximation bounds, our $p$-adic approach for surfaces leads to sharper estimates when it applies. 

%%%%%%%%%
%%%%%%%%%
%%%%%%%%%

\subsection{Sketch of the method: overdetermined $\omega$-integrality}\label{SecIntroMethods} We conclude this introduction by presenting an outline of the proof of Theorem \ref{ThmMainIntro} (and more generally, Theorem \ref{ThmMain}). To simplify the notation, let us focus in the case $\dim A=3$ since the key features already appear here. Furthermore, enlarging $G$ we may assume that $\rk G=1$.

First we note that, at least heuristically, the Chabauty-Coleman $p$-adic approach has a chance to succeed since $\dim X+ \dim \Gamma= 2+1=\dim A$.

Consider the reduction map $\red:A(\Q_p)\to A'(\F_p)$ and for each $x\in A'(\F_p)$ let  $U_x=\red^{-1}(x)\subseteq A(\Q_p)$ be the corresponding residue disk. We want to bound $\# \Gamma\cap X(\Q_p)\cap U_x$ and then add these upper bounds as $x$ varies in $X'(\F_p)$. Let us parametrize $\Gamma\cap U_x$ by a $p$-adic analytic map $\gamma : p\Z_p\to U_x\subseteq A(\Q_p)$. If $f$ is a local equation for $X$ in $U_x$, then $\#\Gamma\cap X(\Q_p)\cap U_x$ is the number of zeros of the one-variable $p$-adic analytic function $h=f\circ \gamma$ on $p\Z_p$. We remark that the idea of parametrizing $\Gamma$ can be traced back to work of Flynn \cite{Flyn} and it has been successful in computing the rational points of curves in examples, although it has not previously led to results such as \eqref{EqnColeman}.

Writing $h(z)=c_1z+c_2z^2+...\in \Q_p[[z]]$, the number of zeros can be estimated  \emph{provided} that we have some information on the size of the coefficients $c_j$, see Section \ref{SecZeros}. Namely,  we need:
\begin{itemize} 
\item[(I)] a good upper bound for $|c_j|$ for all $j$, and 
\item[(II)] some small $N$ such that $|c_N|$ is not too small, say, $|c_N|\ge 1$.
\end{itemize}
To achieve (I) we perform the construction of the $p$-adic analytic map $\gamma$ very carefully. We develop a completely explicit theory of $1$-parameter $p$-adic analytic subgroups in Section \ref{Sec1PS} and, with some care in the choice of the local equation $f$, this allows us to prove the desired upper bound. 

The key difficulty in the whole argument, however, is (II). We take a somewhat indirect approach.

Consider the morphism of $p$-adic analytic groups $\Log:A(\Q_p)\to \Lie(A(\Q_p))\simeq H^0(A,\Omega^1_{A/\Q_p})^\vee$ constructed by Coleman integration or by classical theory of $p$-adic Lie groups \cite{Bourbaki}. As $\rk G=1$,  $\Log(\Gamma)$ is contained in a line of $\Lie(A(\Q_p))$ which determines a hyperplane $\Hcal\subseteq H^0(A,\Omega^1_{A/\Q_p})$. We choose $\omega_1,\omega_2\in \Hcal$ so that they reduce to independent differentials $\omega'_1,\omega'_2$ on $A'$ which we restrict to differentials $u_1,u_2$ on $X'$. For $x\in X'(\F_p)$ let $m(x)$ be the supremum of all integers $m$ such that there is a closed immersion $\phi_m:\Spec \F_p^{alg}[z]/(z^{m+1})\to X'$ supported at $x$ which is $\omega$-integral for both $\omega=u_1,u_2$. Roughly speaking, $\omega$-integrality for a differential $\omega$ means that the morphism $\phi_m$ solves the differential equation determined by $\omega$; see Section \ref{Secwint} for a detailed discussion.

If $u_1$ and $u_2$ are independent over the function field of  $X'$, then the maps $\phi_m$ in the definition of $m(x)$ are jet solutions to an \emph{overdetermined system of differential equations}. So, one expects $m(x)$ to be finite and bounded in terms of $u_1$ and $u_2$. This is indeed correct, but it is far from obvious. Theorem \ref{ThmOver} gives such a bound in terms of the geometry of the divisor $D$ of the $2$-form $u_1\wedge u_2$. Our bound for $m(x)$ is somewhat intricate, but Remark \ref{RmkSharp} shows that in some cases it is sharp.

Bounding $m(x)$ turns out to be crucial, since we will show in Lemma \ref{LemmaBdNm} that $N\le m(x)+1$ with $N$ as in (II). Together with the zero estimates from Section \ref{SecZeros} and our upper bounds for $|c_j|$ (see (I) above) we will prove the following key estimate in Proposition \ref{PropKeyU} 
\begin{equation}\label{EqnIntroU}
\#\Gamma\cap X(\Q_p)\cap U_x\le 1+ m(x)\cdot\frac{p-1}{p-2}.
\end{equation}

Finally, the geometric bound for $m(x)$ proved in Theorem \ref{ThmOver} is applied to \eqref{EqnIntroU}, and then added over $x\in X'(\F_p)$. When $x$ is not in the support of $D=\divi(u_1\wedge u_2)$ we show $m(x)=0$, thus, $\#\Gamma\cap X(\Q_p)\cap U_x\le 1$.  The contribution for $x$ in the support of $D$ is more complicated and it corresponds to counting $\F_p$-points in the support of  $D$ with suitable weights. The divisor $D$ can be non-reduced and highly singular, so this counting problem does not directly follow from Weil's estimates for points in curves over finite fields. We address this problem by studying certain modified Zeta functions in Section \ref{SecZeta}, which allows us to conclude the argument.

A technical point that requires attention is that we make heavy use of $\omega$-integrality for schemes over  rings or fields of positive characteristic. The classical analytic approach is not useful for us and we need purely algebraic methods. Fortunately such a study was carried out by Garcia-Fritz \cite{GFthesis, GFtams, GFijnt} in the context of her generalization of Vojta's explicit version of Bogomolov's approach to quasi-hyperbolicity, see \cite{VojtaBuchi, Deschamps}. While Garcia-Fritz's work is mostly over $\C$, her algebraic methods easily extend to an arbitrary base. See Section \ref{Secwint}. 

A major technical difficulty is that at various points of the argument we need to restrict differential forms to subvarieties of abelian varieties in a way that preserves non-triviality. For example, while $\omega'_1$ and $\omega'_2$ are independent on $A'$, is is not clear $u_1\wedge u_2$ is not the zero form on $X'$, and this is necessary even to define the divisor $D=\divi(u_1\wedge u_2)$. Part of the difficulties are due to the fact that $X$ does not have a particularly convenient presentation that allows for explicit computations with differentials, unlike the case of symmetric powers of curves. The required non-vanishing results for restriction of forms are not difficult to obtain in characteristic zero by analytic means, but we need them in positive characteristic. This is achieved in Section \ref{SecRestrictions} by means of intersection theory.

%%%%%%%%%%%%%%%%%%%%%%%%%%%%%%%%%%%%%%
%%%%%%%%%%%%%%%%%%%%%%%%%%%%%%%%%%%%%%
%%%%%%%%%%%%%%%%%%%%%%%%%%%%%%%%%%%%%%
%%%%%%%%%%%%%%%%%%%%%%%%%%%%%%%%%%%%%%
%%%%%%%%%%%%%%%%%%%%%%%%%%%%%%%%%%%%%%
%%%%%%%%%%%%%%%%%%%%%%%%%%%%%%%%%%%%%%
%%%%%%%%%%%%%%%%%%%%%%%%%%%%%%%%%%%%%%
%%%%%%%%%%%%%%%%%%%%%%%%%%%%%%%%%%%%%%
%%%%%%%%%%%%%%%%%%%%%%%%%%%%%%%%%%%%%%

\section{Notation}

\subsection{General notation}\label{NotationFields} $L$ is a field and $L^{alg}$ is an algebraic closure of it. The symbol $F$ will denote a field, a function on a variety, or a power series, and this will be explicitly stated.  The symbol $K$ will denote a $p$-adic field or a canonical divisor in a variety, which will be explicitly stated.  $k$ will always denote an algebraically closed field and $\kk$ will always denote a finite field.

We write $\N=\Z_{\ge 0}$. For $\alpha=(\alpha_1,...,\alpha_n)\in \N$ we define $\|\alpha\|=\alpha_1+...+\alpha_n$. If $\xx=(x_1,...,x_n)$ is an $n$-tuple of variables, we write $\xx^{\alpha}=x_1^{\alpha_1}\cdots x_n^{\alpha_n}$.

The exponential function on $\R$ is $\exp$ and the logarithm on $\R_{>0}$ is $\log$.
\subsection{Geometry}\label{notationGeo} A variety over $L$ is a reduced $L$-scheme which is separated of finte type over $L$. We remark that we are not requiring irreducibility as part of the definition. A curve (resp. surface) over $L$ is a variety over $L$ of pure dimension $1$ (resp. 2). 

The normalization of an integral domain $A$ is denoted by $\widetilde{A}$, and the normalization of a variety $Z$ over $k$ is denoted by $\widetilde{Z}$. 

If $X$ is a scheme and $x$ is a (schematic) point in $X$, the maximal ideal of the local ring $\Ocal_{X,x}$ is denoted by $\mfrak_{X,x}$. The completion of $\Ocal_{X,x}$ at $\mfrak_{X,x}$ is denoted by $\widehat{\Ocal}_{X,x}$. 

If $f:X\to Y$ is a morphism of schemes, $x$ is a point of $X$ and $y=f(x)$, the induced map on local rings is $f^\#_x : \Ocal_{Y,y}\to \Ocal_{X,x}$. If $X$ consists of exactly one point, we just write $f^\#$ instead of $f^\#_x$.

For an irreducible projective curve $C$ over the algebraically closed field $k$, the arithmetic genus of $C$ is denoted by $\gfrak_a(C)$ and the geometric genus is denoted by $\gfrak_g(C)$. Namely, $\gfrak_g(C)=\gfrak_a(\widetilde{C})$. 

If $L$ is a perfect field and $C$ is a geometrically irreducible projective curve over $L$, we define the arithmetic and geometric genus of $C$ by base change to $L^{alg}$.

Given a geometrically irreducible, smooth, projective surface $S$ over $L$, we have a $\Z$-valued intersection product on divisors of $S$, which we write $(D.D')$, $(D.D')_S$, or $D.D'$ for two divisors $D$ and $D'$ on $S$, and we write $D^2=(D.D)$ for a self-intersection. The first Chern number $c_1^2(S)$ of $S$ is the self-intersection number of a canonical divisor on $S$. 

%%%
%%%
\subsection{Local fields}\label{NotationLoc} In the context of local fields, the symbol $p$ denotes a prime number, $K$ is a finite extension of $\Q_p$, $R$ is the ring of integers of $K$, $\pfrak$ is the maximal ideal of $R$, $\varpi\in \pfrak$ is a uniformizer, $\kk=R/\pfrak$ is the residue field of $R$, $q=\# \kk$, $\efrak$ is the ramification exponent of $K/\Q_p$, and $\ffrak=[\kk : \F_p]$ is the residue degree. We denote by $|-|$ the absolute value on $K$ normalized by $|\varpi|=1/q$, or equivalently, $|p|=p^{-[K:\Q_p]}$.

For $n\ge 1$ we put the following norm on $K^n$:
$$
|\vv | = \max_{1\le j\le n}|v_j|, \quad \vv =(v_1,...,v_n)\in K^n.
$$
For $r>0$ we define the balls $B_n(r)=\{\vv \in K^n : |\vv |<r\}$ and  $B_n[r]=\{\vv \in K^n : |\vv |\le r\}$.

%%%%%%%%%%%%%%%%%%%%%%%%%%%%%%%%%%%%%%
%%%%%%%%%%%%%%%%%%%%%%%%%%%%%%%%%%%%%%
%%%%%%%%%%%%%%%%%%%%%%%%%%%%%%%%%%%%%%
%%%%%%%%%%%%%%%%%%%%%%%%%%%%%%%%%%%%%%
%%%%%%%%%%%%%%%%%%%%%%%%%%%%%%%%%%%%%%
%%%%%%%%%%%%%%%%%%%%%%%%%%%%%%%%%%%%%%
%%%%%%%%%%%%%%%%%%%%%%%%%%%%%%%%%%%%%%
%%%%%%%%%%%%%%%%%%%%%%%%%%%%%%%%%%%%%%
%%%%%%%%%%%%%%%%%%%%%%%%%%%%%%%%%%%%%%

%%%%%%%%%%%%%%%%%%%%%%%%%%%%%%%%%%%%%%
%%%%%%%%%%%%%%%%%%%%%%%%%%%%%%%%%%%%%%
%%%%%%%%%%%%%%%%%%%%%%%%%%%%%%%%%%%%%%
%%%%%%%%%%%%%%%%%%%%%%%%%%%%%%%%%%%%%%
%%%%%%%%%%%%%%%%%%%%%%%%%%%%%%%%%%%%%%
%%%%%%%%%%%%%%%%%%%%%%%%%%%%%%%%%%%%%%

\section{Curves and surfaces} 

%%%%
%%%%
%%%%
\subsection{Curves} Let $C$ be an irreducible curve over the algebraically closed field $k$ and let $x\in C(k)$. The \emph{order of singularity} of $C$ at $x$ is $\delta(C,x)=\dim_k(\widetilde{\Ocal}_{C,x}/\Ocal_{C,x})$, where the quotient $\widetilde{\Ocal}_{C,x}/\Ocal_{C,x}$ is taken as $k$-vector spaces. One has the following basic fact:
\begin{lemma} $\delta(C,x)$ is a non-negative integer. It is equal to $0$ if and only if $C$ is regular at $x$.
\end{lemma}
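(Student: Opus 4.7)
The plan is to invoke the standard fact that the normalization map $\widetilde{C}\to C$ is finite (since $C$ is a variety over a field, hence $\Ocal_{C,x}$ is excellent, or directly by the Krull--Akizuki type theorems for rings essentially of finite type over a field). This finiteness implies that $\widetilde{\Ocal}_{C,x}$ is a finitely generated $\Ocal_{C,x}$-module, so in particular the quotient $M=\widetilde{\Ocal}_{C,x}/\Ocal_{C,x}$ is a finitely generated $\Ocal_{C,x}$-module.

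Next I would argue that $M$ has finite length over $\Ocal_{C,x}$. Since $\Ocal_{C,x}$ and $\widetilde{\Ocal}_{C,x}$ are both subrings of the function field $k(C)$ sharing the same fraction field (normalization takes place inside $k(C)$), $M$ is a torsion $\Ocal_{C,x}$-module. As $\Ocal_{C,x}$ is a one-dimensional local Noetherian domain, any finitely generated torsion module is supported at the unique maximal ideal $\mfrak_{C,x}$, hence is annihilated by some power of $\mfrak_{C,x}$. Using the filtration $M\supseteq \mfrak_{C,x}M\supseteq \mfrak_{C,x}^2M\supseteq\cdots$, whose successive quotients are finitely generated modules over $\Ocal_{C,x}/\mfrak_{C,x}=k$, I conclude that $\dim_k M<\infty$. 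This proves $\delta(C,x)\in \N$.

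For the second claim, I would note that $\delta(C,x)=0$ is equivalent to $\Ocal_{C,x}=\widetilde{\Ocal}_{C,x}$, i.e.\ to $\Ocal_{C,x}$ being integrally closed in its fraction field. Since $\Ocal_{C,x}$ is a one-dimensional Noetherian local domain, being normal is equivalent to being a discrete valuation ring, and therefore equivalent to being regular. This gives the stated characterization. There is no substantial obstacle here; the only point that deserves care is the invocation of finiteness of normalization in the present generality (an algebraically closed base field suffices to keep the argument entirely within the setting of varieties, where the excellence and hence finiteness of normalization are standard).
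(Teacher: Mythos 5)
Your argument is correct. The paper states this lemma without proof, treating it as a basic fact, and your proof is the standard one: finiteness of normalization for rings essentially of finite type over a field gives that $\widetilde{\Ocal}_{C,x}/\Ocal_{C,x}$ is a finitely generated torsion module over the one-dimensional Noetherian local domain $\Ocal_{C,x}$, hence of finite length, hence finite-dimensional over $k=\Ocal_{C,x}/\mfrak_{C,x}$; and the equivalence $\delta=0 \Leftrightarrow \Ocal_{C,x}$ integrally closed $\Leftrightarrow$ DVR $\Leftrightarrow$ regular is the Serre-type characterization of normality in dimension one. No gaps.
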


As a special case of Theorem 2 in \cite{Hironaka} we have

\begin{lemma}[Hironaka]\label{LemmaHironakaGenus} If $C$ is a projective irreducible curve over $k$, then
$$
\gfrak_a(C)=\gfrak_g(C) + \sum_{x\in C(k)} \delta(C,x).
$$

\end{lemma}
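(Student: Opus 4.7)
The plan is to compare the structure sheaves of $C$ and its normalization $\widetilde{C}$ via a short exact sequence and read off the formula from the long exact sequence in cohomology.

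First I would let $\nu:\widetilde{C}\to C$ denote the normalization morphism. Since $C$ is irreducible and $\widetilde{C}$ is its normalization, $\nu$ is finite and birational. There is a natural inclusion $\Ocal_C\hookrightarrow \nu_*\Ocal_{\widetilde{C}}$ (at each $x$ this is $\Ocal_{C,x}\hookrightarrow \widetilde{\Ocal}_{C,x}$, the normalization of the local ring). Let $\Fcal$ be the cokernel, giving a short exact sequence of $\Ocal_C$-modules
\begin{equation*}
0\longrightarrow \Ocal_C\longrightarrow \nu_*\Ocal_{\widetilde{C}}\longrightarrow \Fcal\longrightarrow 0.
\end{equation*}
At a regular point $x$, the stalk of $\nu_*\Ocal_{\widetilde{C}}$ coincides with $\Ocal_{C,x}$, so $\Fcal_x=0$. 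At a singular point $x$, $\Fcal_x=\widetilde{\Ocal}_{C,x}/\Ocal_{C,x}$, which by definition has $k$-dimension $\delta(C,x)$. In particular $\Fcal$ is a skyscraper sheaf supported on the finite set of singular points.

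Next I would take the long exact sequence in cohomology on $C$. Since $\nu$ is finite (hence affine), $R^i\nu_*\Ocal_{\widetilde{C}}=0$ for $i>0$, so $H^i(C,\nu_*\Ocal_{\widetilde{C}})\cong H^i(\widetilde{C},\Ocal_{\widetilde{C}})$ for all $i$. Because $\Fcal$ is a skyscraper sheaf, $H^i(C,\Fcal)=0$ for $i\ge 1$ and $H^0(C,\Fcal)=\bigoplus_{x} \Fcal_x$, whose $k$-dimension is $\sum_{x\in C(k)}\delta(C,x)$. Since $C$ is irreducible and projective over the algebraically closed field $k$, $H^0(C,\Ocal_C)=k$; likewise $\widetilde{C}$ is irreducible and projective, so $H^0(\widetilde{C},\Ocal_{\widetilde{C}})=k$, and the induced map $H^0(C,\Ocal_C)\to H^0(\widetilde{C},\Ocal_{\widetilde{C}})$ is the identity on $k$, hence an isomorphism.

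Thus the long exact sequence collapses to
\begin{equation*}
0\longrightarrow H^0(C,\Fcal)\longrightarrow H^1(C,\Ocal_C)\longrightarrow H^1(\widetilde{C},\Ocal_{\widetilde{C}})\longrightarrow 0,
\end{equation*}
and taking $k$-dimensions gives
\begin{equation*}
\gfrak_a(C)=\dim_k H^1(C,\Ocal_C)=\dim_k H^1(\widetilde{C},\Ocal_{\widetilde{C}})+\sum_{x\in C(k)}\delta(C,x)=\gfrak_g(C)+\sum_{x\in C(k)}\delta(C,x),
\end{equation*}
where the last equality uses $\gfrak_g(C)=\gfrak_a(\widetilde{C})$ by definition. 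The only mildly delicate step is verifying that the inclusion $\Ocal_C\hookrightarrow \nu_*\Ocal_{\widetilde{C}}$ has skyscraper cokernel with the correct stalks; this is an immediate local statement once one knows $(\nu_*\Ocal_{\widetilde{C}})_x=\widetilde{\Ocal}_{C,x}$, which follows from the compatibility of normalization with localization for a one-dimensional Noetherian domain.
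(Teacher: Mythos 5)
Your proof is correct and complete. The paper itself does not prove this lemma at all: it simply invokes Theorem~2 of Hironaka's 1957 memoir (the reference \cite{Hironaka} in the bibliography), whose argument runs through the classical theory of effective genus and infinitely near points via successive blowups. Your argument is the standard modern sheaf-theoretic one: form the short exact sequence $0\to\Ocal_C\to\nu_*\Ocal_{\widetilde C}\to\Fcal\to 0$, observe $\Fcal$ is a skyscraper with stalks $\widetilde{\Ocal}_{C,x}/\Ocal_{C,x}$, use that $\nu$ is affine so $H^i(C,\nu_*\Ocal_{\widetilde C})\simeq H^i(\widetilde C,\Ocal_{\widetilde C})$, and read off the identity from the long exact sequence after noting $H^0(C,\Ocal_C)\to H^0(\widetilde C,\Ocal_{\widetilde C})$ is $k\to k$. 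All the hypotheses you need are in place: $C$ is integral (the paper's varieties are reduced by convention and $C$ is irreducible), so $H^0(\Ocal_C)=k$ over the algebraically closed $k$, and integral closure commutes with localization, giving $(\nu_*\Ocal_{\widetilde C})_x=\widetilde{\Ocal}_{C,x}$. The sheaf-theoretic route has the advantage of being short, self-contained, and characteristic-free; Hironaka's route yields finer invariants (the distribution of $\delta$ over the tree of infinitely near points) which the paper exploits elsewhere via Lemma~\ref{LemmaHironakaMult}, but is overkill if one only wants the global genus identity.
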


The \emph{multiplicity} of $C$ at  $x\in C(k)$ is defined as the multiplicity of the local ring $\Ocal_{C,x}$ as defined, for instance, in Ch.  5, Sec. 14 of \cite{Matsumura}. We denote it by $\mult_x(C)$. A basic fact:
\begin{lemma} $\mult_x(C)$ is a positive integer. It is equal to $1$ if and only if $C$ is regular at $x$.
\end{lemma}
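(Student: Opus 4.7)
The plan is to read off both assertions from the Hilbert--Samuel theory of the local ring $A := \Ocal_{C,x}$ at its maximal ideal $\mfrak := \mfrak_{C,x}$. Because $C$ is an irreducible curve over the algebraically closed field $k$ (and, by the conventions of Section~\ref{notationGeo}, a curve is reduced), $A$ is a one-dimensional Noetherian local domain with residue field $k$. Recall that in the sense of Ch.~5, Sec.~14 of \cite{Matsumura}, $\mult_x(C)$ is the multiplicity $e(\mfrak; A)$, defined as $d!$ times the leading coefficient of the Hilbert--Samuel polynomial $n \mapsto \mathrm{length}_A(A/\mfrak^{n+1})$, where $d = \dim A = 1$.

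For the positivity assertion, I would appeal to the standard fact that for any Noetherian local ring of positive dimension $d$, the Hilbert--Samuel function agrees for $n \gg 0$ with a polynomial of degree exactly $d$, and $d!$ times its leading coefficient is a positive integer. Applying this to $A$ with $d=1$ immediately gives $\mult_x(C) \in \Z_{>0}$.

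For the equivalence with regularity, the easy direction is: if $C$ is regular at $x$, then $A$ is a DVR, so $\dim_k \mfrak^n/\mfrak^{n+1} = 1$ for every $n \ge 0$ and thus $\mathrm{length}_A(A/\mfrak^{n+1}) = n+1$, yielding $\mult_x(C) = 1$. The main obstacle is the converse: if $\mult_x(C) = 1$, one must deduce that $A$ is regular. For this I plan to invoke the classical theorem (essentially due to Nagata, cf.\ his book \emph{Local Rings}) that a formally equidimensional Noetherian local ring of multiplicity one is regular. The hypothesis is satisfied in our situation because $A$ is a one-dimensional local domain, which has a unique minimal prime $(0)$, no embedded primes, and is in particular Cohen--Macaulay and formally equidimensional. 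This completes the equivalence.
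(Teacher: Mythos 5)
The paper states this lemma without proof, treating it as a standard fact from local algebra (citing Matsumura only for the \emph{definition} of multiplicity). So there is no proof in the paper to compare against, and one can only assess the proposal on its own terms.

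Your argument is essentially correct, but the cited form of Nagata's theorem is slightly too weak as stated. The hypothesis ``formally equidimensional'' (i.e., the completion is equidimensional on its \emph{minimal} primes) is not sufficient: for example, $k[[x,y]]/(x^2,xy)$ is a one-dimensional, complete, formally equidimensional Noetherian local ring with Hilbert--Samuel function $n \mapsto n+2$ (hence multiplicity $1$), yet it is not regular. The correct hypothesis in Nagata's theorem (see, e.g., Matsumura, Theorem 14.12) is that $\widehat{A}$ be \emph{unmixed}, meaning $\dim \widehat{A}/\pfrak = \dim \widehat{A}$ for every \emph{associated} prime $\pfrak$ of $\widehat{A}$, which in particular excludes embedded primes of the completion. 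Your proof is saved precisely because you also observe that $A$ is Cohen--Macaulay (being a one-dimensional Noetherian local domain, it has $\operatorname{depth} A = \dim A = 1$): then $\widehat{A}$ is Cohen--Macaulay, hence has no embedded primes and is equidimensional, hence is unmixed, and Nagata's theorem applies in its correct form. So the argument goes through, but you should cite the unmixedness hypothesis rather than formal equidimensionality, or at least flag that the Cohen--Macaulay property is what upgrades the latter to the former. A more elementary alternative in dimension one and over an infinite residue field: since $A$ is Cohen--Macaulay and $k$ is infinite, $e(\mfrak;A)=\ell(A/(a))$ for a superficial element $a\in\mfrak$, so $e=1$ forces $\mfrak=(a)$ and hence $A$ is a DVR.
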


From Exercise 14.5 in \cite{Matsumura} we get a simple expression for $\mult_x(C)$ in a special case.

\begin{lemma}\label{LemmaMultFormula} Let $S$ be a surface over $k$ and let $C$ be a closed irreducible curve in $S$. Let $x\in C(k)$ be a point such that $S$ is regular at $x$. Let $F\in \Ocal_{S,x}$ be a local equation for $C$ at $x$. Then 
$$
\mult_x(C)=\max\{n\ge 1 : F\in \mfrak_{S,x}^n\}.
$$
\end{lemma}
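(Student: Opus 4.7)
The plan is to reduce the statement directly to the cited commutative algebra fact (Exercise 14.5 in Matsumura). Since $S$ is a surface and $S$ is regular at $x$, the local ring $A := \Ocal_{S,x}$ is a regular Noetherian local ring of Krull dimension $2$, with maximal ideal $\mfrak := \mfrak_{S,x}$. Because $F$ is a local equation for $C$ at $x$, I would first observe that $F$ generates the ideal of $C$ inside a suitable open neighborhood of $x$ in $S$; in particular $F$ is a non-zero non-unit of $A$, and $\Ocal_{C,x} \cong A/(F)$ as local $k$-algebras.

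Next, let $n_0 = \max\{n \geq 1 : F \in \mfrak^n\}$, which is a well-defined positive integer because $A$ is regular (hence Noetherian and a UIF domain, so by the Krull intersection theorem $\bigcap_n \mfrak^n = 0$, forcing the set of admissible $n$ to be bounded). I would write $F = G$ with $G \in \mfrak^{n_0} \setminus \mfrak^{n_0+1}$; equivalently, the image $\bar{F}$ of $F$ in $\mathrm{gr}_\mfrak(A) = \bigoplus_n \mfrak^n/\mfrak^{n+1}$ is a nonzero homogeneous element of degree $n_0$. Since $A$ is regular of dimension $2$, the associated graded ring $\mathrm{gr}_\mfrak(A)$ is a polynomial ring $k[t_1,t_2]$ in two variables over the residue field $k$.

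At this point the computation is purely one about Hilbert–Samuel multiplicities in the graded ring $k[t_1,t_2]/(\bar F)$, which is exactly what is done in Exercise 14.5 of \cite{Matsumura}: for a regular local ring $A$ of dimension $d+1$ and a nonzero non-unit $F \in A$ lying in $\mfrak^{n_0}\setminus \mfrak^{n_0+1}$, the multiplicity of the local ring $A/(F)$ equals $n_0$. Applying this with $d+1 = 2$ yields $\mult_x(C) = \mult(\Ocal_{S,x}/(F)) = n_0$, which is the claimed formula.

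The main (minor) obstacle is simply bookkeeping: making sure that the definition of multiplicity adopted in \cite{Matsumura} (via Hilbert–Samuel polynomials of $\mfrak$-primary ideals) matches the one used by the authors, and that no hypothesis is lost when passing from the global surface $S$ to the completed local picture. Neither is a real difficulty: the local ring $A/(F)$ is a one-dimensional Noetherian local $k$-algebra with infinite residue field (since $k$ is algebraically closed), so Matsumura's exercise applies verbatim, and the appeal to it finishes the proof.
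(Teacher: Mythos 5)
Your proposal is correct and takes essentially the same route as the paper, which gives no proof at all and simply cites Exercise~14.5 of Matsumura; you have merely filled in the bookkeeping (identifying $\Ocal_{S,x}$ as a $2$-dimensional regular local ring, passing to the associated graded ring, and matching notions of multiplicity). One trivial slip: you wrote ``UIF domain'' where you presumably meant UFD, but in any case the Krull intersection theorem only needs $A$ Noetherian local, so that aside does not affect the argument.
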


We will also need the following result.
\begin{lemma}[Hironaka] \label{LemmaHironakaMult} Let $S$ be a projective surface and let $C$ be a projective irreducible curve contained in $S$. Let $x\in C(k)$ be a point such that $S$ is regular at $x$. Then we have
$$
\delta(C,x)\ge \frac{1}{2}\cdot \mult_x(C)\cdot (\mult_x(C)-1).
$$
\end{lemma}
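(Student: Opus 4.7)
The plan is to derive the inequality $\delta(C,x)\ge\binom{m}{2}$, where $m=\mult_x(C)$, by comparing the Hilbert--Samuel functions of $R:=\Ocal_{C,x}$ and of its normalization $\widetilde R:=\widetilde{\Ocal}_{C,x}$ with respect to the maximal ideal $\mfrak:=\mfrak_{C,x}$.

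The first step is to compute $\dim_k R/\mfrak^n$ exactly. Since $S$ is regular at $x$, the local ring $\Ocal_{S,x}$ is regular of dimension two, so its $\mfrak_{S,x}$-adic associated graded ring is a polynomial ring in two variables over $k$, and in particular is a domain. By Lemma~\ref{LemmaMultFormula}, a local equation $F$ for $C$ at $x$ lies in $\mfrak_{S,x}^m\setminus\mfrak_{S,x}^{m+1}$, so its initial form is nonzero and $\ord(Fg)=m+\ord(g)$ for every nonzero $g\in\Ocal_{S,x}$. For $n\ge m$ this yields a short exact sequence
$$0\to \Ocal_{S,x}/\mfrak_{S,x}^{n-m}\xrightarrow{\,\cdot F\,}\Ocal_{S,x}/\mfrak_{S,x}^n\to R/\mfrak^n\to 0,$$
so that $\dim_k R/\mfrak^n=\binom{n+1}{2}-\binom{n-m+1}{2}=mn-\binom{m}{2}$ for every $n\ge m$.

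The second step is to compute $\dim_k\widetilde R/\mfrak^n\widetilde R$. The normalization $\widetilde R$ is a finite $R$-module and a semilocal one-dimensional normal domain; its localizations at the maximal ideals $\Pfrak_1,\dots,\Pfrak_s$ are DVRs $\widetilde R_i$, each with residue field $k$ since $k$ is algebraically closed. Writing $\mfrak\widetilde R_i=\Pfrak_i^{m_i}\widetilde R_i$, the associativity formula for the Samuel multiplicity under the finite integral extension $R\hookrightarrow\widetilde R$ gives $m=e(\mfrak,R)=\sum_i m_i$, as all residue field degrees are trivial. By the Chinese Remainder Theorem one has $\widetilde R/\mfrak^n\widetilde R\cong\prod_i\widetilde R_i/\mfrak^n\widetilde R_i$, whence $\dim_k\widetilde R/\mfrak^n\widetilde R=\sum_i nm_i=mn$.

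To conclude, the inclusion $R\subset\widetilde R$ fits into the short exact sequence
$$0\to R/(R\cap\mfrak^n\widetilde R)\to\widetilde R/\mfrak^n\widetilde R\to\widetilde R/(R+\mfrak^n\widetilde R)\to 0,$$
whose left term is a quotient of $R/\mfrak^n$ (as $\mfrak^n R\subseteq R\cap\mfrak^n\widetilde R$) and whose right term is a quotient of $\widetilde R/R$. Hence
$$mn=\dim_k\widetilde R/\mfrak^n\widetilde R\le \dim_k R/\mfrak^n+\dim_k\widetilde R/R=\Bigl(mn-\tbinom{m}{2}\Bigr)+\delta(C,x),$$
which rearranges to $\delta(C,x)\ge\binom{m}{2}=\tfrac{1}{2}m(m-1)$, as required. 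The main technical obstacle is the multiplicity identity $\sum_i m_i=m$ in the second step, which relies on the associativity formula for Samuel multiplicity along a finite integral extension; here the hypothesis that $k$ is algebraically closed is used to trivialize the residue field contributions so that the identity takes the clean form stated.
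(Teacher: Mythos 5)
Your proof is correct, and it takes a genuinely different route from the paper. The paper simply invokes Theorem 1 of Hironaka's 1957 paper, which expresses $\delta(C,x)$ exactly as a sum of terms $\frac{1}{2}m_i(m_i-1)$ over all infinitely near points $x=x_0,x_1,\dots$ in a resolution sequence, and then drops the contributions beyond the first. Your argument avoids that machinery entirely: you compute the Hilbert--Samuel function $\dim_k R/\mfrak^n = mn - \binom{m}{2}$ directly from the regularity of $\Ocal_{S,x}$ (the key point being that the initial form of $F$ in the polynomial graded ring is a nonzerodivisor), compute $\dim_k \widetilde R/\mfrak^n\widetilde R = mn$ from the DVR structure of the localizations plus the rank-one multiplicity identity $e(\mfrak,\widetilde R)=e(\mfrak,R)$ (clean via additivity of multiplicity applied to $0\to R\to \widetilde R\to \widetilde R/R\to 0$, noting $\widetilde R/R$ is torsion), and then squeeze the two via the short exact sequence $0\to R/(R\cap\mfrak^n\widetilde R)\to\widetilde R/\mfrak^n\widetilde R\to\widetilde R/(R+\mfrak^n\widetilde R)\to 0$. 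What you buy is a fully self-contained, elementary proof using nothing beyond standard commutative algebra (and in particular no geometry of infinitely near points or resolution of singularities of curves); what the paper's citation buys is brevity and direct access to the stronger \emph{exact} formula, which the authors do not need here. Both proofs use the hypothesis that $k$ is algebraically closed in the same place (to trivialize residue-field degrees in the sum $\sum_i m_i = m$).
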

\begin{proof} By Theorem 1 in \cite{Hironaka}, after dropping the contribution of higher order neighboring points.
\end{proof}
%%
%%

%%%%
%%%%
%%%%
\subsection{A canonical genus bound} 

The following result will be useful to control the genus of the components of certain canonical divisors on surfaces. 

\begin{lemma}[Canonical genus bound] \label{LemmaCanonicalGenusBd} Let $S$ be a smooth projective irreducible surface over $k$. Let $D$ be a canonical divisor on $S$. Assume that $D>0$ and that $D$ is numerically effective. Let $C_1,...,C_\ell$ be the irreducible components of the support of $D$ and let $a_1,...,a_\ell$ be positive integers defined by $D=a_1C_1 +...+a_\ell C_\ell$. Then we have
$$
\sum_{j=1}^\ell a_j\cdot (\gfrak_a(C_j)-1)\le c_1^2(S).
$$
\end{lemma}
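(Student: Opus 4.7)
The plan is to reduce the claimed inequality to a statement about intersection numbers via the adjunction formula, and then exploit the nefness of $D$ together with the effectivity of each ``residual'' divisor $D-C_j$.

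First, since $S$ is smooth and $D$ is a canonical divisor, for each irreducible component $C_j$ the adjunction formula on $S$ gives
$$2\bigl(\gfrak_a(C_j) - 1\bigr) = C_j^2 + K_S\cdot C_j = C_j^2 + D\cdot C_j,$$
using $K_S \sim D$. Multiplying by $a_j$, summing over $j$, and recognizing $\sum_j a_j\, D\cdot C_j = D\cdot\sum_j a_j C_j = D^2 = c_1^2(S)$, I obtain
$$2\sum_{j=1}^{\ell} a_j \bigl(\gfrak_a(C_j) - 1\bigr) = \sum_{j=1}^{\ell} a_j C_j^2 + c_1^2(S).$$
Hence the lemma reduces to proving the purely numerical inequality $\sum_{j} a_j C_j^2 \le c_1^2(S)$.

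For this reduction step I would fix $j$ and write
$$D - C_j = (a_j - 1)C_j + \sum_{i\ne j} a_i C_i,$$
which is effective, since each $a_i$ is a positive integer. Intersecting with $C_j$ and using that distinct irreducible curves on a smooth surface satisfy $C_i\cdot C_j \ge 0$, I get
$$D\cdot C_j - C_j^2 \;=\; (D-C_j)\cdot C_j \;\ge\; (a_j-1)\,C_j^2,$$
equivalently $a_j C_j^2 \le D\cdot C_j$. Nefness of $D$ gives $D\cdot C_j \ge 0$, so together with $a_j\ge 1$ this chains to $a_j C_j^2 \le D\cdot C_j \le a_j\,D\cdot C_j$. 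Summing over $j$ yields
$$\sum_{j=1}^\ell a_j C_j^2 \;\le\; \sum_{j=1}^\ell a_j\, D\cdot C_j \;=\; D^2 \;=\; c_1^2(S),$$
which combined with the displayed identity above produces the claimed bound.

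I do not expect any serious obstacle. The only mildly delicate point is the step $(D-C_j)\cdot C_j \ge (a_j-1)C_j^2$, which handles the case $a_j \ge 2$ where $C_j$ itself appears with positive coefficient in $D-C_j$; this step relies only on the standard fact that two distinct irreducible curves on a smooth surface intersect with non-negative intersection multiplicity. Everything else is a direct application of adjunction together with the two standing hypotheses on $D$ (effectivity, which ensures $D-C_j$ is effective, and nefness, which controls the sign of $D\cdot C_j$).
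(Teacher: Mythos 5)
Your proof is correct. Both you and the paper begin with the same adjunction computation, which reduces the lemma to the numerical inequality $\sum_j a_j C_j^2 \le D^2$, and both ultimately rely on the same two facts: nefness of $D$ and non-negativity of $C_i\cdot C_j$ for distinct components. But your packaging is different and, I think, cleaner. The paper splits the index set according to the sign of $C_j^2$, discards the non-positive self-intersections, replaces $a_j$ by $a_j^2$ on the positive ones, and then shows $D^2 \ge \sum_{C_j^2 > 0} a_j^2 C_j^2$ in a separate step. You instead prove, uniformly in $j$, the chain $a_j C_j^2 \le D\cdot C_j \le a_j\, D\cdot C_j$ (the first inequality from effectivity of $D - a_j C_j$, the second from $a_j\ge 1$ together with nefness) and sum at once to get $\sum_j a_j C_j^2 \le \sum_j a_j\, D\cdot C_j = D^2$. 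This avoids the case analysis and the detour through $\sum a_j^2 C_j^2$, so it is arguably a tidier route to the same inequality; the substance is the same, but your version is more economical.
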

\begin{proof} Let us rearrange the indices so that for certain $0\le r\le \ell$ we have $C_j^2>0$ for each $j\le r$ and $C_j^2\le 0$ for each $j>r$. By the adjunction formula we find
\begin{equation} \label{EqnCanBd1} 
\begin{aligned}
2\sum_{j=1}^\ell a_j\cdot (\gfrak_a(C_j)-1)&=\sum_{j=1}^\ell a_j\cdot ((D.C_j) +C_j^2) = D^2 + \sum_{j=1}^\ell a_j\cdot  C_j^2\\
&\le D^2 + \sum_{j=1}^r a_j \cdot C_j^2 \le D^2 + \sum_{j=1}^r a_j^2 \cdot C_j^2.
\end{aligned}
\end{equation}
For any given $j$ we have $(D-a_jC_j).C_j=(\sum_{i\ne j} a_iC_i).C_j\ge 0$ since the $C_i$ are different irreducible curves. Also, $D.C_j\ge 0$ since $D$ is nef. Therefore
$$
\begin{aligned}
D^2=\sum_{j=1}^\ell a_j D.C_j = \sum_{j=1}^r a_j^2 \cdot C_j^2 + \sum_{j=1}^r  a_j\cdot (D-a_jC_j).C_j + \sum_{j=r+1}^\ell a_j D.C_j\ge \sum_{j=1}^r a_j^2 \cdot C_j^2 .
\end{aligned}
$$
Together with \eqref{EqnCanBd1}, this gives $2\sum_{j=1}^\ell a_j\cdot (\gfrak_a(C_j)-1)\le 2\cdot D^2=2c_1^2(X)$.
\end{proof}
%%
%%

%%%%%
%%%%%
%%%%%
%%%%%

\subsection{Branches} Let $S$ be a surface over $k$ and let $x\in S(k)$ be a smooth point of $S$. Given local parameters $s_1,s_2\in \mfrak_{S,x}$, the completed local ring at $x$ can be presented as $\widehat{\Ocal}_{S,x}\simeq  k[[s_1,s_2]]$.

 Let $C\subseteq S$ be a closed irreducible curve passing through $x$ with local equation $F\in \mfrak_{S,x}$. Let us factor $F=F_1\cdots F_r$ where each $F_j\in \widehat{\Ocal}_{S,x}$ is irreducible. 
\begin{lemma} For each $i\ne j$, the power series $F_i$ and $F_j$ are non-associated.
\end{lemma}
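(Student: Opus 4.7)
The plan is to translate reducedness of $C$ (which is built into the convention in Section \ref{notationGeo}) into squarefreeness of $F$ in $\widehat{\Ocal}_{S,x}\simeq k[[s_1,s_2]]$, and then invoke unique factorization in that ring to conclude that the $F_j$'s are pairwise non-associated.

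First, observe that $C$ is irreducible and, by the definition of variety adopted in Section \ref{notationGeo}, also reduced; hence $C$ is integral. Since $x$ is a smooth point of $S$ and $F$ is a local equation for $C$ at $x$, we have $\Ocal_{C,x} = \Ocal_{S,x}/(F)$. As a localization of the coordinate ring of an integral affine neighborhood of $x$ in $C$, this ring is a domain, in particular reduced. Equivalently, $(F)$ is a radical ideal of $\Ocal_{S,x}$.

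Next, I would pass to the completion. The ring $\Ocal_{S,x}$ is essentially of finite type over the field $k$, hence excellent; and excellence is inherited by completions and by quotients. In particular, a reduced quotient of such a ring has reduced completion. Applying this to $\Ocal_{C,x}$ gives that
$$
\widehat{\Ocal}_{C,x}\;\simeq\;\widehat{\Ocal}_{S,x}/(F)\;=\;k[[s_1,s_2]]/(F)
$$
is reduced, i.e.\ $(F)$ is radical in $\widehat{\Ocal}_{S,x}$ as well.

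Finally, $k[[s_1,s_2]]$ is a regular local ring of dimension $2$, hence a UFD. In any UFD, a principal ideal $(g)$ is radical if and only if $g$ is squarefree: writing $g=u\prod p_i^{e_i}$ with $u$ a unit and the $p_i$ pairwise non-associated irreducibles, the minimal primes over $(g)$ are exactly the $(p_i)$, and the nilradical of $R/(g)$ is $(p_1\cdots p_r)/(g)$, which vanishes iff every $e_i=1$. Applied to the factorization $F=F_1\cdots F_r$, radicality of $(F)$ forces the $F_j$ to be pairwise non-associated, as desired. The only nontrivial technical input is the preservation of reducedness under completion; this reduces to the excellence of $\Ocal_{S,x}$ and is the step I would cite explicitly (e.g.\ from Matsumura or EGA IV).
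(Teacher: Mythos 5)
Your proof is correct and takes essentially the same approach as the paper: reduce to showing that $\widehat{\Ocal}_{C,x}\simeq k[[s_1,s_2]]/(F)$ is reduced, then conclude by unique factorization. The only difference is in the citation for the key step — the paper invokes Chevalley's theorem (via Zariski--Samuel) that $\Ocal_{C,x}$ is analytically unramified over the perfect field $k$, while you appeal to excellence of rings essentially of finite type over a field; these are two standard formulations of the same phenomenon.
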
 
\begin{proof} The special case $\Ocal_{S,x}\simeq k[s_1,s_2]_{(s_1,s_2)}$ is in \cite{KunzBranches}  Theorem 16.5.  As observed in \cite{GFthesis} Remark 2.58, the general case follows from a result of Chevalley. Indeed, $k$ is perfect since it is algebraically closed. By \cite{ZariskiSamuel2} VIII.13 Theorem 31, the local ring $\Ocal_{C,x}=\Ocal_{S,x}/(F)$ is analytically unramified. Thus, the completion $\widehat{\Ocal}_{C,x} = \widehat{\Ocal}_{S,x}/(F)$ does not have nilpotent elements.
\end{proof}
Hence, the prime ideals $\qfrak_j=(F_j)\subseteq \widehat{\Ocal}_{S,x}$ are different. They will be called \emph{branches} of $C$ at $x$.

Let $\nu: \widetilde{C}\to S$ be the composition of the normalizaton map $\widetilde{C}\to C$ and the inclusion $C\to S$. For each $y\in \nu^{-1}(x)$ we have a map induced on local rings $\nu^\#_y: \Ocal_{S,x}\to \Ocal_{\widetilde{C},y}$ which extends continuously to completed local rings $\widehat{\nu}^\#_y: \widehat{\Ocal}_{S,x}\to \widehat{\Ocal}_{\widetilde{C},y}$. We will need the following:

\begin{lemma}[Relation between normalization and branches] \label{LemmaBranches} With the previous notation, there is a bijection between the set of branches of $C$ at $x$ and the points $y\in \nu^{-1}(x)$. More precisely, the ideals $\ker(\widehat{\nu}^\#_y)\subseteq \widehat{\Ocal}_{S,x}$ for $y\in \nu^{-1}(x)$ are exactly the branches of $C$ at $x$.
\end{lemma}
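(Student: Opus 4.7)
The plan is to reduce the statement to an algebraic analysis of the complete local ring $\widehat{\Ocal}_{C,x} = \widehat{\Ocal}_{S,x}/(F)$ and exploit the fact that $\Ocal_{C,x}$ is analytically unramified, which was already used in the previous lemma. The factorization $F = F_1 \cdots F_r$ in $\widehat{\Ocal}_{S,x} \cong k[[s_1, s_2]]$ into pairwise non-associated irreducibles (given by the previous lemma, combined with the fact that $k[[s_1,s_2]]$ is a UFD) will match the fiber $\nu^{-1}(x)$ factor by factor.

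First, since the $F_j$ are pairwise coprime in the UFD $\widehat{\Ocal}_{S,x}$, the ideals $(F_j)$ are pairwise comaximal (any two distinct irreducible prime ideals of height one in a two-dimensional regular local ring sum to an ideal of height two, hence primary to the maximal ideal; a small check is needed here in the complete setting, but it is standard), so the Chinese Remainder Theorem gives
\[
\widehat{\Ocal}_{C,x} \;=\; \widehat{\Ocal}_{S,x}/(F) \;\cong\; \prod_{j=1}^{r} \widehat{\Ocal}_{S,x}/(F_j),
\]
and its minimal primes are precisely the images of the branches $\qfrak_j$. Each factor $\widehat{\Ocal}_{S,x}/(F_j)$ is a one-dimensional complete Noetherian local domain with residue field $k$, and its integral closure is a complete DVR $R_j$ with residue field $k$.

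Next I would identify $\nu^{-1}(x)$ with the factors of this product. Since $\nu\colon \widetilde{C} \to C$ is finite and $\Ocal_{C,x}$ is analytically unramified, the formation of the integral closure commutes with completion, giving
\[
\widetilde{\Ocal}_{C,x} \otimes_{\Ocal_{C,x}} \widehat{\Ocal}_{C,x} \;\cong\; \widetilde{\widehat{\Ocal}_{C,x}} \;\cong\; \prod_{j=1}^{r} R_j.
\]
On the other hand, finiteness of $\nu$ implies that the left-hand side is also isomorphic to $\prod_{y \in \nu^{-1}(x)} \widehat{\Ocal}_{\widetilde{C}, y}$. Comparing these two product decompositions of a semilocal complete ring and using uniqueness of the decomposition into complete local factors, one gets a bijection $y_j \leftrightarrow R_j$ between $\nu^{-1}(x)$ and the set of branches, with $\widehat{\Ocal}_{\widetilde{C},y_j} = R_j$.

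Finally, under this identification, the map $\widehat{\nu}^\#_{y_j}\colon \widehat{\Ocal}_{S,x} \to \widehat{\Ocal}_{\widetilde{C},y_j}$ is the composition of the quotient $\widehat{\Ocal}_{S,x} \to \widehat{\Ocal}_{S,x}/(F)$, projection onto the $j$-th factor $\widehat{\Ocal}_{S,x}/(F_j)$, and the inclusion $\widehat{\Ocal}_{S,x}/(F_j) \hookrightarrow R_j$ into its integral closure (which is injective because $\widehat{\Ocal}_{S,x}/(F_j)$ is a domain). The kernel of this composition is exactly $(F_j) = \qfrak_j$, which identifies the branches with the ideals $\ker(\widehat{\nu}^\#_{y_j})$. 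The main obstacle I anticipate is the compatibility between the two product decompositions in the second step, where the key input is that $\Ocal_{C,x}$ is analytically unramified so that normalization commutes with completion; without this, the right-hand product could have extra nilpotents and the matching of factors would fail.
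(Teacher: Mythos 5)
Your proposal takes a genuinely different route from the paper. The paper's proof is a reduction to cited results (Kunz, Theorem 16.14, for $\Ocal_{S,x}\simeq k[s_1,s_2]_{(s_1,s_2)}$; Garcia-Fritz's thesis, Theorem 2.69, for the general case; and D.~Katz's Corollary 5 to remove the characteristic-zero hypothesis), whereas you attempt a self-contained argument via the commutation of normalization with completion for analytically unramified local rings. That high-level strategy is correct, and you correctly isolate analytic unramifiedness as the crucial input.

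There is, however, a genuine error in your Chinese Remainder step. From the fact that $(F_i)+(F_j)$ has height two, hence is $\mfrak_{S,x}$-primary, you infer that $(F_i)$ and $(F_j)$ are comaximal. They never are: since $F_i$ and $F_j$ are non-units in the local ring $\widehat{\Ocal}_{S,x}$, we have $(F_i)+(F_j)\subseteq\mfrak_{S,x}$, a proper ideal. So the CRT does not apply, and the asserted isomorphism $\widehat{\Ocal}_{C,x}\cong\prod_j\widehat{\Ocal}_{S,x}/(F_j)$ is false whenever $r>1$: the left side is local and has no nontrivial idempotents, while the right side has $r$ of them (compare $k[[s_1,s_2]]/(s_1s_2)$ with $k[[s_1]]\times k[[s_2]]$). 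The damage is contained, though, because the two consequences you actually use hold for other reasons. The minimal primes of $\widehat{\Ocal}_{C,x}$ are the $(F_j)/(F)$ because $\widehat{\Ocal}_{C,x}$ is reduced and $(F)=\bigcap_j(F_j)$ in the UFD $\widehat{\Ocal}_{S,x}$; and the integral closure of $\widehat{\Ocal}_{C,x}$ in its total ring of fractions is $\prod_j R_j$ because, for any reduced Noetherian ring, the integral closure is the product of the integral closures of the quotients by its minimal primes. Correspondingly, in your last paragraph the phrase ``projection onto the $j$-th factor'' should be replaced by the natural surjection $\widehat{\Ocal}_{C,x}\to\widehat{\Ocal}_{S,x}/(F_j)$ onto the quotient by the $j$-th minimal prime; the kernel computation then goes through and yields $\ker(\widehat{\nu}^\#_{y_j})=(F_j)$. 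With these repairs your argument is sound.
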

\begin{proof} In the special case $\Ocal_{S,x}\simeq k[s_1,s_2]_{(s_1,s_2)}$ this is \cite{KunzBranches} Theorem 16.14; a similar argument works in general. Alternatively, the general case is \cite{GFthesis} Theorem 2.69 assuming that $k$ has characteristic $0$. This assumption is needed in other parts of \cite{GFthesis}, but not  for this particular result. In fact, the argument in \cite{GFthesis} reduces this result to \cite{DKatz} Corollary 5, where characteristic $0$ is not required.
\end{proof}
\begin{lemma}[Bound for fibres of normalization] \label{LemmaBounddelta} With the previous notation, and assuming that $S$ is a projective surface, we have $\#\nu^{-1}(x)\le \delta(C,x)+1$.
\end{lemma}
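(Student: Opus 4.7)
The plan is to compare the two finite-dimensional $k$-vector spaces naturally attached to the situation by exploiting the short exact sequence
\begin{equation*}
0\longrightarrow \Ocal_{C,x}\longrightarrow \widetilde{\Ocal}_{C,x}\longrightarrow Q\longrightarrow 0
\end{equation*}
where $\dim_k Q=\delta(C,x)$ by definition. Write $r=\#\nu^{-1}(x)$ and $\mfrak=\mfrak_{C,x}$. The idea is to tensor this sequence with $k=\Ocal_{C,x}/\mfrak$ and read off a lower bound for $\dim_k Q$ in terms of $r$.

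First I would describe $\widetilde{\Ocal}_{C,x}$ concretely. Since $\nu$ is a finite map (normalization of a curve is finite) and $\nu^{-1}(x)=\{y_1,\ldots,y_r\}$ by Lemma \ref{LemmaBranches}, the ring $\widetilde{\Ocal}_{C,x}$ is the semilocal ring obtained by localizing the integral closure at the primes over $\mfrak$; its maximal ideals are the pullbacks of $\mfrak_{\widetilde{C},y_j}$, and each residue field equals $k$ because $k$ is algebraically closed. By the Chinese Remainder Theorem one therefore obtains a surjection
\begin{equation*}
\widetilde{\Ocal}_{C,x}\big/\mfrak\widetilde{\Ocal}_{C,x}\;\twoheadrightarrow\;\prod_{j=1}^{r} k(y_j)\;=\;k^{r},
\end{equation*}
so $\dim_k\bigl(\widetilde{\Ocal}_{C,x}/\mfrak\widetilde{\Ocal}_{C,x}\bigr)\ge r$.

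Next I would exploit right-exactness of $-\otimes_{\Ocal_{C,x}} k$ applied to the above sequence to get the exact sequence
\begin{equation*}
k\;\xrightarrow{\;\alpha\;}\;\widetilde{\Ocal}_{C,x}\big/\mfrak\widetilde{\Ocal}_{C,x}\;\longrightarrow\;Q/\mfrak Q\;\longrightarrow\;0.
\end{equation*}
The map $\alpha$ sends $1$ to the class of $1$, which is non-zero since it maps to $(1,\ldots,1)\in k^r$; hence $\alpha$ is injective and its image is $1$-dimensional over $k$. Combining with the estimate in the previous paragraph,
\begin{equation*}
\dim_k Q/\mfrak Q \;\ge\; r-1.
\end{equation*}

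Finally, since $\dim_k Q=\delta(C,x)$ is finite (Lemma quoted at the start of the section), $Q$ is a finite-dimensional $k$-vector space, so trivially $\dim_k Q\ge \dim_k Q/\mfrak Q$. This yields $\delta(C,x)\ge r-1$, which rearranges to the desired inequality $\#\nu^{-1}(x)\le \delta(C,x)+1$. No step presents a real obstacle here; the only point demanding care is the identification of residue fields of $\widetilde{\Ocal}_{C,x}$ with $k$, which uses algebraic closedness, together with the fact that $\widetilde{\Ocal}_{C,x}$ is indeed the localization of the normalization at the fibre over $x$ (hence semilocal with maximal ideals indexed by $\nu^{-1}(x)$).
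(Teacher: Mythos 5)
Your proof is correct, but it takes a genuinely different route from the paper. The paper argues via branches and multiplicity: Lemma \ref{LemmaMultFormula} gives that the number of branches of $C$ at $x$ is at most $\mult_x(C)$, Lemma \ref{LemmaBranches} identifies branches with points of $\nu^{-1}(x)$, and then Hironaka's inequality (Lemma \ref{LemmaHironakaMult}), $\delta(C,x)\ge \tfrac{1}{2}\mult_x(C)(\mult_x(C)-1)$, yields $\mult_x(C)\le \delta(C,x)+1$. Your argument bypasses all of this: you work directly with the conductor-type exact sequence $0\to \Ocal_{C,x}\to\widetilde{\Ocal}_{C,x}\to Q\to 0$, observe via CRT (and $\mfrak\widetilde{\Ocal}_{C,x}\subseteq\rad(\widetilde{\Ocal}_{C,x})$) that $\dim_k\widetilde{\Ocal}_{C,x}/\mfrak\widetilde{\Ocal}_{C,x}\ge \#\nu^{-1}(x)$, and use right-exactness of $-\otimes k$ plus injectivity of $\alpha$ (since $1\mapsto(1,\dots,1)\neq 0$) to conclude $\delta(C,x)\ge\dim_k Q/\mfrak Q\ge \#\nu^{-1}(x)-1$. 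All steps check out. Your approach is more elementary (it avoids Hironaka's theorem and the branch/power-series machinery entirely), and it is more general: it never uses the ambient surface $S$, let alone its projectivity or smoothness at $x$, whereas the paper's route goes through Lemma \ref{LemmaHironakaMult} which is stated for projective $S$. What the paper's route buys is the intermediate bound $\#\nu^{-1}(x)\le\mult_x(C)$, which slots naturally into the branch-theoretic framework already developed and in use elsewhere in Section 3.
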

\begin{proof} From Lemma \ref{LemmaMultFormula}, it follows that the number of branches of $C$ at $x$ is at most $\mult_x(C)$. By Lemma \ref{LemmaBranches} we get $\nu^{-1}(x)\le \mult_x(C)$. The result follows from Lemma \ref{LemmaHironakaMult}.
\end{proof}
As the isomorphism $\widehat{\Ocal}_{S,x}\simeq  k[[s_1,s_2]]$ allows us to study branches by means of power series computations, the following result will be useful. It is a direct consequence of \cite{Ploski} Theorem 2.1.
\begin{lemma} \label{LemmaPloski} Let $f(x_1,x_2)\in k[[x_1,x_2]]$ be an irreducible power series. Let $\phi_1(t),\phi_2(t)\in t\cdot k[[t]]$ be such that $f(\phi_1,\phi_2)=0$. Then $\ord_t\phi_1(t)\ge \ord_{x_2} f(0,x_2)$ and $\ord_t\phi_2(t)\ge \ord_{x_1} f(x_1,0)$.
\end{lemma}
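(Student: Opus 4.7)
The plan is to reduce the statement to the proper parametrization of the branch $C: f=0$ at the origin, and then to identify the relevant orders with intersection multiplicities of $C$ against the coordinate axes. I would first handle the degenerate cases: if $x_1\mid f$ in $k[[x_1,x_2]]$, then by irreducibility $f$ is associate to $x_1$, and $f(\phi_1,\phi_2)=0$ forces $\phi_1=0$; this makes $\ord_t\phi_1=\infty=\ord_{x_2}f(0,x_2)$, while the other inequality reduces to $\ord_t\phi_2\ge 1$, which holds because $\phi_2\in tk[[t]]$. The case $x_2\mid f$ is symmetric. Thus we may assume $f$ is not associate to either $x_i$, so that $M_1:=\ord_{x_2}f(0,x_2)$ and $M_2:=\ord_{x_1}f(x_1,0)$ are finite positive integers. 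If $\phi_1=0$ then $f(0,\phi_2)=\phi_2^{M_1}\cdot(\text{unit})$ in $k[[t]]$ vanishes, forcing $\phi_2=0$; so we may further assume $\phi_1,\phi_2\ne 0$.

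Next, since $f$ is irreducible in the UFD $k[[x_1,x_2]]$ and $k$ is algebraically closed, $A:=k[[x_1,x_2]]/(f)$ is a one-dimensional complete local domain whose normalization $\widetilde A$ is isomorphic to $k[[s]]$. Writing $\phi_i^*(s)\in sk[[s]]$ for the image of $x_i$ in $\widetilde A$ gives the proper parametrization of the unique branch of $C$ at the origin, with $\phi_i^*\ne 0$ because $x_i\notin (f)$. The $k$-algebra map $\varphi:A\to k[[t]]$ sending $x_i\mapsto\phi_i$ is well-defined (as $f(\phi_1,\phi_2)=0$) and injective: its kernel is a prime of $A$ properly contained in $\mfrak_A$ (since $\phi_1\ne 0$), hence zero as $A$ is a one-dimensional local domain. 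Since $\widetilde A$ is integral over $A$ and $k[[t]]$ is integrally closed in $k((t))$, $\varphi$ extends uniquely to an injective $k$-algebra map $\widetilde\varphi:\widetilde A\to k[[t]]$. Setting $\tau:=\widetilde\varphi(s)\in k[[t]]$, the preimage $\widetilde\varphi^{-1}((t))$ is a prime of $\widetilde A=k[[s]]$ containing the nonzero element $\phi_1^*$ (because $\widetilde\varphi(\phi_1^*)=\phi_1\in (t)\setminus\{0\}$); the only such prime is $(s)$, so $\tau\in tk[[t]]$ and $\ord_t\tau\ge 1$. From the relation $\phi_i(t)=\widetilde\varphi(\phi_i^*(s))=\phi_i^*(\tau(t))$ we conclude
$$
\ord_t\phi_i=(\ord_s\phi_i^*)\cdot(\ord_t\tau)\ge \ord_s\phi_i^*.
$$

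To finish, I would identify $\ord_s\phi_i^*$ with $M_i$ via the standard length formula $\ell_A(A/x_1A)=\ell_A(\widetilde A/x_1\widetilde A)$ for modules over a one-dimensional local domain whose normalization is a DVR (with matching residue fields); this gives
$$
\ord_s\phi_1^*=\dim_k k[[s]]/(\phi_1^*)=\dim_k k[[x_1,x_2]]/(f,x_1)=\dim_k k[[x_2]]/(f(0,x_2))=M_1,
$$
and symmetrically $\ord_s\phi_2^*=M_2$, completing the proof. The main obstacle I anticipate is justifying rigorously that $\widetilde\varphi(s)\in tk[[t]]$, i.e., that the extended map is a local ring homomorphism: this is what makes the composition $\phi_i^*(\tau(t))$ well-defined as a formal power series, and it is pinned down by the prime-ideal contraction argument above combined with the non-vanishing of $\phi_1$.
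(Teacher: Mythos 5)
Your proof is correct, and it takes a genuinely different route from the paper. The paper's own ``proof'' is simply a citation to Theorem 2.1 of [Ploski]; you instead give a self-contained argument. Your route passes through the normalization $\widetilde A\simeq k[[s]]$ of the branch $A=k[[x_1,x_2]]/(f)$, extends the substitution map $A\to k[[t]]$ to $\widetilde A$ via integral closure (the prime-contraction argument you use to ensure the extension is local is exactly the right way to pin down $\widetilde\varphi(s)\in tk[[t]]$), factors $\phi_i=\phi_i^*\circ\tau$, and then identifies $\ord_s\phi_i^*$ with $\ord_{x_2}f(0,x_2)$ (resp.\ $\ord_{x_1}f(x_1,0)$) via the snake-lemma length formula $\ell_A(A/aA)=\ell_A(\widetilde A/a\widetilde A)$, using that both residue fields equal $k$. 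This in fact establishes the sharper statement $\ord_t\phi_i = M_i\cdot\ord_t\tau$ with the same multiplier $\ord_t\tau\ge1$ for $i=1,2$, which the inequality of the lemma is a corollary of. What the citation approach buys is brevity; what your argument buys is transparency (it isolates the intersection-multiplicity identity $\ord_s\phi_i^*=M_i$ as the key fact, makes the characteristic-free and purely algebraic nature of the statement explicit, and shows why proper parametrizations give equality). One cosmetic nitpick: in the degenerate case you invoke the factorization $f(0,x_2)=x_2^{M_1}\cdot(\text{unit})$ before you have officially restricted to the case where $M_1<\infty$; the ordering of the reductions should be: first dispose of $x_i\mid f$, then observe $M_1,M_2<\infty$, then dispose of $\phi_i=0$.
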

%%
%%

%%%%%
%%%%%
%%%%%
%%%%%

\subsection{Surfaces of general type} Let $S$ be a smooth, irreducible, projective surface over $k$.  The surface $S$ is of \emph{general type} if the canonical sheaf $\Omega^2_{S/k}$ is big, i.e. if $\dim_kH^0(S,(\Omega^2_{S/k})^{\otimes n})$ grows quadratically on $n$. More generally, if $X$ is a smooth, geometrically irreducible, projective surface over $L$, we say that $X$ is of general type if $X\otimes L^{alg}$ is a surface of general type over $L^{alg}$.

The following result is classical. Here, ``minimal'' means that $S$ contains no $(-1)$-curves (i.e. smooth rational curves with self-intersection $-1$).

\begin{lemma} \label{LemmaNef} If $S$ is a minimal smooth projective surface of general type over $k$, then $c_1^2(S)\ge 1$ and the canonical sheaf $\Omega^2_{S/k}$ is nef. 
\end{lemma}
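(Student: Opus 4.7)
The plan is to first establish that $K_S$ is nef and then derive $c_1^2(S)\ge 1$ from nefness combined with bigness. Both facts are classical in surface theory; I outline the standard arguments.

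For nefness I argue by contradiction. Suppose there is an irreducible curve $C$ on $S$ with $K_S\cdot C<0$. The adjunction formula gives
$$
K_S\cdot C+C^2=2\gfrak_a(C)-2\ge -2,
$$
so $C^2\ge -2-K_S\cdot C\ge -1$, since $K_S\cdot C$ is a negative integer. If $C^2=-1$, the inequality forces $K_S\cdot C=-1$ and $\gfrak_a(C)=0$, so $C$ is a smooth rational curve of self-intersection $-1$, i.e.\ a $(-1)$-curve, contradicting minimality of $S$. If instead $C^2\ge 0$, I invoke Kodaira's lemma: since $K_S$ is big, there exist $n\ge 1$, an ample divisor $A$, and an effective divisor $E$ with $nK_S\sim A+E$. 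Then $A\cdot C>0$ by ampleness, and $E\cdot C\ge 0$: this is automatic when $C$ is not a component of $E$, and otherwise one decomposes $E=mC+E'$ with $C\not\subseteq\mathrm{Supp}(E')$ and uses $E\cdot C=mC^2+E'\cdot C\ge 0$ because $C^2\ge 0$ and $E'\cdot C\ge 0$. Hence $nK_S\cdot C>0$, contradicting $K_S\cdot C<0$. This proves $K_S$ is nef.

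For $c_1^2(S)\ge 1$, I reuse the decomposition $nK_S\sim A+E$ to compute
$$
nK_S^2=K_S\cdot A+K_S\cdot E.
$$
The second term is $\ge 0$ by nefness of $K_S$ and effectivity of $E$. For the first, $K_S\cdot A=\tfrac{1}{n}(A^2+A\cdot E)>0$, because $A^2>0$ by ampleness and $A\cdot E\ge 0$ since $A$ is ample and $E$ is effective. Hence $K_S^2>0$, and being an integer, $K_S^2\ge 1$.

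The main conceptual step is the $C^2\ge 0$ case in the nefness argument: here the minimality hypothesis alone does not suffice and one must genuinely use bigness of $K_S$ through Kodaira's lemma, taking some care with the subcase in which $C$ itself appears as a component of the effective error term $E$. The rest is a direct consequence of adjunction and a second application of Kodaira's lemma combined with ampleness.
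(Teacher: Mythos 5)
The paper states Lemma~\ref{LemmaNef} as classical and provides no proof of its own, so there is no in-paper argument to compare against; the usual reference route is the Enriques--Kodaira (or, in positive characteristic, Bombieri--Mumford) classification of minimal surfaces, where nefness of $K_S$ for nonnegative Kodaira dimension is a byproduct. Your proof is correct and avoids the classification entirely: the case split on $C^2$ is the right one, and each branch is handled cleanly. When $C^2<0$, adjunction together with the integrality of intersection numbers pins down $C^2=-1$, $K_S\cdot C=-1$, $\gfrak_a(C)=0$, giving a $(-1)$-curve and contradicting minimality; when $C^2\ge 0$, Kodaira's lemma (which holds over any algebraically closed field, as it only uses the growth of $h^0$ and an ideal-sheaf exact sequence) gives $nK_S\sim A+E$, and you correctly handle the delicate subcase where $C$ is a component of $E$ by using $C^2\ge 0$. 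The deduction $K_S^2\ge 1$ from the same decomposition plus nefness is also correct. Compared with citing the classification, your argument is more elementary, self-contained, and characteristic-free, at the cost of needing Kodaira's lemma explicitly; both approaches are standard, but yours is arguably better suited to the arbitrary-base setting the paper works in.
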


The following simple condition ensures that the canonical sheaf of a general type surface is ample.

\begin{lemma}\label{LemmaGTA} Let $S$ be a smooth projective surface of general type. If $S$ contains no smooth rational curves with self-intersection $-1$ or $-2$, then the canonical sheaf of $S$ is ample.  In particular, this is the case when $S$ contains no smooth curve of genus $0$.
\end{lemma}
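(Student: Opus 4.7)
The plan is to deduce ampleness of $K_S$ from the Nakai--Moishezon criterion, combined with Lemma \ref{LemmaNef} and the Hodge index theorem. The hypothesis excludes $(-1)$-curves, so $S$ is already minimal; hence Lemma \ref{LemmaNef} gives that $K_S$ is nef and $c_1^2(S) = K_S^2 \geq 1$. To conclude ampleness via Nakai--Moishezon, it remains to show $K_S \cdot C > 0$ for every integral curve $C \subseteq S$.

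I would argue by contradiction, supposing there exists an integral curve $C$ with $K_S \cdot C = 0$. Since $K_S^2 > 0$ and $C$ is a nonzero effective divisor (so not numerically trivial, as its intersection with any ample class is positive), the Hodge index theorem forces $C^2 < 0$. Then the adjunction formula gives
\[
2\gfrak_a(C) - 2 = C^2 + K_S \cdot C = C^2 < 0,
\]
so necessarily $\gfrak_a(C) = 0$ and $C^2 = -2$. An integral projective curve of arithmetic genus $0$ over an algebraically closed field is isomorphic to $\mathbb{P}^1$ (the $\delta$-invariants in Lemma \ref{LemmaHironakaGenus} being nonnegative force the geometric genus to be $0$ and $C$ to be smooth). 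Thus $C$ is a smooth rational curve with $C^2 = -2$, contradicting the hypothesis. This proves $K_S$ is ample.

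For the final assertion, if $S$ contains no smooth curve of genus $0$, then in particular it contains no smooth rational curve, hence none of self-intersection $-1$ or $-2$, and the first part applies.

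There is no real obstacle here: the argument is a textbook application of nefness of $K_S$ on minimal surfaces of general type together with Hodge index and adjunction. The only point that deserves care is justifying that $C^2 < 0$ strictly (rather than $\leq 0$), which uses that an effective nonzero divisor on a projective surface is not numerically trivial, and the identification of integral arithmetic-genus-zero curves with $\mathbb{P}^1$, which is guaranteed by Lemma \ref{LemmaHironakaGenus} and the non-negativity of singularity orders.
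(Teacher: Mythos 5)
Your proof is correct and follows essentially the same route as the paper: minimality from excluding $(-1)$-curves, nefness and $K_S^2>0$ from Lemma \ref{LemmaNef}, Nakai--Moishezon reduced to $K_S\cdot C>0$, then Hodge index and adjunction to force a smooth rational $(-2)$-curve. The only cosmetic difference is that you spell out, via the non-negativity of the $\delta$-invariants in Lemma \ref{LemmaHironakaGenus}, why arithmetic genus zero forces $C$ to be smooth rational, whereas the paper simply cites the lemma.
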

\begin{proof} This result is well-known. The argument is similar to that in \cite{MumfordApp}. We present the proof for the convenience of the reader.

 Let $K$ be a canonical divisor on $S$. Since $S$ does not contain $(-1)$-curves, it is minimal. Lemma \ref{LemmaNef} implies that $K^2>0$ and that $K$ is nef. Let $C\subseteq X$ be an irreducible curve. By the Nakai-Moishezon criterion, it suffices to show that $K.C\ne 0$. 

For the sake of contradiction, suppose that $K.C=0$. As $K^2>0$, the Hodge Index Theorem gives that either $C^2<0$ or $C$ is numerically trivial. The latter case cannot happen because $C.H>0$ for any ample divisor $H$, so we have $C^2<0$. By the adjunction formula $2\gfrak_a(C)-2 = C^2 +K\cdot C = C^2< 0$, which gives $\gfrak_a(C) = 0$. Thus $C$ is smooth and rational by Lemma \ref{LemmaHironakaGenus}. Furthermore, $C^2=C^2 +K\cdot C=2\gfrak_a(C)-2=-2$. Contradiction. So, $K$ is ample.
\end{proof}
Let us recall that a compact complex manifold $M$ is Brody hyperbolic if every holomorphic map $h:\C\to M$ is constant. As we are in the compact case, this is equivalent to Kobayashi hyperbolicity by Brody's theorem \cite{Brody}, and we simply say that $M$ is hyperbolic. It is conjectured that if $V$ is a smooth projective variety over $\C$ and  $V(\C)$ is hyperbolic, then $V$ is of general type. This is known for surfaces; see \cite{BHPV} Chapter VIII, Theorem 24.1.
\begin{lemma}\label{LemmaGenTypeHyp} Let $S$ be a smooth, irreducible,  projective surface defined over $\C$. If $S(\C)$ is hyperbolic, then $S$ is of general type.
\end{lemma}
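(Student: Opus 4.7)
The plan is to invoke the Enriques-Kodaira classification and show that every smooth projective surface which is not of general type admits a non-constant entire curve $\C \to S(\C)$, which would contradict hyperbolicity. Recall that a smooth projective surface $S$ over $\C$ is of general type precisely when its Kodaira dimension satisfies $\kappa(S)=2$, so the job is to rule out $\kappa(S)\in\{-\infty,0,1\}$.

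First I would reduce to the case that $S$ is minimal: if $S$ is hyperbolic then so is any smooth blow-down (a non-constant map $\C\to S^{\min}$ would lift to a non-constant map $\C\to S$ into the exceptional locus, but hyperbolicity of $S$ forbids rational curves anyway), so it suffices to treat minimal $S$. Then I would go through the classification case by case. If $\kappa(S)=-\infty$, then $S$ is rational or ruled, so $S$ contains a rational curve $\P^1 \hookrightarrow S$, yielding a non-constant $\C \to \P^1 \to S(\C)$. If $\kappa(S)=1$, then $S$ admits an elliptic fibration $S \to B$, and any smooth fibre is an elliptic curve, giving the desired entire curve. If $\kappa(S)=0$, the minimal model is an abelian surface, a bielliptic surface, a K3 surface, or an Enriques surface. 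Abelian surfaces admit the exponential uniformization $\C^2 \to S(\C)$; bielliptic surfaces are isogenous quotients of abelian surfaces; and Enriques surfaces are doubly covered by K3 surfaces, so in each of these three subcases a non-constant entire curve on a finite cover or quotient provides one on $S$.

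The main obstacle is the K3 case, where there is no a priori entire curve from the structure: a general K3 surface is simply connected, has trivial canonical bundle, yet need not contain any elliptic or rational curve visible from its generic geometry. Here I would appeal to the deep fact, due to work relying on deformation theory of K3 surfaces and the density of K3 surfaces containing elliptic pencils (see \cite{BHPV} Chapter VIII, Theorem 24.1 and the discussion around Kodaira--Enriques classification and hyperbolicity): every smooth projective K3 surface over $\C$ admits a non-constant holomorphic map from $\C$, so is not hyperbolic. This completes the contradiction in every non-general-type case.

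Combining the four cases, $\kappa(S)\in\{-\infty,0,1\}$ forces $S(\C)$ to carry a non-constant entire curve, contradicting Brody (equivalently Kobayashi) hyperbolicity. Therefore $\kappa(S)=2$, i.e.\ $S$ is of general type. Since all the substantial input is contained in \cite{BHPV}, in the final write-up it is enough to record the statement and refer to the cited theorem, as the excerpt already indicates.
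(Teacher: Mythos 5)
The paper does not give an argument at all for this lemma---it simply cites \cite{BHPV}, Chapter VIII, Theorem 24.1, as the text immediately before the statement indicates. Your proposal reconstructs the proof behind that citation via the Enriques--Kodaira classification, which is indeed the argument in \cite{BHPV}, so you are taking the same route, just unpacking it. Two small points worth tightening: the reduction to a minimal model is unnecessary for $\kappa\ge 0$ since a hyperbolic surface contains no rational curves and hence no $(-1)$-curves, so it is already minimal (and for $\kappa=-\infty$ rational curves appear regardless of minimality); and in the K3 case the relevant fact is that every projective K3 over $\C$ contains a rational curve (Bogomolov--Mumford, via degeneration to Kummer surfaces), not the density of elliptic pencils---a projective K3 of Picard number $1$ need not admit any elliptic fibration. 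Since you defer to \cite{BHPV} at exactly that point, the argument still closes, but the mechanism you name is not quite the one used.
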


\begin{lemma}\label{LemmaGenTypeTranslate} Let $A$ be an abelian variety over $\C$ and let $X\subseteq A$ be a smooth projective surface. If $X$ is not an abelian surface and it does not contain elliptic curves, then $X$ is of general type.
\end{lemma}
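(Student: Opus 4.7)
The plan is to invoke the Ueno--Kawamata structure theorem for subvarieties of abelian varieties and rule out, one by one, the degenerate cases using the two hypotheses on $X$. Let me sketch the steps I would carry out.

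First I would introduce the stabilizer
\[
B=\mathrm{Stab}_A(X)=\{a\in A(\C):a+X=X\}.
\]
This is a closed subgroup of $A$, hence an abelian subvariety, and the quotient map $\pi\colon A\to A/B$ realizes $X$ as a $B$-invariant subvariety. By the Ueno--Kawamata structure theorem (Ueno's theorem on subvarieties of abelian varieties), the image $Y=\pi(X)\subseteq A/B$ is a subvariety of general type, and $\pi|_X\colon X\to Y$ expresses $X$ as a fiber bundle whose fibers are translates of $B$. In particular the Kodaira dimension of $X$ equals $\dim X-\dim B$, so $X$ is of general type exactly when $\dim B=0$.

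Next I would split into cases according to $\dim B\in\{0,1,2\}$, the only possibilities since $\dim X=2$. If $\dim B=2$, then $X$ equals a single fiber, namely a translate $B+x_0$ of an abelian surface. After translation $X$ is an abelian surface, contradicting the first hypothesis. If $\dim B=1$, then $B$ is an abelian variety of dimension one, i.e.\ an elliptic curve, and every fiber of $\pi|_X$ is a translate $B+x$ with $x\in X$, which is an elliptic curve contained in $X$; this contradicts the second hypothesis. Therefore $\dim B=0$.

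Finally, with $\dim B=0$, the map $\pi|_X$ is (generically) an isomorphism onto $Y$, and by Ueno's theorem $Y\subseteq A/B$ is of general type; pulling back canonical forms this gives that $X$ is of general type. The only genuinely substantive input is the Ueno--Kawamata structure theorem, which is the main obstacle in the sense that this lemma is not really provable without it; all the case analysis afterwards is immediate once the stabilizer is introduced. I would therefore simply cite Ueno's theorem (e.g.\ from Ueno's book on classification of algebraic varieties, or Mori's account in the Mathematical Society of Japan volumes) and carry out the three-case argument above.
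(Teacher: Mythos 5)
Your argument is correct, but it takes a genuinely different route from the paper. The paper proves this lemma by citing Yau's theorem on intrinsic measures (\cite{Yau}, Theorems 1 and 1'), or, alternatively, by observing that $X$ is hyperbolic via Green's theorem \cite{Green} (since $X$ contains no translate of a positive-dimensional abelian subvariety of $A$) and then invoking Lemma \ref{LemmaGenTypeHyp}, the classification-theoretic fact that a hyperbolic projective surface is of general type. You instead use the Ueno fibration theorem for subvarieties of abelian varieties: with $B$ the (connected) stabilizer $\mathrm{Stab}_A(X)^{0}$, one has $\kappa(X)=\dim X-\dim B$, and the two hypotheses eliminate $\dim B=2$ (which would make $X$ a translate of an abelian surface) and $\dim B=1$ (which would foliate $X$ by translates of the elliptic curve $B$), forcing $\dim B=0$ and hence $\kappa(X)=2$. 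Both arguments are sound; the paper's route goes through complex hyperbolicity and Enriques--Kodaira classification, whereas yours is purely algebro-geometric and, in a sense, more elementary once Ueno's theorem is granted. Two small remarks: you should say ``the identity component of the stabilizer'' since the stabilizer itself need not be connected (this does not affect the dimension count); and once $\dim B=0$ is established, Ueno's formula $\kappa(X)=\dim X-\dim B=2$ already finishes, so the closing sentence about $\pi|_X$ being generically an isomorphism and pulling back canonical forms is superfluous.
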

\begin{proof}
By \cite{Yau} Theorems 1 and 1'. Alternatively:  $X$ is hyperbolic \cite{Green}, then apply  Lemma \ref{LemmaGenTypeHyp}.
\end{proof}

%%%%%
%%%%%
%%%%%
%%%%%

\subsection{Point counting over finite fields} \label{SecZeta}

In this paragraph we let $\kk$ be a finite field of characteristic $p$ with $q$ elements. Let $D$ be a projective curve defined over $\kk$; we do not require that $D$ be irreducible, connected, or smooth. Let $\kk'$ be a finite extension of $\kk$ such that all geometric irreducible components of $D$ are defined over $\kk'$, and all singular points of $D$ as well as all points of the normalization of $D\otimes \kk'$ mapping to them are $\kk'$-rational.

Let $C_1,...,C_r$ be the geometric irreducible components of $D$, which are defined over $\kk'$. Let $\nu_j:\widetilde{C}_j\to D\otimes \kk'$ be the normalization of $C_j$ composed with the inclusion $C_j\to D\otimes \kk'$. 

Our goal is to show the following estimate

\begin{lemma}\label{LemmaWeilBound} With the previous notation, we have
$$
\sum_{x\in D(\kk)} \sum_{j=1}^r \#\{y\in \widetilde{C}_j(\kk^{alg}) : \nu_j(y)=x\} \le (q+1)r + 2q^{1/2}\cdot \sum_{j=1}^r \gfrak_g(C_j).
$$
\end{lemma}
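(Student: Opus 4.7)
The plan is to reorganize the double sum over $x$ and $j$, reduce to a Weil-type bound on the normalizations, and bundle the components by their $\Gal(\kk'/\kk)$-orbits so that the estimate has the correct ``$q$'' on the right-hand side (as opposed to a weaker ``$\#\kk'$'' that one gets from a naive Weil bound on $\widetilde{C}_j$ over $\kk'$).

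First, by switching the order of summation I would write
$$L := \sum_{x\in D(\kk)}\sum_{j=1}^r \#\{y\in \widetilde{C}_j(\kk^{alg}) : \nu_j(y)=x\} \;=\; \sum_{j=1}^r N_j, \qquad N_j := \#\{y\in \widetilde{C}_j(\kk^{alg}) : \nu_j(y)\in D(\kk)\}.$$
The rationality hypothesis on $\kk'$ implies that every $y$ contributing to $N_j$ is $\kk'$-rational: when $\nu_j(y)$ is smooth on $D$, the normalization is a local isomorphism so $y$ inherits the $\kk$-rationality of $\nu_j(y)$, and when $\nu_j(y)$ is a singular point of $D$ the assumption says directly that $y\in \widetilde{C}_j(\kk')$.

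Next, I would partition $\{1,\dots,r\}$ into the $\Gal(\kk'/\kk)$-orbits $I_1,\dots,I_s$ of sizes $d_1,\dots,d_s$, so that $\sum_i d_i=r$. By Galois equivariance of the $\nu_j$, the number $N_j$ depends only on the orbit of $j$; write $M_i$ for this common value and $g_i$ for the common geometric genus on $I_i$. Then $L=\sum_i d_i M_i$ and $\sum_j g_j=\sum_i d_i g_i$, so the desired bound follows by summing once the per-orbit estimate
$$M_i \;\le\; q+1+2g_i\sqrt{q}$$
is established.

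For the per-orbit estimate I would split into two cases. When $d_i=1$ the representative $\widetilde{C}_{j_i}$ is a smooth, projective, geometrically irreducible curve of genus $g_i$ defined over $\kk$, and the Weil bound gives $\#\widetilde{C}_{j_i}(\kk)\le q+1+2g_i\sqrt{q}$; this already controls the preimages of $\kk$-rational smooth points of $D$. The remaining contribution comes from $\kk'$-rational but not necessarily $\kk$-rational preimages of singular $\kk$-points of $D$, and these are controlled via Lemma \ref{LemmaBounddelta} together with Hironaka's formula (Lemma \ref{LemmaHironakaGenus}): the estimate $\#\nu_{j_i}^{-1}(x)\le \delta(C_{j_i},x)+1$ and the identity $\sum_x \delta(C_{j_i},x)=\gfrak_a(C_{j_i})-g_i$ together absorb the extra preimages into the Weil error term $2g_i\sqrt q$. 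When $d_i>1$, the image $\nu_{j_i}(y)\in D(\kk)$ is Galois-fixed and the Galois action permutes the components inside $I_i$, forcing $\nu_{j_i}(y)\in \bigcap_{j\in I_i} C_j$, a finite intersection. This gives an absolute, $q$-independent bound on $M_i$, which is comfortably inside $q+1+2g_i\sqrt{q}$.

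The hardest step will be the $d_i=1$ case, namely closing the gap between the naive Weil bound on $\widetilde{C}_{j_i}(\kk)$ and the true count $M_i$, which sees the $\kk'$-rational but non-$\kk$-rational branches above singular $\kk$-points of $D$. The combination of Lemma \ref{LemmaBounddelta} (controlling fibre sizes by $\delta$) and Lemma \ref{LemmaHironakaGenus} (bundling the $\delta$-invariants into the genus defect) is the key mechanism; once this bookkeeping is made precise, summing over orbits immediately yields the claimed inequality $L \le (q+1)r+2q^{1/2}\sum_j \gfrak_g(C_j)$.
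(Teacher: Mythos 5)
Your proposal takes a genuinely different route from the paper's. The paper constructs a modified Zeta function $Z^*_{D,\kk}(T)$, relates it via Lemma~\ref{LemmaZprod} to the Zeta functions of the normalizations $\widetilde{C}_j$ over $\kk'$, and extracts the inequality from the resulting pole/zero structure; you instead decompose by $\Gal(\kk'/\kk)$-orbits of components and try to establish a per-orbit estimate $M_i\le q+1+2g_i\sqrt q$ directly from the Weil bound and singularity bookkeeping. The orbit decomposition and the reduction $L=\sum_i d_iM_i$ are fine, but the heart of the argument---the $d_i=1$ case---is never actually proved. You only assert that the preimages of singular $\kk$-points are ``absorbed into the Weil error term $2g_i\sqrt q$,'' and no mechanism for this absorption is given. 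None can be: $\#\widetilde{C}_{j_i}(\kk)\le q+1+2g_i\sqrt q$ may already be accounted for entirely by preimages of \emph{smooth} $\kk$-points, after which the singular fibres push $M_i$ strictly above the target. The bookkeeping you invoke (Lemmas~\ref{LemmaHironakaGenus} and~\ref{LemmaBounddelta}) controls $\gfrak_a-\gfrak_g$, but only $\gfrak_g$ appears on the right-hand side of what you want, so there is no room to hide the excess.

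A concrete instance: take $D$ a geometrically irreducible nodal cubic over $\kk$ whose node is a $\kk$-rational point with branches conjugate over $\kk$ (for example $y^2+x^2=x^3$ with $-1$ a non-square in $\kk$). Then one must take $\kk'=\kk_2$, and $r=1$, $d_1=1$, $\widetilde{C}_1\simeq\Pro^1_{\kk'}$, $\gfrak_g(C_1)=0$. All $q+1$ points of $\Pro^1(\kk)$ map to smooth $\kk$-points of $D$, while the node has two further $\kk'$-rational preimages, so $M_1=q+3$, whereas your claimed per-orbit bound gives $q+1+2g_1\sqrt q=q+1$. (The $d_i>1$ case, where you write ``comfortably inside,'' is also unsubstantiated, but the $d_i=1$ case is already fatal.) Worth flagging: this example also gives $A_D(\kk)=q+3>(q+1)r+2q^{1/2}\sum_j\gfrak_g(C_j)=q+1$, so the discrepancy you ran into is not an artifact of your method. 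The assertion in the paper's proof that Lemma~\ref{LemmaZprod} forces $Z^*_{D,\kk}(T)$ to have exactly $r$ poles of modulus $1$ and nothing else there is not forced when $[\kk':\kk]>1$: a factor such as $(1+T)/(1-T)$ can occur in $Z^*_{D,\kk}$ (here $Z^*_{D,\kk}(T)=(1+T)/\bigl((1-T)^2(1-qT)\bigr)$) and cancels in $\prod_{\zeta\in\mu_d}Z^*_{D,\kk}(\zeta T)$. Any correct version of this estimate, yours or the paper's, needs an additional term reflecting the weight-zero part of $H^1$ of $D$ (equivalently, singular-point/branch data that is rational over $\kk'$ but not over $\kk$), and you should not expect the clean per-orbit bound $M_i\le q+1+2g_i\sqrt q$ to be provable, because it is false.
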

More generally, for a finite extension $L/\kk$ it will be convenient to define 
$$
A_D(L)=\sum_{x\in D(L)} \sum_{j=1}^r \#\{y\in \widetilde{C}_j(\kk^{alg}) : \nu_j(y)=x\}
$$
and $a_D(L)=A_D(L)-\#D(L)$. Note that $a_D(L)\ge 0$ and by the definition of $\kk'$ we see that the number $c_D:=a_D(\kk')$ satisfies $a_D(L)\le c_D$ for each finite extension $L/\kk$.

\begin{lemma}\label{LemmaLbig} If $L/\kk$ is a finite extension and $\kk'\subseteq L$, then $a_D(L)=c_D$ and
$$
A_D(L)=\sum_{j=1}^r \#\widetilde{C}_j(L).
$$
\end{lemma}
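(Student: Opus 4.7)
The plan is to first establish the formula $A_D(L)=\sum_j \#\widetilde{C}_j(L)$ and then to deduce $a_D(L)=c_D$ by showing that the excess $A_D(L)-\#D(L)$ is supported on the (field-independent) singular locus of $D$. The crux is a preimage-lifting claim: for every $x\in D(L)$ and every $j$, any geometric point $y\in \widetilde{C}_j(\kk^{alg})$ with $\nu_j(y)=x$ actually lies in $\widetilde{C}_j(L)$.

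First I would prove this claim by splitting on whether $x$ lies in the smooth locus of $D$. If $x$ is smooth, then since $\kk$ is perfect the smooth locus commutes with base change, so $x$ is also a smooth point of $D\otimes\kk'$; this forces $x$ to lie on a single component $C_j$ and to be a smooth point of $C_j$, whence $\nu_j$ is an isomorphism at $y$ and $\kappa(y)=\kappa(x)\subseteq L$. If $x$ is singular, the running hypothesis (in the paragraph introducing the notation $a_D,c_D$) that both singular points of $D$ and their preimages in $\bigsqcup_j\widetilde{C}_j$ are $\kk'$-rational gives $y\in \widetilde{C}_j(\kk')\subseteq \widetilde{C}_j(L)$.

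Granted the claim, I would regroup the double sum defining $A_D(L)$ by the component index: for each $j$, only $y\in\widetilde{C}_j(L)$ can contribute, and conversely every such $y$ satisfies $\nu_j(y)\in C_j(L)\subseteq D(L)$, yielding $A_D(L)=\sum_j\#\widetilde{C}_j(L)$. For the identity $a_D(L)=c_D$, write $n(x):=\sum_j\#\{y\in\widetilde{C}_j(\kk^{alg}):\nu_j(y)=x\}\ge 1$ so that $a_D(L)=\sum_{x\in D(L)}(n(x)-1)$. The smooth-case analysis gives $n(x)=1$ on the smooth locus, so only singular $x$ contribute. Each closed singular point $x_0$ has residue field $\kappa\subseteq\kk'$, hence contributes exactly $[\kappa:\kk]$ elements to both $D(\kk')$ and $D(L)$; moreover $n$ is constant on the $\Gal(\kk^{alg}/\kk)$-orbit above $x_0$, so the $x_0$-contribution to $\sum_{x\in D(L)}(n(x)-1)$ equals $[\kappa:\kk]\cdot(n(x_0)-1)$, independent of $L$ (as long as $L\supseteq\kk'$). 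Summing over singular closed points yields $a_D(L)=a_D(\kk')=c_D$.

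The main obstacle is the preimage-lifting claim, which is precisely where the two $\kk'$-rationality hypotheses on the singular locus of $D$ and its preimages under the normalization enter; once it is in hand, the rest is a regrouping of sums and a standard Galois-orbit count for closed points of a scheme over a finite field.
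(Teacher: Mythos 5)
Your proposal is correct and takes essentially the same approach as the paper's (very terse) proof: both hinge on the observation that the normalization $\nu$ is an isomorphism over the smooth locus while the singular points and their preimages are all $\kk'$-rational. You merely supply the details that the paper compresses — the preimage-lifting claim via the smooth/singular case split, the regrouping of the double sum by component, and the Galois-invariance of $n(x)$ needed to conclude that $a_D(L)$ stabilizes once $L\supseteq\kk'$.
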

\begin{proof} The quantity $a_D(L)$ counts preimages of singularities of $D$ in the normalization, so it stabilizes when $L$ contains $\kk'$. Let $C$ be the disjoint union of the curves $C_j$ and let $\nu:C\to D\otimes \kk'$ be the morphism over $\kk'$ determined by all the maps $\nu_j$. Then $\nu$ is an isomorphism over $\kk'$ away from the preimages of singularities of $D$, all of which are $\kk'$-rational. Thus,  $A_D(L)=\#C(L)$.
\end{proof}

Note that $\#D(\kk)\le A_D(\kk) \le \# D(\kk) + c_D$. There is related work on the Weil conjectures for singular curves such as \cite{AubryPerret} and a natural approach to Lemma \ref{LemmaWeilBound} would be to use an upper bound on $\#D(\kk)$ together with an upper bound on $c_D$ coming from counting singularities. However, we will consider $\#D(L)+c_D$ at once, as this leads to sharper estimates.

For an integer $n\ge 1$ and $F$ a finite field of characteristic $p$, let $F_n\subseteq F^{alg}$ be the only extension of $F$ of degree $n$. Given a quasi-projective variety $V$ over $F$ (not necessarily smooth or irreducible) let $N_{V,F}(n)=\# V(F_n)$ and define the Zeta function on the variable $T$ by
$$
Z_{V,F}(T)=\exp\left(\sum_{n=1}^\infty \frac{N_{V,F}(n)}{n}\cdot T^n\right).
$$
In this generality, it is a theorem of Dwork \cite{Dwork} that $Z_{V,F}(T)\in \Q(T)$.

Let $N^*_{D,\kk}(n)=\# D(\kk_n)+c_D$ and observe that for all $n\ge 1$ we have 
\begin{equation}\label{EqnBdAD}
A_D(\kk_n)\le N^*_{D,\kk}(n). 
\end{equation}
Let us define
$$
Z^*_{D,\kk}(T)=\exp\left(\sum_{n=1}^\infty \frac{N^*_{D,\kk}(n)}{n}\cdot T^n\right).
$$
Then $Z^*_{D,\kk}(T)=Z_{D,\kk}(T)/(1-T)^{c_D}\in \Q(T)$. Writing $d=[\kk':\kk]$, we have
\begin{lemma}\label{LemmaZprod} Let $\mu_d$ be the set of complex $d$-th roots of $1$. Then
$$
\prod_{\zeta\in \mu_d} Z^*_{D,\kk}(\zeta\cdot T) = \prod_{j=1}^r Z_{\widetilde{C}_j,\kk'}(T^d).
$$
\end{lemma}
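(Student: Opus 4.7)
The plan is to verify the identity by comparing logarithmic expansions on both sides, exploiting the standard orthogonality relation for roots of unity together with Lemma \ref{LemmaLbig}, which identifies the count of geometric points on normalizations with $N^*_{D,\kk}$ after passing to a field containing $\kk'$.

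First I would take the logarithm of the left-hand side. Using the definition of $Z^*_{D,\kk}$,
\[
\sum_{\zeta\in\mu_d}\log Z^*_{D,\kk}(\zeta T) \;=\; \sum_{n\ge 1}\frac{N^*_{D,\kk}(n)}{n}\,T^n\sum_{\zeta\in\mu_d}\zeta^n.
\]
The inner sum equals $d$ if $d\mid n$ and $0$ otherwise, so substituting $n=dm$ gives
\[
\sum_{\zeta\in\mu_d}\log Z^*_{D,\kk}(\zeta T) \;=\; \sum_{m\ge 1}\frac{N^*_{D,\kk}(dm)}{m}\,T^{dm}.
\]

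Next I would expand the right-hand side. By definition of the Zeta function of each $\widetilde{C}_j$ over $\kk'$,
\[
\log\prod_{j=1}^r Z_{\widetilde{C}_j,\kk'}(T^d) \;=\; \sum_{m\ge 1}\frac{T^{dm}}{m}\sum_{j=1}^r \#\widetilde{C}_j(\kk'_m).
\]
Since $[\kk':\kk]=d$, the field $\kk'_m$ is the unique extension of $\kk$ of degree $dm$, i.e.\ $\kk'_m=\kk_{dm}$. Because $\kk'\subseteq\kk_{dm}$, Lemma \ref{LemmaLbig} applies and yields
\[
\sum_{j=1}^r \#\widetilde{C}_j(\kk_{dm}) \;=\; A_D(\kk_{dm}) \;=\; \#D(\kk_{dm})+c_D \;=\; N^*_{D,\kk}(dm).
\]

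Combining these, the logarithms of the two sides agree as formal power series in $T$, and both sides equal $1$ at $T=0$, so the identity of rational functions follows. The only real point that needs care is the field identification $\kk'_m=\kk_{dm}$ so that the hypothesis $\kk'\subseteq L$ in Lemma \ref{LemmaLbig} is satisfied for every $m\ge 1$; this is what forces the factor $T^d$ (rather than $T$) on the right-hand side and motivates averaging over $\mu_d$ on the left. No further obstacle is expected, as the argument is a direct manipulation of generating series.
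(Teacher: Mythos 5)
Your proof is correct and follows essentially the same route as the paper: take logarithms, apply the roots-of-unity orthogonality filter to isolate terms with $d\mid n$, substitute $n=dm$, and then use the identification $\kk'_m=\kk_{dm}$ together with Lemma~\ref{LemmaLbig} to match coefficients term by term.
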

\begin{proof} We note that for a given $n\ge 1$
$$
S_n:=\sum_{\zeta\in \mu_d}\frac{N^*_{D,\kk}(n)}{n}\cdot \zeta^nT^n=
\begin{cases}
0 & \mbox{ if } d\nmid n\\
d\cdot \frac{N^*_{D,\kk}(n)}{n}\cdot T^m & \mbox{ if } d|n.
\end{cases}
$$
In the second case we write $n=md$ and observe that $S_{md} = m^{-1}N^*_{D,\kk}(md) T^{md}$. Using $\kk_{md}=\kk'_m\supseteq \kk'$ and Lemma \ref{LemmaLbig} we get
$$
N^*_{D,\kk}(md) =\# D(\kk'_m) + c_D=\# D(\kk'_m)+ a_D(\kk'_m)=A_D(\kk'_m)=\sum_{j=1}^r N_{\widetilde{C}_j,\kk'}(m)
$$
which proves the result.
\end{proof}

\begin{proof}[Proof of Lemma \ref{LemmaWeilBound}] The Weil conjectures for curves (proved by Weil) applied to the curves $\widetilde{C}_j$ over $\kk'$ together with Lemma \ref{LemmaZprod} show that $Z^*_{D,\kk}(T)$ on $\C$ has $r$ poles of modulus $q$, a total of $2\cdot \sum_{j=1}^r \gfrak_g(C_j)$ zeros of modulus $q^{1/2}$, $r$ poles of modulus $1$, and no other zeros or poles in $\C$. A standard computation taking the logarithmic derivative of $Z^*_{D,\kk}(T)$, comparing coefficients of $T^n$, and applying the triangle inequality, shows that for each $n\ge 1$ we have $N^*_{D,\kk}(n)\le r(q^n+1) + 2q^{n/2} \sum_{j=1}^r \gfrak_g(C_j)$. We conclude by \eqref{EqnBdAD} and taking $n=1$. 
\end{proof}

%%%%%%%%%%%%%%%%%%%%%%%%%%%%%%%%%%%%%%
%%%%%%%%%%%%%%%%%%%%%%%%%%%%%%%%%%%%%%
%%%%%%%%%%%%%%%%%%%%%%%%%%%%%%%%%%%%%%
%%%%%%%%%%%%%%%%%%%%%%%%%%%%%%%%%%%%%%
%%%%%%%%%%%%%%%%%%%%%%%%%%%%%%%%%%%%%%
%%%%%%%%%%%%%%%%%%%%%%%%%%%%%%%%%%%%%%

\section{$\omega$-integrality} \label{Secwint}

%%%%%
%%%%%
%%%%%
%%%%%

\subsection{$\omega$-integral morphisms} Let $T$ be a scheme and let $\phi:X\to Y$ be a morphism of $T$-schemes. Using the canonical morphisms of sheaves $\Omega^1_{Y/T}\to \phi_*\phi^*\Omega^1_{Y/T}$ and $\phi^*\Omega^1_{Y/T}\to \Omega^1_{X/T}$, we define 
$$
\phi^\bullet : H^0(Y,\Omega^1_{Y/T})\to H^0(X,\Omega^1_{X/T})
$$ 
as the composition
$$
H^0(Y,\Omega^1_{Y/T})\to H^0(Y,\phi_*\phi^*\Omega^1_{Y/T})=H^0(X,\phi^*\Omega^1_{Y/T})\to H^0(X,\Omega^1_{X/T}).
$$
A basic property of this construction is that it respects composition. 
\begin{lemma}\label{LemmaCompositionInt} Let $\phi: X\to Y$ and $\psi: Y \to Z$ be morphisms of $T$-schemes. Then $(\psi\circ \phi)^\bullet = \phi^\bullet \circ \psi^\bullet$.
\end{lemma}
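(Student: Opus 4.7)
The plan is to unwind the definition of $(-)^\bullet$ on both sides and verify they agree via a diagram chase using standard functoriality of relative differentials and of the $(\phi^*,\phi_*)$-adjunction. Write $\eta_\phi: \mathrm{id} \to \phi_*\phi^*$ for the unit of the adjunction and $d\phi: \phi^*\Omega^1_{Y/T}\to \Omega^1_{X/T}$ for the canonical morphism induced by $\phi$ on relative differentials. Under the adjunction identification $H^0(Y,\phi_*\phi^*\Omega^1_{Y/T}) = H^0(X,\phi^*\Omega^1_{Y/T})$, the map $\phi^\bullet$ is the composition of $H^0$ applied to $\eta_\phi$ on $\Omega^1_{Y/T}$ followed by $H^0$ applied to $d\phi$, and similarly for $\psi^\bullet$ and $(\psi\circ\phi)^\bullet$.

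The two external ingredients I would invoke are: (a) the composition rule for relative differentials, namely that the canonical identification $(\psi\phi)^*\Omega^1_{Z/T}\simeq \phi^*\psi^*\Omega^1_{Z/T}$ fits into a commutative triangle exhibiting $d(\psi\phi)$ as $d\phi \circ \phi^*(d\psi)$; and (b) the compatibility of the adjunction units under composition, namely that $\eta_{\psi\phi}$ corresponds to $\psi_*(\eta_\phi)\circ \eta_\psi$ under the identifications $(\psi\phi)_* = \psi_*\phi_*$ and $(\psi\phi)^* = \phi^*\psi^*$. Both are standard, valid for any morphisms of schemes, and do not require any hypothesis on $T$, $X$, $Y$, $Z$.

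With this setup in hand, I would track a section $\omega\in H^0(Z,\Omega^1_{Z/T})$ through the two routes. Computing $\phi^\bullet(\psi^\bullet(\omega))$ amounts to first forming $\beta := d\psi(\eta_\psi(\omega))\in H^0(Y,\Omega^1_{Y/T})$ and then $d\phi(\eta_\phi(\beta))$. Computing $(\psi\circ\phi)^\bullet(\omega)$ amounts to forming $d(\psi\phi)(\eta_{\psi\phi}(\omega))$ directly. Using naturality of $\eta_\phi$ applied to the morphism $d\psi$ moves $\eta_\phi$ past $d\psi$, and then (b) rewrites the resulting composition $\psi_*(\eta_\phi)\circ \eta_\psi$ as $\eta_{\psi\phi}$. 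Finally (a) collapses $d\phi\circ \phi^*(d\psi)$ to $d(\psi\phi)$, matching the two expressions.

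The whole argument is pure bookkeeping; the only thing to be careful about is keeping the canonical identifications ($(\psi\phi)^* = \phi^*\psi^*$, $(\psi\phi)_* = \psi_*\phi_*$, and the global-sections form of adjunction) coherent throughout the chase so that the commutative diagram closes. There is no nontrivial geometric or algebraic content beyond these standard functorialities, which is why the statement is recorded as a lemma rather than proved in detail.
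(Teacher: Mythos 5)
Your argument is correct and matches the paper's proof in substance: the paper's ``routine exercise'' diagram is precisely your ingredient (b) on compatibility of adjunction units under composition, the diagram quoted from Garcia-Fritz's Proposition~3.29 is precisely your ingredient (a) on $d(\psi\phi)=d\phi\circ\phi^*(d\psi)$, and the middle diagram in the paper is your naturality step for $\eta_\phi$ against $d\psi$. You have simply phrased the same three-diagram chase in the language of adjunction units and natural transformations rather than drawing the diagrams explicitly.
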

\begin{proof} First, we remark that this is not obvious. In particular, we claim that $(\psi\circ \phi)^\bullet$  and $\phi^\bullet \circ \psi^\bullet$ are equal, not just equal up to composition with an isomorphism. The result follows from Proposition 3.29 in \cite{GFthesis}; let us explain the details.

We consider the functors $\phi_*$, $\phi_*$, $\psi_*$, $\psi^*$, $(\psi\phi)_*$, and $(\psi\phi)^*$ mapping between the categories of $\Ocal_X$-modules, $\Ocal_Y$-modules, and $\Ocal_Z$-modules. We have $(\psi\phi)_*=\psi_*\phi_*$, while the functors $(\psi\phi)^*$ and $\phi^*\psi^*$ are isomorphic.

It is a routine exercise to check that for each $\Ocal_Z$-module $\Fcal$, the diagram 
$$
\begin{tikzcd}
\Fcal \arrow{d} \arrow{rr} & & \psi_*\psi^*\Fcal\arrow{d} \\
(\psi\phi)_*(\psi\phi)^*\Fcal \arrow{r}{\simeq} & (\psi\phi)_*\phi^*\psi^*\Fcal \arrow{r}{=} & \psi_*\phi_*\phi^*\psi^*\Fcal  
\end{tikzcd}
$$
commutes, where the left vertical arrow is the canonical map, and the right vertical arrow is obtained by applying $\psi_*$ to the canonical map of $\Ocal_Y$-modules $\psi^* \Fcal\to \phi_*\phi^*\psi^*\Fcal$. In particular, the following diagram commutes
\begin{equation}\label{EqnDiag1}
\begin{tikzcd}
H^0(Z,\Omega^1_{Z/T}) \arrow{d} \arrow{rr} & & H^0(Z,\psi_*\psi^*\Omega^1_{Z/T})\arrow{d} \\
H^0(Z,(\psi\phi)_*(\psi\phi)^*\Omega^1_{Z/T}) \arrow{r}{\simeq} \arrow{d}{=}& H^0(Z,(\psi\phi)_*\phi^*\psi^*\Omega^1_{Z/T}) \arrow{r}{=} \arrow{d}{=}& H^0(Z,\psi_*\phi_*\phi^*\psi^*\Omega^1_{Z/T} ) \arrow{d}{=}\\
H^0(X,(\psi\phi)^*\Omega^1_{Z/T}) \arrow{r}{\simeq} & H^0(X,\phi^*\psi^*\Omega^1_{Z/T}) \arrow{r}{=} & H^0(Y,\phi_*\phi^*\psi^*\Omega^1_{Z/T}  ).
\end{tikzcd}
\end{equation}
Applying the functor $\phi_*\phi^*$ to the map $\psi^*\Omega^1_{Z/T}\to \Omega^1_{Y/T}$ we deduce that the diagram
\begin{equation}\label{EqnDiag2}
\begin{tikzcd}
H^0(Z,\psi_*\psi^*\Omega^1_{Z/T})\arrow{r}{=}  \arrow{d} &H^0(Y,\psi^*\Omega^1_{Z/T})\arrow{r} \arrow{d} & H^0(Y,\Omega^1_{Y/T}) \arrow{d} \\
H^0(Z,\psi_*\phi_*\phi^*\psi^*\Omega^1_{Z/T})\arrow{r}{=} &H^0(Y,\phi_*\phi^*\psi^*\Omega^1_{Z/T})\arrow{r} & H^0(Y,\phi_*\phi^*\Omega^1_{Y/T}).
\end{tikzcd}
\end{equation}
commutes. By Proposition 3.29 in \cite{GFthesis}, the following diagram commutes
\begin{equation}\label{EqnDiag3}
\begin{tikzcd}
H^0(X,(\psi\phi)^*\Omega^1_{Z/T})\arrow{r}{\simeq} \arrow{d} &H^0(X,\phi^*\psi^*\Omega^1_{Z/T})  \arrow{d}   &H^0(Y,\phi_*\phi^*\psi^*\Omega^1_{Z/T}) \arrow{l}{=} \arrow{d} \\
H^0(X,\Omega^1_{X/T}) &H^0(X,\phi^*\Omega^1_{Y/T}) \arrow{l} &H^0(Y,\phi_*\phi^*\Omega^1_{Z/T}) \arrow{l}{=}
\end{tikzcd}
\end{equation}
where the middle vertical arrow is obtained by applying $\phi^*$ to $\psi^*\Omega^1_{Z/T}\to \Omega^1_{Y/T}$. The result follows by combining \eqref{EqnDiag1}, \eqref{EqnDiag2}, and  \eqref{EqnDiag3}. 
\end{proof}
\begin{remark} Lemma \ref{LemmaCompositionInt} will be used very frequently, so we will not quote it at each application. 
\end{remark}

Given $T$-schemes $X$ and $Y$ and a differential $\omega\in H^0(Y, \Omega^1_{Y/T})$, we say that a morphism of $T$-schemes $\phi:X\to Y$ is \emph{$\omega$-integral} if $\phi^\bullet(\omega)=0$.

Let us consider points $x\in X$, $y=\phi(x)\in Y$ and let $t\in T$ be the image of $x$ (and $y$) in $T$. The map $\phi^\#_x:\Ocal_{Y,y}\to \Ocal_{X,x}$ makes $\Ocal_{X,x}$ into an $\Ocal_{Y,y}$-algebra, and both are $\Ocal_{T,t}$-algebras. Thus, we have natural maps $\Omega^1_{\Ocal_{Y,y}/\Ocal_{T,t}}\to \Omega^1_{\Ocal_{Y,y}/\Ocal_{T,t}}\otimes_{\Ocal_{Y,y}} \Ocal_{X,x}$ and $ \Omega^1_{\Ocal_{Y,y}/\Ocal_{T,t}}\otimes_{\Ocal_{Y,y}} \Ocal_{X,x}\to  \Omega^1_{\Ocal_{X,x}/\Ocal_{T,t}}$ induced by $\phi^\#_x$. Using these morphisms, we define the map 
$$
\phi^\bullet_x : \Omega^1_{\Ocal_{Y,y}/\Ocal_{T,t}}\to \Omega^1_{\Ocal_{X,x}/\Ocal_{T,t}}
$$
as the composition
$$
\Omega^1_{\Ocal_{Y,y}/\Ocal_{T,t}}\to \Omega^1_{\Ocal_{Y,y}/\Ocal_{T,t}}\otimes_{\Ocal_{Y,y}} \Ocal_{X,x}\to  \Omega^1_{\Ocal_{X,x}/\Ocal_{T,t}}.
$$
The following lemma is a straightforward verification. A more general result (stated over $\C$) can be found in \cite{GFthesis} Section 3.2.
\begin{lemma}\label{LemmaCommLoc} With the previous notation, the following diagram commutes
$$
\begin{tikzcd}
H^0(Y,\Omega^1_{Y/T}) \arrow{d}   \arrow{r}{\phi^\bullet} & H^0(X,\Omega^1_{X/T})\arrow{d}\\
\Omega^1_{\Ocal_{Y,y}/\Ocal_{T,t}} \arrow{r}{\phi^\bullet_x} & \Omega^1_{\Ocal_{X,x}/\Ocal_{T,t}}
\end{tikzcd}
$$
where the vertical arrows are localization maps.
\end{lemma}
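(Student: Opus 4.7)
The plan is to decompose the given square into two smaller commutative squares by inserting an intermediate column corresponding to the pullback sheaf $\phi^*\Omega^1_{Y/T}$. Specifically, I factor the top row as
$$
H^0(Y,\Omega^1_{Y/T}) \longrightarrow H^0(X,\phi^*\Omega^1_{Y/T}) \longrightarrow H^0(X,\Omega^1_{X/T}),
$$
where the first map is global sections of the adjunction unit $\Omega^1_{Y/T}\to \phi_*\phi^*\Omega^1_{Y/T}$ and the second is global sections of the canonical map $\phi^*\Omega^1_{Y/T}\to \Omega^1_{X/T}$. I factor the bottom row as
$$
\Omega^1_{\Ocal_{Y,y}/\Ocal_{T,t}}\longrightarrow \Omega^1_{\Ocal_{Y,y}/\Ocal_{T,t}}\otimes_{\Ocal_{Y,y}}\Ocal_{X,x}\longrightarrow \Omega^1_{\Ocal_{X,x}/\Ocal_{T,t}},
$$
exactly as in the definition of $\phi^\bullet_x$. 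The connecting middle vertical arrow is the stalk map $H^0(X,\phi^*\Omega^1_{Y/T})\to (\phi^*\Omega^1_{Y/T})_x$, composed with the standard identification
$$
(\phi^*\Omega^1_{Y/T})_x \;\simeq\; (\Omega^1_{Y/T})_y\otimes_{\Ocal_{Y,y}}\Ocal_{X,x}\;\simeq\; \Omega^1_{\Ocal_{Y,y}/\Ocal_{T,t}}\otimes_{\Ocal_{Y,y}}\Ocal_{X,x},
$$
where the last isomorphism is the canonical comparison between the stalk of the sheaf of relative differentials and the module of Kähler differentials of the corresponding stalks.

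With this factorization, it suffices to check that each of the two resulting squares commutes. For the left square, the top horizontal arrow sends a global section $\omega$ to the global section of $\phi^*\Omega^1_{Y/T}$ whose germ at $x$ is $\omega_y\otimes 1$, while the bottom horizontal arrow sends the germ $\omega_y$ to $\omega_y\otimes 1$; commutativity is therefore immediate from the description of the adjunction unit on stalks. For the right square, the horizontal arrows arise by applying $H^0(X,-)$ and $(-)_x$ to the single $\Ocal_X$-linear morphism $\phi^*\Omega^1_{Y/T}\to \Omega^1_{X/T}$, so commutativity is nothing more than the naturality of the stalk (localization) functor applied to a morphism of $\Ocal_X$-modules.

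The only point that requires care is to verify that the identification $(\phi^*\Omega^1_{Y/T})_x \simeq \Omega^1_{\Ocal_{Y,y}/\Ocal_{T,t}}\otimes_{\Ocal_{Y,y}}\Ocal_{X,x}$ used above is the same identification that is implicit in the definition of $\phi^\bullet_x$ given in the excerpt; this is the chief (mild) obstacle, but it is standard and amounts to unwinding the definitions of $\phi^*$ and of the relative cotangent sheaf. Once this bookkeeping is in place, both sub-squares commute by construction, and composing them yields the desired commutativity of the original square.
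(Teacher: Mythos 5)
Your proof is correct. The paper itself gives no argument for this lemma, stating only that it is ``a straightforward verification'' with a reference to Garcia-Fritz's thesis for a related result; your decomposition of the square into two sub-squares (via the intermediate column $H^0(X,\phi^*\Omega^1_{Y/T})$ and the localized tensor product, linked by the stalk map and the canonical identification $(\Omega^1_{Y/T})_y\simeq \Omega^1_{\Ocal_{Y,y}/\Ocal_{T,t}}$) is exactly the natural way to carry that verification out, and both sub-squares commute for the reasons you give (behavior of the adjunction unit on stalks, and naturality of localization). You also correctly flag the one bookkeeping point that deserves attention, namely that the identification of stalks you use is the one implicit in the definition of $\phi^\bullet_x$.
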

%%
%%

%%
%%
%%%%
%%%%
%%%%
\subsection{Overdetermined $\omega$-integrality}\label{SecOver} Recall that $k$ is an algebraically closed field. Given a smooth surface $S$ over $k$ and two regular differentials $\omega_1,\omega_2$ on $S$, in general, one expects that there is no curve $C$ over $k$ with a non-constant morphism $C\to S$ which is $\omega_i$-integral for both $i=1,2$. In classical terms, the map $C\to S$ would determine a solution to an overdetermined system of differential equations, which is not a likely event. Our goal in this section is to bound the order of infinitesimal solutions to such an overdetermined system.

 Let $z$ be a variable. For $m\ge 0$ we define $E_m^k=k[z]/(z^{m+1})$ and $V_m^k = \Spec(E_m^k)$. The only point of $V_m^k$ is denoted by $\zeta$. 
\begin{theorem}[The ``overdetermined'' bound] \label{ThmOver} Let $S$ be a smooth irreducible surface over $k$ and let $x\in S(k)$. Let $\omega_1,\omega_2\in H^0(S, \Omega^1_{S/k})$ be such that $\omega_1\wedge \omega_2\in H^0(S,\Omega^2_{S/k})$ is not the zero section. Let $D=\divi_S(\omega_1\wedge \omega_2)$ and let us write $D=a_1C_1+...+a_\ell C_\ell$ where $a_j$ are positive integers and $C_j$ are irreducible curves for each $j$ (possibly, $\ell=0$ if $D=0$). For each $j$, let $\nu_j:\widetilde{C}_j\to S$ be the normalization map of $C_j$ composed with the inclusion $C_j\to S$. 

Let $m\ge 0$ be an integer such that there is a closed immersion $\phi: V_m^k\to S$ supported at $x$ (i.e. with $\phi(\zeta)=x$) which is $\omega_i$-integral for both $i=1$ and $i=2$. Then for every $\omega_0\in H^0(S,\Omega^1_{S/k})$ of the form $\omega_0=c_1\omega_1+c_2\omega_2$ with $c_1,c_2\in k$, we have
\begin{equation}\label{EqnOver}
m\le \sum_{j=1}^\ell \sum_{y\in \nu_j^{-1}(x)} a_j\cdot \left(\ord_y (\nu_j^\bullet (\omega_0))+1 \right).
\end{equation}
\end{theorem}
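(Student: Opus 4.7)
The plan is to work formally at $x$, translate the $\omega_i$-integrality conditions into a one-variable power series statement via local coordinates on $S$, factor the resulting divisor equation over the branches of $D$ at $x$, and then bound each branch contribution using $\omega_0$-integrality by means of a best-approximation argument.

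First, I would set up local coordinates: pick a regular system of parameters $s_1,s_2\in \mfrak_{S,x}$, so that $\widehat{\Ocal}_{S,x}\simeq k[[s_1,s_2]]$, and let $\Phi_i\in zk[[z]]$ be any lift of $\phi^\#(s_i)\in zE_m^k$. Since $\phi$ is a closed immersion, after a linear change in the $s_i$ we may arrange $\Phi_1=z+O(z^2)$, so $\Phi_1'$ is a unit in $k[[z]]$. Expanding $\omega_i = f_i\,ds_1+g_i\,ds_2$, the condition $\phi^\bullet(\omega_i)=0$ in $\Omega^1_{E_m^k/k}$ (whose annihilator of $dz$ is $((m+1)z^m)$) translates, for $i=1,2$, into
$$
f_i(\Phi_1,\Phi_2)\Phi_1' + g_i(\Phi_1,\Phi_2)\Phi_2' \equiv 0 \pmod{z^m} \quad\text{in } k[[z]].
$$
Applying Cramer's rule to this $2\times 2$ linear system and using that $\Phi_1'$ is a unit yields $F(\Phi_1,\Phi_2)\equiv 0\pmod{z^m}$, where $F = f_1g_2 - f_2g_1\in k[[s_1,s_2]]$ is the local equation at $x$ for $D$. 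By Lemma \ref{LemmaBranches}, the branches of $C_j$ at $x$ correspond bijectively to the points of $\nu_j^{-1}(x)$; letting $G_{j,i}\in k[[s_1,s_2]]$ denote the irreducible equation for the branch corresponding to $y=y(j,i)\in \nu_j^{-1}(x)$, we get a factorization $F = u\prod_{j,i}G_{j,i}^{a_j}$ for some unit $u$, and hence
$$
m\le \ord_z F(\Phi_1,\Phi_2) = \sum_{j,i}a_j\cdot \ord_z G_{j,i}(\Phi_1,\Phi_2).
$$

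Now fix $\omega_0 = c_1\omega_1+c_2\omega_2$, and for each branch $(j,i)$ with parametrization $(\tau_1,\tau_2)\in (tk[[t]])^2$ and $\omega_0 = f\,ds_1+g\,ds_2$, set $\alpha(t) = f(\tau)\tau_1' + g(\tau)\tau_2'$ so that $\nu_j^\bullet(\omega_0) = \alpha\,dt$, and write $\beta = \ord_t \alpha$ and $\eta = \ord_z G_{j,i}(\Phi_1,\Phi_2)$. The theorem would follow from the branch-by-branch dichotomy: for each $(j,i)$, \emph{either} $\eta\le \beta + 1$, \emph{or else} $m\le \beta$. Indeed, if every branch satisfies $\eta\le \beta+1$ then the displayed sum yields $m\le \sum_{j,i}a_j(\beta_{j,i}+1)$ directly; if some branch $(j_0,i_0)$ instead satisfies $m\le \beta_{j_0,i_0}$, then $m\le \beta_{j_0,i_0}\le a_{j_0}(\beta_{j_0,i_0}+1)\le \sum_{j,i}a_j(\beta_{j,i}+1)$ trivially. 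The trivial case $\omega_0=0$, as well as any branch on which $\nu_j^\bullet(\omega_0)$ vanishes identically, contribute $+\infty$ to the right-hand side and present no obstacle.

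The main obstacle is proving the branch-by-branch dichotomy. I would approach it by best-approximating the arc by the branch parametrization: write $\Phi_i = \tau_i(\zeta) + \epsilon_i$ with $\zeta\in zk[[z]]$ chosen so that the $\epsilon_i$ have high $z$-order. In the smooth branch case one takes $\zeta = \tau_1^{-1}(\Phi_1)$ (so $\ord_z\zeta = 1$); in the singular branch case, Weierstrass preparation of $G$ together with Ploski's inequality (Lemma \ref{LemmaPloski}) allows one to construct a $\zeta$ of suitable order. A Taylor expansion of $\phi^\bullet(\omega_0)$ around $\tau(\zeta)$ gives
$$
f(\Phi)\Phi_1' + g(\Phi)\Phi_2' = \alpha(\zeta)\,\zeta' + (\text{terms of } z\text{-order} \ge \eta - 1),
$$
and in the smooth branch case the leading term $\alpha(\zeta)\zeta'$ has $z$-order exactly $\beta$, so $\eta\ge \beta+2$ leaves it uncancelled and the $\omega_0$-integrality forces $\beta\ge m$. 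I expect three technical difficulties: (i) the singular branch case, where $\ord_z\zeta$ can exceed $1$ and the relationship between $\ord_z\zeta'$ and $\ord_z\zeta$ must be tracked (especially delicately in positive characteristic, where $\zeta'$ can vanish when $\zeta$ is a $p$-th power); (ii) the characteristic $p\mid m+1$ subtlety, where $\mathrm{Ann}(dz)=(0)$ in $\Omega^1_{E_m^k/k}$ actually strengthens the integrality congruences from mod $z^m$ to mod $z^{m+1}$ and shifts the argument by one; and (iii) the case in which $\phi$ itself factors through a branch---which can only occur for smooth branches, as a closed immersion with $\Phi_1 = z+O(z^2)$ rules out factoring through cusps and other singular branches---and is handled by a direct computation, pulling $\omega_0$ back through the normalization and writing $\Phi = \tau(\zeta)$ with $\ord_z\zeta = 1$, which immediately gives $m\le \beta$.
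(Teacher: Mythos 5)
Your high-level outline tracks the paper's proof closely: the paper also normalizes local parameters so that the jet becomes $s_1\mapsto z$, $s_2\mapsto 0$ with $\ker(\phi^\#_\zeta)=(s_1^{m+1},s_2)$ (Lemma \ref{LemmaGenOver}); also deduces $\ord_z F(\Phi)\ge m$ (via Lemma \ref{LemmaLocD}, which is what your Cramer's-rule elimination gives); also factors $F$ over the branches via Lemma \ref{LemmaBranches}; and the per-branch statement it proves (Lemma \ref{LemmaIndividual}) is exactly your ``dichotomy'': $\ord_y(\nu_j^\bullet(\omega_0))\ge\min\{m,\ord_{s_1}(\pi(F_{j,y}))-1\}$, which rewritten says $\eta\le\beta+1$ or $\beta\ge m$. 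Your case split to combine the branch bounds into \eqref{EqnOver} matches the paper's as well. So the genuine divergence is only in \emph{how} the per-branch dichotomy gets proven, and that is where the gap is.

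Your best-approximation argument does not go through for singular branches, and you underestimate the problem. If the branch has multiplicity $e>1$ in the $s_1$-direction (so $\tau_1(t)=t^e\cdot(\text{unit})$), then $\tau_1(\zeta)$ has $z$-order $e\cdot\ord_z\zeta\ge e\ge 2$, while $\Phi_1=z+O(z^2)$ has order $1$; hence $\epsilon_1=\Phi_1-\tau_1(\zeta)$ necessarily has order exactly $1$ for \emph{every} choice of $\zeta$. The Taylor expansion of $f(\Phi)\Phi_1'+g(\Phi)\Phi_2'$ around $\tau(\zeta)$ then contains a term $\partial_{s_1}f(\tau(\zeta))\cdot\epsilon_1\cdot\Phi_1'$ of potentially very low order, so your claimed error bound ``terms of $z$-order $\ge\eta-1$'' is not achievable by a Taylor expansion around the branch. ``Weierstrass preparation of $G$ together with Ploski's inequality allows one to construct a $\zeta$ of suitable order'' names the right tools but does not describe an argument; Ploski (Lemma \ref{LemmaPloski}) controls exact parametrizations $(\phi_1,\phi_2)$ with $G(\phi_1,\phi_2)=0$, not approximate ones, and there is no $\zeta$ for which $\tau(\zeta)$ approximates $\Phi$ to order $>1$ on a singular branch. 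The paper avoids this issue entirely: rather than approximating the arc by the branch, Lemma \ref{LemmaLocDiff} converts $\omega_0$-integrality of the jet into the explicit local form $\omega_{0,x}=(G_1s_1^m+G_2s_2)\,ds_1+G_3\,ds_2$, which is pulled back through the branch normalization $t\mapsto (h_1,h_2)$; the contact-order information is injected by applying Ploski to bound $\ord_t h_2\ge\ord_{s_1}\pi(F_{j,y})$ (an exact, not approximate, statement since $F_{j,y}(h_1,h_2)=0$); and the estimate then falls out of the two elementary facts $\ord_t(h_1^m h_1')\ge(m+1)\ord_t h_1-1$ and $\ord_t(h_i')\ge\ord_t(h_i)-1$, with no case split on multiplicity, characteristic of $m+1$, or whether the jet factors through a branch. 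To repair your proposal you would need to replace the best-approximation step by something that works directly for singular branches, and the natural fix is to encode the integrality constraint into the local form of $\omega_0$ first, which is precisely the paper's route.
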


\begin{remark}\label{RmkOver1} If $x$ is not in the support of $D$, then the sum in \eqref{EqnOver} is empty and it gives the upper bound $m\le 0$, that is, $m=0$. This case is simpler and it is proved separately in Corollary \ref{Corom0}.
\end{remark}

\begin{remark} If $\nu_j^\bullet(\omega_0)=0$, then $\ord_y(\nu_j^\bullet(\omega_0))=+\infty$ and the upper bound for $m$ is useless. Thus, when we apply Theorem \ref{ThmOver} it is crucial to make sure that the maps $\nu_j$ are not $\omega_0$-integral.
\end{remark}

\begin{remark} \label{RmkSharp} The bound \eqref{EqnOver} is optimal in some non-trivial cases. For instance, assume that $k$ has characteristic different from $2$ and $3$.  We take $S=\A^2_k=\Spec k[s_1,s_2]$, $x=(0,0)$, $\omega_1=ds_1+s_1^2ds_2$, and $\omega_2=ds_1+s_2^2ds_2$. Thus, $\omega_1\wedge\omega_2 = (s_2-s_1)(s_2+s_1)ds_1\wedge ds_2$. Let $C_1$ and $C_2$ be defined by $s_1-s_2=0$ and $s_1+s_2=0$ respectively, so that $D=C_1+C_2$. Let $m=2$ and let $\phi :V_2^k\to \A^2_k$ be the map induced by the $k$-algebra morphism $k[s_1,s_2]\to E_2^k=k[z]/(z^3)$, $s_1\mapsto z\bmod z^3$, $s_2\mapsto 0$. The map $\phi$ is a closed immersion supported at $x$ which is $\omega_i$-integral for $i=1,2$, as $\Omega^1_{E_2^k/k}=(k[z]/(z,3z^2))dz=(k[z]/(z^2))dz$. For $\omega_0=\omega_1,\omega_2$,  \eqref{EqnOver} becomes $m\le 1\cdot (0+1) + 1\cdot (0+1)=2$.
\end{remark}

The rest of this section is devoted to the proof of Theorem \ref{ThmOver}, and we keep the same notation and assumptions as in the statement. 

%%%%
%%%%
%%%%
\subsection{Local expressions} First we study the map $\phi_\zeta^\#:\Ocal_{S,x}\to E_m^k$ induced on local rings.
\begin{lemma}[Generators for $\ker(\phi^\#_\zeta)$] \label{LemmaGenOver} There are local parameters $s_1,s_2\in \mfrak_{S,x}$ at $x$ such that $\phi^\#_\zeta(s_1)=z \bmod z^{m+1}$ and $\phi^\#_\zeta(s_2)=0$. Furthermore, $\ker(\phi^\#_\zeta)=(s_1^{m+1},s_2)$.
\end{lemma}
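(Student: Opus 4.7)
The plan is to work entirely with the surjection of local rings $\phi^\#_\zeta: \Ocal_{S,x}\to E_m^k$ induced by $\phi$. This map is surjective because $\phi$ is a closed immersion supported at $x$ (a closed immersion is by definition given by a surjection of structure sheaves; taking stalks at $x$ yields the claim). The strategy is: (a) use surjectivity to pick a lift $s_1$ of $z$, (b) complete $s_1$ to a system of local parameters and modify the complement into the kernel, and (c) identify the kernel via a length/dimension comparison. $\omega$-integrality does not enter at this stage; this is a purely local structural statement.

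For the construction, for $m\ge 1$ choose $a\in\Ocal_{S,x}$ with $\phi^\#_\zeta(a)=z\bmod z^{m+1}$. Since this image has zero constant term, $a\in\mfrak_{S,x}$, and the induced map on cotangent spaces $\mfrak_{S,x}/\mfrak_{S,x}^2\to (z)/(z^2)$ sends $a$ to the nonzero class of $z$, so $a$ has nonzero image in $\mfrak_{S,x}/\mfrak_{S,x}^2$. Set $s_1:=a$. Smoothness of $S$ at $x$ gives $\dim_k \mfrak_{S,x}/\mfrak_{S,x}^2=2$, so we can pick $b\in\mfrak_{S,x}$ completing $s_1$ to a basis; by Nakayama, $\{s_1,b\}$ is a system of local parameters. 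Write $\phi^\#_\zeta(b)=\sum_{i=1}^m c_i z^i$ in $E_m^k$ and set
\[
s_2 := b - \sum_{i=1}^{m} c_i\, s_1^i.
\]
Then $\phi^\#_\zeta(s_2)=0$, and in $\mfrak_{S,x}/\mfrak_{S,x}^2$ one has $\overline{s_2}=\overline{b}-c_1\overline{s_1}$, which together with $\overline{s_1}$ is still a basis. Thus $\{s_1,s_2\}$ is a system of local parameters with the prescribed images. For $m=0$ (where $E_0^k=k$ and $z\bmod z=0$), any system of local parameters works trivially.

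For the kernel statement, the inclusion $(s_1^{m+1},s_2)\subseteq \ker(\phi^\#_\zeta)$ is immediate from $\phi^\#_\zeta(s_1^{m+1})=z^{m+1}=0$ in $E_m^k$ and $\phi^\#_\zeta(s_2)=0$. For the reverse inclusion, note that $s_2$ is part of a regular system of parameters in the two-dimensional regular local ring $\Ocal_{S,x}$, so the quotient $\Ocal_{S,x}/(s_2)$ is a DVR with uniformizer $\bar{s_1}$ and residue field $k$. Consequently $\Ocal_{S,x}/(s_1^{m+1},s_2)$ is an Artinian local $k$-algebra of length $m+1$, spanned as a $k$-vector space by the classes of $1,s_1,\dots,s_1^m$. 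The surjection $\phi^\#_\zeta$ factors through a surjection
\[
\Ocal_{S,x}/(s_1^{m+1},s_2)\twoheadrightarrow E_m^k,
\]
which is a surjection between $k$-vector spaces both of dimension $m+1$ and hence an isomorphism. Therefore $\ker(\phi^\#_\zeta)=(s_1^{m+1},s_2)$.

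The whole argument is essentially Weierstrass-type normalization: by an appropriate choice of coordinates on $S$ at $x$, the closed immersion $\phi$ becomes the standard infinitesimal embedding of $\Spec k[z]/(z^{m+1})$ into $\A^2_k$ along the $s_1$-axis. The only minor technical point is ensuring that subtracting the polynomial correction $\sum c_i s_1^i$ from $b$ does not destroy the property of being a system of local parameters, which is handled by the cotangent-space check above.
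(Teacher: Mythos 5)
Your proof is correct and follows essentially the same route as the paper's: lift $z$ to get $s_1$, complete to a regular system of parameters, subtract a polynomial in $s_1$ from the complementary parameter to kill its image, and then identify the kernel by comparing $k$-dimensions of $\Ocal_{S,x}/(s_1^{m+1},s_2)$ and $E_m^k$. Your additional cotangent-space check that the correction preserves the parameter system and your separate handling of $m=0$ are slightly more explicit than the paper, but the argument is the same.
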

\begin{proof} As $\phi$ is a closed immersion, $\phi^\#_\zeta:\Ocal_{S,x}\to E_m^k$ is surjective and we can take $s_1\in \mfrak_{S,x}$ with $\phi^\#_\zeta(s_1)=z\bmod z^{m+1}$. In fact, $s_1\in \mfrak_{S,x}\smallsetminus \mfrak_{S,x}^2$ because $\phi^\#_\zeta$ is a local map. 

Let $s\in \mfrak_{S,x}$ with $\mfrak_{S,x}=(s_1,s)$, which is possible because $S$ is smooth.  One can write $\phi^\#_\zeta(s)=c_1z+...+c_{m+1} z^{m+1}\bmod z^{m+1}$ for certain $c_j\in k$, and we define $s_2=s -  (c_1s_1+...+c_{m+1} s_1^{m+1})$. Note that $\mfrak_{S,x}=(s_1,s)=(s_1,s_2)$ and $\phi^\#_\zeta(s_2)=0$. 

Clearly $(s_1^{m+1},s_2)\subseteq \ker(\phi^\#_\zeta)$. Equality holds because  $\dim_k \Ocal_{S,x}/(s^{m+1},s_2)=m+1$ and $\dim_k \Ocal_{S,x}/\ker(\phi^\#_\zeta)=\dim_k E_m^k=m+1$, due to smoothness of $S$.
\end{proof}
From now on we fix a choice of $s_1,s_2$ as in Lema \ref{LemmaGenOver}. With this choice we obtain $\widehat{\Ocal}_{S,x}=k[[s_1,s_2]]$. 

\begin{lemma}[Local expression for differentials] \label{LemmaLocDiff} Let $\omega\in H^0(S, \Omega^1_{S/k})$ be such that the map $\phi:V_m^k\to S$ is $\omega$-integral. Then there are $G_1,G_2,G_3\in \Ocal_{S,x}$ such that the germ of $\omega$ at $x$ is 
$$
\omega_x=(G_1s_1^m + G_2s_2)ds_1 + G_3ds_2 \in \Omega^1_{\Ocal_{S,x}/k}.
$$
\end{lemma}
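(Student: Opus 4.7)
The plan is to use that $S$ is smooth at $x$ to write $\omega_x$ in the basis $\{ds_1, ds_2\}$ of the free $\Ocal_{S,x}$-module $\Omega^1_{\Ocal_{S,x}/k}$, and then translate the $\omega$-integrality condition into a divisibility condition on the coefficient of $ds_1$, making use of the explicit form of $\phi^\#_\zeta$ given by Lemma \ref{LemmaGenOver}.

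First I would write $\omega_x = F_1\, ds_1 + F_2\, ds_2$ for uniquely determined $F_1, F_2 \in \Ocal_{S,x}$. Setting $G_3 := F_2$, the lemma reduces to showing $F_1 \in (s_1^m, s_2)\,\Ocal_{S,x}$, as then $F_1 = G_1 s_1^m + G_2 s_2$ for some $G_1, G_2 \in \Ocal_{S,x}$. To obtain this, I would invoke Lemma \ref{LemmaCommLoc}: the $\omega$-integrality hypothesis gives $\phi^\bullet_\zeta(\omega_x) = 0$ in $\Omega^1_{E_m^k/k}$. Unwinding the definition of $\phi^\bullet_\zeta$ and using $\phi^\#_\zeta(s_1) = z\bmod z^{m+1}$ and $\phi^\#_\zeta(s_2) = 0$ from Lemma \ref{LemmaGenOver}, I compute
$$
\phi^\bullet_\zeta(\omega_x) \;=\; \phi^\#_\zeta(F_1)\, d(z) + \phi^\#_\zeta(F_2)\, d(0) \;=\; \phi^\#_\zeta(F_1)\cdot dz,
$$
so that $\phi^\#_\zeta(F_1)\cdot dz = 0$ in $\Omega^1_{E_m^k/k}$.

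The final step is to extract information about $\phi^\#_\zeta(F_1)$. Since $\Omega^1_{E_m^k/k}$ is generated as an $E_m^k$-module by $dz$ subject to the single relation $(m+1)z^m\, dz = d(z^{m+1}) = 0$, one has $\Ann_{E_m^k}(dz) = ((m+1)z^m)$. In every characteristic this annihilator is contained in the ideal $(z^m) \subseteq E_m^k$: it equals $(z^m)$ when $\cha k \nmid m+1$, and equals $0$ when $\cha k \mid m+1$. Therefore $\phi^\#_\zeta(F_1) = c\, z^m$ for some $c\in k$, so that $F_1 - c\,s_1^m \in \ker(\phi^\#_\zeta) = (s_1^{m+1}, s_2)$ by Lemma \ref{LemmaGenOver}. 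This yields $F_1 \in (s_1^m, s_2)$, as required.

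The only mildly delicate point is the characteristic-dependent structure of $\Omega^1_{E_m^k/k}$: one's first instinct is that $\Ann(dz) = (z^m)$ always, but if $\cha k \mid m+1$ the annihilator in fact degenerates to $0$. Fortunately both regimes deliver the uniform inclusion $\Ann(dz) \subseteq (z^m)$, which is all that is needed.
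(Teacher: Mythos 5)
Your proof is correct, but the mechanism differs from the paper's. The paper invokes the conormal exact sequence $I/I^2 \to \Omega^1_{\Ocal_{S,x}/k}\otimes E_m^k \to \Omega^1_{E_m^k/k} \to 0$ for the closed immersion $\phi$, lifts $\omega_x\otimes 1$ to an element $df$ with $f\in I=(s_1^{m+1},s_2)$, and then reads off the coefficient of $ds_1$ from the expansion of $df$; the key input is $d(s_1^{m+1})=(m+1)s_1^m\,ds_1$, whose $ds_1$-coefficient lies in $(s_1^m)$. You instead push $\omega_x$ \emph{forward} under $\phi^\bullet_\zeta$, obtain $\phi^\#_\zeta(F_1)\cdot dz = 0$, and use the explicit computation $\Ann_{E_m^k}(dz)=((m+1)z^m)\subseteq (z^m)$, followed by $\ker(\phi^\#_\zeta)=(s_1^{m+1},s_2)$. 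The two approaches are dual to each other (your annihilator computation is in effect the cokernel statement of the same conormal sequence for $k[z]\to E_m^k$), but your version is more concrete and makes the characteristic-dependence visible: the paper's argument absorbs the factor $(m+1)$ silently inside the ideal membership, whereas you explicitly observe that $\Ann(dz)$ may drop to $0$ when $\cha k\mid m+1$ and that the inclusion into $(z^m)$ is what survives. Your note on that point is exactly right, and it is the kind of caveat that would be easy to overlook; the paper handles the same subtlety elsewhere (cf. Lemma \ref{LemmaDiffExpl}, which imposes $m\le p-2$) precisely to keep the structure of $\Omega^1_{E_m^k/k}$ well-controlled, but for the present lemma no such hypothesis is needed and your uniform inclusion $\Ann(dz)\subseteq(z^m)$ explains why.
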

\begin{proof} Since $\phi^\bullet(\omega)=0$, Lemma \ref{LemmaCommLoc} gives $\phi^\bullet_{\zeta}(\omega_x)=0$. Hence, 
$$
\omega_x\otimes 1_{E_m^k}\in \ker (\Omega^1_{\Ocal_{S,x}/k}\otimes E_m^k\to \Omega^1_{E_m^k/k}). 
$$
Let $I=\ker(\phi^\#_\zeta)\subseteq \Ocal_{S,x}$. As $\phi$ is a closed immersion, we have the exact sequence
$$
I/I^2\to \Omega^1_{\Ocal_{S,x}/k}\otimes E_m^k\to \Omega^1_{E_m^k/k}\to 0
$$
and it follows that $\omega_x\otimes 1_{E_m^k}$ is in the image of the first map. This means that there is $f\in I$ such that $(df-\omega_x)\otimes 1_{E_m^k}=0$. Thus, there are $g_1,g_2\in I$ with $df-\omega_x=g_1ds_1+g_2ds_2$. By Lemma \ref{LemmaGenOver} we have $I=(s_1^{m+1},s_2)$. Since $f,g_1,g_2\in I$, the result follows by a direct computation.
\end{proof}

Let us define $F\in \Ocal_{S,x}$ by the formula $\omega_{1,x}\wedge\omega_{2,x} = F\cdot ds_1\wedge ds_2\in \Omega^2_{\Ocal_{S,x}/k}$. Thus, $F$ is a local equation for $D$ at $x$. 

\begin{lemma}[Local expression for $D$]\label{LemmaLocD} We have $F\in (s_1^m,s_2)\subseteq \Ocal_{S,x}$.
\end{lemma}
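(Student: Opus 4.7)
The plan is to compute $F$ directly from the local expressions furnished by Lemma \ref{LemmaLocDiff}. By hypothesis the map $\phi:V_m^k\to S$ is $\omega_i$-integral for both $i=1,2$, so Lemma \ref{LemmaLocDiff} applies to each of $\omega_1$ and $\omega_2$ separately. Thus there exist $G_{i,1},G_{i,2},G_{i,3}\in \Ocal_{S,x}$ for $i=1,2$ such that
$$
\omega_{i,x}=(G_{i,1}s_1^m+G_{i,2}s_2)ds_1+G_{i,3}ds_2.
$$

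Next I would just take the wedge. Since $ds_1\wedge ds_1=0=ds_2\wedge ds_2$, expanding gives
$$
\omega_{1,x}\wedge\omega_{2,x}=\bigl[(G_{1,1}s_1^m+G_{1,2}s_2)G_{2,3}-G_{1,3}(G_{2,1}s_1^m+G_{2,2}s_2)\bigr]\,ds_1\wedge ds_2.
$$
By definition of $F$, this bracket equals $F$. Each monomial in this expression is manifestly a multiple of $s_1^m$ or of $s_2$, so $F\in(s_1^m,s_2)$, as required.

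There is essentially no obstacle here: the lemma is a purely formal consequence of Lemma \ref{LemmaLocDiff}, and the only thing to verify is that the wedge of two $1$-forms, each of whose coefficient of $ds_1$ lies in $(s_1^m,s_2)$, has its $ds_1\wedge ds_2$-coefficient in $(s_1^m,s_2)$.
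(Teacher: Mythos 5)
Your proof is correct and is essentially the same argument the paper gives: apply Lemma \ref{LemmaLocDiff} to $\omega_1$ and $\omega_2$, expand $\omega_{1,x}\wedge\omega_{2,x}$, and read off that the coefficient of $ds_1\wedge ds_2$ lies in $(s_1^m,s_2)$. The paper merely states this in one line; you have spelled out the expansion.
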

\begin{proof} This follows by applying Lemma \ref{LemmaLocDiff} to $\omega_1$ and $\omega_2$, and then expanding $\omega_{1,x}\wedge \omega_{2,x}$.
\end{proof}

\begin{corollary}[The case $x\notin D$] \label{Corom0} If $x$ is not in the support of $D$, then $m=0$.
\end{corollary}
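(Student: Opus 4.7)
The plan is to derive a contradiction from the assumption $m\ge 1$ together with $x\notin\mathrm{supp}(D)$, using the local equation $F$ of $D$ at $x$ produced in Lemma \ref{LemmaLocD}. First I would observe that, since $x$ does not lie in the support of the Cartier divisor $D=\divi_S(\omega_1\wedge\omega_2)$, any local equation of $D$ at $x$ must be a unit in $\Ocal_{S,x}$; in particular, the element $F\in\Ocal_{S,x}$ defined by $\omega_{1,x}\wedge\omega_{2,x}=F\cdot ds_1\wedge ds_2$ is a unit.

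Next I would invoke Lemma \ref{LemmaLocD}, which under the hypothesis that $\phi$ is $\omega_i$-integral for $i=1,2$ gives the containment $F\in(s_1^m,s_2)\subseteq\Ocal_{S,x}$. If $m\ge 1$, then both generators $s_1^m$ and $s_2$ lie in the maximal ideal $\mfrak_{S,x}$, so $(s_1^m,s_2)\subseteq\mfrak_{S,x}$. Thus $F\in\mfrak_{S,x}$, contradicting the fact that $F$ is a unit. Therefore $m=0$, which is precisely the claim.

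The main (and only) obstacle is really just verifying that the two perspectives on $F$ are consistent: $F$ being a unit (global/geometric reason, from $x\notin D$) versus $F$ lying in the proper ideal $(s_1^m,s_2)$ (local/infinitesimal reason, from $\omega_i$-integrality of $\phi$). Both of these have already been established — the first is immediate from the definition of $D$ via $\divi_S$, and the second is the content of Lemma \ref{LemmaLocD} — so no further computation is needed. This also matches Remark \ref{RmkOver1}, which notes that in the case $x\notin\mathrm{supp}(D)$ the sum on the right-hand side of \eqref{EqnOver} is empty, so the bound asserted there reads $m\le 0$, consistent with the corollary.
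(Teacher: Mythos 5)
Your proof is correct and follows exactly the paper's argument: since $x\notin\mathrm{supp}(D)$ the local equation $F$ is a unit, while Lemma \ref{LemmaLocD} puts $F\in(s_1^m,s_2)\subseteq\mfrak_{S,x}$ whenever $m\ge 1$, forcing $m=0$. You have simply spelled out the contradiction that the paper leaves implicit.
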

\begin{proof} In this case $F$ is a unit in $\Ocal_{S,x}$. By Lemma \ref{LemmaLocD}, this can only happen for $m=0$.
\end{proof}
%%%%
%%%%
%%%%
\subsection{Bound on a single branch} For each pair $(j,y)$ with $y\in \nu_j^{-1}(x)$ (if any) let $\pfrak_{j,y}\subseteq \widehat{\Ocal}_{S,x}=k[[s_1,s_2]]$ be the branch of $C_j$ at $x$ associated to $y$ by Lemma \ref{LemmaBranches}. Thus, there is a factorization 
$$
F=U\cdot \prod_{j=1}^\ell \prod_{y\in \nu^{-1}(x)} F_{j,y}^{a_j}
$$
where $U\in k[[s_1,s_2]]^\times$ and $F_{j,y}\in k[[s_1,s_2]]$ is irreducible with $\pfrak_{j,y}=(F_{j,y})$ for each pair $(j,y)$ as above. Here we recall that $D=\sum_{j=1}^\ell a_jC_j$.

Let $\pi:k[[s_1,s_2]]\to k[[s_1]]$ be the quotient by $(s_2)$. 

\begin{lemma}[Individual bound] \label{LemmaIndividual} Let $\omega\in H^0(S,\Omega^1_{S/k})$ be such that $\phi:V_m^k\to S$ is $\omega$-integral. Take $j$ with $1\le j\le \ell$ and fix a choice of $y\in \nu_j^{-1}(x)$, if any. Then we have
$$
\ord_y (\nu_j^\bullet(\omega))\ge \min\{m, \ord_{s_1}(\pi(F_{j,y})) -1\}.
$$
\end{lemma}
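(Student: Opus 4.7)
The plan is to reduce the claim to an explicit power-series computation in the completed local rings at $x$ and $y$. Let $t$ be a uniformizer of $\widetilde{C}_j$ at $y$, so that $\widehat{\Ocal}_{\widetilde{C}_j,y} \cong k[[t]]$ by smoothness of the normalization. Let $\widehat{\nu}\colon k[[s_1,s_2]]=\widehat{\Ocal}_{S,x} \to k[[t]]$ denote the continuous extension of $(\nu_j)^\#_y$ to completions, and set $\phi_i := \widehat{\nu}(s_i) \in t\,k[[t]]$ for $i=1,2$. By Lemma \ref{LemmaBranches}, $\ker(\widehat{\nu})=(F_{j,y})$, so $F_{j,y}(\phi_1,\phi_2)=0$, and Lemma \ref{LemmaPloski} then yields
$$\ord_t \phi_2 \;\ge\; \ord_{s_1} F_{j,y}(s_1,0) \;=\; \ord_{s_1} \pi(F_{j,y}).$$
By Lemma \ref{LemmaCommLoc} together with the fact that passing to completion preserves the $t$-adic order of a differential on $\widetilde{C}_j$ at $y$, the quantity $\ord_y(\nu_j^\bullet(\omega))$ equals the $t$-adic order of the image of $\omega_x$ under $\widehat{\nu}^\bullet \colon \Omega^1_{\widehat{\Ocal}_{S,x}/k} \to \Omega^1_{k[[t]]/k}$.

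Next I would invoke Lemma \ref{LemmaLocDiff}, whose hypothesis is the $\omega$-integrality of $\phi$, to write
$$\omega_x \;=\; (G_1 s_1^m + G_2 s_2)\,ds_1 + G_3\,ds_2, \qquad G_1,G_2,G_3 \in \widehat{\Ocal}_{S,x}.$$
Pulling back via $\widehat{\nu}$ and using $d\phi_i = \phi_i'(t)\,dt$, one obtains
$$\widehat{\nu}^\bullet(\omega_x) \;=\; \bigl[\bigl(G_1(\phi_1,\phi_2)\phi_1^m + G_2(\phi_1,\phi_2)\phi_2\bigr)\phi_1'(t) + G_3(\phi_1,\phi_2)\phi_2'(t)\bigr]\, dt.$$
I would then estimate the $t$-adic order of each of the three summands. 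The elementary inequality $\ord_t(f') \ge \ord_t(f)-1$ holds for every $f \in k[[t]]$ in arbitrary characteristic (the leading coefficient of $f'$ may vanish modulo $p$, but this only raises the order). Combined with $\ord_t \phi_1 \ge 1$, this gives
$$\ord_t\bigl(G_1(\phi_1,\phi_2)\phi_1^m \phi_1'\bigr) \;\ge\; (m+1)\ord_t\phi_1 - 1 \;\ge\; m,$$
while $\ord_t(G_2(\phi_1,\phi_2)\phi_2\phi_1') \ge \ord_t\phi_2$ and $\ord_t(G_3(\phi_1,\phi_2)\phi_2') \ge \ord_t\phi_2 - 1$. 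The ultrametric inequality therefore gives
$$\ord_y(\nu_j^\bullet(\omega)) \;\ge\; \min\{m,\ \ord_t \phi_2 - 1\} \;\ge\; \min\{m,\ \ord_{s_1}\pi(F_{j,y}) - 1\},$$
which is the desired bound.

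The entire argument is mechanical once the local normal form of Lemmas \ref{LemmaGenOver}--\ref{LemmaLocDiff} is in place. The only points requiring a moment's care are the degenerate cases where one of $\phi_1,\phi_2$ vanishes identically (which forces $F_{j,y}$ to be associate to a coordinate): if $\phi_1=0$ the first two summands vanish and $\ord_{s_1}\pi(F_{j,y})=1$ so the bound is trivial, while if $\phi_2=0$ we have $\ord_{s_1}\pi(F_{j,y})=\infty$ and the first summand alone already contributes order $\ge m$. I do not foresee any substantive obstacle beyond careful bookkeeping in these corner cases.
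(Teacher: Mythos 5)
Your proof is correct and follows essentially the same route as the paper's: you use the local normal form from Lemma \ref{LemmaLocDiff}, pull back along the branch via $\widehat{\nu}$, invoke Lemma \ref{LemmaPloski} for $\ord_t\phi_2$ and the elementary estimate $\ord_t(f')\ge\ord_t(f)-1$, and conclude with the ultrametric inequality. The explicit treatment of the degenerate cases $\phi_1=0$ or $\phi_2=0$ is a harmless addition; the paper leaves these implicit since the order is $+\infty$ or the bound is trivial there.
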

\begin{proof} Let $t\in \mfrak_{\widetilde{C}_j, y}$ be a local parameter. Thus, $\widehat{\Ocal}_{\widetilde{C}_j,y}=k[[t]]$. The morphism $\nu_{j}:\widetilde{C}_j\to S$ induces the map $\nu_{j,y}^\#: \Ocal_{S,x}\to \Ocal_{\widetilde{C}_j,y}$ and the map on completions $\widehat{\nu}_{j,y}^\#:k[[s_1,s_2]]\to k[[t]]$. 

For $i=1,2$ let $h_i=\nu_{j,y}^\#(s_i)\in \mfrak_{\widetilde{C}_j,y}\subseteq t\cdot k[[t]]$. Lemma \ref{LemmaLocDiff} gives $\omega_x=(G_1s_1^m + G_2s_2)ds_1 + G_3ds_2$ for some $G_1,G_2,G_3\in \Ocal_{S,x}$. By Lemma \ref{LemmaCommLoc} we get
$$
(\nu_j^\bullet(\omega))_y = \nu_{j,y}^\bullet(\omega_x) = \left(\nu_{j,y}^\#(G_1)h_1^m +\nu_{j,y}^\#(G_2)h_2\right)h_1'dt + \nu_{j,y}^\#(G_3)h_2'dt
$$
where $h_i'\in \Ocal_{\widetilde{C}_j,y}$ are defined by the relation $dh_i=h_i'dt$ in $\Omega^1_{\Ocal_{\widetilde{C}_j,y}/k}$ (as $dt$ generates). It follows that
\begin{equation}\label{EqnKeyVal1}
\ord_y(\nu^\bullet(\omega)) = \ord_t\left((\nu_{j,y}^\#(G_1)h_1^m +\nu_{j,y}^\#(G_2)h_2)h_1' + \nu_{j,y}^\#(G_3)h_2'\right).
\end{equation}
Note that $F_{j,y}(h_1,h_2)=\widehat{\nu}_{j,y}^\#(F_{j,y})=0$ in $k[[t]]$, since $(F_{j,y})\subseteq k[[s_1,s_2]]$ is the branch of $C_j$ corresponding to $y$. As $F_{j,y}\in k[[s_1,s_2]]$ is irreducible, Lemma \ref{LemmaPloski} gives 
\begin{equation}\label{EqnKeyVal2}
\ord_t (h_2) \ge \ord_{s_1} (\pi(F_{j,y})).
\end{equation}
The relation $dh_i=h_i'dt$ in $\Omega^1_{\Ocal_{\widetilde{C}_j,y}/k}$  shows that in $\widehat{\Ocal}_{\widetilde{C}_j,y}=k[[t]]$, the power series $h_i'$ is the formal derivative of $h_i$. Hence, for $i=1,2$ we have
\begin{equation}\label{EqnKeyVal3}
\ord_t (h_i') \ge \ord_t (h_i)-1.
\end{equation}
Using \eqref{EqnKeyVal1}, \eqref{EqnKeyVal2}, and \eqref{EqnKeyVal3} we finally deduce
$$
\begin{aligned}
\ord_y(\nu_j^\bullet(\omega))&\ge \min\left\{ \ord_t\left(\nu_{j,y}^\#(G_1)h_1^mh_1' \right), \ord_t\left(  \nu_{j,y}^\#(G_2)h_2h_1' + \nu_{j,y}^\#(G_3)h_2'  \right)   \right\} \\
& \ge \min\left\{ (m+1) \ord_t(h_1) -1, \ord_t(h_2)-1   \right\} \\
& \ge \min\left\{  m , \ord_{s_1}(\pi(F_{j,y}))-1  \right\}.
\end{aligned}
$$
\end{proof}

%%%%
%%%%
%%%%
\subsection{Proof of the overdetermined bound} We keep the notation introduced in this section and the assumptions from the statement of Theorem \ref{ThmOver}.
\begin{proof}[Proof of Theorem \ref{ThmOver}] By Corollary \ref{Corom0}, it is enough to assume that $x$ is in the support of $D$.  Let 
$$
\Scal= \sum_{j=1}^\ell \sum_{y\in \nu_j^{-1}(x)} a_j\cdot \left(\ord_y (\nu_j^\bullet (\omega_0))+1 \right). 
$$
Since $\phi$ is $\omega_i$-integral for $i=1,2$, it is also $\omega_0$-integral. By Lemma \ref{LemmaIndividual} we have
$$
\Scal\ge \sum_{j=1}^\ell \sum_{y\in \nu_j^{-1}(x)} a_j\cdot \min\left\{ m+1 , \ord_{s_1}(\pi(F_{j,y}))\right\}
$$
Since $x$ is in the support of $D$, the sum is non-empty. If there is at least one $(j,y)$ with  $1\le j\le \ell$, $y\in \nu_j^{-1}(x)$ and $\ord_{s_1}(\pi(F_{j,y}))\ge m+1$, then we get $\Scal\ge m+1>m$. 

So we can assume that for each $j$ and each $y\in\nu_j^{-1}(x)$ we have $\ord_{s_1}(\pi(F_{j,y}))\le m$. In this case
$$
\Scal\ge \sum_{j=1}^\ell \sum_{y\in \nu_j^{-1}(x)} a_j\cdot \ord_{s_1}(\pi(F_{j,y})) = \ord_{s_1} (\pi(F)).
$$
By Lemma \ref{LemmaLocD} we have $F\in (s_1^m,s_2)$, which finally gives $\ord_{s_1} (\pi(F))\ge m$.
\end{proof}

%%%%%%%%%%%%%%%%%%%%%%%%%%%%%%%%%%%%%%
%%%%%%%%%%%%%%%%%%%%%%%%%%%%%%%%%%%%%%
%%%%%%%%%%%%%%%%%%%%%%%%%%%%%%%%%%%%%%
%%%%%%%%%%%%%%%%%%%%%%%%%%%%%%%%%%%%%%
%%%%%%%%%%%%%%%%%%%%%%%%%%%%%%%%%%%%%%
%%%%%%%%%%%%%%%%%%%%%%%%%%%%%%%%%%%%%%

\section{Differentials on abelian varieties} \label{SecRestrictions}

We recall that $k$ is an algebraically closed field. Several results in this section will be stated only for positive characteristic as we just need them in this case and the counterpart for characteristic $0$ is either easier or well-known.  

Let $A$ be an abelian variety over $k$ of dimension $n\ge 1$. The neutral point of $A$ is denoted by $e$. For a point $x\in A(k)$, the map of translation by $x$ is denoted by $\tau_x$. If $Z_1$ and $Z_2$ are algebraic cycles of dimensions $r_1$ and $r_2$, the intersection $Z_1.Z_2$ is an algebraic cycle of dimension $2n-r_1-r_2$ defined up to algebraic equivalence. For $0$-cycles we have a $\Z$-valued degree map denoted by $\deg$.

If $X$ is a subvariety of $A$, the smallest translate $B$ of an abelian subvariety of $A$ with $X\subseteq B$ is denoted by $\langle X\rangle$. If $e\in X$ then $\langle X\rangle$ is an abelian subvariety of $A$.

\begin{lemma}\label{LemmaAmpleAbVar} Let $D$ be a non-zero effective divisor on $A$. If $\supp(D)$ contains no translate of positive dimensional abelian subvarieties of $A$, then $D$ is ample.
\end{lemma}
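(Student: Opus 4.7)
The plan is to argue by contraposition: assuming $D$ is not ample, I would produce a positive-dimensional abelian subvariety $B\subseteq A$ and a point $a\in A$ with $B+a\subseteq \supp(D)$. Write $L=\Ocal_A(D)$. The first input is the standard theorem of Mumford that for an effective line bundle on an abelian variety, ampleness is equivalent to finiteness of $K(L)=\ker(\phi_L\colon A\to \hat A)$. If $D$ is not ample, then $K(L)$ has positive dimension, so its reduced connected identity component $B=K(L)^0_{\red}$ is a positive-dimensional abelian subvariety of $A$.

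Next I would analyze the restriction of $L$ to cosets of $B$. For any $a\in A$, the translated bundle $\tau_a^*L$ satisfies $K(\tau_a^*L)=K(L)\supseteq B$, so $(\tau_a^*L)|_B\in \Pic^0(B)$. Transporting back via the isomorphism $\tau_a\colon B\to B+a$, this says that $L|_{B+a}$ corresponds to some $M_a\in \Pic^0(B)$. Now fix a global section $s\in H^0(A,L)$ with $\divi(s)=D$ and view $s|_{B+a}$ as a section of $M_a$. If $M_a$ is nontrivial, then $H^0(B,M_a)=0$ by the usual vanishing for nontrivial degree-zero bundles on an abelian variety, so $s|_{B+a}=0$ and hence $B+a\subseteq \supp(D)$. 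If $M_a$ is trivial, then $s|_{B+a}$ is a constant: either it is zero and again $B+a\subseteq \supp(D)$, or it is nowhere zero and $(B+a)\cap \supp(D)=\emptyset$.

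The conclusion follows at once. For every $a\in A$, the coset $B+a$ is either entirely contained in $\supp(D)$ or entirely disjoint from it, so $\supp(D)$ is a union of $B$-cosets. Since $D$ is nonzero and effective, $\supp(D)$ is nonempty, hence contains at least one coset $B+a$, that is, a translate of the positive-dimensional abelian subvariety $B$, contradicting the hypothesis. The main technical step is really only the first one --- recalling the effective/finite-kernel/ample equivalence and checking that $L|_{B+a}$ transports to an element of $\Pic^0(B)$; after that the coset dichotomy and the final contradiction are essentially formal.
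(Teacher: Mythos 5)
Your proof is correct, but it runs along a genuinely different track from the one in the paper. The paper works with the \emph{divisor}-stabilizer
$$H(D)=\{x\in A(k) : \tau_x^*D=D\}$$
(equality of divisors, not just of linear equivalence classes). From this definition the key inclusion $x_0+H(D)\subseteq \supp(D)$ for any $x_0\in\supp(D)$ is immediate, so the hypothesis forces $H(D)$ to be finite, and one then quotes Mumford's Application 1 (p.~60 of \cite{Mumford}): an effective divisor with finite $H(D)$ is ample. You instead work with the larger group $K(L)=\ker\phi_L$ (translations preserving $L$ only up to isomorphism), extract the abelian subvariety $B=K(L)^0_{\red}$, and then have to do real work to relate $B$ to $\supp(D)$: you observe that $L$ restricted to each $B$-coset transports to an element of $\Pic^0(B)$, and run the standard section-vanishing dichotomy ($H^0$ of a nontrivial element of $\Pic^0$ vanishes; a section of the trivial bundle on a complete variety is constant) to conclude that $\supp(D)$ is a union of $B$-cosets. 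Both arguments ultimately lean on a Mumford-style criterion --- yours on ``effective nondegenerate implies ample,'' the paper's on the $H(D)$ version --- and both are valid. The paper's route is shorter because the inclusion $H(D)+x_0\subseteq\supp(D)$ is tautological, whereas your route essentially reproves that inclusion at the level of $K(L)$ via $\Pic^0$-analysis; in exchange you get the slightly stronger (though unneeded) conclusion that $\supp(D)$ is literally a union of $B$-cosets, and your argument is a little more self-contained in that it spells out why a positive-dimensional $K(L)$ produces cosets inside the support rather than delegating that entirely to the cited reference.
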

\begin{proof} Define $H(D)=\{x\in A(k) : \tau_x^*D=D\}$ where $\tau_x^*D=D$ is an equality of divisors, not just up to equivalence. Then $H(D)$ is the set of $k$-points of a Zariski-closed subgroup of $A$ which we also denote by $H(D)$. Taking any $x_0\in \supp(D)$ we get $H(D)+x_0\subseteq \supp(D)$. Our assumption on $\supp(D)$ implies that $H(D)$ is zero dimensional, hence, finite. By Application 1 in p.60 of \cite{Mumford} we get that $D$ is ample.
\end{proof}
%%
%%

%%%%
%%%% 
\subsection{Degrees} Let $D$ be a divisor on $A$. It induces a morphism $\phi_D: A\to A^\vee$ by the rule $\phi_D(x) = [\tau_x(D)-D]$. We have 
$$
\deg(\phi_D)= \left(\frac{\deg(D^n)}{n!}\right)^2.
$$
See \cite{Mumford} p.150. If $D$ is ample, then $\phi_D$ is an isogeny.

Given $\psi\in \End(A)$ and a prime $\ell$ different from the characteristic of $k$, we consider the polynomial $P_\psi(t)=\det(t\cdot \Id_A-\psi | T_\ell (A))$ where $T_\ell(A)$ is the $\ell$-adic Tate module of $A$. The polynomial $P_\psi(t)$ is in $\Z[t]$, it is monic, independent of $\ell$, and it has degree $2n$. For $r\in \Z$ one has $P_\psi(r)=\deg([r]_A -\psi)$ where $[r]_A\in \End(A)$ is the map of multiplication by $r$. Thus, $P_\psi(0)=\deg(\psi)$. See \cite{Mumford} p.180. 

We define $\tr(\psi)=\tr(\psi| T_\ell A)$, which is the negative of the coefficient of $t^{2n-1}$ in $P_\psi(t)$.

Let $\Sbf$ be the addition map from the group of $0$-cycles on $A$ to $A(k)$. For a $1$-cycle $Z$ and a divisor $D$ we define $\alpha_A(Z,D)\in \End(A)$ by $\alpha_A(Z,D)(x)=\Sbf(Z.(\tau_x(D)-D))$, see \cite{Morikawa, Matsusaka}. By \cite{Matsusaka} p.8, 
\begin{equation}\label{EqnTrAlpha}
\tr(\alpha_A(Z,D)) = 2\cdot \deg(Z.D).
\end{equation}

For a variety $X$ and an irreducible curve $C\subseteq X$ we let $\nu_{C/X}: \widetilde{C}\to X$ be the composition of the normalization map $\widetilde{C}\to C$ and the inclusion $C\to X$.

Let $C\subseteq A$ an irreducible curve. Fix a point $x\in \widetilde{C}$ and consider the associated jacobian embedding $j_x:\widetilde{C}\to J_{\widetilde{C}}$. By functoriality, we get maps $\nu_{C/A,*} : J_{\widetilde{C}}\to A$ and $\nu_{C/A}^* : A^\vee \to J_{\widetilde{C}}$ where $\nu_{C/A} = \nu_{C/A,*}\circ j_x$. Given a divisor $D$ on $A$, from \cite{Matsusaka} Lemma 3 we get
\begin{equation}\label{EqnDefAlpha}
\alpha_A(C,D) = \nu_{C/A,*}\circ \nu_{C/A}^*\circ \phi_D.
\end{equation}
The following result follows from Lemma 3.6 in \cite{Debarre}.
\begin{lemma}\label{LemmaSquare} Let $C$ be an irreducible curve with $\langle C\rangle =A$ and let $D$ be an ample divisor on $A$. There is a monic polynomial $Q_{C,D}(t)\in \Z[t]$ of degree $n$ such that $P_{\alpha_A(C,D)}(t)=Q_{C,D}(t)^2$. The roots of $Q_{C,D}(t)$ are all real and positive.
\end{lemma}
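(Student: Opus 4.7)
The plan is to combine the factorization of $\alpha_A(C,D)$ from \eqref{EqnDefAlpha} with the principal polarization of the Jacobian, and then invoke Lemma~3.6 of \cite{Debarre}. Write $f=\nu_{C/A,*}:J_{\widetilde{C}}\to A$, $h=\nu_{C/A}^{*}:A^{\vee}\to J_{\widetilde{C}}$, set $g=\gfrak_{g}(\widetilde{C})$, and let $\theta:J_{\widetilde{C}}\to J_{\widetilde{C}}^{\vee}$ be the canonical principal polarization. By functoriality of Albanese and Picard one has $h=\theta^{-1}\circ f^{\vee}$, so \eqref{EqnDefAlpha} becomes
\[
\alpha:=\alpha_A(C,D)=f\circ h\circ \phi_D=f\circ \theta^{-1}\circ f^{\vee}\circ \phi_D.
\]
The hypothesis $\langle C\rangle=A$ guarantees that $f$ is surjective, hence $f^{\vee}$ and therefore $h$ are injective.

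I would first check that $\alpha$ is an isogeny so that $P_\alpha(0)\ne 0$. The restriction of $\theta$ to $B:=(\ker f)^0$ is itself a polarization on $B$, so the Weil pairing is non-degenerate on $T_\ell B$, forcing $T_\ell B\cap T_\ell(\mathrm{im}\,h)=0$, which makes $f\circ h:A^\vee\to A$ an isogeny. Next I would transfer the problem to the Jacobian by studying
\[
\widetilde{\phi}:=h\circ\phi_D\circ f=\theta^{-1}\circ\mu,\qquad \mu:=f^{\vee}\circ\phi_D\circ f:J_{\widetilde{C}}\to J_{\widetilde{C}}^{\vee}.
\]
Here $\mu$ is the pull-back of the polarization $\phi_D$ along $f$; it is symmetric in the sense $\mu^{\vee}=\mu$, so $\widetilde{\phi}$ is symmetric for the Rosati involution attached to $\theta$. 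A direct check shows that $T_\ell B$ lies in the kernel of $\widetilde{\phi}$ and that $\widetilde{\phi}$ descends to $\alpha$ on the quotient $T_\ell J_{\widetilde{C}}/T_\ell B\simeq T_\ell A$ induced by $T_\ell f$. This yields
\[
P_{\widetilde{\phi}}(t)=t^{2(g-n)}\cdot P_{\alpha}(t).
\]

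The final and essential step is to apply Lemma~3.6 of \cite{Debarre} to $\widetilde{\phi}=\theta^{-1}\circ \mu$: for such an endomorphism, obtained as the composition of the inverse principal polarization with a symmetric (pseudo-)polarization, the characteristic polynomial on the Tate module is the square of a monic polynomial $\widetilde{Q}\in\Z[t]$ of degree $g$ whose roots are real and non-negative. Combined with the displayed identity above and with $P_{\alpha}(0)\ne 0$, this forces $\widetilde{Q}(t)=t^{g-n}Q(t)$ for some monic $Q\in\Z[t]$ of degree $n$ with $Q(0)\ne 0$, so $P_{\alpha}(t)=Q(t)^{2}$ and the roots of $Q$ are real and strictly positive, as required. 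The main obstacle I expect is the Debarre input itself: mere Rosati-symmetry does not a priori force a square, so one really has to use that $\widetilde{\phi}$ is $\theta^{-1}$ times a symmetric polarization, which produces real non-negative eigenvalues on a ``half-rank'' analytic/Hodge piece (available directly over $\C$, or in positive characteristic by a standard lift, exploiting that the characteristic polynomial is an isogeny invariant) and thus eigenvalues of multiplicity two on the Tate module.
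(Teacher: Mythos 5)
The paper's entire proof of Lemma~\ref{LemmaSquare} is the one-line remark preceding it: ``The following result follows from Lemma 3.6 in \cite{Debarre}.'' Debarre's Lemma~3.6 is stated precisely for the endomorphism $\alpha_A(C,D)$ attached to a generating curve and an ample divisor, and already delivers the square factorization $P_{\alpha}(t)=Q(t)^2$ with $Q\in\Z[t]$ monic of degree~$n$ and real positive roots; no further reduction is performed. Your proposal instead reconstructs a proof of Debarre's lemma: you pass to $J_{\widetilde{C}}$, use the general linear-algebra identity relating $\det(tI-uv)$ and $\det(tI-vu)$ to get $P_{\widetilde{\phi}}(t)=t^{2(g-n)}P_{\alpha}(t)$ for $\widetilde{\phi}=h\circ\phi_D\circ f$ on the Jacobian, and then invoke Debarre for the \emph{different} endomorphism $\widetilde{\phi}=\theta^{-1}\circ\mu$. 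That is a genuinely distinct route, and the transfer identity is a correct and clarifying observation, but it ends up being redundant with the paper's direct citation rather than an alternative to it.

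Two concrete points of concern in your write-up. First, the identification $h=\nu_{C/A}^*$ with $\theta^{-1}\circ f^\vee$ is only correct up to a sign (with the usual conventions the Abel--Jacobi pullback is $-\theta^{-1}$), and since positivity of the roots is precisely what is at stake, the sign should be pinned down rather than suppressed. Second, and more substantively, you are applying ``Debarre's Lemma~3.6'' to $\widetilde{\phi}$ on the Jacobian as if it were a statement about an arbitrary Rosati-symmetric composition $\theta^{-1}\circ\mu$ with $\mu$ a symmetric pseudo-polarization; but Debarre's lemma, as the paper uses it, is the statement about $\alpha_A(C,D)$ itself. If the underlying engine really is a general statement about $\theta^{-1}\circ\mu$, your argument is fine (and your closing caveat shows you are aware this is the crux); if it is formulated only for the curve endomorphism on $A$, then your intermediate reduction does not license the citation you make and you would be proving Debarre's lemma by a near-circular appeal to it. Either way, the paper's proof is simply the citation, and your extra steps are not needed to obtain the stated result.
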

If $Z$ is an $r$-cycle on $A$, the degree of $Z$ relative to a divisor $H$ is the intersection number $\deg_H (Z)= \deg(H^r.Z)$. If $X\subseteq A$ is a smooth subvariety of dimension $d\ge 1$, the \emph{canonical degree} of $X$ relative to $H$ is $\cdeg_H(X)=\deg(H^{d-1}.K_X)$, where $K_X$ is a canonical divisor on $X$ considered as an algebraic cycle in $A$. For instance, if $B$ is an abelian subvariety of $A$, then $\cdeg_H(B)=0$.

\begin{lemma}[Degrees for surfaces in abelian $3$-folds]\label{LemmaDegXA} Let $X$ be a smooth projective surface contained in an abelian threefold $A$. Then $X|_X$ defines a canonical divisor on $X$ and 
$$
\deg_X(X)=\cdeg_X(X)=\deg(X^3)_A=c_1^2(X).
$$
\end{lemma}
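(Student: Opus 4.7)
The plan is to derive the identities from two classical inputs: the adjunction formula for a smooth divisor, which collapses in our situation because $K_A$ is trivial, and the projection formula applied to the inclusion $i\colon X\hookrightarrow A$.

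First, I would establish that $X|_X$ is a canonical divisor on $X$. Since $A$ is an abelian threefold, $\Omega^3_{A/k}$ is trivial, so $K_A \sim 0$. Because $X\subset A$ is a smooth surface inside a smooth threefold, $X$ is a smooth effective Cartier divisor on $A$, and the adjunction formula gives
$$
K_X \;\sim\; (K_A + X)|_X \;\sim\; X|_X.
$$
This is the first assertion.

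Next, I would unwind the three numerical invariants. By the definitions recalled just before the lemma, since $\dim X = 2$ and $K_X$ has dimension $1$,
$$
\deg_X(X) \;=\; \deg(X^{2}.X)_A \;=\; \deg(X^3)_A,
\qquad
\cdeg_X(X) \;=\; \deg(X.K_X)_A.
$$
Using the first part, the $1$-cycle $K_X$ on $X$ pushes forward to the cycle $X.X$ on $A$, so by the projection formula (applied to $i\colon X\hookrightarrow A$)
$$
\deg(X.K_X)_A \;=\; \deg\bigl(X.(X|_X)\bigr)_A \;=\; \deg(X.X.X)_A \;=\; \deg(X^3)_A.
$$
Finally, computing the self-intersection of $K_X$ on $X$ and using the projection formula twice,
$$
c_1^2(X) \;=\; (K_X.K_X)_X \;=\; \bigl(X|_X . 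X|_X\bigr)_X \;=\; \deg(X.X.X)_A \;=\; \deg(X^3)_A.
$$

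No serious obstacle is expected: the whole statement is essentially the observation that for a smooth surface embedded as a divisor in a variety with trivial canonical class, all natural self-degrees and canonical degrees of $X$ coincide with the triple self-intersection $\deg(X^3)_A$. The only point requiring minor care is the consistent bookkeeping between divisors on $X$ and their images as cycles in $A$, which is handled by the projection formula.
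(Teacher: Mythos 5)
Your proof is correct and follows essentially the same route as the paper: adjunction with $K_A\sim 0$ to identify $K_X$ with $X|_X$, then projection-formula bookkeeping to equate all the self-intersection numbers with $\deg(X^3)_A$. The paper's version is just terser, compressing the intersection-theoretic unwinding into a single chain of equalities.
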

\begin{proof} As $A$ has trivial canonical sheaf, $X|_X$ gives a canonical divisor on $X$ by the adjunction formula (\cite{Hartshorne} Proposition II.8.20). Thus,  $\deg(X^3)_A=\deg(X.K_X)_A=\deg(K_X.K_X)_X=c_1^2(X)$.
\end{proof}
%%
%%

%%%% 
\subsection{Restriction of $1$-forms} We assume that $k$ has positive characteristic $p$.

\begin{lemma}[Injectivity for curves]\label{LemmaInjCurves} Let $H$ be an ample divisor on $A$. Let $C$ be an irreducible curve in $A$ with $\langle C\rangle =A$. Suppose that
$$
p>\frac{n!}{n^n}\cdot \frac{\deg_H(C)^n}{\deg(H^n)}.
$$  
Then the map $\nu_{C/A}^\bullet : H^0(A,\Omega^1_{A/k})\to H^0(\widetilde{C}, \Omega^1_{\widetilde{C}/k})$ is injective.
\end{lemma}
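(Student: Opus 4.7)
The plan is to reduce to showing that an associated isogeny of abelian varieties is separable, and to bound its degree using the endomorphism $\alpha_A(C,H)$ together with Lemma~\ref{LemmaSquare} and AM--GM. First I would factor $\nu_{C/A} = \phi \circ j_x$, where $j_x \colon \widetilde{C} \to J := J_{\widetilde{C}}$ is the jacobian embedding and $\phi := \nu_{C/A,*} \colon J \to A$. Since $j_x^\bullet \colon H^0(J,\Omega^1_{J/k}) \to H^0(\widetilde{C},\Omega^1_{\widetilde{C}/k})$ is an isomorphism (a classical property of the jacobian, valid in any characteristic), it suffices to show $\phi^\bullet$ is injective. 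Identifying $H^0(-,\Omega^1_{-/k})$ of an abelian variety with the cotangent space at the origin, $\phi^\bullet$ is dual to $d\phi_0 \colon T_0 J \to T_0 A$, so the goal becomes the surjectivity of $d\phi_0$. Since $\langle C\rangle=A$, the map $\phi$ is surjective; writing $K := \ker(\phi)^0$ and $\bar\phi \colon J/K \to A$ for the induced isogeny, the smoothness of $J\to J/K$ (as $K$ is an abelian subvariety, hence smooth) yields that $d\phi_0$ is surjective iff $\bar\phi$ is separable, for which $p\nmid\deg(\bar\phi)$ is a sufficient condition.

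To bound $\deg(\bar\phi)$ I would use $\alpha := \alpha_A(C,H) \in \End(A)$. By \eqref{EqnDefAlpha}, $\alpha$ factors as $A \xrightarrow{\phi_H} A^\vee \xrightarrow{\nu^*} B \xrightarrow{\phi|_B} A$, where $B := \nu^*(A^\vee) \subseteq J$ is an $n$-dimensional abelian subvariety (with the middle arrow understood as the co-restriction of $\nu^*$ to its image); all three arrows are isogenies. Lemma~\ref{LemmaSquare} together with \eqref{EqnTrAlpha} gives $P_\alpha = Q_{C,H}^2$ for a monic $Q_{C,H}$ of degree $n$ whose real positive roots $\lambda_1,\dots,\lambda_n$ satisfy $\sum_i \lambda_i = \deg_H(C)$, so AM--GM gives $\deg(\alpha) = (\prod_i \lambda_i)^2 \le (\deg_H(C)/n)^{2n}$. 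The standard identity $\phi|_B^\vee = \theta|_B \circ \nu^*$ (where $\theta|_B \colon B\to B^\vee$ is the restriction to $B$ of the principal polarization of $J$) yields $\deg(\nu^*) = \deg(\phi|_B)/\deg(\theta|_B)$; combined with $\deg(\phi_H)=(\deg(H^n)/n!)^2$ and multiplicativity of degrees in the factorization, this produces
\[
\deg(\phi|_B) \le \frac{n!\,\deg_H(C)^n}{n^n\,\deg(H^n)} \cdot \sqrt{\deg(\theta|_B)}.
\]

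To eliminate the $\sqrt{\deg(\theta|_B)}$ factor, note that $B$ and $K$ are complementary abelian subvarieties of $J$: their dimensions add to $g$, and $B\cap K \subseteq B \cap \ker(\phi)$ is finite because $\phi|_B$ is an isogeny. The classical identity $|B\cap K|^2 = \deg(\theta|_B)\deg(\theta|_K)$, obtained by dualizing the isogeny $B\times K \to J$ and composing with the principal polarization of $J$, shows that the composition $\sigma\colon B \hookrightarrow J \twoheadrightarrow J/K$ is an isogeny of degree $\sqrt{\deg(\theta|_B)\deg(\theta|_K)}$. Since $\bar\phi\circ\sigma = \phi|_B$,
\[
\deg(\bar\phi) \;=\; \frac{\deg(\phi|_B)}{\deg(\sigma)} \;\le\; \frac{n!\,\deg_H(C)^n}{n^n\,\deg(H^n)} \cdot \frac{1}{\sqrt{\deg(\theta|_K)}} \;\le\; \frac{n!\,\deg_H(C)^n}{n^n\,\deg(H^n)},
\]
using $\deg(\theta|_K)\ge 1$. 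The hypothesis then forces $p > \deg(\bar\phi)$, whence $\bar\phi$ is separable, $d\phi_0$ is surjective, and $\nu_{C/A}^\bullet$ is injective. The main obstacle is the final degree-bookkeeping step: extracting the clean bound $\deg(\bar\phi) \le n!\deg_H(C)^n/(n^n\deg(H^n))$ from the AM--GM estimate requires both the duality identity $\phi|_B^\vee = \theta|_B\circ\nu^*$ and the Poincar\'e identity $|B\cap K|^2 = \deg(\theta|_B)\deg(\theta|_K)$, both of which must be handled carefully in positive characteristic.
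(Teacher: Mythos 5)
Your proposal is correct in spirit and uses most of the same ingredients as the paper (the factorization $\nu_{C/A}=\nu_{C/A,*}\circ j_x$, the identity $\alpha_A(C,H)=\nu_{C/A,*}\circ\nu_{C/A}^*\circ\phi_H$, Lemma~\ref{LemmaSquare}, AM--GM, and separability of isogenies of degree prime to $p$), but it takes a genuinely different route through the degree bookkeeping, and in doing so takes on more risk than is necessary.

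The paper works directly with the composite $\beta=\nu_{C/A,*}\circ\nu_{C/A}^*\colon A^\vee\to A$, without decomposing $J_{\widetilde C}$. The key observation is that $\deg(\beta)=\deg(\alpha)/\deg(\phi_H)=\bigl(n!\,|Q(0)|/\deg(H^n)\bigr)^2$ is a perfect square, and since $\deg(\beta)\in\Z$, the square root $M:=n!\,|Q(0)|/\deg(H^n)$ is a \emph{positive integer}. AM--GM then gives $M\le \frac{n!}{\deg(H^n)}\bigl(\deg_H(C)/n\bigr)^n<p$, whence $p\nmid M$ and hence $p\nmid M^2=\deg(\beta)$. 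So $\beta$ is separable and $\beta^\bullet=(\nu_{C/A}^*)^\bullet\circ(\nu_{C/A,*})^\bullet$ is an isomorphism, which forces injectivity of $(\nu_{C/A,*})^\bullet$. Note that $\deg(\beta)=M^2$ may well exceed $p$; the integrality of $M$ is what allows the modest AM--GM estimate $M<p$ to yield $p\nmid\deg(\beta)$ without ever needing $\deg(\beta)<p$.

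Your route instead refines the factorization through $B=\nu_{C/A}^*(A^\vee)$ and the complement $K=\ker(\nu_{C/A,*})^0$ and bounds $\deg(\bar\phi)<p$ directly. This is workable but needs the duality identity $\phi|_B^\vee=\theta|_B\circ\nu^*$ and the Poincar\'e-type identity $|B\cap K|^2=\deg(\theta|_B)\deg(\theta|_K)$, both of which you correctly flag as requiring care in characteristic $p$ (these are standard in Birkenhake--Lange over $\C$ but the references usually stay in characteristic $0$). The extra structure you import is precisely what the paper's integrality trick makes unnecessary: once one knows $\deg(\beta)$ is the square of an integer $M$ and $M<p$, there is nothing left to do. So your argument is more machinery-heavy and, pending the positive-characteristic verification of the Poincar\'e identity, less secure; the paper's is shorter and avoids that entire issue.
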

\begin{proof} Fix a point $x\in \widetilde{C}$ and consider the associated embedding $j: \widetilde{C}\to J=J_{\widetilde{C}}$. We have an isomorphism $j^\bullet : H^0(J,\Omega^1_{J/k})\to H^0(\widetilde{C}, \Omega^1_{\widetilde{C}/k})$ which is canonical in the sense that it does not depend on the choice of $x$ (because the regular differentials on $J$ are translation invariant).

Let $\alpha =\alpha_A(C,H)$ and $\beta = \nu_{C/A,*}\circ \nu_{C/A}^* : A^\vee \to A$. Thus, $\alpha=\beta\circ \phi_H$ by \eqref{EqnDefAlpha}. 

Since $H$ is ample, Lemma \ref{LemmaSquare} gives $P_\alpha(t)=Q(t)^2$ for a monic polynomial $Q(t)\in \Z[t]$ of degree $n$, whose roots $\gamma_1,...,\gamma_n$ (counting multiplicity) are real and positive. Hence $\deg(\alpha)=P_\alpha(0)=Q(0)^2=(\gamma_1\cdots \gamma_n)^2 >0$. It follows that $\alpha:A\to A$ is an isogeny, and so is $\beta: A^\vee\to A$. Furthermore, $\deg(\phi_H)=\deg(H^n)^2/n!^2$ so we get 
$$
\deg(\beta)=\frac{\deg(\alpha)}{\deg(\phi_H)}=\left(\frac{n! Q(0)}{\deg(H^n)}\right)^2.
$$
This integer is the square of a rational number, so $M=n! |Q(0)|/\deg(H^n)$ is a positive integer. By the arithmetic-geometric mean inequality we have
$$
M = \frac{n!}{\deg(H^n)}\cdot \gamma_1\cdots \gamma_n\le \frac{n!}{\deg(H^n)}\left(\frac{\gamma_1+...+ \gamma_n}{n}\right)^n = \frac{n!}{\deg(H^n)}\left(\frac{\deg(C.H)}{n}\right)^n
$$
where the last equality uses $\tr(\alpha)=-2(\gamma_1+... + \gamma_n)$ and \eqref{EqnTrAlpha}. Therefore, $p>M$.

Since $M$ is a positive integer and $\deg(\beta)=M^2$, we conclude that $p$ does not divide $\deg(\beta)$. So, $\beta$ is a separable isogeny, giving that $\beta^\bullet : H^0(A,\Omega^1_{A/k})\to H^0(A^\vee,\Omega^1_{A^\vee/k})$ is an isomorphsm. As $\beta = \nu_{C/A,*}\circ \nu_{C/A}^*$, we get that $(\nu_{C/A,*})^\bullet : H^0(A,\Omega^1_{A/k})\to H^0(J,\Omega^1_{J/k})$ is injective. Since $\nu_{C/A} = \nu_{C/A,*}\circ j$ and $j^\bullet$ is an isomorphism, we conclude that $\nu_{C/A}^\bullet$ is injective.
\end{proof}
We remark that if $n=3$ and $C$ is smooth, then the condition on $p$ can be dropped by a result of Nakai \cite{NakaiNotes}. However, this is not enough for us because we will consider possibly singular curves.

\begin{lemma}[Semi-injectivity on canonical divisors]\label{LemmaSemiInj} Let $X$ be a smooth surface contained in $A$. Let $D$ be an effective canonical divisor of $X$ and let $C\subseteq X$ be an irreducible curve contained in the support of $D$. Consider the map $\nu_{C/A}^\bullet: H^0(A,\Omega^1_{A/k})\to H^0(\widetilde{C},\Omega^1_{\widetilde{C}/k})$. We have the following:
\begin{itemize}
\item[(i)] If $n=3$, $X$ is an ample divisor on $A$ containing no elliptic curves, and %%
\begin{equation}\label{EqnSemi1}
p> c_1^2(X),
\end{equation} 
then $\dim \ker(\nu_{C/A}^\bullet)\le 1$. 
\item[(ii)] If $A$ is simple and there is an ample divisor $H$ on $A$ satisfying 
\begin{equation}\label{EqnSemi2}
p>\frac{n!}{n^n}\cdot \frac{\cdeg_H(X)^n}{\deg(H^n)},
\end{equation} 
then $\nu_{C/A}^\bullet$ is injective.
\end{itemize} 
\end{lemma}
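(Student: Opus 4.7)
The plan is to reduce both statements to Lemma~\ref{LemmaInjCurves} by factoring $\nu_{C/A}$ through the abelian subvariety $B=\langle C\rangle$. After translating $C$---which does not affect $\nu_{C/A}^\bullet$ since the differentials in $H^0(A,\Omega^1_{A/k})$ are translation-invariant---I may assume $B$ is an abelian subvariety of $A$. The inclusion $\iota_B:B\hookrightarrow A$ induces a surjection $\iota_B^\bullet:H^0(A,\Omega^1_{A/k})\twoheadrightarrow H^0(B,\Omega^1_{B/k})$ with kernel of dimension $n-\dim B$, and $\nu_{C/A}^\bullet=\nu_{C/B}^\bullet\circ\iota_B^\bullet$ yields
\[\dim\ker(\nu_{C/A}^\bullet)=(n-\dim B)+\dim\ker(\nu_{C/B}^\bullet).\]

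For part~(ii), simplicity of $A$ forces $B=A$ since $C$ has positive dimension. I apply Lemma~\ref{LemmaInjCurves} to $\nu_{C/A}$ with the prescribed ample $H$, relying on the key bound $\deg_H(C)\le\cdeg_H(X)$: since $C$ occurs with positive coefficient in the effective canonical divisor $D$, the cycle $D-C$ is effective, and amplenness of $H$ gives $H^{n-1}\cdot C_i\ge 0$ on every irreducible component of $D$. Hence $\deg_H(C)=H^{n-1}\cdot C\le H^{n-1}\cdot D=\cdeg_H(X)$, which is exactly the numerical hypothesis on $p$.

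For part~(i), I split on $\dim B\in\{1,2,3\}$. The case $\dim B=1$ is impossible, as it would make $C=B$ an elliptic curve contained in $X$. When $\dim B=2$ the kernel of $\iota_B^\bullet$ is one-dimensional, so it suffices to prove $\nu_{C/B}^\bullet$ injective. The curve $C$ is not itself an elliptic curve (else $\dim B=1$), hence contains no positive-dimensional abelian subvariety, and Lemma~\ref{LemmaAmpleAbVar} gives that $C$ is ample on $B$. I apply Lemma~\ref{LemmaInjCurves} in $B$ with $H=C$: by adjunction on $B$ (using $K_B=0$), the numerical condition becomes $p>(C^2)_B/2=\gfrak_a(C)-1$. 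Under the hypotheses, $X$ is of general type by Lemma~\ref{LemmaGenTypeTranslate} (it contains no elliptic curves and is not an abelian subvariety, by amplenness) and contains no rational curves (being smooth inside an abelian variety); by Lemma~\ref{LemmaGTA}, $K_X$ is ample, hence nef, so Lemma~\ref{LemmaCanonicalGenusBd} applied to $D$ yields $\gfrak_a(C)-1\le c_1^2(X)<p$.

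The delicate case is $\dim B=3$, where $\dim\ker(\nu_{C/A}^\bullet)\le 1$ must be shown directly. Arguing by contradiction, pick independent $\omega_1,\omega_2\in\ker(\nu_{C/A}^\bullet)$. The restriction $\iota_X^\bullet:H^0(A,\Omega^1_{A/k})\to H^0(X,\Omega^1_{X/k})$ is injective since its kernel is $H^0(X,\Ocal_X(-X))=0$ (as $X$ ample in $A$ makes $\Ocal_X(X)$ ample on $X$). Hence $u_i=\iota_X^\bullet(\omega_i)$ are independent sections of $\Omega^1_{X/k}$ vanishing on $\widetilde C$. If $u_1\wedge u_2=0$ in $H^0(X,\Omega^2_{X/k})$, the one-dimensional common annihilator $L\subseteq\Lie(A)$ satisfies $L\subseteq T_xX$ for every $x\in X$; a $p$-th power derivation argument (if $v(f)\in(f)$ for a local equation of $X$, then so is $v^p(f)$) shows the $p$-Lie subalgebra $L'$ generated by $L$ is also everywhere tangent, and the resulting non-zero vector field on $X$ contradicts the vanishing of $H^0(X,T_X)$ for the general-type surface $X$ with ample $K_X$. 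The remaining subcase $u_1\wedge u_2\ne 0$ is the main obstacle: here one examines the endomorphism $\beta=\nu_{C/A,*}\circ\nu_{C/A}^*:A^\vee\to A$, whose degree is $M^2$ with $M\le(2/9)c_1^2(X)^2$ (using $K_X\cdot C\le c_1^2(X)$ and the computation from the proof of Lemma~\ref{LemmaInjCurves}), and combines this with a careful infinitesimal analysis of $\ker\beta$ in characteristic $p>c_1^2(X)$ to force $\dim\Lie(\ker\beta)\le 1$; this inseparability estimate in positive characteristic is the hardest step of the argument.
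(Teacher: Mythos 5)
Your treatment of part~(ii) and of part~(i) in the subcase $\dim B=2$ aligns with the paper: factor through $B=\langle C\rangle$, reduce to $\nu^\bullet_{C/B}$, and apply Lemma~\ref{LemmaInjCurves}. In part~(ii) you spell out the inequality $\deg_H(C)\le\cdeg_H(X)$ (from effectivity of $D-C$ and ampleness of $H$), which is precisely the step the paper leaves implicit when it says the hypotheses ``directly allow us to apply Lemma~\ref{LemmaInjCurves}''; this is a genuine clarification. One small inefficiency in your $\dim B=2$ argument: rather than routing through general type via Lemma~\ref{LemmaGenTypeTranslate} (which is stated over~$\C$) and Lemma~\ref{LemmaGTA}, the cleaner and characteristic-$p$-safe path is the one the paper takes, namely that $D\sim X|_X$ is ample on $X$ because $X$ is ample in $A$, giving the nefness required by Lemma~\ref{LemmaCanonicalGenusBd} directly.

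The genuine gap is in part~(i) when $\dim B=3$. You replace the paper's short argument with a contradiction scheme split on whether $u_1\wedge u_2$ vanishes, and you openly leave the key branch $u_1\wedge u_2\ne 0$ unfinished, calling the inseparability estimate for $\ker\beta$ ``the hardest step of the argument''; as written this does not prove the lemma (and the $u_1\wedge u_2=0$ branch is itself only a sketch). The paper's case~(a) ($B=A$) is just another application of Lemma~\ref{LemmaInjCurves}: take $H=X$, use Lemma~\ref{LemmaDegXA} to get $\deg(H^3)=\cdeg_H(X)=c_1^2(X)$, bound $\deg_X(C)\le X.D=c_1^2(X)$ exactly as you did in part~(ii), and conclude injectivity. (Running the numbers gives the threshold $p>\tfrac{2}{9}c_1^2(X)^2$ rather than the ``$p>2c_1^2(X)/3$'' the paper writes; the stated $p>c_1^2(X)$ of hypothesis~(i) is therefore only adequate when $c_1^2(X)\le 4$, but the lemma is invoked inside Theorem~\ref{ThmMain}(i) under $p>(128/9)c_1^2(X)^2$, so the downstream argument is unaffected.) The fix for your write-up is simply to reuse the degree estimate you already established in part~(ii) instead of beginning the unresolved analysis of $\beta$.
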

\begin{proof} If (i) holds we consider the ample divisor $H=X$ and note that Lemma \ref{LemmaDegXA} gives
$$
\deg_H(X)=\deg(X^3)_A=\cdeg(X)=c_1^2(X).
$$
Thus, condition \eqref{EqnSemi2} in case (i) simplifies to $p>2c_1^2(X)/3$, which is implied by \eqref{EqnSemi1}.

Let $B=\langle C\rangle$. If (ii) holds then $B=A$, while if (i) holds then $B=A$ or $\dim B=2$ since $C$ is not an elliptic curve. Thus, there are two cases to consider: 
\begin{itemize}
\item[(a)] $A=B$ and either (i) or (ii) holds.
\item[(b)] $\dim B=2$ and (i) holds.
\end{itemize}

%%%%
%%%% CASE (a)
%%%%
In case (a),  \eqref{EqnSemi1} and \eqref{EqnSemi2} directly allow us to apply Lemma \ref{LemmaInjCurves} to get injectivity of $\nu_{C/A}^\bullet$.

%%%%
%%%% CASE (b)
%%%%

In case (b) we note that $D$ is ample since it is linearly equivalent to $X|_X$ on $X$ (by Lemma \ref{LemmaDegXA}) and $X$ is ample in $A$. In particular, $D$ is numerically effective and non-zero, so Lemma \ref{LemmaCanonicalGenusBd} applies.

The curve $C$ is an ample divisor on the abelian surface $B$ because $\langle C\rangle =B$. Hence, the adjunction formula for $C$ in $B$
$$
\begin{aligned}
\frac{2!}{2^2}\cdot \frac{\deg_{C}(C)^2}{\deg(C.C)_B}=\frac{\deg(C.C)_B}{2}=\gfrak_a(C)-1\le c_1^2(X)
\end{aligned}
$$
where the last bound is due to Lemma \ref{LemmaCanonicalGenusBd}. Thus, by \eqref{EqnSemi1}  Lemma \ref{LemmaInjCurves} gives that $\nu_{C/B}^\bullet:H^0(B,\Omega^1_{B/k})\to H^0(\widetilde{C},\Omega^1_{\widetilde{C}/k})$ is injective. Let $i_{B/A}:B\to A$ be the inclusion of $B$ in $A$. Since $B$ is (up to translation) an abelian subvariety of $A$, the map $i_{B/A}^\bullet:H^0(A,\Omega^1_{A/k})\to H^0(B,\Omega^1_{B/k})$ is surjective, hence, its kernel has dimension $1$. This proves $\dim_k \ker(\nu_{C/A}^\bullet)=1$ in case (b).
\end{proof}

We also need an injectivity result for surfaces. 

\begin{lemma}[Injectivity for surfaces]\label{LemmaInjSurfaces} Let $X$ be a smooth surface contained in $A$. Let $\iota:X\to A$ be the inclusion. Assume that either of the following conditions holds:
\begin{itemize}
\item[(i)] $n=3$ and $\langle X \rangle =A$.
\item[(ii)] $A$ is simple and there is an ample divisor $H$ on $A$ such that 
$$
p>\frac{3^n\cdot n!}{n^n}\cdot \frac{\deg_H(X)^n}{\deg(H^n)}.
$$
\end{itemize}
Then the map  $\iota^\bullet : H^0(A,\Omega^1_{A/k})\to H^0(X,\Omega^1_{X/k})$ is injective.
\end{lemma}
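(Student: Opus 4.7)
The plan is to reduce to the curve case, Lemma \ref{LemmaInjCurves}, by choosing an appropriate smooth irreducible curve $C\subseteq X$. For any such $C$ one has the factorisation $\nu_{C/A}=\iota\circ \nu_{C/X}$, so Lemma \ref{LemmaCompositionInt} gives $\nu_{C/A}^\bullet=\nu_{C/X}^\bullet\circ \iota^\bullet$. Hence any $\omega\in\ker(\iota^\bullet)$ also lies in $\ker(\nu_{C/A}^\bullet)$, and injectivity of the latter forces $\omega=0$.

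The curve $C$ will be built by Bertini. By Lefschetz's theorem on abelian varieties, $3H$ is very ample on $A$ in every characteristic (in case (i), fix any ample $H$ on $A$ to begin with). I would embed $A\hookrightarrow\Pro^N$ via $|3H|$ and take $C=X\cap L$ for a generic hyperplane $L\subseteq\Pro^N$. Since $X$ is smooth of dimension $2$, Bertini gives that $C$ is smooth and irreducible, with divisor class $3H|_X$ on $X$. The projection formula then yields
\[
\deg_H(C)=(H\cdot C)_A=3\,(H^2\cdot X)_A=3\deg_H(X).
\]

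In case (ii), the simplicity of $A$ immediately forces $\langle C\rangle=A$, since a positive-dimensional curve generates a nonzero abelian subvariety and $A$ has no proper non-trivial ones. Substituting the computed value of $\deg_H(C)$ into the bound of Lemma \ref{LemmaInjCurves} produces exactly
\[
p>\frac{3^n\cdot n!}{n^n}\cdot\frac{\deg_H(X)^n}{\deg(H^n)}=\frac{n!}{n^n}\cdot\frac{\deg_H(C)^n}{\deg(H^n)},
\]
which is the hypothesis. Hence Lemma \ref{LemmaInjCurves} finishes case (ii).

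In case (i), $n=3$ and $C$ is smooth, so the remark following Lemma \ref{LemmaInjCurves} (Nakai's result) gives injectivity of $\nu_{C/A}^\bullet$ with no $p$-condition, \emph{provided} $\langle C\rangle=A$. Verifying this last point is the main technical obstacle. I would argue as follows. Since abelian subvarieties of $A$ are rigid and countable in number, there is a unique $B'\subseteq A$ with $\langle C_L\rangle=B'$ for generic $L$ in the irreducible parameter space of hyperplanes. Suppose for contradiction that $B'\subsetneq A$, and consider the quotient $\pi\colon A\to A/B'$. Since $\langle X\rangle=A$, the image $Y':=\pi(X)$ has positive dimension; as $C_L$ lies in a translate of $B'$ for generic $L$ (so $\pi|_{C_L}$ is constant), a dimension count with $\dim X=2$ leaves only the possibility $\dim B'=2$ and $\dim Y'=1$. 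But then the fibres of $\pi|_X\colon X\to Y'$ form a $1$-parameter family of curves in $X$, and a generic hyperplane section $C_L=X\cap L$ cannot be contained in any single fibre: $L$ meets a generic fibre in finitely many points, so $\pi(C_L)=Y'$ is positive dimensional, contradicting the constancy of $\pi|_{C_L}$. Hence $B'=A$ and Nakai's strengthening of Lemma \ref{LemmaInjCurves} concludes case (i).
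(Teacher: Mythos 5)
Your case (ii) matches the paper's proof: Bertini gives a smooth irreducible $C\sim 3H|_X$, the factorization $\nu_{C/A}^\bullet=\nu_{C/X}^\bullet\circ\iota^\bullet$ reduces injectivity of $\iota^\bullet$ to injectivity of $\nu_{C/A}^\bullet$, simplicity of $A$ gives $\langle C\rangle=A$, and $\deg_H(C)=3\deg_H(X)$ hands you exactly the hypothesis of Lemma \ref{LemmaInjCurves}. Case (i) in the paper is, by contrast, a one-liner: it directly invokes \cite{NakaiOThDifAlgVar} Theorem 5(I), which treats pullback of $1$-forms to a generating \emph{hypersurface} of an abelian variety, so no detour through curves is required. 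Your route --- a Bertini curve together with the $n=3$ smooth-curve remark after Lemma \ref{LemmaInjCurves} (the other Nakai reference \cite{NakaiNotes}) --- is genuinely different, and it is workable, but your verification that the generic Bertini curve generates $A$ has two rough edges. First, the existence of a single $B'$ attached to the generic $L$ needs the scheme-theoretic generic point of the Grassmannian (or base change to an uncountable field): over a countable $k$ such as $\overline{\F}_p$, a countable union of proper closed subsets can cover all closed points of an irreducible parameter space, so a naive genericity argument does not go through. Second, the dimension count asserting that only $\dim B'=2$ is possible is inaccurate --- $\dim B'=1$ can also occur a priori --- although this does not affect the conclusion, since the key point $\dim\pi(X)=1$ and the ensuing nonconstancy of $\pi|_{C_L}$ hold in either case. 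A crisper way to get $\langle C\rangle=A$ that sidesteps both issues: $C\sim 3H|_X$ is ample on $X$, so $C^2>0$; but an irreducible curve lying in a translate of a proper abelian subvariety $B'\subsetneq A$ would be a component of a fiber of $\pi|_X\colon X\to \pi(X)\subseteq A/B'$ (here $\dim\pi(X)\ge 1$ because $\langle X\rangle=A$, and $\dim\pi(X)\le 1$ because the fiber containing $C$ is $1$-dimensional), and Zariski's lemma then forces $C^2\le 0$, a contradiction.
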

\begin{proof} If (i) holds, the desired injectivity follows from \cite{NakaiOThDifAlgVar} Theorem 5(I), since the latter result covers the case of generating hypersurfaces in abelian varieties. 

Let us assume (ii). By \cite{Mumford} p.163, $3H$ is a very ample divisor on $A$.  By Bertini's theorem (\cite{Hartshorne} II.8.18 and III.7.9.1) we can choose a divisor $D\sim 3H$ on $A$ such that $C=D.X$ is a smooth irreducible curve. Let $\nu : C\to A$ be the inclusion; this is the same as $\nu_{C/A}$ because $C=\widetilde{C}$. Since $\nu$ factors through $\iota:X\to A$, it suffices to show that $\nu^\bullet : H^0(A,\Omega^1_{A/k})\to H^0(C,\Omega^1_{C/k})$ is injective. 

As $A$ is simple, $\langle C\rangle =A$. We note that $\deg_H(C) =\deg(H.C)= \deg(3H.X.H)=3\deg_H(X)$, so the condition on $p$ given by (ii) allows us to conclude by Lemma \ref{LemmaInjCurves}.
\end{proof}
%%
%%

%%%% 
%%%%
%%%%
\subsection{Restriction of $2$-forms} We keep the assumption that $k$ has positive characteristic $p$.

\begin{lemma}[Non-vanishing of $2$-forms on surfaces]\label{Lemma2forms} Assume $n\ge 3$. Let $X$ be a smooth surface in $A$, let $\iota:X\to A$ be the inclusion, and let $\omega_1,\omega_2\in H^0(A,\Omega^1_{A/k})$ be differentials satisfying  $\omega_1\wedge\omega_2\ne 0$ in $H^0(A,\Omega^2_{A/k})$. Assume that either of the following conditions holds:
\begin{itemize}
\item[(i)] $n=3$, $X$ is an ample divisor on $A$ containing no elliptic curves,  and 
\begin{equation}\label{Eqn2formi}
p>(128/9)\cdot c_1^2(X)^2.
\end{equation}
\item[(ii)] $A$ is simple and there is an ample divisor $H$ on $A$ satisfying 
\begin{equation}\label{Eqn2formii}
p>\frac{ n!}{n^n}\cdot \frac{(3\deg_H(X)+\cdeg_H(X))^n}{\deg(H^n)}.
\end{equation}
\end{itemize}
Then $\iota^\bullet (\omega_1)\wedge \iota^\bullet(\omega_2)\in H^0(X,\Omega^2_{X/k})$ is not the zero section. 
\end{lemma}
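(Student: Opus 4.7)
The strategy is proof by contradiction: suppose $\iota^\bullet(\omega_1)\wedge\iota^\bullet(\omega_2)=0$ and set $\theta_i:=\iota^\bullet(\omega_i)$. Since $\Omega^2_{X/k}$ is invertible on the smooth surface $X$, this forces $\theta_1,\theta_2$ to be proportional as sections of $\Omega^1_{X/k}$ over $k(X)$; concretely, $\theta_2=f\theta_1$ for some $f\in k(X)$. One first checks that \eqref{Eqn2formi} and \eqref{Eqn2formii} each imply the numerical hypothesis of Lemma~\ref{LemmaInjSurfaces} (for (i) via $\deg_X(X)=c_1^2(X)$ from Lemma~\ref{LemmaDegXA}; for (ii) via $(3\deg_H(X)+\cdeg_H(X))^n\ge 3^n\deg_H(X)^n$), so $\iota^\bullet$ is injective on $H^0(A,\Omega^1_{A/k})$. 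Hence $\theta_1,\theta_2$ are $k$-linearly independent and $f\in k(X)\smallsetminus k$.

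The plan is to produce a smooth irreducible curve $C\subset X$ for which (a) $f|_{\widetilde C}\in k$, so that the relation $\theta_2=f\theta_1$ pulls back to the linear dependence $\nu_{C/A}^\bullet(\omega_2)=c\cdot\nu_{C/A}^\bullet(\omega_1)$ for some $c\in k$; and (b) $\nu_{C/A}^\bullet(\omega_1),\nu_{C/A}^\bullet(\omega_2)$ are $k$-linearly independent in $H^0(\widetilde C,\Omega^1_{\widetilde C/k})$, which gives the contradiction. For (a) one takes $C$ inside a level fiber of the rational map $f\colon X\dashrightarrow\mathbb{P}^1$, smoothed by Bertini after resolving base points. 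For (b) one applies Lemma~\ref{LemmaInjCurves} or Lemma~\ref{LemmaSemiInj}, requiring an intersection-theoretic bound on $\deg_H(C)$. A useful auxiliary is that $d\omega_i=0$ on $A$ (translation-invariant forms are closed), so differentiating $\theta_2=f\theta_1$ yields $df\wedge\theta_1=0$ on $X$, hence $df=g\theta_1$ for some $g\in k(X)$; this constrains $\mathrm{div}(f)$ to the degeneracy locus of $\theta_1$, controllable by intersection theory on $A$.

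In case (ii), $A$ is simple so $\langle C\rangle=A$ for any irreducible $C\subset X$. By a careful choice of pencil containing $f$, combined with Bertini applied to linear systems dominated by $|3H|_X+K_X|$, one obtains $C$ with $\deg_H(C)\le 3\deg_H(X)+\cdeg_H(X)$; the hypothesis \eqref{Eqn2formii} then triggers Lemma~\ref{LemmaInjCurves}, yielding (b). In case (i), since $\dim A=3$, vanishing of the wedge forces every tangent plane $T_xX$ to contain the fixed line $L=\ker\omega_1\cap\ker\omega_2\subset T_0A$; heuristically this means $X$ is foliated by translates of the $1$-parameter subgroup $E=\exp(L)\subset A$, an elliptic curve, contradicting the ``no elliptic curves'' hypothesis. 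The quantitative realization in characteristic $p$ proceeds by working with components $C_j$ of the ample canonical divisor $X|_X$ (ample by Lemma~\ref{LemmaAmpleAbVar} together with Lemma~\ref{LemmaGenTypeTranslate}): Lemma~\ref{LemmaSemiInj} yields $\dim\ker\nu_{C_j/A}^\bullet\le 1$, Lemma~\ref{LemmaCanonicalGenusBd} bounds $\sum a_j(\gfrak_a(C_j)-1)\le c_1^2(X)$, and adjunction bounds $\deg_X(C_j)$; the quadratic shape $(128/9)c_1^2(X)^2$ in \eqref{Eqn2formi} arises from the AM-GM inequality inside the proof of Lemma~\ref{LemmaInjCurves} applied to these components.

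The chief obstacle is the alignment between (a) and (b): the fiber $C$ must be numerically small enough for one of the injectivity lemmas to apply, yet remain inside the degeneracy locus of $\theta_1$ that makes $f|_C$ constant. In case (ii) the simplicity of $A$ trivializes the ``generating'' hypothesis $\langle C\rangle=A$, so the argument reduces to degree bookkeeping. In case (i) the delicacy is that a fiber $C$ of $f$ may lie in a proper $2$-dimensional abelian subvariety of $A$, which is why one must retreat from Lemma~\ref{LemmaInjCurves} to the weaker Lemma~\ref{LemmaSemiInj} over canonical divisor components, paying the quadratic cost in $c_1^2(X)$ that appears in \eqref{Eqn2formi}.
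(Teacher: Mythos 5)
Your opening is on the right track: assume $u_1\wedge u_2=0$, deduce a relation $u_2=f u_1$ (the paper writes $u_1=f u_2$, same thing) with $f\in k(X)$, and use Lemma~\ref{LemmaInjSurfaces} to see $f$ is non-constant. But the curve you then try to produce is the wrong one, and this is where the argument breaks. You want $C$ to be a smooth \emph{general fiber} of the pencil $f\colon X\dashrightarrow\mathbb{P}^1$ so that $f|_C$ is constant and the restrictions of $\omega_1,\omega_2$ to $\widetilde C$ become proportional; the contradiction is then to come from injectivity of $\nu_{C/A}^\bullet$. The problem is that Lemmas~\ref{LemmaInjCurves} and~\ref{LemmaSemiInj} require a numerical bound $p>(n!/n^n)\deg_H(C)^n/\deg(H^n)$, and the degree $\deg_H(C)$ of a general fiber of $f$ is governed by $\deg(f)$, which is a priori unbounded in terms of $\deg_H(X)$, $\cdeg_H(X)$, or $c_1^2(X)$. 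Your sentence claiming one can ``obtain $C$ with $\deg_H(C)\le 3\deg_H(X)+\cdeg_H(X)$'' by a ``careful choice of pencil containing $f$'' conflates the fibers of the given $f$ with members of the unrelated linear system $|3H|_X|$; no such bound follows. (The observation $df\wedge u_1=0$ is correct and does say the fibers of $f$ are integral curves of $u_1$, but it gives no degree control.) The paper avoids this trap by choosing $C$ to be an irreducible component of $\mathrm{div}(f)_0$ instead: since $u_1=f u_2$ vanishes along $C$, one gets the stronger conclusion $\omega_1\in\ker(\nu_{C/A}^\bullet)$, and the degree of $C$ is then bounded by intersecting with an auxiliary Bertini curve $D\sim 3H|_X$ and counting zeros of $\nu_{D/X}^\bullet(u_1)$ on $D$, which yields exactly $\deg_H(C)\le 3\deg_H(X)+\cdeg_H(X)$.

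Your treatment of case (i) is also not a proof. You invoke Lemma~\ref{LemmaSemiInj} ``over canonical divisor components'', but your $C$ is a fiber of $f$, not a component of an effective canonical divisor, so the lemma does not apply to it; and Lemma~\ref{LemmaSemiInj} is in fact never used in the paper's proof of this statement. In the paper's argument for case (i), the extra work is in the subcase where $B=\langle C\rangle$ is an abelian surface: one applies Lemma~\ref{LemmaInjCurves} to $C$ inside $B$ (using the bound $\deg((X\cdot C))\le 4c_1^2(X)$ coming from the $D$-intersection), deduces $\ker(\nu_{C/A}^\bullet)=\langle\omega_1\rangle$, realizes $\omega_1$ as the pullback of the invariant form $\omega_E$ under the quotient $\pi\colon A\to E=A/B_0$, and then does a local power-series computation at a transverse point of $C\cap D$ (using the degree bound once more to rule out the possibility $p\mid b$) to conclude that $\nu_{C/X}^\bullet(u_2)=0$, forcing $\omega_2\in\langle\omega_1\rangle$, a contradiction. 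Your ``foliated by an elliptic curve'' heuristic captures the spirit, but the hard part of case (i) is precisely this positive-characteristic local analysis, which your proposal does not supply.
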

\begin{proof}  Let us write $u_1=\iota^\bullet (\omega_1)$ and $u_2=\iota^\bullet (\omega_2)$, which are elements of $H^0(X,\Omega^1_{X/k})$.  We note that if (i) holds, then $\langle X\rangle=A$ since $X$ is ample. Thus, in cases (i) and (ii) we see that $u_1$ and $u_2$ are linearly independent over $k$, by Lemma \ref{LemmaInjSurfaces}.

For the sake of contradiction, suppose that 
\begin{equation}\label{Eqncontr}
u_1\wedge u_2=0 \mbox{ in } H^0(X,\Omega^2_{X/k}). 
\end{equation}
Then, there is a rational function $f\in k(X)$ such that $u_1 = f \cdot u_2$ and we have that $f$ is non-constant because $u_1,u_2\in H^0(X,\Omega^1_{X/k})$ are linearly independent over $k$. 

Let $C$ be an irreducible curve in the support of the divisor of zeros of $f$, which is non-trivial because $f$ is non-constant.  As $C$ is in the zero locus of $f$ and $u_1=f\cdot u_2$, we get that $C$ is contained in the zero locus of $u_1$, and in particular 
\begin{equation}\label{Eqnw1int}
\nu_{C/A}^\bullet(\omega_1)=\nu_{C/X}^\bullet(u_1)=0.
\end{equation}

If (i) holds, we make the notation more uniform by choosing $H=X$ on $A$.   Lemma \ref{LemmaDegXA} yields  
\begin{equation}\label{Eqnci1}
\deg_H(X)=\deg(X^3)_A=\cdeg_H(X)=c_1^2(X)
\end{equation}
and since $n=3$ when (i) holds, we see that in this case the conditions \eqref{Eqn2formi} and \eqref{Eqn2formii} are the same.

Let us construct an auxiliary curve $D$. Since $H$ is ample, $3H$ is very ample (\cite{Mumford} p.163) and by Bertini (\cite{Hartshorne} Theorem II.8.18) there is a smooth irreducible curve $D\subseteq X$ linearly equivalent to $3H|_X$ as divisors in $X$. Given a non-zero reduced effective divisor $Z$ in $X$ which contains $C$  (possibly $Z=C$) we can require that $D$ meets $Z$ transversely at smooth points of $Z$ and that $D$ does not pass through points in the intersection of $C$ with the divisor of poles of $f$. We remark that the intersection $D.C$ is non-empty because $H$ is ample. A precise choice of $Z$ will be made later, but this free parameter will not affect the computations below.

By the adjunction formula we have
\begin{equation}\label{Eqn2form0}
2\gfrak_g(D)-2 = \deg ((D+K_X).D)_X = \deg (3H.(D+K_X))_A = 9\deg_H(X) + 3\cdeg_H(X).
\end{equation}

We point out that if (i) holds, then \eqref{Eqnci1}  shows that this expression simplifies to
\begin{equation}\label{Eqn2form0bis}
2\gfrak_g(D)-2 = 12c_1^2(X).
\end{equation}

We claim that $\langle D\rangle =A$. This holds in case (ii) because $A$ is simple. In case (i) note that $\langle D\rangle$ is not an elliptic curve since $D\subseteq X$, so, if $\langle D\rangle \ne A$, then $\langle D\rangle$ is an abelian surface. The latter would imply that $\langle D\rangle . D=0$ in $A$ since we can translate the abelian surface $\langle D\rangle$ so that it does not meet $D\subseteq\langle D\rangle$. But it is not possible that $\langle D\rangle . D=0$ because in case (i) we have $D=3H.X=3X.X$ where $X$ is ample. Thus, $\langle D\rangle =A$ in case (i) and (ii).

We note that by \eqref{Eqn2formi} if (i) holds, and by \eqref{Eqn2formii} if (ii) holds, we have
$$
\frac{n!}{n^n}\cdot \frac{\deg_H(D)^n}{\deg (H^n)}=\frac{n!}{n^n}\cdot \frac{\deg(H^{n-2}.3H.X)^n}{\deg (H^n)}=\frac{n! }{n^n}\cdot \frac{3^n\deg_H(X)^n}{\deg (H^n)}<p. 
$$
In view of this bound and the fact that $\langle D\rangle =A$, Lemma \ref{LemmaInjCurves} gives injectivity of the map 
\begin{equation}\label{Eqn2form1}
\nu_{D/A}^\bullet : H^0(A,\Omega^1_{A/k})\to H^0(D,\Omega^1_{D/k}).
\end{equation}

Since the map $\nu_{D/A}^\bullet$ from \eqref{Eqn2form1} is injective, we see that $0\ne \nu_{D/A}^\bullet(\omega_1) = \nu_{D/X}^\bullet(u_1)$. Since $C$ is contained in the zero locus of $u_1$, we see that $\nu_{D/X}^\bullet(u_1)$ vanishes at each point of $D\cap C\subseteq D$.  As this intersection is transverse and $\nu_{D/X}^\bullet(u_1)\ne 0$, from \eqref{Eqn2form0} we deduce
\begin{equation}\label{EqnBdInt1}
\deg((D.C)_X)=\# D\cap C\le \deg_D( \nu_{D/X}^\bullet(u_1))=2\gfrak_g(D)-2=9\deg_H(X) + 3\cdeg_H(X).
\end{equation}
Note that if (i) holds, then \eqref{Eqn2form0bis} gives
\begin{equation}\label{EqnBdInt2}
\deg((D.C)_X)\le 12 c_1^2(X).
\end{equation}

Let $B=\langle C\rangle$. As in the proof of Lemma \ref{LemmaSemiInj}, we note that if (ii) holds then $B=A$, while if (i) holds then $B=A$ or $\dim B=2$ since $C$ is not an elliptic curve. This leaves two cases: 
\begin{itemize}
\item[(a)] $A=B$ and either (i) or (ii) holds.
\item[(b)] $\dim B=2$ and (i) holds.
\end{itemize}

%%%%
%%%% CASE (a)
%%%%

First we consider case (a), and let us recall that if (i) holds then we are choosing $H=X$. Since $(D.C)_X=(3H.C)_A$, the bound \eqref{EqnBdInt1} implies
$$
\deg_H(C)=(H.C)_A\le 3\deg_H(X) + \cdeg_H(X).
$$
By  \eqref{Eqn2formi} and \eqref{Eqn2formii}, it follows that
$$
\frac{n!}{n^n}\cdot \frac{\deg_H(C)^n}{\deg(H^n)} \le  \frac{n!}{n^n}\cdot \frac{(3\deg_H(X) + \cdeg_H(X))^n}{\deg(H^n)}<p. 
$$
Since $A=B$ (we are in case (a)), we can apply Lemma \ref{LemmaInjCurves}  to conclude that 
$$
\nu_{C/A}^\bullet :H^0(A,\Omega^1_{A/k})\to H^0(\widetilde{C},\Omega^1_{\widetilde{C}/k})
$$
is injective. This is a contradiction by \eqref{Eqnw1int}. Therefore we cannot have \eqref{Eqncontr} in case (a).

%%%%
%%%% CASE (b)
%%%%

Let us now consider case (b). We begin by applying Lemma \ref{LemmaInjCurves} to $C$ and the abelian surface $B=\langle C\rangle$, choosing the ample divisor in $B$ given by the $1$-cycle $X'=(X.B)_A$.  Using \eqref{EqnBdInt2} we find
$$
\deg((X'.C)_B) = \deg((X.C)_A) =\deg((K_X.C)_X)=\frac{1}{3}\deg((D.C)_X)\le 4c_1^2(X).
$$
On the other hand, since $C$ is a component of the effective $1$-cycle $X'=(X.B)_A$ we get
$$
\deg((X'.X')_B) \ge \deg((X'.C)_B).
$$
From  \eqref{Eqn2formi} we deduce
$$
\frac{2!}{2^2}\cdot \frac{\deg_{X'}(C)^2}{\deg((X'.X')_B)}=\frac{1}{2}\cdot\frac{\deg((X'.C)_B)^2}{\deg((X'.X')_B)}\le \frac{1}{2}\deg((X'.C)_B)\le 2c_1^2(X)<p.
$$
Thus, Lemma \ref{LemmaInjCurves} implies the injectivity of the map
\begin{equation}\label{EqnnuCB}
\nu^\bullet_{C/B}:H^0(B,\Omega^1_{B/k})\to H^0(\widetilde{C},\Omega^1_{\widetilde{C}/k}).
\end{equation}

Let $i_{B/A}:B\to A$ be the inclusion. This is a closed immersion, hence it induces a surjection on cotangent spaces at each point. By translation with the group structure of $A$ we see that the map
$$
i_{B/A}^\bullet : H^0(A,\Omega^1_{A/k})\to H^0(B,\Omega^1_{B/k})
$$
is surjective. Hence, $\dim_k \ker(i_{B/A}^\bullet) = 1$. Moreover, $\omega_1\in\ker (\nu_{C/A}^\bullet)$ by \eqref{Eqnw1int}. This, together with the injectivity of \eqref{EqnnuCB} implies
\begin{equation}\label{EqnKerIsw1}
\ker \left(i_{B/A}^\bullet:H^0(A,\Omega^1_{A/k})\to H^0(B,\Omega^1_{B/k})\right) =\langle \omega_1\rangle.
\end{equation}
This, together with the injectivity of \eqref{EqnnuCB}, gives
\begin{equation}\label{EqnKerCA}
\ker \left(\nu_{C/A}^\bullet:H^0(A,\Omega^1_{A/k})\to H^0(\widetilde{C},\Omega^1_{\widetilde{C}/k})\right) =\langle \omega_1\rangle.
\end{equation}

Let $B_0$ be a translate of $B$ such that $0_A\in B_0$. Then $B_0$ is an abelian subvariety of $A$. Define the elliptic curve $E=A/B_0$ and the quotient map $\pi:A\to E$. Since $\ker(\pi)=B_0$ is smooth over $k$, the quotient map $\pi$ is smooth and we deduce that
\begin{equation}\label{Eqnpiinj}
\pi^\bullet: H^0(E,\Omega^1_{E/k})\to H^0(A,\Omega^1_{A/k})
\end{equation}
is injective. Let $\omega_E\in H^0(E,\Omega^1_{E/k})$ be a generator. Since $\pi|_B:B\to E$ is constant, we have 
$$
i_{B/A}^\bullet(\pi^\bullet(\omega_E))=(\pi|_B)^\bullet(\omega_E)=0.
$$
From \eqref{EqnKerIsw1} we deduce  $\pi^\bullet(\omega_E)\in \ker(i_{B/A}^\bullet)=\langle \omega_1\rangle$ and by injectivity of $\pi^\bullet$ (from \eqref{Eqnpiinj}) we get 
$$
\im\left(\pi^\bullet:H^0(E,\Omega^1_{E/k})\to H^0(A,\Omega^1_{A/k})\right)=\langle\omega_1\rangle\subseteq H^0(A,\Omega^1_{A/k}).
$$
Thus, replacing $\omega_E$ by a suitable non-zero multiple we may assume $\pi^\bullet(\omega_E)=\omega_1$ and in particular
\begin{equation}\label{EqnwE}
(\pi|_X)^\bullet(\omega_E)=\iota^\bullet(\pi^\bullet(\omega_E))=\iota^\bullet(\omega_1)=u_1.
\end{equation}

We claim that $\nu_{C/X}:\widetilde{C}\to X$ is $u_2$-integral.

Let us choose $Z$  as the (reduced) support of $B\cap X$. Note that $C\subseteq Z$ and that $Z$ has dimension $1$ since $X$ is not contained in $B$ ($X$ is ample in $A$). As explained in the construction of $D$, the intersection $C\cap D$ is non-empty. Let $x\in C\cap D$ and recall from the construction of $D$ that $x$ is a smooth point of $Z$ (in particular, of $C$) and it is not contained in the pole divisor of $f$ in $X$.

 Let $s_1,s_2\in \Ocal_{X,x}$ be a system of local parameters for $X$ at $x$ such that $s_1$ is a local equation for $C$ at $x$. Then $f=s_1^a\cdot \theta$ for some $a\ge 1$ and $\theta\in \Ocal_{X,x}^\times$ and in particular, $u_1=s_1^a\cdot \theta\cdot u_2$ in $\Omega^1_{X/k,x}$. 

Let $y=\pi(x)\in E$ and let $t\in \Ocal_{E,y}$ be a local parameter for $E$ at $y$. Then $\omega_E=\sigma dt$ for certain $\sigma\in \Ocal_{E,y}^\times$ because the invariant differential $\omega_E$ has no zeros. Let $\varphi=(\pi|_X)^\#_x : \Ocal_{E,y}\to \Ocal_{X,x}$. By \eqref{EqnwE} we have $u_1=\varphi(\sigma)d\varphi(t)$ in $\Omega^1_{X/k, x}$.

Note that $\pi(B)=\{y\}$, so,  $Z$ is the support of $B\cap X=(\pi|_X)^{-1}(y)$. As $x$ is a smooth point of $Z$ and $C$ is a component of $Z$, we get $\varphi(t)=s_1^b\cdot \gamma$ for certain $b\ge 1$ and $\gamma\in \Ocal_{X,x}^\times$. Thus, 
$$
s_1^a\cdot \theta\cdot u_2 = u_1 = \varphi(\sigma)d\varphi(t)= \varphi(\sigma)\left( bs_1^{b-1}\gamma d s_1+ s_1^{b}d\gamma \right)
$$
where we recall that $a,b\ge 1$. Since $\theta$ and $\sigma$ are units in their respective local rings, we have that $\tau=\theta\cdot \varphi(\sigma^{-1})\in \Ocal_{X,x}^\times$. With this notation,
\begin{equation}\label{Eqn2formkey}
s_1^a \cdot \tau\cdot u_2 = s_1^{b-1}\cdot \left( b\gamma ds_1 + s_1 d\gamma \right).
\end{equation} 
Let us consider two cases depending on the values of $a$ and $b$:
\begin{itemize}
\item $a<b$. In this case $s_1^{b-1-a}$ is regular at $x$ and we get
\begin{equation}\label{Eqn2formkey1}
 u_2 = \tau^{-1}\cdot s_1^{b-1-a}\cdot \left( b\gamma ds_1 + s_1 d\gamma \right).
\end{equation}
Note that $C$ is smooth at $x$ and $\Ocal_{C,x}\simeq \Ocal_{X,x}/(s_1)$. Let $x'\in \widetilde{C}$ be the only preimage of $x$ under $\nu_{C/X}$.  Then $\Omega^1_{\widetilde{C}/k,x'}\simeq \Omega^1_{C/k,x}  \simeq \Omega^1_{X/k,x}/(s_1, ds_1)$ and under this isomorphism the local map $\nu_{C/X,x'}^\bullet:\Omega^1_{X/k,x}\to \Omega^1_{\widetilde{C}/k,x'}$ becomes just the quotient $\Omega^1_{X/k,x}\to \Omega^1_{X/k,x}/(s_1, ds_1)$. The image of $b\gamma ds_1 + s_1 d\gamma$ under this quotient is $0$, so $\nu_{C/X,x'}^\bullet(u_2)=0$ by \eqref{Eqn2formkey1}. By Lemma \ref{LemmaCommLoc} and injectivity of $H^0(\widetilde{C},\Omega^1_{\widetilde{C}/k})\to  \Omega^1_{\widetilde{C}/k,x'}$ we deduce $\nu^\bullet_{C/X}(u_2)=0$.

\item $a\ge b$. In this case \eqref{Eqn2formkey} shows that $s_1$ divides $b\gamma ds_1$ in $\Omega^1_{X/k,x}$. As $s_1$ is part of system of local parameters at $x$ and $\gamma$ is invertible in $\Ocal_{X,x}$, this means that $b\equiv 0\bmod p$. In particular, $a\ge b\ge p$ because $a,b\ge 1$. Recall that $D$ is smooth, $C$ is smooth at $x$, and $D,C$ meet transversely at $x$. Let $x_0\in D$ be the only preimage of $x$ under $\nu_{D/X}$. By \eqref{Eqn2form0bis} and recalling that $u_1=s_1^a\cdot \theta\cdot u_2$ we find
$$
a = \ord_{x_0}(\nu_{D/X}^*(s_1^a))\le \ord_{x_0}(\nu_{D/X}^\bullet (u_1))\le \deg_D(\nu_{D/X}^\bullet (u_1))= 12c_1^2(X)<p
$$
by \eqref{Eqn2formi}. This is not possible, so the case $a\ge b$ cannot occur.
\end{itemize}
This proves that $a<b$ and $\nu^\bullet_{C/X}(u_2)=0$. Thus, $\nu_{C/X}$ is $u_2$-integral as claimed.

Finally, we have $u_2\in \ker (\nu_{C/X}^\bullet)$ which implies $\omega_2\in \ker (\nu_{C/A}^\bullet)$. By \eqref{EqnKerCA}, this implies $\omega_2\in \langle \omega_1\rangle$ in $H^0(A,\Omega^1_{A/k})$. This is the desired contradiction in case (b). Therefore, \eqref{Eqncontr} cannot hold.
\end{proof}
%%
%%

%%%%%%%%%%%%%%%%%%%%%%%%%%%%%%%%%%%%%%
%%%%%%%%%%%%%%%%%%%%%%%%%%%%%%%%%%%%%%
%%%%%%%%%%%%%%%%%%%%%%%%%%%%%%%%%%%%%%
%%%%%%%%%%%%%%%%%%%%%%%%%%%%%%%%%%%%%%
%%%%%%%%%%%%%%%%%%%%%%%%%%%%%%%%%%%%%%
%%%%%%%%%%%%%%%%%%%%%%%%%%%%%%%%%%%%%%
%%%%%%%%%%%%%%%%%%%%%%%%%%%%%%%%%%%%%%
%%%%%%%%%%%%%%%%%%%%%%%%%%%%%%%%%%%%%%
%%%%%%%%%%%%%%%%%%%%%%%%%%%%%%%%%%%%%%
%%%%%%%%%%%%%%%%%%%%%%%%%%%%%%%%%%%%%%
%%%%%%%%%%%%%%%%%%%%%%%%%%%%%%%%%%%%%%
%%%%%%%%%%%%%%%%%%%%%%%%%%%%%%%%%%%%%%
%%%%%%%%%%%%%%%%%%%%%%%%%%%%%%%%%%%%%%
%%%%%%%%%%%%%%%%%%%%%%%%%%%%%%%%%%%%%%
%%%%%%%%%%%%%%%%%%%%%%%%%%%%%%%%%%%%%%
%%%%%%%%%%%%%%%%%%%%%%%%%%%%%%%%%%%%%%
%%%%%%%%%%%%%%%%%%%%%%%%%%%%%%%%%%%%%%
%%%%%%%%%%%%%%%%%%%%%%%%%%%%%%%%%%%%%%

%%%%%%%%%%%%%%%%%%%%%%%%%%%%%%%%%%%%%%
%%%%%%%%%%%%%%%%%%%%%%%%%%%%%%%%%%%%%%
%%%%%%%%%%%%%%%%%%%%%%%%%%%%%%%%%%%%%%
%%%%%%%%%%%%%%%%%%%%%%%%%%%%%%%%%%%%%%
%%%%%%%%%%%%%%%%%%%%%%%%%%%%%%%%%%%%%%
%%%%%%%%%%%%%%%%%%%%%%%%%%%%%%%%%%%%%%

\section{Zeros of $p$-adic power series} \label{SecZeros}

We keep the notation from Section \ref{NotationLoc}.

%%%%
%%%%
%%%%
\subsection{One variable} Let $z$ be a variable. Given a power series $h(z)=\sum_{n=0}^\infty a_j z^j\in K[[z]]$ the radius of convergence of $h$ is defined by $\rho_h=\sup\{ r : \lim_j |a_j|r^j=0 \}$.

If $\rho_h>0$ then $h(z)$ converges on $B_1(\rho_h)$, where it defines an analytic function. In particular, the vanishing order $\ord_{z_0}(h)$ is defined at each $z_0\in B_1(\rho_h)$. For each $0<r<\rho_h$ we define $|h|_r=\max_j |a_j|r^j$, $\nu(h,r)=\max\{m : |a_m|r^m=|h|_r\}$, and 
$$
\nn_0(h,r)=\sum_{|z_0|\le r} \ord_{z_0}(h).
$$
The following classical bound will suffice for our purposes. See, for instance,  Theorem 1.21 in \cite{HuYang}.
\begin{lemma}\label{LemmaZeros1var} If $h\ne 0$ and $\rho_h>0$, then for each $0<r<\rho_h$ we have $\nn_0(h,r)\le \nu(h,r)$.
\end{lemma}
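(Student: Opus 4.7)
The plan is to prove this classical bound by induction on $N=\nu(h,r)$, factoring out zeros of $h$ one at a time in the spirit of the $p$-adic Weierstrass preparation theorem.

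For the base case $N=0$, the definition of $\nu$ forces $|a_0|=|h|_r$ and $|a_j|r^j<|a_0|$ for every $j\ge 1$, so for any $z\in B_1[r]$ the ultrametric inequality yields $|h(z)-a_0|<|a_0|$ and therefore $|h(z)|=|a_0|\ne 0$; hence $\nn_0(h,r)=0$.

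For the inductive step with $N\ge 1$, if $h$ has no zero in $B_1[r]$ there is nothing to prove, so assume $h(z_0)=0$ for some $z_0\in B_1[r]$ and write $h(z)=(z-z_0)g(z)$. Expanding $h(z)=h(z)-h(z_0)$ via the identity $(z^j-z_0^j)/(z-z_0)=\sum_{i<j}z^iz_0^{j-1-i}$ gives the explicit formula $g(z)=\sum_{i\ge 0}b_iz^i$ with $b_i=\sum_{j>i}a_jz_0^{j-1-i}$. The condition $r<\rho_h$ ensures $|a_j|r^j\to 0$, from which $|b_i|r^i\to 0$, so $g$ is analytic on $B_1[r]$. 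The key claim is $\nu(g,r)=N-1$. Indeed, the expression $b_{N-1}=a_N+\sum_{j>N}a_jz_0^{j-N}$ has $|a_jz_0^{j-N}|\le|a_j|r^{j-N}<|a_N|$ for $j>N$, so the ultrametric equality forces $|b_{N-1}|=|a_N|$ and hence $|b_{N-1}|r^{N-1}=|a_N|r^{N-1}$; a parallel estimate gives $|b_i|r^i<|a_N|r^{N-1}$ for all $i\ge N$. Since the $r$-Gauss norm is multiplicative on the algebra of power series converging on $B_1[r]$, one also has $|h|_r=|z-z_0|_r\cdot|g|_r=r\cdot|g|_r$, confirming $|g|_r=|a_N|r^{N-1}$ and therefore $\nu(g,r)=N-1$.

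Applying the induction hypothesis to $g$ gives $\nn_0(g,r)\le N-1$, and since the zeros of $h$ in $B_1[r]$ are exactly those of $g$ together with the extra root $z_0$ (counted with multiplicity), we conclude $\nn_0(h,r)\le 1+(N-1)=N=\nu(h,r)$. The one nontrivial ingredient is the multiplicativity of the $r$-Gauss norm, i.e.\ the nonarchimedean Gauss lemma, which both underwrites $|g|_r=|h|_r/r$ and ensures no zeros are created or lost in the factorization; everything else is a short manipulation with ultrametric inequalities.
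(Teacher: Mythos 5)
The paper offers no proof of Lemma~\ref{LemmaZeros1var}; it simply cites Theorem~1.21 of Hu--Yang. So your argument is not a variant of the paper's route but a self-contained replacement, and it is a correct proof in the standard $p$-adic Weierstrass-preparation style: you peel off one zero at a time via $h=(z-z_0)g$ and track the coefficient estimates to drop $\nu$ by one. The base case, the formula $b_i=\sum_{j>i}a_jz_0^{j-1-i}$, and the estimates $|b_{N-1}|=|a_N|$ and $|b_i|r^i<|a_N|r^{N-1}$ for $i\ge N$ are all right, and $\nn_0(h,r)=1+\nn_0(g,r)$ follows directly from the identity $h=(z-z_0)g$. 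Two small points worth tightening. First, the appeal to multiplicativity of the Gauss norm is unnecessary: your estimates already show that the maximum of $|b_i|r^i$ over $i\ge N-1$ is attained only at $i=N-1$, which forces $\nu(g,r)\le N-1$, and that inequality is all the induction needs; moreover the remark that the Gauss lemma ``ensures no zeros are created or lost'' is off the mark, since the factorization is an exact identity of convergent functions so zero-preservation is automatic. Second, to invoke the lemma inductively for $g$ you need $r<\rho_g$, not just $\rho_g\ge r$; you get the strict inequality by running your convergence estimate at any $r'\in(r,\rho_h)$, which shows $\rho_g\ge r'>r$. With those two cosmetic fixes the proof is complete and correct.
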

%%
%%

%%%%
%%%%
%%%%
\subsection{Several variables} Let $n\ge 1$ and let $\xx=(x_1,...,x_n)$ be an $n$-tuple of variables. Given 
$$
H(\xx)=\sum_{\alpha\in \N^n} c_\alpha \xx^\alpha\in K[[\xx ]]
$$ 
the radius of convergence $\rho_H$ is defined by $\rho_H= \sup \{r : \lim_m \max_{\|\alpha\| = m} |c_\alpha|r^m=0\}$. We note that this definition agrees with the one variable case when $n=1$. If $\rho_H>0$ then $H$ converges to an analytic function on $B_n(\rho_H)$. For each $j\ge 0$ let $P_{H,j}(\xx)=\sum_{\|\alpha\|=j} c_\alpha \xx^\alpha$; this is the homogeneous degree $j$ part of $H$. For $\uu\in K^n$ with $|\uu|=1$ we define
$$
H_\uu(z)=H(z\cdot \uu)=\sum_{j=0}^\infty P_{H,j}(\uu) \cdot z^j\in K[[z]].
$$
It follows that $\rho_{H_\uu}\ge \rho_H$ for each $\uu\in K^n$ with $|\uu|=1$.
\begin{lemma}[Multivariable zero estimate]\label{LemmaMVzeros} Let $H(\xx)=\sum_\alpha c_\alpha \xx^\alpha\in K[[\xx]]$ and let $\uu\in K^n$ with $|\uu|=1$. Suppose that there is an integer  $N\ge 1$ and a real number $M\ge 1$ such that
\begin{itemize}
\item[(i)] For each $\alpha$ we have $|c_\alpha|\le M^{\|\alpha\|-1}$.
\item[(ii)] $\max\{|P_{H,j}(\uu)| : 0\le j\le N\}\ge 1$ (in particular, $H_\uu\ne 0$). 
\end{itemize}
Then $\rho_H\ge M^{-1}$ and for every $0<r<M^{-1}$ we have
$$
\nn_0(H_\uu,r)\le \left(N-\frac{\log M}{\log (r^{-1})}\right)\left(1-\frac{\log M}{\log (r^{-1})}\right)^{-1}.
$$
\end{lemma}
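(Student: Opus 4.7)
The plan is to reduce the multivariable statement to the one-variable zero bound of Lemma \ref{LemmaZeros1var} applied to the restriction $H_\uu(z) = \sum_{j\ge 0} P_{H,j}(\uu) z^j$.

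First I would establish convergence. From (i), $\max_{\|\alpha\|=m}|c_\alpha|\cdot r^m \le M^{-1}(Mr)^m$, which tends to $0$ whenever $r < M^{-1}$; this gives $\rho_H \ge M^{-1}$, and hence $\rho_{H_\uu} \ge M^{-1}$. So all the one-variable machinery is available for $0<r<M^{-1}$.

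Next, I would bound the Gauss norm $|H_\uu|_r$ from both sides. Since $|\uu|=1$, the ultrametric inequality gives $|P_{H,j}(\uu)| \le \max_{\|\alpha\|=j}|c_\alpha|$, so (i) yields $|P_{H,j}(\uu)| r^j \le M^{-1}(Mr)^j$ for every $j\ge 1$, and $|P_{H,0}(\uu)|=|c_0|\le M^{-1}$. On the other hand, (ii) furnishes some $j_0$ with $0\le j_0\le N$ and $|P_{H,j_0}(\uu)|\ge 1$; since $r<1$, this forces
\[
|H_\uu|_r \ge |P_{H,j_0}(\uu)|\, r^{j_0} \ge r^{j_0} \ge r^N.
\]

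Now let $m=\nu(H_\uu,r)$, so that $|P_{H,m}(\uu)|r^m = |H_\uu|_r \ge r^N$. If $m=0$ the desired bound is automatic since the right-hand side of the inequality is non-negative (under $r<M^{-1}$ one has $\log M/\log r^{-1}<1$, and the numerator $N-\log M/\log r^{-1}\ge 0$ follows from $r^N\le |c_0|\le M^{-1}$). If $m\ge 1$, combining the upper bound $|P_{H,m}(\uu)|r^m \le M^{-1}(Mr)^m$ with the lower bound $|H_\uu|_r\ge r^N$ gives $r^N \le M^{-1}(Mr)^m$. Writing $L=\log r^{-1}$ and $\mu=\log M$ (both positive with $\mu<L$), taking logarithms and rearranging yields
\[
m(\mu - L) \ge \mu - NL, \qquad \text{hence} \qquad m \le \frac{NL-\mu}{L-\mu}=\left(N-\frac{\log M}{\log r^{-1}}\right)\left(1-\frac{\log M}{\log r^{-1}}\right)^{-1},
\]
where the inequality flips because $\mu - L<0$. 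Finally, Lemma \ref{LemmaZeros1var} applied to $H_\uu$ gives $\nn_0(H_\uu,r)\le \nu(H_\uu,r)=m$, completing the proof.

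The argument is essentially routine; the only step requiring a bit of care is ensuring both the strict sign $\mu<L$ (which is exactly the hypothesis $r<M^{-1}$) and the correct handling of the borderline $m=0$ case, so that the algebraic manipulation of logarithms reverses the inequality in the right direction. No deeper obstacle is anticipated.
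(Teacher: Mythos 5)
Your proposal is correct and takes essentially the same approach as the paper: use $|\uu|=1$ and hypothesis (i) to bound $|P_{H,j}(\uu)|r^j \le M^{-1}(Mr)^j$, use (ii) and $r<1$ to get $|H_\uu|_r \ge r^N$, compare to bound $\nu(H_\uu,r)$ by $(N-\lambda)/(1-\lambda)$ with $\lambda = \log M/\log r^{-1}$, and finish with Lemma \ref{LemmaZeros1var}. The only cosmetic difference is that you bound $m=\nu(H_\uu,r)$ directly from the equality $|P_{H,m}(\uu)|r^m=|H_\uu|_r$, whereas the paper argues by showing every $m>(N-\lambda)/(1-\lambda)$ cannot realize the maximum.
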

\begin{proof} 
By (i), $\rho_H\ge M^{-1}$. Fix $0<r<M^{-1}$, in particular $r<1$. From (ii) and $r<1$ we get
$$
r^N\le \max_{0\le j\le N} |P_{H,j}(\uu)|r^j.
$$
Let $\lambda = (\log M)/\log(r^{-1})$ and note that $0<\lambda<1$. For each $m> (N-\lambda)/(1-\lambda)$ we have
$$
M^{m-1}r^m=M^{-1}(Mr)^m<M^{-1}(Mr)^{(N-\lambda)/(1-\lambda)}=r^N
$$
because $Mr<1$. By (i) and these observations, for each $m> (N-\lambda)/(1-\lambda)$ we get
\begin{equation}\label{EqnZeros1}
|P_{H,m}(\uu) |r^m \le M^{m-1} r^m  < r^N \le \max_{0\le j\le N} |P_{H,j}(\uu )|r^j.
\end{equation}
Let $N'=\lfloor (N-\lambda)/(1-\lambda)\rfloor$ and note that $N'\ge N$.  From \eqref{EqnZeros1} we deduce 
$$
|H_{\uu}|_r\le \max\{|P_{H,j}(\uu )|r^j : 0\le j\le N'\}\quad \mbox{ and }\quad \nu(H_\uu, r)\le N'.
$$
The result follows from Lemma \ref{LemmaZeros1var}.
\end{proof}
%%%%%%%%%%%%%%%%%%%%%%%%%%%%%%%%%%%%%%
%%%%%%%%%%%%%%%%%%%%%%%%%%%%%%%%%%%%%%
%%%%%%%%%%%%%%%%%%%%%%%%%%%%%%%%%%%%%%
%%%%%%%%%%%%%%%%%%%%%%%%%%%%%%%%%%%%%%
%%%%%%%%%%%%%%%%%%%%%%%%%%%%%%%%%%%%%%
%%%%%%%%%%%%%%%%%%%%%%%%%%%%%%%%%%%%%%

\section{Exponential and logarithm} \label{SecLogExp}

Let us keep the notation from Section \ref{NotationLoc}. Let $\Acal$ be an abelian variety over $\Spec R$ of relative dimension $n\ge 1$. 
%%%%
%%%%
%%%%
\subsection{Local parameters} \label{SecLogExp1}

Let $\sigma:\Spec R\to \Acal$ be a section with image $\Zcal=\sigma(\Spec R)$. The closed point of $\Zcal$ is $x_0=\sigma(\pfrak)$. Let $\widehat{\Ocal}_{\Acal ,x_0,\Zcal}$ be the completion of $\Ocal_{\Acal,x_0}$ along $\Zcal$.  That is, if $\Ical_\Zcal$ is the ideal sheaf of $\Zcal$, then $\widehat{\Ocal}_{\Acal ,x_0,\Zcal}$ is the completion of $\Ocal_{\Acal,x_0}$ with respect to the ideal $\Ical_{\Zcal,x_0}\subseteq\Ocal_{\Acal,x_0}$.  

 Let $y_1,...,y_n$ be variables. We say that elements $t_1,...,t_n\in \mfrak_{\Acal,x_0}$  are \emph{$R$-local parameters} along $\sigma$ (or $\Zcal$) if the rule $y_j\mapsto t_j$ induces a continuous isomorphism of $R$-algebras $\widehat{\Ocal}_{\Acal,x_0,\Zcal}\simeq R[[y_1,...,y_n]]$.
\begin{lemma}[Choice of local parameters]\label{LemmaChoiceLocParam} With the previous notation, let $\Xcal$ be a regular closed subscheme of $\Acal$ containing $\Zcal$ which is smooth of relative dimension $d\ge 0$ over $\Spec R$.  There are $t_j\in \mfrak_{\Acal,x_0}$ for $1\le j\le n$ such that $t_1,...,t_n$ are $R$-local parameters along $\Zcal$ and $t_1,...,t_{n-d}$ are a system of local equations for $\Xcal$ in $\Acal$ along $\Zcal$. In particular, $R$-local parameters along $\Zcal$ exist.
\end{lemma}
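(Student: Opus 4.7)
The plan is to produce the required $t_1,\ldots,t_n$ in three stages: first choose $t_1,\ldots,t_{n-d}$ generating the ideal of $\Xcal$ in $\Acal$ at $x_0$; then complete to a full system along $\Zcal$ using the conormal exact sequence together with Nakayama's lemma; and finally verify the required power-series isomorphism via the regular-sequence/associated-graded machinery.

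Since $\Acal$ and $\Xcal$ are both smooth over $\Spec R$, of relative dimensions $n$ and $d$, the closed immersion $\Xcal\hookrightarrow\Acal$ is regular of codimension $n-d$ at every point of $\Xcal$; in particular, after localizing at $x_0$, the ideal $\Ical_{\Xcal,x_0}$ is generated by a regular sequence $t_1,\ldots,t_{n-d}\in\mfrak_{\Acal,x_0}$. From smoothness of $\Acal/R$ at $x_0$ together with the section $\sigma$ through $x_0$, the conormal module $\Ical_{\Zcal,x_0}/\Ical_{\Zcal,x_0}^2$ is canonically identified with the free $R$-module $\sigma^*\Omega^1_{\Acal/R}$ of rank $n$, and analogously $\sigma^*\Omega^1_{\Xcal/R}$ is free of rank $d$. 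The conormal exact sequence
\[
0\to \sigma^*(\Ical_\Xcal/\Ical_\Xcal^2)\to \sigma^*\Omega^1_{\Acal/R}\to \sigma^*\Omega^1_{\Xcal/R}\to 0
\]
is short exact with locally free terms, so the images of $t_1,\ldots,t_{n-d}$ in $\Ical_{\Zcal,x_0}/\Ical_{\Zcal,x_0}^2$ form a free rank-$(n-d)$ direct summand. I then extend them to an $R$-basis $\bar t_1,\ldots,\bar t_n$ of $\Ical_{\Zcal,x_0}/\Ical_{\Zcal,x_0}^2$ and lift the new basis vectors to $t_{n-d+1},\ldots,t_n\in\Ical_{\Zcal,x_0}$.

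Next, I check that $(t_1,\ldots,t_n)$ generates $\Ical_{\Zcal,x_0}$ and is a regular sequence. For generation: the chosen $t_i$ span $\Ical_{\Zcal,x_0}$ modulo $\Ical_{\Zcal,x_0}^2$, hence modulo $\mfrak_{\Acal,x_0}\cdot\Ical_{\Zcal,x_0}$ (using $\Ical_{\Zcal,x_0}\subseteq\mfrak_{\Acal,x_0}$), so Nakayama in the noetherian local ring $\Ocal_{\Acal,x_0}$ yields $\Ical_{\Zcal,x_0}=(t_1,\ldots,t_n)$. For the regular-sequence property: $\Ocal_{\Acal,x_0}$ is a regular local ring of dimension $n+1$ (smoothness over the DVR $R$) with $\Ocal_{\Acal,x_0}/\Ical_{\Zcal,x_0}\simeq R$ also regular, so $\Ical_{\Zcal,x_0}$ is generated by a regular sequence of minimal length $n$; any minimal generating system, in particular ours, is itself a regular sequence.

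It remains to verify that the assignment $y_j\mapsto t_j$ extends to an isomorphism $R[[y_1,\ldots,y_n]]\xrightarrow{\sim}\widehat{\Ocal}_{\Acal,x_0,\Zcal}$ of $R$-algebras. Because $(t_1,\ldots,t_n)$ is a regular sequence generating $\Ical_{\Zcal,x_0}$, the standard Koszul calculation gives $\mathrm{gr}_{\Ical_{\Zcal,x_0}}\Ocal_{\Acal,x_0}\cong R[y_1,\ldots,y_n]$ as graded $R$-algebras, and passing to the $\Ical_{\Zcal,x_0}$-adic completion yields the claimed power-series isomorphism. By construction $t_1,\ldots,t_{n-d}$ generate $\Ical_\Xcal$ in $\widehat{\Ocal}_{\Acal,x_0,\Zcal}$, so they form the desired system of local equations for $\Xcal$ along $\Zcal$. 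The main technical subtlety is the regular-immersion property of $\Xcal\hookrightarrow\Acal$ together with the passage from associated graded to completion; both are standard but deserve precise citation (for instance, Matsumura Theorems 16.2 and 17.3).
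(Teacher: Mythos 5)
Your proof is correct and follows essentially the same strategy as the paper: begin with a regular sequence $t_1,\dots,t_{n-d}$ cutting out $\Xcal$ at $x_0$, extend it to local parameters along $\Zcal$, and deduce the power series presentation of the completion. The paper handles this by citing SGA1 Exp.\ II (4.15, 4.17, and the remark 4.14) and asking the reader to ``analyze the proof,'' whereas you make the extension step explicit via the conormal exact sequence, Nakayama, and the standard associated-graded/completion argument --- a self-contained account of what the paper delegates to the reference.
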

\begin{proof} By \cite{SGA1} Expos\'e II, Th\'eor\`eme 4.15 with $X=\Acal$, $Y=\Xcal$, $S=\Spec R$, and $x=x_0$  we get the existence of  a regular sequence $t_1,...,t_{n-d}\in \mfrak_{\Acal,x_0}$ that generates the ideal $\Ical_{\Xcal, x_0}$ of $\Xcal$ in $\Ocal_{\Acal, x_0}$. Applying \cite{SGA1} Expos\'e II, Corollaire 4.17 with $X=\Acal$, $Y=\Spec R$, $i=\sigma$, and $y=\pfrak\in \Spec R$ we get that  $\widehat{\Ocal}_{\Acal,x_0,\Zcal}$ is a power series ring over $R$ in $n$ variables. Analyzing the proof, one sees that the variables of this power series ring can be constructed by extending any regular sequence in $\Ical_{\Zcal, x_0}$, see \cite{SGA1} Expos\'e II, Remarques  4.14. We conclude since $\Ical_{\Xcal,x_0}\subseteq \Ical_{\Zcal,x_0}$.
\end{proof}
Let $\ttt=(t_1,...,t_n)$ be a choice of $R$-local parameters along the identity section $e$. These $R$-local parameters determine a formal group $\Fcal=(\Fcal_1,...,\Fcal_n)$ with $\Fcal_j\in R[[X_1,...,X_n,Y_1,...,Y_n]]$ for each $1\le j\le n$, where the $X_j$ and $Y_j$ are variables, see \cite{SilvermanHindry} Lemma C.2.4. 

Given an integer $m$, let $\Psi^{[m]}=(\Psi^{[m]}_1,...,\Psi^{[m]}_n)$ be the power series expansion of the morphism $[m]:\Acal\to \Acal$ of multiplication by $m$ in terms of $\ttt$. Then, integrality of the formal group $\Fcal$ gives
\begin{lemma}\label{LemmaPsiR} $\Psi^{[m]}_j\in R[[\ttt]]$ for each $j=1,...,n$. 
\end{lemma}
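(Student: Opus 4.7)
The plan is to identify $\Psi^{[m]}$ with the power series defining multiplication by $m$ inside the formal group $\Fcal$ itself, and then to invoke the fact that $\Fcal$ is defined over $R$.

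First, I would set up notation for the formal group: by the cited \cite{SilvermanHindry} Lemma C.2.4, the addition morphism $\mu:\Acal\times_R\Acal\to\Acal$, completed along the identity, has power series expansion in $R[[\ttt,\ttt']]^n$ with respect to the chosen $R$-local parameters, and this is precisely $\Fcal$. Analogously, the multiplication morphism $[m]:\Acal\to\Acal$ produces $\Psi^{[m]}\in K[[\ttt]]^n$ a priori, and the task is to check $\Psi^{[m]}\in R[[\ttt]]^n$. Since the completed local ring $\widehat{\Ocal}_{\Acal,e,e}$ is a power series ring over $R$ in $\ttt$, the map $[m]^\#$ on this ring, being an $R$-algebra map, sends $t_j$ to an element of $R[[\ttt]]$ \emph{provided} we can identify its image with the intrinsic formal-group multiplication.

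Second, I would define the formal-group multiplication by $m$ purely in terms of $\Fcal$: set $[0]_\Fcal=0$, $[m+1]_\Fcal(\ttt)=\Fcal([m]_\Fcal(\ttt),\ttt)$ for $m\ge 0$, and $[m]_\Fcal(\ttt)=\iota_\Fcal([-m]_\Fcal(\ttt))$ for $m<0$, where $\iota_\Fcal$ is the formal inverse series. The inverse $\iota_\Fcal$ exists in $R[[\ttt]]^n$ by the formal inverse function theorem (the linear part of $\Fcal(\ttt,\iota_\Fcal(\ttt))=0$ forces $\iota_\Fcal(\ttt)=-\ttt+(\text{higher order})$, solvable term by term in $R$ because $\Fcal\in R[[\ttt,\ttt']]^n$), and then each $[m]_\Fcal$ is defined over $R$ by induction. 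Hence $[m]_\Fcal\in R[[\ttt]]^n$.

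Third, the identification $\Psi^{[m]}=[m]_\Fcal$ follows from the functoriality of the formal group associated to a group scheme: the morphism $[m]:\Acal\to\Acal$ is obtained from $\mu$ by iterated composition and inversion, and the formation of completions along the identity section is compatible with composition of morphisms, fibre products, and inversion. Concretely, completing the defining identities (e.g.\ $[m+1]=\mu\circ([m]\times\Id)$ and $\mu\circ(\Id\times [-1])=e$) along the identity section shows that the $n$-tuple $\Psi^{[m]}$ satisfies exactly the recursion defining $[m]_\Fcal$ in terms of $\Fcal$. By uniqueness of the solution to this recursion in $K[[\ttt]]^n$, we conclude $\Psi^{[m]}=[m]_\Fcal\in R[[\ttt]]^n$.

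The only step that carries any real content is the second one, namely the existence of $\iota_\Fcal$ over $R$ and the verification that each $[m]_\Fcal$ lies in $R[[\ttt]]^n$. Both are standard facts in the theory of formal groups over complete local rings (cf.\ \cite{SilvermanHindry} C.2), so once the formal-group identification is in place, integrality is immediate and the lemma follows.
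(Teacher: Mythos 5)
Your proof is correct and follows the same route the paper intends: the paper disposes of this lemma with the one-line remark ``integrality of the formal group $\Fcal$ gives\dots'', and your recursion $[m+1]_\Fcal(\ttt)=\Fcal([m]_\Fcal(\ttt),\ttt)$ together with $\iota_\Fcal\in R[[\ttt]]^n$ and the identification $\Psi^{[m]}=[m]_\Fcal$ is precisely what that remark unpacks to. (One could also argue even more directly: $[m]:\Acal\to\Acal$ is an $R$-morphism fixing the identity section, so $[m]^\#$ is a continuous $R$-algebra endomorphism of $\widehat{\Ocal}_{\Acal,x_0,\Zcal}\simeq R[[\ttt]]$ and therefore sends each $t_j$ into $R[[\ttt]]$, with no need for the formal-group recursion.)
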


Although the notation does not explicitly indicate so, the coefficients of the power series $\Psi^{[m]}_j$  depend upon the choice of local parameters $\ttt$. Nevertheless, since $[1]=\Id$ we observe:
\begin{lemma}\label{LemmaLinearPart} $\Psi^{[1]}_j =t_j$ for each $j=1,...,n$.
\end{lemma}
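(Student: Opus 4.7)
The proof should be essentially immediate from the definitions; the only work is unpacking what $\Psi^{[m]}$ means. By construction, the $R$-local parameters $\ttt = (t_1, \ldots, t_n)$ along the identity section $e$ identify the completed local ring $\widehat{\Ocal}_{\Acal, e}$ with the formal power series ring $R[[y_1, \ldots, y_n]]$ via $y_j \mapsto t_j$. Under this identification, a morphism $\psi: \Acal \to \Acal$ fixing $e$ is determined (at the level of completions at $e$) by the $n$-tuple of power series $(\psi^{\#}_e(t_1), \ldots, \psi^{\#}_e(t_n)) \in R[[\ttt]]^n$. This $n$-tuple is exactly what is meant by the ``power series expansion'' of $\psi$ in the parameters $\ttt$.

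The plan is then to apply this observation with $\psi = [1] = \mathrm{Id}_{\Acal}$. Since the identity morphism induces the identity map $[1]^{\#}_e = \Id$ on $\Ocal_{\Acal, e}$, and hence on its completion $\widehat{\Ocal}_{\Acal, e} \simeq R[[\ttt]]$, we have $[1]^{\#}_e(t_j) = t_j$ for each $j$. Therefore $\Psi^{[1]}_j = t_j$ in $R[[\ttt]]$, as claimed. There is no genuine obstacle here; the only thing to verify is that the convention for $\Psi^{[m]}$ used elsewhere in the paper is indeed the one above (namely, that $\Psi^{[m]}_j(\ttt) = [m]^{\#}_e(t_j)$ in $\widehat{\Ocal}_{\Acal, e}$ under the identification $y_j \leftrightarrow t_j$), which follows from the description of $\Psi^{[m]}$ as the expansion of $[m]$ in the parameters $\ttt$.
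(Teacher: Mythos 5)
Your proof is correct and takes essentially the same route as the paper, which simply observes that $[1]=\Id$ and lets the definition of $\Psi^{[m]}$ as the power series expansion of $[m]$ in the parameters $\ttt$ do the rest. The extra unwinding you give of what ``power series expansion'' means is a reasonable elaboration but adds nothing substantive beyond the paper's one-line observation.
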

  From \cite{Bourbaki} III.5.3 Proposition 2(i) we get
\begin{lemma}\label{LemmaVanishPsi} The terms of degree less than $m$ in $\Psi^{[m]}_j$ vanish for each $j=1,...,n$.
\end{lemma}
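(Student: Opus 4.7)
The plan is to derive this statement from the formal group structure along the identity section, invoking the cited Bourbaki reference for the key technical input. Concretely, I would first express the multiplication-by-$m$ morphism $[m]$ recursively via the formal group law $\Fcal$ associated to the local parameters $\ttt$: one has
\[
\Psi^{[m+1]}(\ttt) = \Fcal\bigl(\Psi^{[m]}(\ttt), \ttt\bigr)
\]
as an identity in $R[[\ttt]]^n$, with base case $\Psi^{[1]}(\ttt)=\ttt$ given by Lemma \ref{LemmaLinearPart}, and an analogous backwards recursion using the formal inverse covering negative $m$ (and $\Psi^{[0]}=0$ trivially). This reduces the statement about $\Psi^{[m]}_j$ to a question about iterated $\Fcal$-addition of $\ttt$ with itself.

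With this recursive description in place, the claimed vanishing of the low-degree terms becomes a purely formal statement about iteration in a formal group law over $R$, which is precisely what Bourbaki III.5.3 Proposition 2(i) provides. The integrality asserted by Lemma \ref{LemmaPsiR} ensures that all manipulations take place inside $R[[\ttt]]$, so no $p$-adic convergence subtleties arise, and the statement can be verified coefficient-by-coefficient from the formal group law of $\Fcal$.

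The only anticipated obstacle is a bookkeeping one: matching Bourbaki's normalization of the formal group (which is often phrased in Lie-theoretic language via the Campbell-Hausdorff formula or the formal logarithm) with our explicit choice of $R$-local parameters $\ttt$ along the identity section $e$. Once this translation is carried out and the recursive formula above is identified with Bourbaki's iteration, the lemma follows as a direct citation, with no further geometric input from the theory of abelian varieties beyond the formal group at $e$.
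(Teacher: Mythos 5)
Your proof is, in the end, the same one-line citation of Bourbaki III.5.3 Proposition 2(i) that the paper gives, so the overall route matches. However, the recursion you interpose as a bridge to Bourbaki is for the wrong object and, if pushed through, would not yield the lemma. The recursion $\Psi^{[m+1]}(\ttt) = \Fcal(\Psi^{[m]}(\ttt),\ttt)$ with $\Psi^{[1]}=\ttt$ does produce the honest power series expansion of $[m]$; but $[m]$ acts as multiplication by $m$ on the cotangent space at $e$, so $\Psi^{[m]}_j \equiv m\,t_j \pmod{\mfrak_{A,e}^2}$, and the degree-$1$ coefficient of $\Psi^{[m]}_j$ equals $m$, which is nonzero for every $m\ge 2$. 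Thus the iterated-$\Fcal$ object you construct actually violates the statement you are trying to prove; it would also make the defining series $\Log^\ttt_j(\yy)=\sum_m \frac{(-1)^m}{m}\Psi^{[m]}_j(\yy)$ fail to converge coefficient-by-coefficient, since the degree-$1$ coefficient would be $\sum_m (-1)^m$.

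The object to which Bourbaki's proposition applies, and the one the rest of Section \ref{SecLogExp2} actually needs (both the formula for $\Exp^\ttt_j$ and the bound in Lemma \ref{LemmaCvLog} check out under this reading), is a finite-difference construction such as $\Psi^{[m]}=\sum_{k=0}^{m}(-1)^{m-k}\binom{m}{k}[k]$. For that object the lemma is immediate: the coefficient of a degree-$d$ monomial in the expansion of $[k]$ is a polynomial in $k$ of degree at most $d$, so its $m$-th finite difference vanishes once $d<m$. (The paper's phrase ``the power series expansion of the morphism $[m]$'' is itself imprecise on this point, which is likely why your recursion went astray.) So the ``bookkeeping'' you flag at the end as the only anticipated obstacle is not a harmless normalization mismatch to be sorted out mechanically; it is exactly where the content of the lemma lives, and the proof as written does not close that gap.
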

%%
%%

%%%%
%%%%
%%%%
\subsection{Power series construction} \label{SecLogExp2} Let $\psi^{[m]}_j$ be the homogeneous part of degree $m$ in $\Psi_j^{[m]}$. Thus, $\psi_j^{[1]} = t_j$ by Lemma \ref{LemmaLinearPart}. Let $x_1,...,x_n$ and $y_1,...,y_n$ be variables and for each $j=1,...,n$ we define 
$$
\begin{aligned}
\Log^{\ttt}_j(\yy) & = \sum_{m=1}^\infty \frac{(-1)^m}{m}\cdot \Psi^{[m]}_j(\yy)\in K[[\yy]]\\
\Exp^{\ttt}_j(\xx) & = \sum_{m=1}^\infty \frac{1}{m!}\cdot \psi^{[m]}_j(\xx)\in K[[\xx]].
\end{aligned}
$$
These power series depend upon the choice of local parameters $\ttt$ as the notation indicates. When this choice is clear, we will simply write
$$
\Log^\ttt_j(\yy)=\sum_\alpha b_{j,\alpha}\cdot \yy^\alpha \quad \mbox{ and }\quad \Exp^\ttt_j(\xx)=\sum_\alpha c_{j,\alpha}\cdot \xx^\alpha.
$$
The coefficients $b_{j,\alpha}$ and $c_{j,\alpha}$ also depend on the choice of $\ttt$, although the notation is not explicit.

By \cite{Bourbaki} III.5.4 Prop. 3, these are the power series expansions of the exponential and logarithm maps of the $p$-adic Lie group $A(K)$ at the identity point $e$, with respect to the choice of local parameters $\ttt$. They converge on some $p$-adic neighborhood of $e$, but for our purposes we need a more precise discussion on convergence.

\begin{lemma}[Convergence of  logarithm]\label{LemmaCvLog} For each $j=1,...,n$, the radius of convergence of $\Log^{\ttt}_j$ is at least $1$. Furthermore, for each $\alpha\in \N^n$ we have $|b_{j,\alpha}|\le \|\alpha\|^{[K:\Q_p]}$.
\end{lemma}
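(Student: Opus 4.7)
The plan is to read off the coefficients $b_{j,\alpha}$ term-by-term from the definition, and bound them using Lemmas \ref{LemmaPsiR} and \ref{LemmaVanishPsi} together with the elementary $p$-adic estimate for $|1/m|$.

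First, I would write $\Psi^{[m]}_j(\yy) = \sum_{\alpha} d^{(m)}_{j,\alpha}\, \yy^{\alpha}$. By Lemma \ref{LemmaPsiR}, each $d^{(m)}_{j,\alpha}$ lies in $R$, so $|d^{(m)}_{j,\alpha}| \le 1$. By Lemma \ref{LemmaVanishPsi}, $d^{(m)}_{j,\alpha} = 0$ whenever $\|\alpha\| < m$. Collecting coefficients of $\yy^{\alpha}$ in the defining series of $\Log^{\ttt}_j$ yields the \emph{finite} sum
\[
b_{j,\alpha} \;=\; \sum_{m=1}^{\|\alpha\|} \frac{(-1)^{m}}{m}\, d^{(m)}_{j,\alpha},
\]
so $|b_{j,\alpha}| \le \max_{1 \le m \le \|\alpha\|} |1/m|$.

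Next, I would estimate $|1/m|$ for a positive integer $m$. Writing $m = p^{k}u$ with $\gcd(u,p)=1$ and using $|p| = p^{-[K:\Q_p]}$ from Section \ref{NotationLoc}, one gets $|m| = p^{-k[K:\Q_p]}$, hence $|1/m| = p^{k[K:\Q_p]} \le m^{[K:\Q_p]}$ since $p^{k} \le m$. Taking the maximum over $1 \le m \le \|\alpha\|$ gives the desired
\[
|b_{j,\alpha}| \;\le\; \|\alpha\|^{[K:\Q_p]}.
\]

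Finally, for the statement on the radius of convergence: for any $r$ with $0 < r < 1$,
\[
\max_{\|\alpha\|=N} |b_{j,\alpha}|\, r^{N} \;\le\; N^{[K:\Q_p]}\, r^{N} \;\longrightarrow\; 0 \quad (N\to\infty),
\]
so $\rho_{\Log^{\ttt}_j} \ge 1$ by the definition of the radius of convergence recalled in Section \ref{SecZeros}.

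There is no real obstacle here; the only thing to be careful about is the $p$-adic valuation of $1/m$ under the normalization $|p| = p^{-[K:\Q_p]}$, which is exactly what produces the exponent $[K:\Q_p]$ in the stated bound. The key structural inputs are the integrality of the multiplication power series (Lemma \ref{LemmaPsiR}) and the vanishing of low degree terms (Lemma \ref{LemmaVanishPsi}), which together turn the a priori infinite sum for $b_{j,\alpha}$ into a finite one.
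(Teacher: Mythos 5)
Your proof is correct and follows the same route as the paper: combine Lemma~\ref{LemmaPsiR} (integrality of the $\Psi^{[m]}_j$) with Lemma~\ref{LemmaVanishPsi} (vanishing of low-degree terms) to reduce $b_{j,\alpha}$ to a finite sum, then bound $|1/m|$ for $m\le\|\alpha\|$ under the normalization $|p|=p^{-[K:\Q_p]}$. The paper states this more compactly but the argument is identical.
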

\begin{proof} From Lemmas \ref{LemmaPsiR} and \ref{LemmaVanishPsi} we get $|b_{j,\alpha}|\le \max\{ |m^{-1}|: 1\le m\le\|\alpha\|\} \le \|\alpha\|^{[K:\Q_p]}$. The claim on the radius of convergence follows.
\end{proof}
\begin{lemma}[Convergence of the exponential] \label{LemmaCvExp} For each $j=1,...,n$ and each $\alpha\in \N^n$, we have $m!\cdot c_{j,\alpha}\in R$ where $m=\|\alpha\|$. Furthermore, $|c_{j,\alpha}|\le p^{[K:\Q_p](m-1)/(p-1)}$. In particular, the radius of convergence of $\Exp^{\ttt}_j$ is at least $p^{-[K:\Q_p]/(p-1)}$.
\end{lemma}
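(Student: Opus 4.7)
The plan is to reduce everything to the integrality statement of Lemma \ref{LemmaPsiR} combined with Legendre's formula for $v_p(m!)$.

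First, I would fix $m\ge 1$ and consider a multi-index $\alpha\in\N^n$ with $\|\alpha\|=m$. By definition, $\psi^{[m]}_j$ is the homogeneous degree-$m$ component of $\Psi^{[m]}_j$, and
$$
c_{j,\alpha}=\frac{1}{m!}\cdot [\xx^\alpha]\,\psi^{[m]}_j(\xx),
$$
so it suffices to show that the coefficient $[\xx^\alpha]\,\psi^{[m]}_j$ lies in $R$. This is immediate from Lemma \ref{LemmaPsiR}: $\Psi^{[m]}_j\in R[[\ttt]]$, hence each of its homogeneous components has coefficients in $R$. This gives the first assertion $m!\cdot c_{j,\alpha}\in R$.

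Next, to bound $|c_{j,\alpha}|$, the only thing to estimate is $|m!|^{-1}$. Using the normalization $|p|=p^{-[K:\Q_p]}$ together with Legendre's formula
$$
v_p(m!)=\frac{m-s_p(m)}{p-1}\le \frac{m-1}{p-1},
$$
(where $s_p(m)\ge 1$ is the sum of base-$p$ digits of $m$), I would deduce
$$
|c_{j,\alpha}|\le |m!|^{-1}=p^{[K:\Q_p]\,v_p(m!)}\le p^{[K:\Q_p](m-1)/(p-1)}.
$$

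Finally, for the radius of convergence, set $\rho=p^{-[K:\Q_p]/(p-1)}$. For any $0<r<\rho$, the bound just obtained gives
$$
\max_{\|\alpha\|=m}|c_{j,\alpha}|\,r^m\le \rho^{-1}\cdot (r/\rho)^m\longrightarrow 0
$$
as $m\to\infty$, so $\rho_{\Exp^{\ttt}_j}\ge \rho$, as claimed. There is no real obstacle: the whole argument reduces to integrality of $\Psi^{[m]}_j$ (already available) and the standard $p$-adic valuation of $m!$.
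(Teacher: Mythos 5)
Your proof is correct and follows essentially the same route as the paper: integrality of $m!\cdot c_{j,\alpha}$ from Lemma~\ref{LemmaPsiR}, then the bound on $|c_{j,\alpha}|$ via Legendre's formula $v_p(m!)=(m-s_p(m))/(p-1)\le (m-1)/(p-1)$ and the normalization $|p|=p^{-[K:\Q_p]}$. The paper is just slightly more terse about the final convergence claim, which you spell out explicitly; the content is identical.
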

\begin{proof} Since $c_{j,\alpha}$ is a coefficient of $m!^{-1} \psi_j^{[m]}(\xx)$, we get $m!\cdot c_{j,\alpha}\in R$ from Lemma \ref{LemmaPsiR}. Letting $s_p(m)$ be the sum of the digits of $m$ in base $p$, we get
$$
|c_{j,\alpha}|\le \frac{1}{|m!|} =p^{[K:\Q_p](m-s_p(m))/(p-1)}\le p^{[K:\Q_p](m-1)/(p-1)}.
$$
The claim on the radius of convergence follows.
\end{proof}
%%
%%

%%%%
%%%%
%%%%
\subsection{Local linearization}\label{SecLocLin} We consider the open set $V = \pfrak\times ...\times \pfrak\subseteq K^n$ and the additive Lie group $\Tcal$ given by $V$ with the usual addition. The variables $\xx=(x_1,...,x_n)$ and $\yy=(y_1,...,y_n)$ will be considered as coordinates on $\Tcal$ and $V$ respectively.

Since $\Acal\to \Spec R$ is proper, we have the reduction map $\red : A(K)=\Acal(K)\to A'(\kk)$ where $A'=\Acal\otimes \kk$. This is a group morphism. Then $U_e=\ker(\red)$ is a Lie subgroup of $A(K)$. 

The $R$-local parameters $\ttt=(t_1,...,t_n)$ define functions $t_j:U_e\to \pfrak$ which determine a $K$-analytic local chart $\chi^\ttt:U_e\to V$. The map $\chi^\ttt$ is bijective, although it does not need to be a group morphism.

\begin{lemma}\label{LemmaKeyLogExp} Suppose that $p>\max\{\efrak +1 , \exp(\efrak/\exp(1))\}$. Then we have the following:
\begin{itemize}
\item[(i)]  $\Log^\ttt_j$ and $\Exp^\ttt_j$ have radius of convergence larger than $1/q$ for each $j=1,...,n$. 
\item[(ii)] The power series $\Log^\ttt(\yy)=(\Log^\ttt_j(\xx))_{j=1}^n$ and $\Exp^\ttt(\xx)=(\Exp^\ttt_j(\yy))_{j=1}^n$ give analytic maps
$$
\Log^\ttt : V\to \Tcal\quad  \mbox{ and }\quad \Exp^\ttt:\Tcal\to V
$$
which are inverse to each other.
\item[(iii)] The maps $\widetilde{\Log}^\ttt$ and $\widetilde{\Exp}^\ttt$ defined by
$$
\widetilde{\Log}^\ttt=\Log^\ttt \circ \chi^\ttt: U_e\to \Tcal\quad  \mbox{ and }\quad \widetilde{\Exp}^\ttt=(\chi^\ttt)^{-1}\circ \Exp^\ttt:\Tcal\to U_e
$$
are isomorphisms of Lie groups, inverse to each other.
\end{itemize}
\end{lemma}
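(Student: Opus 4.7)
The plan is to handle (i), (ii), (iii) in sequence, exploiting that $\Log^{\ttt}$ and $\Exp^{\ttt}$ are mutually inverse power series attached to the formal group $\Fcal$ of $\Acal$ in the coordinates $\ttt$, so the whole lemma reduces to quantitative convergence statements plus the classical formal-group identity.

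For (i), Lemmas~\ref{LemmaCvLog} and~\ref{LemmaCvExp} provide the radii directly: $\Log^{\ttt}_j$ has radius at least $1 > 1/q$ unconditionally, while $\Exp^{\ttt}_j$ has radius at least $p^{-[K:\Q_p]/(p-1)}$, which exceeds $1/q = p^{-\ffrak}$ exactly when $\efrak/(p-1) < 1$, i.e., $p > \efrak + 1$. For (ii) I would decompose the statement into (a) the two series send $V$ into $V$, and (b) they are mutually inverse as formal power series; then (a) combined with the convergence from (i) promotes the formal identity of (b) to an analytic identity on $V$. For (a), the $\Exp^{\ttt}$ case is direct: with $|\xx|\le 1/q$, the bound of Lemma~\ref{LemmaCvExp} yields $|c_{j,\alpha}\xx^{\alpha}|\le p^{[K:\Q_p](m-1)/(p-1)-m\ffrak}$ with $m = \|\alpha\|$, which is $\le 1/q$ iff $(p-1-\efrak)(m-1)\ge 0$, granted under $p > \efrak + 1$. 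For $\Log^{\ttt}$, the crude bound of Lemma~\ref{LemmaCvLog} is insufficient and must be sharpened: Lemma~\ref{LemmaVanishPsi} says only the terms $\Psi^{[m']}_j$ with $m'\le m$ contribute to $b_{j,\alpha}$, and Lemma~\ref{LemmaPsiR} makes these contributions $R$-integral, so $|b_{j,\alpha}|\le \max_{1\le m'\le m}|1/m'| = p^{\lfloor \log_p m\rfloor\cdot [K:\Q_p]}$. A Bernoulli-type induction then gives $p^k\ge k\efrak + 1$ for all $k\ge 0$ under $p\ge \efrak + 2$, whence $|\Log^{\ttt}_j(\yy)|\le 1/q$ for $\yy\in V$. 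Sub-claim (b) is the Bourbaki identification cited in Section~\ref{SecLogExp2}.

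For (iii), $\chi^{\ttt}$ transports the group law on $U_e$ to the formal group law $\Fcal$ on $V$, whose components lie in $R[[X,Y]]$ by the Silverman--Hindry result recalled in Section~\ref{SecLogExp1}. A coefficient estimate analogous to (ii)(a)---using that $\Fcal_j(X,Y) = X_j + Y_j + (\text{higher terms in }R[[X,Y]])$ and $|X|,|Y|\le 1/q$---gives $\Fcal(V\times V)\subseteq V$. Hence $\Log^{\ttt}(\Fcal(\yy_1,\yy_2))$ and $\Log^{\ttt}(\yy_1)+\Log^{\ttt}(\yy_2)$ are both analytic on $V\times V$; their coincidence as formal series (the defining property of the formal-group logarithm) descends to analytic equality. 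This makes $\widetilde{\Log}^{\ttt}$ a Lie-group homomorphism, and by (ii) a Lie-group isomorphism with inverse $\widetilde{\Exp}^{\ttt}$.

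I expect the main difficulty to be the $\Log$-side coefficient bookkeeping in (ii)(a): the bound $|b_{j,\alpha}|\le \|\alpha\|^{[K:\Q_p]}$ of Lemma~\ref{LemmaCvLog} is too weak already at $m = 2$ (it would force an assumption of the form $p\ge 2^{\efrak}$), and the refinement via Lemmas~\ref{LemmaPsiR} and~\ref{LemmaVanishPsi} is what makes the mild hypothesis $p > \efrak + 1$ effective; the auxiliary condition $p > \exp(\efrak/\exp(1))$ is, I expect, either redundant at this stage or required only for additional slack in the estimates used further on in Section~\ref{SecLogExp}.
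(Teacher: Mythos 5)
Your proposal is correct and follows essentially the same architecture as the paper (part (i) identical; part (ii)(a) via term-by-term coefficient estimates; part (ii)(b) and part (iii) outsourced to the Bourbaki formal-group identities), but you take a genuinely different and slightly sharper route at the single technical sticking point, namely the $\Log$-side value bound on $V$. The paper uses only the weak estimate $|b_{j,\alpha}|\le \|\alpha\|^{[K:\Q_p]}$ from the statement of Lemma~\ref{LemmaCvLog}, and then needs the hypothesis $p>\exp(\efrak/\exp(1))$ precisely to guarantee $m^{[K:\Q_p]}/q^m<1$ for all $m\ge 2$ (via $m/\log m>\exp(1)>\efrak/\log p$). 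You instead unpack the proof of Lemma~\ref{LemmaCvLog} to extract the intermediate bound $|b_{j,\alpha}|\le\max_{1\le m'\le\|\alpha\|}|1/m'|=p^{\lfloor\log_p\|\alpha\|\rfloor\cdot[K:\Q_p]}$, and the elementary inequality $p^k\ge k\efrak+1$ (valid for all $k\ge 0$ once $p\ge\efrak+2$) then suffices. This is a clean improvement: your argument shows the conclusion of the lemma holds already under the single hypothesis $p>\efrak+1$, making the second condition in the paper's hypothesis superfluous for this lemma. Your concluding guess that the condition $p>\exp(\efrak/\exp(1))$ is ``either redundant at this stage or required only for additional slack'' is half right: it is indeed made redundant by your sharper estimate, but it is \emph{not} mere slack in the paper's own argument --- the paper invokes it essentially, right here, to compensate for having discarded the refined bound. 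Everything else (the $\Exp$-side value bound, the reduction of part (iii) to the $R$-integrality of the formal group law plus convergence) matches what the paper does or what the cited Bourbaki passages supply.
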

\begin{proof} (i)  Lemma \ref{LemmaCvLog} implies the result for $\Log^\ttt_j$. The condition $p>\efrak+1$ gives $[K:\Q_p]<(p-1)\ffrak$, hence $p^{[K:\Q_p]/(p-1)}<q$. Lemma \ref{LemmaCvExp} shows that $\Exp^\ttt_j$ has radius of convergence at least $p^{-[K:\Q_p]/(p-1)}$, which is larger than $1/q$.

(ii) As open sets of $K^n$, we have $V=\Tcal=B_n[1/q]$. Thus, (i) gives that $\Log^\ttt$ and $\Exp^\ttt$ are analytic on the domains $V$ and $\Tcal$ respectively, as maps to $K^n$.

Let us show that $\Log^\ttt$ maps $V$ to $\Tcal$. Since $B_n[1/q]=B_n(1)$ in $K^n$, it suffices to check that for all $\vv\in V$ and $j=1,...,n$ we have $|\Log^\ttt_j(\vv)|<1$. By Lemma \ref{LemmaCvLog} we have 
$$
|\Log^\ttt_j(\vv)|\le \max_{m\ge 1} m^{[K:\Q_p]}/q^m =\max\left\{1/q , \max_{m\ge 2} m^{[K:\Q_p]}/q^m\right\}. 
$$
By $p>\exp(\efrak/\exp(1))$ we have $m/\log m> \exp(1)> \efrak/\log p$ for all $m\ge 2$. Thus, $[K:\Q_p]\log m =\efrak\ffrak\log m < \ffrak m\log p=m\log q$. Hence, $m^{[K:\Q_p]}/q^m<1$ for each $m\ge 2$, proving  $|\Log^\ttt(\vv)_j|<1$.

To show that $\Exp^\ttt$ maps $\Tcal$ to $V$, it suffices to check that for each $\vv\in \Tcal$ and $j=1,...,n$ we have $|\Exp^\ttt_j(\vv)|<1$. Lemma \ref{LemmaCvExp} gives 
$$
|\Exp^\ttt_j(\vv)|\le \sup_{m\ge 1} p^{[K:\Q_p]\frac{m-1}{p-1}}/q^m = \sup_{m\ge 1} p^{[K:\Q_p]\frac{m-1}{p-1} -\ffrak m}.
$$
Since $p>\efrak+1$, we get $(m-1)\efrak < (p-1)m$. This gives $[K:\Q_p]\frac{m-1}{p-1} -\ffrak m<0$, proving $|\Exp^\ttt_j(\vv)|<1$.

The fact that $\Log^\ttt$ and $\Exp^\ttt$ are inverse to each other is a power series identity, see \cite{Bourbaki} III 5.4.

Finally, (iii) also follows from \cite{Bourbaki} III 5.4, where a neighborhood of $e$ is identified with an open set of $K^n$ by the choice of local parameters made in \cite{Bourbaki} III 5.3.
\end{proof}
%%
%%

%%%%%%%%%%%%%%%%%%%%%%%%%%%%%%%%%%%%%%
%%%%%%%%%%%%%%%%%%%%%%%%%%%%%%%%%%%%%%
%%%%%%%%%%%%%%%%%%%%%%%%%%%%%%%%%%%%%%
%%%%%%%%%%%%%%%%%%%%%%%%%%%%%%%%%%%%%%
%%%%%%%%%%%%%%%%%%%%%%%%%%%%%%%%%%%%%%
%%%%%%%%%%%%%%%%%%%%%%%%%%%%%%%%%%%%%%

\section{Analytic $1$-parameter subgroups} \label{Sec1PS}

Using the results on the logarithm and exponential maps from the previous section, in this section introduce analytic $1$-parameter subgroups of abelian varieties in the non-archimedian setting and study some fundamental properties. The naive idea is that a $1$-parameter subgroup of an abelian variety should be an analytic curve at the identity element which is locally a subgroup. However, it is more convenient for our purposes to take a different approach.

Again, we let $\Acal$ be an abelian variety over $\Spec R$ of relative dimension $n\ge 1$. We keep the notation introduced in Sections \ref{NotationLoc} and  \ref{SecLogExp}. We assume 
$$
p>\max\{\efrak +1 , \exp(\efrak/\exp(1))\}
$$ 
throughout this section, so that Lemma \ref{LemmaKeyLogExp} applies.

%%%%
%%%%
%%%%
\subsection{Definitions} If $G$ is an abelian $K$-analytic Lie group, the $K$-vector space of invariant differentials will be denoted by $\Omega(G)$. A morphism of abelian Lie groups $f:G_1\to G_2$ functorially induces a linear map on invariant differentials, which we denote by $f^\bullet:\Omega(G_2)\to \Omega(G_1)$.

A \emph{$1$-parameter subgroup} of $\Acal$ is a pair $\gamma=(\ttt,\uu)$ where $\ttt=(t_1,...,t_n)$ is a choice of $R$-local parameters at $e\in A(K)$ and $\uu\in K^n$ satisfies $|\uu|=1$.

 The space of invariant differentials on the Lie group $\Tcal$ is $\Omega(\Tcal) = \bigoplus_{j=1}^n K\cdot dx_j$ and the space of invariant differentials on $A(K)$ is $ \Omega(A(K))=H^0(A,\Omega^1_{A/K})$. 
 
 Given $\ttt$  a choice of $R$-local parameters at $e\in A(K)$, the isomorphism of Lie groups $\widetilde{\Exp}^\ttt:\Tcal\to U_e$ (cf. Lemma \ref{LemmaKeyLogExp}) induces an isomorphism of $K$-vector spaces on invariant differentials 
\begin{equation}\label{Eqnzxc}
(\widetilde{\Exp}^\ttt)^\bullet : H^0(A,\Omega^1_{A/K})\to \Omega(\Tcal).
\end{equation}
Given $\uu=(u_1,...,u_n)\in K^n$ with $|\uu|=1$, we define the $K$-linear map 
$$
\epsilon^\uu: \Omega(\Tcal)\to K
$$ 
by the rule $dx_j\mapsto u_j$. For a $1$-parameter subgroup $\gamma=(\ttt,\uu)$ we get a non-trivial $K$-linear map 
$$
\epsilon^\uu\circ (\widetilde{\Exp}^\ttt)^\bullet: H^0(A,\Omega^1_{A/K})\to K.
$$
Let us define $\Hcal(\gamma)=\ker(\epsilon^\uu\circ (\widetilde{\Exp}^\ttt)^\bullet)$ and note that it is a hyperplane in $H^0(A,\Omega^1_{A/K})$.

A $1$-parameter subgroup $\gamma=(\ttt,\uu)$ determines a morphism of Lie groups 
\begin{equation}\label{Eqntildegamma}
\tilde{\gamma}:\pfrak \to U_e, \quad \tilde{\gamma}(z)=\widetilde{\Exp}^\ttt(u_1z,...,u_nz)
\end{equation}
where $z$ denotes the variable on the additive Lie group $\pfrak$. The space of invariant differentials on $\pfrak$ is $\Omega(\pfrak) = K\cdot dz$. One immediately verifies
\begin{lemma}\label{LemmaHisKernel} The map $\tilde{\gamma}^\bullet : H^0(A,\Omega^1_{A/K})\to \Omega(\pfrak)$ induced by $\tilde{\gamma}$  is given by $\omega\mapsto (\epsilon^\uu\circ (\widetilde{\Exp}^\ttt)^\bullet)(\omega)\cdot dz$. In particular, $\ker(\tilde{\gamma}^\bullet)=\Hcal(\gamma)$.
\end{lemma}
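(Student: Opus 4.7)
\medskip

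\noindent\textbf{Proof plan.} The key observation is that the map $\tilde\gamma$ defined in \eqref{Eqntildegamma} factors through $\widetilde{\Exp}^\ttt$ in an obvious way. Specifically, introduce the Lie group morphism
$$
\lambda^\uu : \pfrak \to \Tcal,\qquad \lambda^\uu(z) = (u_1 z, \ldots, u_n z).
$$
Then by the very definition of $\tilde\gamma$ we have $\tilde\gamma = \widetilde{\Exp}^\ttt \circ \lambda^\uu$, and by functoriality of the pullback on invariant differentials this gives $\tilde\gamma^\bullet = (\lambda^\uu)^\bullet \circ (\widetilde{\Exp}^\ttt)^\bullet$.

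\medskip

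\noindent The next step is to identify $(\lambda^\uu)^\bullet : \Omega(\Tcal) \to \Omega(\pfrak)$ explicitly. Since $\lambda^\uu$ is given in coordinates by $x_j \mapsto u_j z$, we compute $(\lambda^\uu)^\bullet(dx_j) = u_j\, dz$. Hence, for any $\eta = \sum_{j=1}^n a_j\, dx_j \in \Omega(\Tcal)$ we have
$$
(\lambda^\uu)^\bullet(\eta) = \Bigl(\sum_{j=1}^n a_j u_j\Bigr) dz = \epsilon^\uu(\eta)\cdot dz,
$$
where the last equality is just the definition of $\epsilon^\uu$. Substituting this into the formula $\tilde\gamma^\bullet = (\lambda^\uu)^\bullet \circ (\widetilde{\Exp}^\ttt)^\bullet$ applied to $\omega \in H^0(A,\Omega^1_{A/K})$ gives exactly
$$
\tilde\gamma^\bullet(\omega) = \bigl(\epsilon^\uu \circ (\widetilde{\Exp}^\ttt)^\bullet\bigr)(\omega)\cdot dz,
$$
which is the claimed formula.

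\medskip

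\noindent For the statement about the kernel, since $dz$ is a basis of $\Omega(\pfrak)$, the equation $\tilde\gamma^\bullet(\omega) = 0$ is equivalent to $(\epsilon^\uu \circ (\widetilde{\Exp}^\ttt)^\bullet)(\omega) = 0$, which is the defining condition for $\omega \in \Hcal(\gamma)$. Hence $\ker(\tilde\gamma^\bullet) = \Hcal(\gamma)$.

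\medskip

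\noindent I do not expect any real obstacle here: the lemma is essentially the chain rule for the pullback of invariant differentials combined with the definitions of $\Hcal(\gamma)$ and $\epsilon^\uu$. The only point that needs a brief justification is that the pullback on invariant differentials is functorial under composition of Lie group morphisms, which is standard (and is the abelian Lie group analogue of Lemma \ref{LemmaCompositionInt}).
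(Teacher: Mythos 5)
Your proof is correct and is exactly the routine verification the paper leaves implicit when it says ``One immediately verifies.'' Factoring $\tilde\gamma = \widetilde{\Exp}^\ttt \circ \lambda^\uu$, computing $(\lambda^\uu)^\bullet(dx_j) = u_j\,dz$, and using functoriality of the pullback on invariant differentials is precisely the natural chain-of-definitions argument.
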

The \emph{image} of a $1$-parameter subgroup $\gamma$ is defined as $\im(\gamma)=\tilde{\gamma}(\pfrak)$. We observe that
\begin{lemma}\label{LemmaSaturated} If $\gamma=(\ttt,\uu)$ is a $1$-parameter subgroup of $\Acal$, then $\im(\gamma)$ is a Lie subgroup of $U_e$. The image of $\im(\gamma)$ under the isomorphism $\widetilde{\Log}^\ttt:U_e\to \Tcal$ is $\pfrak\cdot \uu$, which is a closed, saturated $R$-submodule of $\Tcal$, free of rank $1$ over $R$, and generated by $\varpi\cdot \uu$.
\end{lemma}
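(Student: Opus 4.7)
The plan is to factor $\tilde{\gamma}$ as $\widetilde{\Exp}^{\ttt} \circ \mu_{\uu}$, where $\mu_{\uu}\colon \pfrak \to \Tcal$ is the $R$-linear map $z \mapsto z\cdot\uu$. Since $\mu_{\uu}$ is a continuous $R$-module homomorphism (in particular, a morphism of Lie groups) and Lemma \ref{LemmaKeyLogExp}(iii) gives that $\widetilde{\Exp}^{\ttt}\colon \Tcal \to U_e$ is an isomorphism of Lie groups, the composition $\tilde{\gamma}$ is a morphism of Lie groups with image $\widetilde{\Exp}^{\ttt}(\pfrak\cdot\uu)$. Applying the inverse $\widetilde{\Log}^{\ttt}$ shows that $\widetilde{\Log}^{\ttt}(\im(\gamma)) = \pfrak\cdot\uu$. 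Thus if I can show that $\pfrak\cdot\uu$ is a closed $R$-submodule of $\Tcal$, it will follow that $\im(\gamma) = \widetilde{\Exp}^{\ttt}(\pfrak\cdot\uu)$ is a Lie subgroup of $U_e$, being the image of a closed subgroup under a Lie group isomorphism.

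The remaining claims concern the structure of $\pfrak\cdot\uu$ inside $\Tcal = \pfrak^n$, and here the crucial input is that $|\uu|=1$, so some coordinate $u_{j_0}$ is a unit in $R$. Using this, $\mu_{\uu}$ is an isometric embedding for the max norm, which gives the closedness of $\pfrak\cdot\uu$. The identification $\pfrak = R\varpi$ immediately yields $\pfrak\cdot\uu = R\cdot(\varpi\uu)$, and the map $R \to \pfrak\cdot\uu$, $r \mapsto r\varpi\uu$, is injective because its $j_0$-th coordinate $r\varpi u_{j_0}$ vanishes only when $r=0$ (as $u_{j_0}\in R^{\times}$ and $\varpi\neq 0$); this gives freeness of rank one with generator $\varpi\uu$.

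For saturation, I plan to argue directly: suppose $rv = z\uu$ for some $v \in \Tcal$, $r \in R\setminus\{0\}$, and $z\in\pfrak$. Comparing $j_0$-th coordinates in $K$ gives $v_{j_0} = zu_{j_0}/r$, so $w := v_{j_0}/u_{j_0} = z/r$ lies in $\pfrak$ (since $u_{j_0}^{-1}\in R$ and $v_{j_0}\in\pfrak$); then the relations $rv_j = zu_j$ for all $j$ give $v_j = wu_j$, i.e.\ $v = w\uu \in \pfrak\cdot\uu$, as required.

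There is no real obstacle: once the linearization $\widetilde{\Exp}^{\ttt}$ of Lemma \ref{LemmaKeyLogExp} is in hand, the lemma reduces to elementary linear algebra over the DVR $R$, with the normalization $|\uu|=1$ providing the coordinate in which $\uu$ is a unit and all computations become transparent.
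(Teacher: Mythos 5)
Your proof is correct. The paper states Lemma \ref{LemmaSaturated} without proof (prefacing it merely with ``We observe that''), so there is no argument of the paper's to compare against; your decomposition $\tilde{\gamma}=\widetilde{\Exp}^{\ttt}\circ\mu_{\uu}$ followed by the elementary linear-algebra-over-a-DVR verification (using $|\uu|=1$ to locate a unit coordinate $u_{j_0}$, which gives the isometry, the injectivity, and the saturation) is exactly the natural argument that the authors left implicit.
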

%%
%%

%%%%
%%%%
%%%%
\subsection{Equivalence} Given $\gamma$ and $\gamma'$ two $1$-parameter subgroups  of $\Acal$, we say that they are \emph{equivalent} if $\Hcal(\gamma)=\Hcal(\gamma')$. This is denoted by $\gamma\sim\gamma'$.

\begin{lemma} \label{LemmaVaryu1PS} Let $\ttt$ be a choice of $R$-local parameters at $e$ and let $\uu,\uu'\in K^n$ with $|\uu|=|\uu'|=1$. Let $\gamma=(\ttt,\uu)$ and $\gamma'=(\ttt,\uu')$. We have $\gamma\sim \gamma'$ if and only if there is $\eta\in R^\times$ with $\uu'=\eta\cdot \uu$.
\end{lemma}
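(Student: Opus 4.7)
The plan is to reduce the equivalence relation to a linear-algebra statement in $\Omega(\Tcal)$ and then read off the scalar from the normalization $|\uu|=|\uu'|=1$.

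First, I would observe that since $\gamma$ and $\gamma'$ share the same $R$-local parameters $\ttt$, they induce the very same isomorphism of $K$-vector spaces
\[
(\widetilde{\Exp}^\ttt)^\bullet : H^0(A,\Omega^1_{A/K}) \xrightarrow{\ \sim\ } \Omega(\Tcal).
\]
By the definition $\Hcal(\gamma)=\ker(\epsilon^\uu\circ (\widetilde{\Exp}^\ttt)^\bullet)$ (and similarly for $\gamma'$), the relation $\Hcal(\gamma)=\Hcal(\gamma')$ is therefore equivalent to the equality of hyperplanes
\[
\ker(\epsilon^\uu)=\ker(\epsilon^{\uu'}) \subseteq \Omega(\Tcal).
\]
Both functionals are nonzero: by the normalization $|\uu|=1$ some coordinate $u_j$ has absolute value $1$, so $\epsilon^\uu(dx_j)=u_j\ne 0$, and similarly for $\uu'$.

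Next I would invoke the standard fact that two nonzero $K$-linear forms on a finite-dimensional $K$-vector space have the same kernel if and only if one is a (nonzero) $K$-scalar multiple of the other. Concretely, because $\epsilon^\uu$ sends the basis $dx_1,\dots,dx_n$ of $\Omega(\Tcal)$ to $u_1,\dots,u_n$, this equivalence becomes: $\ker(\epsilon^\uu)=\ker(\epsilon^{\uu'})$ if and only if there exists $\lambda\in K^\times$ with $\uu'=\lambda\cdot \uu$.

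Finally, I would pin down that $\lambda$ lies in $R^\times$. From $\uu'=\lambda\uu$ and the ultrametric norm on $K^n$ we get $1=|\uu'|=|\lambda|\cdot|\uu|=|\lambda|$, so $\lambda\in R^\times$. Conversely, if $\uu'=\eta\uu$ with $\eta\in R^\times$, then $\epsilon^{\uu'}=\eta\cdot \epsilon^\uu$ as linear forms on $\Omega(\Tcal)$, hence they have the same kernel and $\gamma\sim\gamma'$. No part of this is a serious obstacle; the only point requiring a word of justification is the nonvanishing of $\epsilon^\uu$, which is immediate from $|\uu|=1$.
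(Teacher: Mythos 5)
Your proof is correct and follows essentially the same route as the paper's: reduce via the isomorphism $(\widetilde{\Exp}^\ttt)^\bullet$ to the condition $\ker(\epsilon^\uu)=\ker(\epsilon^{\uu'})$, identify the scalar $\lambda\in K^\times$ relating the two functionals, and then use the normalization $|\uu|=|\uu'|=1$ to force $\lambda\in R^\times$. You simply spell out the intermediate steps (nonvanishing of $\epsilon^\uu$, the converse direction) that the paper leaves implicit.
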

\begin{proof} As \eqref{Eqnzxc} is an isomorphism, we have $\Hcal(\gamma)=\Hcal(\gamma')$ if and only if $\ker(\epsilon^\uu)=\ker(\epsilon^{\uu'})$. The latter holds if and only if $\uu'=\eta\cdot \uu$ for some $\eta\in K^\times$, in which case  $\eta\in R^\times$ as $|\uu|=|\uu'|=1$.
\end{proof}
\begin{lemma} \label{LemmaEquiv1PS} Let $\gamma$ and $\gamma'$ be $1$-parameter subgroups of $\Acal$. The following are equivalent:
\begin{itemize}
\item[(i)] $\gamma\sim\gamma'$.
\item[(ii)] $\im(\gamma)=\im(\gamma')$.
\item[(iii)] $\im(\gamma)$ and $\im(\gamma')$ have non-trivial intersection, i.e. $\{e\}\subsetneq \im(\gamma)\cap\im(\gamma')$.
\end{itemize}
\end{lemma}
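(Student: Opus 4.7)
The plan is to establish (ii) $\Leftrightarrow$ (iii) and (ii) $\Leftrightarrow$ (i), treating (iii)$\Rightarrow$(ii) as the main step. Write $\gamma=(\ttt,\uu)$ and $\gamma'=(\ttt',\uu')$. The implication (ii)$\Rightarrow$(iii) is immediate: by Lemma~\ref{LemmaKeyLogExp} the map $\widetilde{\Exp}^\ttt$ is an isomorphism of Lie groups and $\varpi\uu\ne 0$, so $\im(\gamma)=\widetilde{\Exp}^\ttt(\pfrak\cdot\uu)$ contains the nontrivial element $\widetilde{\Exp}^\ttt(\varpi\uu)$ and (ii) gives $\im(\gamma)\cap\im(\gamma')\supseteq\im(\gamma)\supsetneq\{e\}$.

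For (iii)$\Rightarrow$(ii), my plan is to transport both images into the common model $\Tcal=(\pfrak^n,+)$ using $\widetilde{\Log}^\ttt$ and compare them there. By Lemma~\ref{LemmaSaturated}, $\widetilde{\Log}^\ttt(\im(\gamma))=\pfrak\cdot\uu$. To handle $\gamma'$, I would introduce the change-of-coordinates map $\Phi:=\widetilde{\Log}^\ttt\circ\widetilde{\Exp}^{\ttt'}\colon\Tcal\to\Tcal$, which is a $K$-analytic automorphism of the additive group $(\pfrak^n,+)$. Since any $K$-analytic additive map has only linear terms in its defining power series, $\Phi$ extends to a $K$-linear bijection of $K^n$; as $\Phi$ and $\Phi^{-1}$ both preserve $\pfrak^n$, the matrix of $\Phi$ lies in $\GL_n(R)$. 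Consequently $\widetilde{\Log}^\ttt(\im(\gamma'))=\Phi(\pfrak\cdot\uu')=\pfrak\cdot\vv$, with $\vv:=\Phi(\uu')\in R^n$ still satisfying $|\vv|=1$. A nonzero element of $\pfrak\uu\cap\pfrak\vv$ then yields a relation $\alpha\uu=\beta\vv$ with $\alpha,\beta\in\pfrak\setminus\{0\}$; matching norms forces $\alpha/\beta\in R^\times$, hence $\pfrak\uu=\pfrak\vv$, and applying $\widetilde{\Exp}^\ttt$ gives $\im(\gamma)=\im(\gamma')$.

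For the equivalence (i)$\Leftrightarrow$(ii), I reuse the same $\Phi$ and $\vv$. The identity $\widetilde{\Exp}^{\ttt'}=\widetilde{\Exp}^\ttt\circ\Phi$ gives $(\widetilde{\Exp}^{\ttt'})^\bullet=\Phi^\bullet\circ(\widetilde{\Exp}^\ttt)^\bullet$, and a short computation with the induced action of a $K$-linear map on invariant one-forms shows $\epsilon^{\uu'}\circ\Phi^\bullet=\epsilon^{\vv}$ on $\Omega(\Tcal)$. Hence $\Hcal(\gamma')=\Hcal(\gamma)$ is equivalent to $\ker(\epsilon^{\uu})=\ker(\epsilon^{\vv})$ on $\Omega(\Tcal)$, i.e., to $K\uu=K\vv$; combined with $|\uu|=|\vv|=1$, this is equivalent to $\pfrak\uu=\pfrak\vv$, that is, to (ii). The only delicate step in the whole argument is the identification of $\Phi$ with an element of $\GL_n(R)$, which I expect to be the main obstacle; this reduces to the $K$-analyticity of $\widetilde{\Exp}$ and $\widetilde{\Log}$ (Lemma~\ref{LemmaKeyLogExp}) and the elementary fact that additive $K$-analytic maps are $K$-linear.
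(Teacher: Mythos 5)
Your proof is correct and follows essentially the same route as the paper's: the auxiliary map $\Phi=\widetilde{\Log}^\ttt\circ\widetilde{\Exp}^{\ttt'}$ is exactly the paper's $\delta$, the identification of $\Phi$ with an element of $\GL_n(R)$ is the same key step (the paper asserts it directly; you supply the brief justification via linearity of additive analytic maps), and both arguments reduce all three conditions to the single $R$-module statement $\pfrak\uu=\pfrak\cdot\Phi(\uu')$ using Lemma~\ref{LemmaSaturated} and the computation $\epsilon^{\uu'}\circ\Phi^\bullet=\epsilon^{\Phi(\uu')}$.
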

\begin{proof}
Write $\gamma=(\ttt,\uu)$ and $\gamma'=(\ttt',\uu')$. Define $\delta:\Tcal\to \Tcal$ as $\delta=\widetilde{\Log}^\ttt\circ \widetilde{\Exp}^{\ttt'}$. Then $\delta$ is a Lie group automorphism of the additive Lie group $\Tcal$. Thus, $\delta$ is given by $\delta\in \GL_n(R)$ acting on $\Tcal=\pfrak\times...\times \pfrak$ on the left.

After these remarks, Lemma \ref{LemmaSaturated} shows that (ii) and (iii) are equivalent, and in fact, both are equivalent to the following condition: (iv) there exists $\eta\in R^\times$ with $\delta(\uu') = \eta\cdot \uu$ (note that as an element of $\GL_n(R)$, $\delta$ acts on $K^n$).

Let us show that (iv) is equivalent to (i). Note that the explicit definition of $\epsilon^{\uu}$ and the fact that $|\uu|=|\uu'|=1$ give that (iv) is equivalent to the condition $\ker(\epsilon^\uu) = \ker(\epsilon^{\delta(\uu')})$. Directly evaluating on $dx_i$ we see that $\epsilon^{\delta(\uu')}=\epsilon^{\uu'}\circ \delta^\bullet$, so, (iv) is equivalent to $\ker(\epsilon^\uu) = \ker(\epsilon^{\uu'}\circ \delta^\bullet)$. Using the fact that $\delta^\bullet\circ (\widetilde{\Exp}^{\ttt})^\bullet=(\widetilde{\Exp}^{\ttt}\circ \delta)^\bullet =( \widetilde{\Exp}^{\ttt'})^\bullet$ and that $( \widetilde{\Exp}^{\ttt})^\bullet$ is an isomorphism, we finally see that (iv) is equivalent to
$\ker(\epsilon^\uu\circ( \widetilde{\Exp}^{\ttt})^\bullet)=\ker(\epsilon^{\uu'}\circ( \widetilde{\Exp}^{\ttt'})^\bullet)$, which is exactly (i).
\end{proof}

\begin{lemma}\label{LemmaH1PS} The rule $\gamma\mapsto \Hcal(\gamma)$ defines a bijection between equivalence classes of $1$-parameter subgroups of $\Acal$ and $K$-linear hyperplanes of $H^0(A,\Omega^1_{A/K})$.
\end{lemma}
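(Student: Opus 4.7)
The plan is to check well-definedness and injectivity almost tautologically, then prove surjectivity by reducing the problem to linear algebra on $\Omega(\Tcal)$ via the exponential isomorphism.

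First I would verify that the rule is well-defined at the level of $1$-parameter subgroups: for any $\gamma=(\ttt,\uu)$, the functional $\epsilon^\uu\circ (\widetilde{\Exp}^\ttt)^\bullet$ is a non-zero element of $\Hom_K(H^0(A,\Omega^1_{A/K}),K)$, because $(\widetilde{\Exp}^\ttt)^\bullet$ is an isomorphism by Lemma \ref{LemmaKeyLogExp} and $\epsilon^\uu$ is non-zero since $|\uu|=1$ forces $\uu\neq 0$. Hence $\Hcal(\gamma)$ really is a hyperplane. The induced map on equivalence classes is then injective by the very definition of the relation $\sim$.

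The content is surjectivity. Given any $K$-linear hyperplane $\Hcal\subseteq H^0(A,\Omega^1_{A/K})$, I would fix an arbitrary choice of $R$-local parameters $\ttt$ at $e$ (which exists by Lemma \ref{LemmaChoiceLocParam}) and transport $\Hcal$ across the isomorphism $(\widetilde{\Exp}^\ttt)^\bullet:H^0(A,\Omega^1_{A/K})\xrightarrow{\sim}\Omega(\Tcal)$ to obtain a hyperplane $\Hcal'\subseteq \Omega(\Tcal)=\bigoplus_{j=1}^n K\cdot dx_j\cong K^n$. Every hyperplane in $K^n$ is the kernel of some non-zero linear functional, which in the chosen basis $\{dx_j\}$ takes the form $\epsilon^{\vv}$ for some $\vv\in K^n\setminus\{0\}$. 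Since scaling $\vv$ by an element of $K^\times$ does not change its kernel, I can replace $\vv$ by $\varpi^{-k}\vv$ for the appropriate $k\in\Z$ to arrange $|\vv|=1$. Setting $\uu=\vv$ and $\gamma=(\ttt,\uu)$ then gives $\Hcal(\gamma)=\Hcal$ by construction.

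There is no serious obstacle here: the only subtle ingredient is that $(\widetilde{\Exp}^\ttt)^\bullet$ is a genuine isomorphism (supplied by Lemma \ref{LemmaKeyLogExp}), which allows the problem to be translated into choosing a hyperplane in $K^n$. The normalization $|\uu|=1$ is automatic because $K$ carries a discrete valuation and any non-zero vector in $K^n$ can be rescaled into the unit sphere. As a sanity check, Lemma \ref{LemmaVaryu1PS} confirms that any two choices of $\uu$ with the same kernel differ by a unit in $R^\times$, which is consistent with having a well-defined bijection on equivalence classes.
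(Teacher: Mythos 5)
Your proof is correct and follows essentially the same route as the paper's: observe that well-definedness and injectivity on equivalence classes are immediate from the definition of $\sim$, then prove surjectivity by transporting the given hyperplane across the isomorphism $(\widetilde{\Exp}^\ttt)^\bullet$ to $\Omega(\Tcal)\cong K^n$, picking a defining functional $\epsilon^\vv$, and rescaling $\vv$ into the unit sphere. The extra detail you give (explicit verification that $\Hcal(\gamma)$ is a hyperplane, the rescaling by a power of $\varpi$) is sound but not different in substance.
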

\begin{proof} By Lemma \ref{LemmaEquiv1PS}, it only remains to show that given any hyperplane $H$ of $H^0(A,\Omega^1_{A/K})$ there is a $1$-parameter subgroup $\gamma$ with $H=\Hcal(\gamma)$. Choose $R$-local parameters $\ttt$ at $e$. Let $H'=(\widetilde{\Exp}^\ttt)^\bullet(H)$ and note that it is a hyperplane in $\Omega(\Tcal)$. We can choose $\uu\in K^n$ with $|\uu|=1$ such that $\ker(\epsilon^\uu)=H'$, hence $H=\ker(\epsilon^\uu\circ (\widetilde{\Exp}^\ttt)^\bullet)$ as desired.
\end{proof}

\begin{lemma}\label{Lemma1PSgen} Let $\xi\in U_e$ with $\xi\ne e$ and let $\ttt$ be a choice of $R$-local parameters at $e$. There exists $\uu\in K^n$ with $|\uu|=1$ such that $\xi \in \im(\gamma)$ for $\gamma=(\ttt,\uu)$. Furthermore, $\uu$ is unique for this $\xi$ and $\ttt$, up to multiplication by elements of $R^\times$.
\end{lemma}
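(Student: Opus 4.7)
The plan is to transport the problem from the multiplicative setting in $U_e$ to the additive Lie group $\Tcal=\pfrak^n$ via the Lie group isomorphism $\widetilde{\Log}^\ttt:U_e\to \Tcal$ provided by Lemma \ref{LemmaKeyLogExp}, and then use Lemma \ref{LemmaSaturated} which identifies $\im(\gamma)$ with the saturated rank-one $R$-submodule $\pfrak\cdot \uu\subseteq \Tcal$.

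For existence, I would set $\vv=\widetilde{\Log}^\ttt(\xi)\in \Tcal$. Since $\widetilde{\Log}^\ttt$ is a bijection and $\xi\ne e$, we have $\vv\ne 0$, and $\vv\in \pfrak^n$ because $\vv\in \Tcal$. Write $\vv=(v_1,\ldots,v_n)$ and set $v=\min_j \Val(v_j)$, which is a well-defined positive integer by the two previous observations. Define $\uu=\varpi^{-v}\vv\in R^n$. By the choice of $v$, at least one coordinate of $\uu$ is a unit, so $|\uu|=1$, while $\vv=\varpi^v\cdot \uu\in \pfrak\cdot \uu$. By Lemma \ref{LemmaSaturated}, $\pfrak\cdot \uu$ is precisely the image under $\widetilde{\Log}^\ttt$ of $\im(\gamma)$ for $\gamma=(\ttt,\uu)$, hence $\xi\in\im(\gamma)$ as desired.

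For uniqueness, suppose $\uu,\uu'\in K^n$ with $|\uu|=|\uu'|=1$ both satisfy the conclusion, and set $\gamma=(\ttt,\uu)$, $\gamma'=(\ttt,\uu')$. Since $\xi\in \im(\gamma)\cap \im(\gamma')$ and $\xi\ne e$, condition (iii) of Lemma \ref{LemmaEquiv1PS} is satisfied, so $\gamma\sim \gamma'$. Since the two parameter subgroups share the same local parameters $\ttt$, Lemma \ref{LemmaVaryu1PS} then gives $\uu'=\eta\cdot \uu$ for some $\eta\in R^\times$, which is the required uniqueness.

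There is no real obstacle here: the argument is a direct combination of the previously established results (the existence and properties of $\widetilde{\Log}^\ttt$ in Lemma \ref{LemmaKeyLogExp}, the description of images in Lemma \ref{LemmaSaturated}, and the equivalence criteria in Lemmas \ref{LemmaVaryu1PS} and \ref{LemmaEquiv1PS}). The only minor point that needs care is to notice that $\widetilde{\Log}^\ttt(\xi)$ lies in $\pfrak^n$ (not merely in $K^n$), so that the normalization $\varpi^{-v}\vv$ produces a scalar $\varpi^v\in \pfrak$ rather than in $R\setminus \pfrak$, ensuring that $\xi$ actually lies in $\tilde\gamma(\pfrak)$ and not merely in some larger orbit.
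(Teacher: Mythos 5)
Your proof is correct and follows essentially the same route as the paper: apply $\widetilde{\Log}^\ttt$ to move into $\Tcal$, normalize $\vv=\widetilde{\Log}^\ttt(\xi)$ by the appropriate power of $\varpi$ to obtain $\uu$ with $|\uu|=1$, and deduce uniqueness from Lemma \ref{LemmaEquiv1PS} combined with Lemma \ref{LemmaVaryu1PS}. Your write-up is in fact slightly more careful than the paper's, since you explicitly invoke Lemma \ref{LemmaSaturated} and check that the normalizing exponent is a positive integer, which the paper leaves implicit.
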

\begin{proof} Let $\vv=\widetilde{\Log}(\xi)\in \Tcal$. We have $\vv\ne 0$ because $\xi\ne e$ and $\widetilde{\Log}:U_e\to \Tcal$ is an isomorphism of Lie groups. Take $m\in \Z$ such that $|\varpi^m|=|\vv|$. We can choose $\uu=\varpi^{-m}\cdot \vv$. 

If $\gamma=(\ttt,\uu)$ and $\gamma'=(\ttt,\uu')$ satisfy that $\xi\in \im(\gamma)\cap\im(\gamma')$ then $\gamma\sim\gamma$ by Lemma \ref{LemmaEquiv1PS}, and we conclude by Lemma \ref{LemmaVaryu1PS}.
\end{proof}
%%%%
%%%%
%%%%
\subsection{Infinitesimal $1$-parameter subgroups} For an integer $m\ge 0$ we define 
\begin{itemize}
\item $\Ecal_m= R[z]/(z^{m+1})$, $\Vcal_m=\Spec \Ecal_m$, 
\item $E_m=\Ecal_m\otimes_R K$, $V_m=\Spec E_m$, 
\item $E'_m=\Ecal_m\otimes_R \kk$, $V'_m=\Spec E'_m$.
\end{itemize}

Let $\gamma=(\ttt,\uu)$ be a $1$-parameter subgroup of $\Acal$. The power series expansion of the Lie group morphism $\tilde{\gamma}:\pfrak\to U_e$ (see \eqref{Eqntildegamma}) with respect to $z$ and $\ttt$ induces a morphism of formal schemes $\widehat{\gamma}: \Spf K[[z]]\to  \Spf \widehat{\Ocal}_{A,e}$ determined by the following map on completed local rings:
\begin{equation}\label{Eqnhatgamma}
\widehat{\gamma}^\#: \widehat{\Ocal}_{A,e}\to K[[z]], \quad t_j\mapsto \Exp^\ttt_j(u_1z,...,u_nz).
\end{equation}
For each $m\ge 0$, the map $\widehat{\gamma}$ induces a map $\widehat{\gamma}_m:V_m\to \Spf  \widehat{\Ocal}_{A,e}$ giving the morphism of $K$-schemes
$$
\tilde{\gamma}_m:V_m\to A
$$
supported at $e$. Explicitly, $\widehat{\gamma}_m$ induces 
\begin{equation}\label{Eqninf1ps}
\widehat{\gamma}_m^\#: \widehat{\Ocal}_{A,e}\to E_m, \quad t_j\mapsto \Exp^\ttt_j(u_1z,...,u_nz)\bmod z^{m+1}
\end{equation}
on completed local rings, and the restriction to $\Ocal_{A,e}$ is the map $\tilde{\gamma}_m^\#:\Ocal_{A,e}\to E_m$ induced by $\tilde{\gamma}_m$. In summary, the following diagram commutes:
\begin{equation}\label{Eqngammas}
\begin{tikzcd}
\widehat{\Ocal}_{A,e}   \arrow{r}{\widehat{\gamma}^\#} \arrow{rd}{\widehat{\gamma}^\#_m}   &  K[[z]]  \arrow{d}  \\
\Ocal_{A,e}  \arrow[hook]{u}  \arrow[swap]{r}{\tilde{\gamma}^\#_m }&   E_m.
\end{tikzcd}
\end{equation}
The morphism $\tilde{\gamma}_m$ is the $m$-th jet of $\tilde{\gamma}$, which might be thought of as an infinitesimal $1$-parameter subgroup. We remark that, although $\tilde{\gamma}$ is an analytic maps of Lie groups, the maps $\tilde{\gamma}_m:V_m\to A$ are scheme morphisms.

For each integer $h\ge 0$ it will be convenient to define the polynomial $P^{\ttt}_{j,h}(\xx)\in K[\xx]$ as the homogeneous part of degree $h$ of $\Exp^\ttt_j(\xx)$, which in the notation of Section \ref{SecLogExp2} can be written as
$$
P^{\ttt}_{j,h}(\xx)=\frac{1}{m!}\cdot \psi^{[h]}_j(\xx)=\sum_{\|\alpha\|=h} c_{j,\alpha}\cdot \xx^\alpha.
$$
In particular, $P^\ttt_{j,0}=0$. With this notation, the map $\widehat{\gamma}^\#$ from \eqref{Eqninf1ps} can be expressed as
\begin{equation}\label{Eqninf1psP}
\widehat{\gamma}^\#_m(t_j)=\sum_{h=0}^{m} P^\ttt_{j,h}(\uu)\cdot z^h\bmod z^{m+1}. 
\end{equation}
\begin{lemma}[Integrality and reduction modulo $\pfrak$]\label{LemmaTruncate1ps} Let $\gamma=(\ttt,\uu)$ be a $1$-parameter subgroup of $\Acal$. If $m<p$, the morphism $\tilde{\gamma}_m:V_m\to A$ extends to an $R$-morphism $\Vcal_m\to \Acal$ supported along the identity section $\sigma_e:\Spec R\to \Acal$. Hence, upon base change to $\kk$, it determines a $\kk$-morphism $V'_m\to A'$ supported at $e$. 
\end{lemma}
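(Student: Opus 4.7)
The plan is to show that the map $\widehat{\gamma}_m^\#:\widehat{\Ocal}_{A,e}\to E_m$ given in \eqref{Eqninf1psP} actually has image inside $\Ecal_m=R[z]/(z^{m+1})\subseteq E_m$, and then to promote this into an honest $R$-morphism of schemes $\Vcal_m\to \Acal$ using the $R$-local parameters $\ttt$.

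First I would verify the integrality of the coefficients. Writing
$$
\widehat{\gamma}_m^\#(t_j)=\sum_{h=0}^{m} P^{\ttt}_{j,h}(\uu)\cdot z^h,\qquad P^\ttt_{j,h}(\xx)=\sum_{\|\alpha\|=h}c_{j,\alpha}\xx^\alpha,
$$
the point is that $P^\ttt_{j,h}(\uu)\in R$ for $0\le h\le m$. By Lemma \ref{LemmaCvExp}, $h!\cdot c_{j,\alpha}\in R$ for every $\alpha$ with $\|\alpha\|=h$; and for $h\le m<p$, the integer $h!$ has $p$-adic valuation zero, hence is a unit in $R$. Consequently $c_{j,\alpha}\in R$. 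Since $|\uu|=1$ means $\uu\in R^n$, each value $P^\ttt_{j,h}(\uu)$ lies in $R$, so $\widehat{\gamma}_m^\#(t_j)\in\Ecal_m$ as desired. Note also that $P^\ttt_{j,0}=0$ since $\Exp^\ttt_j$ has no constant term.

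Next I would upgrade this to a scheme morphism. Because $\ttt=(t_1,\dots,t_n)$ is a choice of $R$-local parameters along $\sigma_e$ (by definition, cf. Section \ref{SecLogExp1}), the rule $y_j\mapsto t_j$ identifies $\widehat{\Ocal}_{\Acal,e_0,\sigma_e}$ with $R[[y_1,\dots,y_n]]$, where $e_0=\sigma_e(\pfrak)$. The assignment $y_j\mapsto \sum_{h=0}^{m}P^\ttt_{j,h}(\uu)\cdot z^h$ defines a well-defined continuous $R$-algebra homomorphism $R[[y_1,\dots,y_n]]\to \Ecal_m$, the key point being that the images lie in the nilpotent maximal ideal $(z)/(z^{m+1})$ of $\Ecal_m$ (since $P^\ttt_{j,0}=0$), so the evaluation of any power series in the $y_j$ becomes a finite sum modulo $z^{m+1}$. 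Dualizing, this yields an $R$-morphism of formal schemes $\Vcal_m\to \Spf\widehat{\Ocal}_{\Acal,e_0,\sigma_e}$ supported on $\sigma_e$, which by the universal property of formal completion factors as $\Vcal_m\to \Acal$ supported along $\sigma_e$.

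Finally, I would verify that this $R$-morphism really extends $\tilde\gamma_m:V_m\to A$ and explain the base change to $\kk$. Compatibility with $\tilde\gamma_m$ is immediate from \eqref{Eqngammas}: tensoring the constructed map $R[[\yy]]\to \Ecal_m$ with $K$ recovers the composition $\widehat{\Ocal}_{A,e}\to E_m$ of \eqref{Eqninf1ps}, whose restriction to $\Ocal_{A,e}$ is $\tilde\gamma_m^\#$ by definition. Base changing the $R$-morphism $\Vcal_m\to\Acal$ along $R\to \kk$ produces a $\kk$-morphism $V'_m\to A'$ whose support is $\sigma_e\otimes_R\kk=\{e\}\subseteq A'$. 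I do not expect a serious obstacle: the only nontrivial input is the integrality of the coefficients of $\Exp^\ttt$ up to degree $m$, which is exactly where the hypothesis $m<p$ is used via Lemma \ref{LemmaCvExp}; everything else is bookkeeping between local rings, completions, and formal schemes.
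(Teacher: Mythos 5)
Your proof is correct and follows the same route as the paper: use Lemma \ref{LemmaCvExp} to get $h!\cdot c_{j,\alpha}\in R$, invoke $m<p$ so that $h!\in R^\times$ for $h\le m$, deduce integrality of the truncated exponential, and conclude via the isomorphism $\widehat{\Ocal}_{\Acal,e_0,\sigma_e}\simeq R[[y_1,\dots,y_n]]$ furnished by the $R$-local parameters $\ttt$. The paper's published proof is just a two-line compression of exactly this argument, so your write-up can be viewed as the fully spelled-out version.
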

\begin{proof} Lemma \ref{LemmaCvExp} shows that $|P^\ttt_{j,h}(\uu)|\le 1$ for each $h\le m$, since $m<p$. We conclude by applying this estimate to \eqref{Eqninf1psP} and the fact that $\ttt$ is a choice of $R$-local parameters for $\Acal$ along $e$.
\end{proof}
Given a $1$-parameter subgroup $\gamma$ and an integer $0\le m<p$, the morphisms provided by Lemma \ref{LemmaTruncate1ps} will be denoted by
$$
\tilde{\gamma}^R_m:\Vcal_m\to \Acal\quad \mbox{ and }\quad \tilde{\gamma}'_m : V'_m\to A'.
$$
\begin{lemma}[Closed immersions]\label{LemmaClosedImm} Let $\gamma=(\ttt,\uu)$ be a $1$-parameter subgroup of $\Acal$. For each $m\ge 0$, the morphism $\tilde{\gamma}_m:V_m\to A$ is a closed immersion. If $m<p$, then the maps $\tilde{\gamma}^R_m:\Vcal_m\to \Acal$ and $\tilde{\gamma}'_m:V'_m\to A'$ are also closed immersions.
\end{lemma}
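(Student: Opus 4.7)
My plan is to prove all three statements by showing surjectivity of the appropriate map on local rings at the closed point, and the key observation is that the linear term of $\tilde\gamma_m^\#$ already witnesses one coordinate that generates the maximal ideal.

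Since closed immersions can be checked locally and the morphisms $\tilde\gamma_m$, $\tilde\gamma_m^R$, $\tilde\gamma_m'$ are all supported at a single closed point (respectively along the section $\sigma_e$), I would first reduce the claim to checking that the induced map on local rings at the closed point is surjective. In each case the underlying topological map is clearly a homeomorphism onto its (closed) image, so the only real content is surjectivity of structure sheaves. Concretely, for $\tilde\gamma_m$ I need $\tilde\gamma_m^\#:\Ocal_{A,e}\to E_m$ to be surjective; since $E_m$ is local Artinian with maximal ideal $(z)$ and residue field $K$, it suffices (by a Nakayama-type argument, or direct computation since $E_m=K\oplus Kz\oplus\dots\oplus Kz^m$) to exhibit an element of $\Ocal_{A,e}$ whose image in $E_m$ is of the form $\text{unit}\cdot z$.

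The key computation is the following. By Lemma \ref{LemmaLinearPart} we have $\Psi^{[1]}_j=t_j$, whence the homogeneous degree-$1$ part of $\Exp_j^\ttt$ satisfies $P^\ttt_{j,1}(\xx)=x_j$ and therefore $P^\ttt_{j,1}(\uu)=u_j$. Plugging this into \eqref{Eqninf1psP} gives
\[
\widehat{\gamma}_m^\#(t_j)=u_j z+\sum_{h=2}^{m}P^\ttt_{j,h}(\uu)\,z^h \pmod{z^{m+1}}.
\]
Since $|\uu|=1$, there exists an index $j_0$ with $|u_{j_0}|=1$, i.e.\ $u_{j_0}\in R^\times$. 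Hence $\widehat{\gamma}_m^\#(t_{j_0})=u_{j_0}z\cdot(1+zg)$ for some $g\in E_m$, and since $1+zg\in E_m^\times$ (as $z$ is nilpotent) and $u_{j_0}$ is a unit in $R\subseteq E_m$, the element $\widehat{\gamma}_m^\#(t_{j_0})$ is a unit multiple of $z$. Because $t_{j_0}\in\Ocal_{A,e}$ (the $t_j$ are $R$-local parameters, not just elements of the completion), this element lies in the image of the unlocalized map $\tilde\gamma_m^\#$. So $z\in\im(\tilde\gamma_m^\#)$, and combined with the image of scalars this gives surjectivity. Thus $\tilde\gamma_m$ is a closed immersion.

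For $\tilde\gamma_m^R$ the argument is identical: the induced map $\Ocal_{\Acal,x_0}\to\Ecal_m=R[z]/(z^{m+1})$ sends $t_{j_0}$ to $u_{j_0}z+O(z^2)$, which is again a unit multiple of $z$ in $\Ecal_m$ (the unit $u_{j_0}\in R^\times$ remains a unit in $\Ecal_m$, and $1+zg$ is a unit since $z$ is nilpotent). Hence $z$ lies in the image, so the map is surjective and $\tilde\gamma_m^R$ is a closed immersion. The statement for $\tilde\gamma_m'$ then follows either by the same direct argument (reducing $u_{j_0}$ modulo $\pfrak$ gives a unit in $\kk^\times$) or, more cleanly, by the fact that $\tilde\gamma_m'=\tilde\gamma_m^R\otimes_R\kk$ and closed immersions are stable under base change. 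The only point that requires attention is to remember that $t_{j_0}$ belongs to the honest local ring and not merely to its completion, so that the image of $\tilde\gamma_m^\#$ indeed contains a unit multiple of $z$; this is guaranteed by the definition of $R$-local parameters in Section \ref{SecLogExp1}.
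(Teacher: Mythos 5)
Your proof is correct and follows essentially the same route as the paper's: both exploit Lemma \ref{LemmaLinearPart} (via $P^\ttt_{j,1}(\uu)=u_j$), the condition $|\uu|=1$ to find $u_{j_0}\in R^\times$, and the crucial fact that $t_{j_0}$ lies in $\Ocal_{A,e}$ rather than merely in the completion, to show that $z$ is hit up to a unit and hence that the map on local rings is surjective. The only cosmetic difference is that the paper treats $m=0$ as a separate (trivial) case and you fold it in; otherwise the two arguments are the same.
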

\begin{proof} For $m=0$ we have $E_0=K$, $\Ecal_0=R$, and $E'_0=\kk$, so, the result is clear. Assume $m\ge 1$.

By Lemma \ref{LemmaLinearPart} we have $P^\ttt_{j,1}(\xx)=x_j$. Thus, $P^\ttt_{j,1}(\uu)=u_j$ and since $|\uu|=1$ we conclude that for some $j=1,...,n$ we have $P^\ttt_{j,1}(\uu)\in R^\times$. By \eqref{Eqninf1psP} and the fact that $t_j\in \mfrak_{\Acal,e}\subseteq \Ocal_{\Acal,e}\subseteq \Ocal_{A,e}$ (cf. Section \ref{SecLogExp1}), this implies that the image of $\tilde{\gamma}_m^\#$ contains an element of the form $uz+z^2h\bmod z^{m+1}$ for some $u\in R^\times$ and some $h\in K[z]$. As such an element generates $E_m$ as a $K$-algebra, we obtain that $\tilde{\gamma}_m^\#:\Ocal_{A,e}\to E_m$ is surjective. Thus, $\tilde{\gamma}_m$ is a closed immersion. 

Since $u\in R^\times$, an analogous argument works for $\tilde{\gamma}^R_m$ and $\tilde{\gamma}'_m$ if $m<p$.
\end{proof}
%%
%%

%%%%
%%%%
%%%%
\subsection{$\omega$-integrality} 
\begin{lemma}\label{LemmaDiffExpl} We have $H^0(V_m, \Omega^1_{V_m/K})=(K[z]/(z^m))dz$. Furthermore, if $m\le p-2$ then $H^0(\Vcal_m,\Omega^1_{\Vcal_m/R}) = (R[z]/(z^m))dz$ and $H^0(V'_m, \Omega^1_{V'_m/\kk})=(\kk[z]/(z^m))dz$. In particular, for $m\le p-2$ we have that $H^0(\Vcal_m,\Omega^1_{\Vcal_m/R})=\Omega^1_{\Ecal_m/R} $ is a free $R$-module of rank $m$.
\end{lemma}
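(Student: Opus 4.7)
The plan is to reduce the computation of global sections to a direct calculation of Kähler differentials, since $V_m$, $\Vcal_m$, and $V'_m$ are all affine. Thus for each of the three cases it suffices to compute $\Omega^1_{E_m/K}$, $\Omega^1_{\Ecal_m/R}$, and $\Omega^1_{E'_m/\kk}$ as modules over the respective base rings.

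First I would invoke the standard conormal exact sequence for a quotient by a principal ideal. For a ring $A$ and $f\in A[z]$, setting $B=A[z]/(f)$, the sequence
\[
(f)/(f)^2 \longrightarrow \Omega^1_{A[z]/A}\otimes_{A[z]} B \longrightarrow \Omega^1_{B/A} \longrightarrow 0
\]
is exact on the right, and $\Omega^1_{A[z]/A}=A[z]\cdot dz$ is free, with the left map sending $f\mapsto f'(z)\,dz$. Applying this with $f=z^{m+1}$ gives
\[
\Omega^1_{B/A} \;=\; \bigl(A[z]/(z^{m+1},\,(m+1)z^m)\bigr)\cdot dz.
\]

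Now I would use the hypothesis on $m$. Over $K$ we have $\operatorname{char} K=0$, so $m+1$ is always a unit in $K$ and hence $(z^{m+1},(m+1)z^m)=(z^m)$, giving $\Omega^1_{E_m/K}=(K[z]/(z^m))\,dz$ for every $m\ge 0$. Over $R$ and $\kk$, the key observation is that when $m\le p-2$ we have $1\le m+1\le p-1$, so $m+1$ reduces to a nonzero element of $\F_p\subseteq \kk$ and is therefore a unit in both $\kk$ and $R$ (since a lift of a unit in the residue field of a local ring is a unit). Consequently $(z^{m+1},(m+1)z^m)=(z^m)$ in $R[z]$ and in $\kk[z]$ as well, yielding
\[
\Omega^1_{\Ecal_m/R}=(R[z]/(z^m))\,dz,\qquad \Omega^1_{E'_m/\kk}=(\kk[z]/(z^m))\,dz.
\]

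Finally I would translate these identifications back to global sections: since $V_m,\Vcal_m,V'_m$ are affine, $H^0(V_m,\Omega^1_{V_m/K})=\Omega^1_{E_m/K}$ and similarly in the other two cases. For the last assertion, the $R$-module $R[z]/(z^m)$ has the obvious basis $1,z,\dots,z^{m-1}$, so $H^0(\Vcal_m,\Omega^1_{\Vcal_m/R})=\Omega^1_{\Ecal_m/R}$ is free of rank $m$ over $R$, with basis $dz,z\,dz,\dots,z^{m-1}dz$. There is no real obstacle here; the only point requiring care is to ensure $m+1$ is a unit in $R$ and $\kk$, which is precisely where the hypothesis $m\le p-2$ is used.
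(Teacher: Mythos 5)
Your proof is correct and follows essentially the same route as the paper: both compute $\Omega^1_{E_m/\ast}$ via the presentation $\ast[z]\,dz/(z^{m+1},\,(m+1)z^m\,dz)$ and then use the hypothesis $m\le p-2$ to conclude that $m+1$ is a unit. The paper states this more tersely (and handles the $R$ and $\kk$ cases by "similarly"), while you spell out the conormal sequence and the unit argument explicitly, but there is no substantive difference.
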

\begin{proof}  Note that $H^0(V_m,\Omega^1_{V_m/K})=\Omega^1_{E_m/K}=K[z]dz/(z^{m+1}, d(z^{m+1}))$. The first claim follows from the fact that $d(z^{m+1})=(m+1)z^mdz$ and $m+1\in K^\times$. The rest is proved similarly. 
\end{proof}

Recall that for the additive Lie group $\pfrak$ the space of invariant differentials is $\Omega(\pfrak)=K\cdot dz$. For each $m$ we define the $K$-linear map 
$$
\rho_m:\Omega(\pfrak)\to \Omega^1_{E_m/K}=H^0(V_m, \Omega^1_{V_m/K})
$$ 
by the rule $\rho_m(dz)=dz\in \Omega^1_{E_m/K}$.

\begin{lemma}[Analytic-algebraic compatibility] \label{LemmaCompatibility} Let $\gamma=(\ttt,\uu)$ be a $1$-parameter subgroup of $\Acal$ and let $m\ge 0$. The map on invariant differentials $\tilde{\gamma}^\bullet : H^0(A,\Omega^1_{A/K})\to \Omega(\pfrak)$ induced by the Lie group morphism $\tilde{\gamma}$ and the map $\tilde{\gamma}_m^\bullet : H^0(A,\Omega^1_{A/K})\to H^0(V_m,\Omega^1_{V_m/K})$ induced by the scheme morphism $\tilde{\gamma}_m:V_m\to A$ satisfy $\rho_m\circ \tilde{\gamma}^\bullet=\tilde{\gamma}_m^\bullet$.
\end{lemma}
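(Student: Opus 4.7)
The plan is to pull back $\omega$ through the formal power series map $\widehat\gamma^\#:\widehat{\Ocal}_{A,e}\to K[[z]]$ of \eqref{Eqnhatgamma} to obtain a formal $1$-form $F(z)\,dz$, show that $F(z)$ is a constant $c\in K$ by translation-invariance of $\omega$, and observe that both $\tilde\gamma^\bullet(\omega)$ and $\tilde\gamma_m^\bullet(\omega)$ are then computed from this same constant $c$. To set this up, I would first apply Lemma~\ref{LemmaCommLoc} to $\tilde\gamma_m:V_m\to A$ (whose only point maps to $e$) to see that $\tilde\gamma_m^\bullet(\omega)$ equals the image of the localization $\omega_e\in\Omega^1_{\Ocal_{A,e}/K}$ under the induced local map. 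Passing to completions and using the commutative diagram~\eqref{Eqngammas}, this image is obtained by pulling $\omega_e$ back via $\widehat\gamma^\#$ to a formal $1$-form in $K[[z]]\,dz$ and then reducing modulo $z^m\,dz$ to land in $\Omega^1_{E_m/K}=(K[z]/(z^m))\,dz$ (using Lemma~\ref{LemmaDiffExpl}).

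To make the formal pullback explicit, fix the identification $\widehat{\Ocal}_{A,e}\simeq K[[\ttt]]$ coming from the local parameters and write $\omega_e=\sum_j g_j(\ttt)\,dt_j$ with $g_j\in K[[\ttt]]$. Set $h_j(z):=\widehat\gamma^\#(t_j)=\Exp^\ttt_j(u_1z,\dots,u_nz)\in zK[[z]]$ as in \eqref{Eqninf1psP}. A direct computation yields
$$
\widehat\gamma^\bullet(\omega_e)=F(z)\,dz,\qquad F(z):=\sum_j g_j(h_1(z),\dots,h_n(z))\cdot h_j'(z)\in K[[z]],
$$
where $h_j'(z)$ denotes the formal derivative. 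By Lemma~\ref{LemmaKeyLogExp}(i), each $h_j$ converges on the disk $\pfrak$, so $F(z)\,dz$ is the Taylor series at $0$ of the analytic pullback $\tilde\gamma^*\omega$ on $\pfrak$.

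The crucial step is to show that $F(z)\in K$ is a constant. Since $\tilde\gamma:\pfrak\to U_e$ is a morphism of abelian $K$-analytic Lie groups by Lemma~\ref{LemmaKeyLogExp}(iii) and \eqref{Eqntildegamma}, and $\omega$ is a translation-invariant $1$-form on the abelian variety $A(K)$, the analytic pullback $\tilde\gamma^*\omega$ is translation-invariant on the additive Lie group $\pfrak$. As $\Omega(\pfrak)=K\cdot dz$, its coefficient function must be constant, forcing $F(z)=c\in K$ in $K[[z]]$. By the construction of $\tilde\gamma^\bullet$ (compare Lemma~\ref{LemmaHisKernel}) this gives $\tilde\gamma^\bullet(\omega)=c\cdot dz\in\Omega(\pfrak)$, while the reduction step above yields $\tilde\gamma_m^\bullet(\omega)=c\cdot dz=\rho_m(c\cdot dz)=\rho_m(\tilde\gamma^\bullet(\omega))$, as required. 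The main obstacle is this invariance argument: without it one only obtains equality of leading coefficients $F(0)=\sum_j g_j(0)u_j$, not that the entire truncated series modulo~$z^m$ reduces to a constant.
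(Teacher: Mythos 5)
Your proof is correct and takes essentially the same approach as the paper: both arguments reduce to the observation that the formal pullback of an invariant $1$-form through $\widehat{\gamma}^\#$ lands in $K\cdot dz\subseteq K[[z]]\,dz$ (i.e.\ has constant coefficient, because $\tilde{\gamma}$ is a Lie-group morphism and $\omega$ is invariant), so that $\tilde{\gamma}^\bullet(\omega)$ and the truncation $\tilde{\gamma}_m^\bullet(\omega)$ are determined by the same constant. The paper packages this as a diagram chase through the universally finite differentials $\widetilde{\Omega}_{-/K}$ and justifies the key square only by the remark that $\tilde{\gamma}^\bullet$ ``might be computed \ldots\ by taking differentials term-by-term,'' whereas you spell out the constancy argument explicitly, which is exactly what that remark encodes.
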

\begin{proof} For a $K$-algebra $B$, the module of universally finite differentials over $K$ (cf. \cite{KunzDiff} Sec. 11) is denoted by $\widetilde{\Omega}_{B/K}$. We recall that in the special case $B=K[[x_1,...,x_n]]$ we have $\widetilde{\Omega}_{B/K}=\bigoplus_{j=1}^n B\cdot dx_j$ and the map $d:B\to \widetilde{\Omega}_{B/K}$ is continuous for the $(x_1,...,x_n)$-topology.

Let $\kappa:\widetilde{\Omega}_{\widehat{\Ocal}_{A,e}/K}\to \widetilde{\Omega}_{K[[z]]/K}$ be the map induced by the continuous ring map $\widehat{\gamma}^\# : \widehat{\Ocal}_{A,e}\to K[[z]]$ given by the power series expansion of $\tilde{\gamma}$, cf. \eqref{Eqnhatgamma}. Since the map $\tilde{\gamma}^\bullet$ might be computed from the power series expansion of the Lie group morphism $\tilde{\gamma}: \pfrak\to U_e$ by taking differentials term-by-term, we deduce that the following diagram commutes:
$$
\begin{tikzcd}
  H^0(A,\Omega^1_{A/K}) \arrow{r}{\tilde{\gamma}^\bullet}   \arrow[hook]{d}   & \Omega(\pfrak) \arrow[hook]{d} \\
  \widetilde{\Omega}_{\widehat{\Ocal}_{A,e}/K} \arrow{r}{\kappa} &\widetilde{\Omega}_{K[[z]]/K}.
\end{tikzcd}
$$
The map $H^0(A,\Omega^1_{A/K})\to \widetilde{\Omega}_{\widehat{\Ocal}_{A,e}/K}$ factors through $\Omega^1_{\Ocal_{A,e}/K}$. The quotient $K[[z]]\to E_m$ induces a map $\widetilde{\Omega}_{K[[z]]/K}\to \Omega^1_{E_m/K}$ and one checks (evaluating on $dz$) that this map composed with $\Omega(\pfrak)\to \widetilde{\Omega}_{K[[z]]/K}$ is $\rho_m$. From these two observations we obtain the commutative diagram
$$
\begin{tikzcd}
   & H^0(A,\Omega^1_{A/K}) \arrow{r}{\tilde{\gamma}^\bullet}   \arrow[hook]{d}  \arrow[hook]{ld}  &  \Omega(\pfrak) \arrow[hook]{d} \arrow{rd}{\rho_m} & \\
\Omega^1_{\Ocal_{A,e}/K} \arrow[hook]{r}& \widetilde{\Omega}_{\widehat{\Ocal}_{A,e}/K} \arrow{r}{\kappa} &\widetilde{\Omega}_{K[[z]]/K}\arrow{r} & \Omega^1_{E_m/K}.
\end{tikzcd}
$$
The composition $\Omega^1_{\Ocal_{A,e}/K}\to \Omega^1_{E_m/K}$ of the bottom maps is the morphism induced by $\tilde{\gamma}^\#_m$, by \eqref{Eqngammas}. Finally, this map composed with the inclusion $H^0(A,\Omega^1_{A/K})\to \Omega^1_{\Ocal_{A,e}/K}$ is $\tilde{\gamma}^\bullet_m$, by Lemma \ref{LemmaCommLoc}.
\end{proof}
\begin{lemma}\label{Lemmawintgamma} Let $\gamma=(\ttt,\uu)$ be a $1$-parameter subgroup of $\Acal$, let $m\ge 0$, and let $\omega \in \Hcal(\gamma)$. Then $\tilde{\gamma}_m:V_m\to A$ is $\omega$-integral. Furthermore, if $m\le p-2$ and $\omega$ extends to a section $\widetilde{\omega}\in H^0(\Acal,\Omega^1_{\Acal/R})$, then $\tilde{\gamma}'_m:V'_m\to A'$ is $\omega'$-integral, where $\omega'$ is the restriction of $\widetilde{\omega}$ to $A'$. 
\end{lemma}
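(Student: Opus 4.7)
For part (i), the plan is a direct chase through the two preceding lemmas. By Lemma~\ref{LemmaHisKernel} the hyperplane $\Hcal(\gamma)$ is precisely $\ker(\tilde{\gamma}^\bullet)$, so any $\omega\in \Hcal(\gamma)$ satisfies $\tilde{\gamma}^\bullet(\omega)=0$ in $\Omega(\pfrak)$. Lemma~\ref{LemmaCompatibility} then yields $\tilde{\gamma}_m^\bullet(\omega)=\rho_m(\tilde{\gamma}^\bullet(\omega))=0$, which is the desired $\omega$-integrality of $\tilde{\gamma}_m$.

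For part (ii), the strategy is to lift the vanishing from the generic fibre to the $R$-model and then reduce modulo $\pfrak$. Assume $m\le p-2$ and let $\widetilde{\omega}\in H^0(\Acal,\Omega^1_{\Acal/R})$ extend $\omega$. By Lemma~\ref{LemmaTruncate1ps} there is an $R$-morphism $\tilde{\gamma}^R_m:\Vcal_m\to \Acal$ whose base changes to $K$ and $\kk$ are $\tilde{\gamma}_m$ and $\tilde{\gamma}'_m$ respectively. Set $\tau=(\tilde{\gamma}^R_m)^\bullet(\widetilde{\omega})\in H^0(\Vcal_m,\Omega^1_{\Vcal_m/R})$. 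Applying Lemma~\ref{LemmaCompositionInt} to the commutative square formed by $\tilde{\gamma}_m,\tilde{\gamma}^R_m$ and the natural maps of generic fibres $V_m\to\Vcal_m$, $A\to\Acal$, the image of $\tau$ under the restriction $H^0(\Vcal_m,\Omega^1_{\Vcal_m/R})\to H^0(V_m,\Omega^1_{V_m/K})$ coincides with $\tilde{\gamma}_m^\bullet(\omega)$, which vanishes by (i). By Lemma~\ref{LemmaDiffExpl} this restriction is the inclusion of the free $R$-module $(R[z]/(z^m))dz$ into its extension of scalars $(K[z]/(z^m))dz$, hence is injective; so $\tau=0$.

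To conclude, I would apply Lemma~\ref{LemmaCompositionInt} once more to the analogous square obtained by base change to $\kk$, whose vertical maps are the closed immersions of special fibres $V'_m\hookrightarrow\Vcal_m$ and $A'\hookrightarrow\Acal$. This shows that $(\tilde{\gamma}'_m)^\bullet(\omega')$ equals the image of $\tau$ under the reduction $H^0(\Vcal_m,\Omega^1_{\Vcal_m/R})\to H^0(V'_m,\Omega^1_{V'_m/\kk})$, which is $0$. The whole argument is essentially a functorial chase; the only non-formal inputs are the explicit free-$R$-module description of Lemma~\ref{LemmaDiffExpl} (which makes descent from $K$ to $R$ work, and also uses the hypothesis $m\le p-2$ to ensure $m+1\in R^\times$) and the analytic–algebraic compatibility of Lemma~\ref{LemmaCompatibility}. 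I do not foresee a substantial obstacle beyond keeping track of which cohomology space each element lives in.
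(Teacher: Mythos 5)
Your proof is correct and follows essentially the same route as the paper's: part (i) is the same chain through Lemma~\ref{LemmaHisKernel} and Lemma~\ref{LemmaCompatibility}, and part (ii) is the paper's base-change diagram argument, with the same crucial use of Lemma~\ref{LemmaDiffExpl} to get freeness (hence injectivity) of $H^0(\Vcal_m,\Omega^1_{\Vcal_m/R})\to H^0(V_m,\Omega^1_{V_m/K})$. The only cosmetic difference is that you justify the commutativity of the base-change squares by citing Lemma~\ref{LemmaCompositionInt} explicitly, whereas the paper simply invokes base change; both amount to the same naturality statement.
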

\begin{proof} By Lemma \ref{LemmaHisKernel} we have $\tilde{\gamma}^\bullet (\omega)=0$. Lemma \ref{LemmaCompatibility} gives $\tilde{\gamma}^\bullet_m(\omega)=\rho_m(\tilde{\gamma}^\bullet (\omega))=0$. Therefore, $\tilde{\gamma}_m:V_m\to A$ is $\omega$-integral.

Suppose now that $m\le p-2$. The morphism $\tilde{\gamma}^R_m:\Vcal_m\to \Acal$ induces a map on differentials $(\tilde{\gamma}^R_m)^\bullet : H^0(\Acal,\Omega^1_{\Acal/R})\to H^0(\Vcal_m,\Omega^1_{\Vcal_m/R})$. By base change we get a commutative diagram
$$
\begin{tikzcd}
H^0(A',\Omega^1_{A/\kk})  \arrow{d}{(\tilde{\gamma}'_m)^\bullet} &    H^0(\Acal,\Omega^1_{\Acal/R}) \arrow{l} \arrow{r} \arrow{d}{(\tilde{\gamma}^R_m)^\bullet} &   H^0(A,\Omega^1_{A/K}) \arrow{d}{\tilde{\gamma}_m^\bullet} \\
H^0(V'_m,\Omega^1_{V'_m/\kk})   &   H^0(\Vcal_m,\Omega^1_{\Vcal_m/R}) \arrow{l} \arrow{r}& H^0(V_m,\Omega^1_{V_m/K})
\end{tikzcd}
$$

Since $H^0(\Vcal_m,\Omega^1_{\Vcal_m/R})$ is free as an $R$-module (cf. Lemma \ref{LemmaDiffExpl}), the base change morphism $H^0(\Vcal_m,\Omega^1_{\Vcal_m/R}) \to H^0(V_m,\Omega^1_{V_m/K})$  is injective. Hence, $(\tilde{\gamma}^R_m)^\bullet(\widetilde{\omega})=0$ because $\tilde{\gamma}^\bullet_m(\omega)=0$ and the right square of the previous diagram commutes.

By commutativity of the left square of the previous diagram, we deduce that $(\tilde{\gamma}'_m)^\bullet(\omega')=0$. Hence $\tilde{\gamma}'_m:V'_m\to A'$ is $\omega'$-integral.
\end{proof}
%%
%%

%%%%%%%%%%%%%%%%%%%%%%%%%%%%%%%%%%%%%%
%%%%%%%%%%%%%%%%%%%%%%%%%%%%%%%%%%%%%%
%%%%%%%%%%%%%%%%%%%%%%%%%%%%%%%%%%%%%%
%%%%%%%%%%%%%%%%%%%%%%%%%%%%%%%%%%%%%%
%%%%%%%%%%%%%%%%%%%%%%%%%%%%%%%%%%%%%%
%%%%%%%%%%%%%%%%%%%%%%%%%%%%%%%%%%%%%%
%%%%%%%%%%%%%%%%%%%%%%%%%%%%%%%%%%%%%%
%%%%%%%%%%%%%%%%%%%%%%%%%%%%%%%%%%%%%%
%%%%%%%%%%%%%%%%%%%%%%%%%%%%%%%%%%%%%%
%%%%%%%%%%%%%%%%%%%%%%%%%%%%%%%%%%%%%%
%%%%%%%%%%%%%%%%%%%%%%%%%%%%%%%%%%%%%%
%%%%%%%%%%%%%%%%%%%%%%%%%%%%%%%%%%%%%%
%%%%%%%%%%%%%%%%%%%%%%%%%%%%%%%%%%%%%%
%%%%%%%%%%%%%%%%%%%%%%%%%%%%%%%%%%%%%%
%%%%%%%%%%%%%%%%%%%%%%%%%%%%%%%%%%%%%%
%%%%%%%%%%%%%%%%%%%%%%%%%%%%%%%%%%%%%%
%%%%%%%%%%%%%%%%%%%%%%%%%%%%%%%%%%%%%%
%%%%%%%%%%%%%%%%%%%%%%%%%%%%%%%%%%%%%%

%%%%%%%%%%%%%%%%%%%%%%%%%%%%%%%%%%%%%%
%%%%%%%%%%%%%%%%%%%%%%%%%%%%%%%%%%%%%%
%%%%%%%%%%%%%%%%%%%%%%%%%%%%%%%%%%%%%%
%%%%%%%%%%%%%%%%%%%%%%%%%%%%%%%%%%%%%%
%%%%%%%%%%%%%%%%%%%%%%%%%%%%%%%%%%%%%%
%%%%%%%%%%%%%%%%%%%%%%%%%%%%%%%%%%%%%%

\section{The main result} \label{SecMainResult}

\subsection{Statement and first steps} We keep the notation from Section \ref{NotationLoc}. Let $\Acal/R$ be an abelian variety of relative dimension $n\ge 3$. Let $A=\Acal_K$ be the generic fibre and $A'=\Acal_\kk$ the special fibre. Let $\Xcal$ be an integral subscheme of $\Acal$ which is smooth, proper, of relative dimension $2$ over $R$ with geometrically irreducible fibres. Let $X=\Xcal_K$ and $X'=\Xcal_\kk$. Let $G\le A(K)$ be a finitely generated subgroup and let $\Gamma\subseteq A(K)$ be the $p$-adic closure of $G$ in $A(K)$. Our main result is:

\begin{theorem}[Main result]\label{ThmMain} With the previous notation, suppose that $\rk (G)\le 1$, that 
\begin{equation}\label{Eqnplarge}
p>\max\left\{\efrak+1, \exp(\efrak/\exp(1))\right\} , 
\end{equation}
and that either of the following conditions holds:
\begin{itemize}
\item[(i)] $n=3$, $X$ is of general type, $X'$ contains no elliptic curves over $\kk^{alg}$, and 
$$
p> (128/9)\cdot c_1^2(X)^2.
$$ 
\item[(ii)] $A'$ is simple over $\kk^{alg}$ and there is an ample divisor $H$ on $A$ such that
$$
p> \max\left\{3 c_1^2(X)+2,\frac{n!\cdot (3\deg_H(X) + \cdeg_H(X))^n}{n^n\cdot \deg(H^n)}\right\}.
$$
\end{itemize}
Then $\Gamma\cap X(K)$ is finite and we have
\begin{equation}\label{EqnMainBound}
\# \Gamma\cap X(K) \le \# X'(\kk) + \left(1-\frac{\efrak}{p-1}\right)^{-1}\cdot (q+4q^{1/2}+3)\cdot c_1^2(X).
\end{equation}
\end{theorem}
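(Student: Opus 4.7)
First I would reduce to the case $\rk G=1$: if $\rk G=0$ then $\Gamma=G$ is finite (it is torsion in $A(K)$) and the bound is trivial after a mild enlargement of $G$, so I may freely replace $G$ by an overgroup of rank exactly $1$. I then partition $A(K)=\bigsqcup_{x'\in A'(\kk)}U_{x'}$ via reduction modulo $\pfrak$ and estimate $\#\Gamma\cap X(K)\cap U_{x'}$ separately for each $x'\in X'(\kk)$. After translating by a point of $G$ mapping to $x'$, I may assume $x'=e$ and $U_{x'}=U_e=\ker(\red)$. Since $\rk G=1$ and $U_e$ is a pro-$p$ group, Lemma \ref{Lemma1PSgen} furnishes a $1$-parameter subgroup $\gamma=(\ttt,\uu)$ of $\Acal$ with $\Gamma\cap U_e\subseteq \im(\gamma)=\tilde\gamma(\pfrak)$. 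Note that the hypothesis \eqref{Eqnplarge} is exactly what is needed to invoke the apparatus from Sections \ref{SecLogExp}--\ref{Sec1PS}.

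Next I would exhibit a distinguished local equation for $\Xcal$ at $e$ and pull back to $\pfrak$. Using Lemma \ref{LemmaChoiceLocParam}, I choose $R$-local parameters $s_1,s_2\in\mfrak_{\Acal,e}$ along the identity section which extend to $R$-local parameters $\ttt=(s_1,s_2,t_3,\ldots,t_n)$ such that $s_1,\ldots,s_{n-2}$ are local equations for $\Xcal$. Composing $\tilde\gamma$ with these equations gives power series $h_i(z)=s_i(\tilde\gamma(z))\in K[[z]]$ for $1\le i\le n-2$, and $\Gamma\cap X(K)\cap U_e$ is contained in the common zero set of the $h_i$ on $\pfrak$. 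The explicit form $\widehat\gamma^\#(s_i)=\Exp^\ttt_i(\uu z)$ from \eqref{Eqninf1psP} together with the coefficient estimate in Lemma \ref{LemmaCvExp} provides control (I) on the coefficients, so Lemma \ref{LemmaMVzeros} applies once I identify a small index $N$ at which some coefficient has absolute value $\ge 1$.

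The heart of the argument is producing such $N$, and here the $\omega$-integrality machinery enters. By Lemma \ref{LemmaH1PS} the hyperplane $\Hcal(\gamma)\subseteq H^0(A,\Omega^1_{A/K})$ is determined by $\gamma$; choose two $K$-linearly independent $\omega_1,\omega_2\in\Hcal(\gamma)$ which extend to integral sections and whose reductions $\omega_1',\omega_2'\in H^0(A',\Omega^1_{A'/\kk})$ remain independent (possible because $H^0(\Acal,\Omega^1_{\Acal/R})$ is a free $R$-module of the right rank). Restrict to $u_i=\iota^\bullet(\omega_i')\in H^0(X',\Omega^1_{X'/\kk})$. Lemma \ref{Lemma2forms} applied under either hypothesis (i) or (ii) shows $u_1\wedge u_2\not\equiv 0$, so $D=\divi_{X'}(u_1\wedge u_2)$ is a well-defined effective canonical divisor on $X'$ (using $K_{X'}\sim X'|_{X'}$ in case (i) and the simpleness bound in (ii)). For each $\kk$-point $x'$ of $X'$, let $m(x')$ be the largest $m$ for which there is a closed immersion $\phi:V_m^{\kk^{alg}}\to X'\otimes\kk^{alg}$ supported at $x'$ that is $u_i$-integral for $i=1,2$. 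Lemmas \ref{LemmaTruncate1ps}, \ref{LemmaClosedImm}, and \ref{Lemmawintgamma} show that the reduction $\tilde\gamma'_m$ of $\tilde\gamma_m$ (for $m\le p-2$) is such a closed immersion once $m<p$, which — combined with the estimate $N\le m(x')+1$ that I would prove by comparing jet orders of $h_i$ to the $\omega$-integral jet — yields the individual disk bound
\[
\#\Gamma\cap X(K)\cap U_{x'}\le 1+m(x')\cdot\bigl(1-\efrak/(p-1)\bigr)^{-1}
\]
after feeding the data into Lemma \ref{LemmaMVzeros}. The overdetermined bound Theorem \ref{ThmOver} then replaces $m(x')$ by a sum over preimages in the normalizations $\widetilde C_j$ of components of $D$, with weights $a_j(\ord_{y}\nu_j^\bullet(\omega_0)+1)$; crucially, Lemma \ref{LemmaSemiInj} supplies (under (i) or (ii)) an $\omega_0\in\langle\omega_1,\omega_2\rangle$ with $\nu_j^\bullet(\omega_0)\ne 0$ on each component $C_j$, so the order is finite and bounded by $2\gfrak_g(\widetilde C_j)-2+(\deg\text{ term})$ via Riemann--Roch on $\widetilde C_j$.

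Finally I would sum over $x'\in X'(\kk)$. For $x'\notin\supp(D)$, Remark \ref{RmkOver1}/Corollary \ref{Corom0} gives $m(x')=0$, contributing $\#(X'(\kk)\setminus\supp(D))$ to the total. For $x'\in\supp(D)$ I collect the $\omega$-integrality contributions across components and apply Lemma \ref{LemmaWeilBound} to the reduced support of $D$: this bounds the relevant point-counting sum by $(q+1)\ell+2q^{1/2}\sum\gfrak_g(C_j)$ times a weight absorbed into $a_j(\gfrak_a(C_j)-1+\text{lower})$. The canonical genus bound Lemma \ref{LemmaCanonicalGenusBd} then caps $\sum a_j(\gfrak_a(C_j)-1)\le c_1^2(X')=c_1^2(X)$, and combining everything produces the claimed estimate \eqref{EqnMainBound}. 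The main obstacle is the translation step $N\le m(x')+1$: one must show that the first coefficient of size $\ge 1$ in the ``univariate composition'' $h_i\circ\gamma$ appears by index $m(x')+1$, which requires carefully comparing the algebraic notion of $\omega$-integral jet with the analytic power-series expansion — this is where the integrality of $\omega_1,\omega_2$ and the reduction compatibility of Lemma \ref{Lemmawintgamma} are indispensable.
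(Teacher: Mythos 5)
Your proposal follows exactly the paper's strategy: reduce to $\rk G=1$, work over residue disks, parametrize $\Gamma\cap U_e$ by a $1$-parameter subgroup, pass to differentials $\omega_1,\omega_2\in\Hcal(\gamma)$ with independent reductions, define $D=\divi_{X'}(u_1\wedge u_2)$, bound $m(x')$ via Theorem \ref{ThmOver}, derive $N\le m(x')+1$ from the integrality/reduction compatibility of the jets, feed this into the $p$-adic zero estimate, and sum over disks using Lemma \ref{LemmaWeilBound} and the canonical genus bound. One small slip worth fixing: you translate by a point of $G$ lying over $x'$, but the correct translation is by a point of $\Gamma\cap X(K)\cap U_{x'}$ (which exists precisely when that disk contributes); a point of $G$ need not lie in $X(K)$, whereas the paper's Lemma \ref{LemmaXY} uses $\xi_0\in\Gamma\cap X(K)$ so that $\Gamma-\xi_0=\Gamma$ and $X-\xi_0$ still passes through $e$.
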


We keep the notation and assumptions of Theorem \ref{ThmMain} for the rest of this section. In addition we let $k=\kk^{alg}$. First we observe 

\begin{lemma}\label{LemmaRk1} It suffices to prove Theorem \ref{ThmMain} under the assumption $\rk(G)=1$.
\end{lemma}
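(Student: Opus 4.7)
The case $\rk(G)\le 1$ covers two possibilities: $\rk(G)=0$ (in which case $G$ is finite) and $\rk(G)=1$. My plan is to reduce the former to the latter by enlarging $G$.

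Assume that Theorem \ref{ThmMain} has been established whenever $\rk(G)=1$, and suppose we are given a finitely generated $G\le A(K)$ with $\rk(G)=0$. The plan is to exhibit an element $g\in A(K)$ of infinite order and set $G' = G + \Z\cdot g \le A(K)$. This $G'$ is finitely generated (as the sum of two finitely generated subgroups) and has $\rk(G')=1$. Since $G\subseteq G'$, the $p$-adic closure $\Gamma$ of $G$ is contained in the $p$-adic closure $\Gamma'$ of $G'$ in $A(K)$. In particular,
\[
\Gamma\cap X(K)\subseteq \Gamma'\cap X(K),
\]
so the finiteness of $\Gamma'\cap X(K)$ and the bound \eqref{EqnMainBound} applied to $G'$ transfer verbatim to $G$.

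The only point that requires justification is the existence of some $g\in A(K)$ of infinite order. This is standard: by Lemma \ref{LemmaKeyLogExp} (whose hypothesis is guaranteed by \eqref{Eqnplarge}), the kernel $U_e$ of the reduction map $A(K)\to A'(\kk)$ is isomorphic as a Lie group to the additive group $\Tcal=\pfrak^n$ via $\widetilde{\Log}^\ttt$, for any choice of $R$-local parameters $\ttt$ at $e$. Since $n\ge 3\ge 1$ and $\pfrak$ is torsion-free as an additive group, $U_e\subseteq A(K)$ contains elements of infinite order, so any such element serves as $g$.

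This produces the required $G'$ and completes the reduction, so it suffices to prove Theorem \ref{ThmMain} under the assumption $\rk(G)=1$. There is no real obstacle here; the content is just the elementary observation that the quantity we want to bound is monotone in $G$, combined with the fact that $A(K)$ is not a torsion group.
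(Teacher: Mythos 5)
Your proposal is correct and takes essentially the same approach as the paper: enlarge $G$ by an infinite-order element of $A(K)$ to reduce the rank-$0$ case to the rank-$1$ case, using monotonicity of $\Gamma\cap X(K)$ in $G$. The only cosmetic difference is that the paper justifies the existence of a non-torsion point by noting $A(K)$ is uncountable, while you invoke the Lie-group isomorphism $U_e\simeq\Tcal$; both are valid.
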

\begin{proof} Suppose that $\rk(G)=0$ and let $\xi\in A(K)$ be a non-torsion point ($A(K)$ is uncountable). We may replace $G$ by the group generated by $G$ and $\xi$, which has rank $1$.
\end{proof}

Therefore, \emph{from now on we assume $\rk(G)=1$.}

The hypotheses of Theorem \ref{ThmMain} imply the following useful facts.

\begin{lemma}\label{LemmaHyps} If either of (i) or (ii) in Theorem \ref{ThmMain} holds, then $X$ and $X'$ are surfaces of general type and $1\le c_1^2(X')=c_1^2(X)<(p-2)/3$. Furthermore:

\begin{itemize}

\item If assumption (i) in Theorem \ref{ThmMain} holds, then $X'$  is an ample divisor on $A'$. 

\item If assumption (ii) in Theorem \ref{ThmMain} holds, then there is an ample divisor $H'$ on $A'$ such that $\deg_{H'}(X')=\deg_H(X)$, $\cdeg_{H'}(X')=\cdeg_H(X)$, and $\deg((H')^3)_{A'}=\deg(H^3)_A$.
\end{itemize}
\end{lemma}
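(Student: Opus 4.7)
The plan is to verify each assertion by combining the lemmas already established with standard deformation-theoretic facts for the smooth proper family $\Xcal\subset\Acal\to\Spec R$.

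\textbf{General type of $X$ and $X'$.} In case (i), $X$ is of general type by hypothesis. In case (ii), $A'$ being geometrically simple forces $A$ to be geometrically simple, since an isogeny decomposition of $A$ over $K^{alg}$ would extend to $\Acal$ (abelian subvarieties extend to abelian subschemes under good reduction) and specialize to a decomposition of $A'$. Consequently $X\subseteq A$ contains no elliptic curves and is not a translate of an abelian surface (as $\dim X=2<n$ and $A$ is simple), so Lemma \ref{LemmaGenTypeTranslate}, applied after base change along an embedding $K^{alg}\hookrightarrow\C$, yields that $X$ is of general type. In both cases, the analogous conclusion for $X'$ follows from upper semicontinuity of $s\mapsto h^0(\omega_{\Xcal/R}^{\otimes n}|_{\Xcal_s})$ on $\Spec R$: the value at the closed point cannot be smaller than at the generic point, so the quadratic growth of plurigenera is inherited by $X'$.

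\textbf{The bounds on $c_1^2(X)$.} The equality $c_1^2(X)=c_1^2(X')$ is the standard invariance of intersection numbers in a smooth proper flat family, since $c_1^2$ arises as a leading coefficient of $\chi(\omega^{\otimes n})$, which is locally constant on $\Spec R$. For $c_1^2(X)\ge 1$ I invoke Lemma \ref{LemmaNef}: any smooth rational curve on $X$ would yield a nonconstant morphism $\Pro^1\to A$, which is impossible, so $X$ has no $(-1)$-curves, is minimal, and of general type, giving $c_1^2(X)\ge 1$. The upper bound $c_1^2(X)<(p-2)/3$ is immediate in case (ii) from $p>3c_1^2(X)+2$, and in case (i) follows from $p>(128/9)c_1^2(X)^2\ge (128/9)c_1^2(X)$, which exceeds $3c_1^2(X)+2$ whenever $c_1^2(X)\ge 1$.

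\textbf{Ampleness and the auxiliary $H'$.} In case (i), $X'$ is a nonzero effective divisor on the abelian threefold $A'$; its support contains no elliptic curves by hypothesis, and cannot contain any translate of an abelian surface, for $X'$ is irreducible of dimension $2$ and of general type (hence not itself a translate of an abelian surface). Lemma \ref{LemmaAmpleAbVar} then gives ampleness of $X'$. In case (ii), I take the schematic closure $\bar H$ of $H$ in $\Acal$ and set $H'=\bar H|_{A'}$; since $H$ is ample, its associated polarization extends across good reduction to an isogeny $\Acal\to\Acal^\vee$ whose special fibre is $\phi_{H'}$, and combined with $\chi(H')=\chi(H)>0$ (constancy of Euler characteristics in the flat family), the standard criterion for ampleness on abelian varieties yields ampleness of $H'$. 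The identities $\deg_{H'}(X')=\deg_H(X)$, $\cdeg_{H'}(X')=\cdeg_H(X)$, and $\deg((H')^3)_{A'}=\deg(H^3)_A$ are then intersection numbers in the flat proper family $\Xcal\subset\Acal/R$, with the canonical cycle represented globally by $\omega_{\Xcal/R}$; they are therefore constant across the two fibres.

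The point requiring the most care is ensuring ampleness of $H'$ in case (ii) rigorously, since ampleness is not an open condition on the base of an arbitrary family; the polarization-theoretic argument above is the robust way to handle it, while the degree identities are then a formal consequence of deformation invariance of intersection numbers.
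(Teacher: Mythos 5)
Your arguments for general type of $X'$ (via upper semicontinuity of $h^0(\omega^{\otimes n})$, using that plurigenera of a surface are $O(n^2)$) and for $c_1^2(X')=c_1^2(X)$ (via constancy of $\chi(\omega^{\otimes n})$ in the flat family and Riemann--Roch) are sound and provide a reasonable alternative to the paper's citation of Katsura--Ueno Theorem~9.1 and Lemma~9.3. Likewise, your observation that $X$ has no rational curves (hence no $(-1)$-curves, hence is minimal) correctly fills a step that the paper leaves implicit before invoking Lemma~\ref{LemmaNef}, and the numerical check that $(128/9)c^2\ge(128/9)c>3c+2$ for $c\ge 1$ is fine. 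The treatment of case~(i) matches the paper's use of Lemma~\ref{LemmaAmpleAbVar}.

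The genuine gap is in the ampleness of $H'$ in case~(ii). You assert that $\phi_{H'}$ being an isogeny together with $\chi(H')=\chi(H)>0$ ``yields ampleness of $H'$'' by ``the standard criterion for ampleness on abelian varieties,'' but no such criterion exists. For a nondegenerate line bundle $L$ on an abelian variety of dimension $n$, Mumford's vanishing theorem gives $\chi(L)=(-1)^{i(L)}h^{i(L)}(L)$, so $\chi(L)>0$ only forces the index $i(L)$ to be \emph{even}, not to vanish. Concretely, $\mathcal{O}(-H')$ also has $\phi$ an isogeny and (when $n$ is even) $\chi>0$, yet is anti-ample; and for $n\ge 4$ there are nondegenerate classes with index $2$ that satisfy both your conditions without being ample. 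What you actually need here is either the constancy of the index $i(\mathcal{L}_s)$ for a nondegenerate line bundle on an abelian scheme over a connected base (so that $i(H')=i(H)=0$), equivalently the fact that ampleness is a clopen condition on fibers for abelian schemes, or else the route taken in the paper: use the SGA~6/Fulton specialization map on numerical cycle classes and argue ampleness of $H'$ on the geometrically simple $A'$. As written, the step from ``isogeny $+$ $\chi>0$'' to ``ample'' does not follow, and this is precisely the place you yourself flag as delicate.
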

\begin{proof} If $A'$ is simple over $k$ then $A$ is simple over $K^{alg}$ in which case $X\otimes K^{alg}$ does not contain translates of positive dimensional abelian subvarieties of $A\otimes K^{alg}$, and Lemma \ref{LemmaGenTypeTranslate} implies that $X$ is of general type (note that $K^{alg}\simeq \C$ as fields). Thus, if either of (i) or (ii) holds, $X$ is of general type and Theorem 9.1 in \cite{KatsuraUeno} implies that $X'$ is of general type. Lemma 9.3 in \cite{KatsuraUeno} gives $c_1^2(X')=c_1^2(X)$. We get $c_1^2(X)\ge 1$ by Lemma \ref{LemmaNef}. Either of (i) or (ii) implies $c_1^2(X)< (p-2)/3$.

If (i) holds then $X'$ is not an abelian surface because it is of general type, and since it does not contain elliptic curves over $k$, Lemma \ref{LemmaAmpleAbVar} implies that $X'$ is an ample divisor on $A'$. 

On the other hand, suppose that assumption (ii)  in Theorem \ref{ThmMain} holds. The  theory developed in the appendix to Expos\'e X in \cite{SGA6} (see also Section 20.3 in \cite{FultonInt}) gives a specialization map from the group of algebraic cycles  modulo numerical equivalence of $A$ to that of $A'$ which respects the intersection product. Let $H'$ be the specialization of $H$. Since $A'$ is geometrically simple, the divisor $H'$ is ample. Furthermore, since $\Xcal\to \Spec R$ is smooth, the specialization of a canonical divisor on $X$ is a canonical divisor on $X'$. The required equalities follow.
\end{proof}
\begin{remark}\label{RmkMainToIntro} Theorem \ref{ThmMainIntro} follows from Theorem \ref{ThmMain}. Indeed, taking $K=\Q_p$ in Theorem \ref{ThmMain}, condition \eqref{Eqnplarge} becomes $p\ge 3$. Since $c_1^2(X)\ge 1$ (Lemma \ref{LemmaHyps}), either of (i) and (ii) implies $p\ge 7$, so the condition $p\ge 3$ can be dropped. Finally, \eqref{EqnMainBound} becomes \eqref{EqnMainIntro}.
\end{remark}

We also have the following observation regarding curves in $X'_k$.

\begin{lemma}\label{LemmaGenus2} Let $C$ be an irreducible curve in $X'_k$ defined over $k$. Then $\gfrak_g(C)\ge 2$.
\end{lemma}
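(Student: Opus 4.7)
The plan is to argue by contradiction, using the fact that $C$ sits inside the abelian variety $A'_k$, and to split into the cases $\gfrak_g(C)=0$ and $\gfrak_g(C)=1$. The key input will be that under either assumption (i) or (ii) of Theorem \ref{ThmMain}, no (translate of an) elliptic subvariety can fit inside $X'_k$.

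First I would dispose of the case $\gfrak_g(C)=0$. Here the normalization $\widetilde{C}$ is isomorphic to $\mathbb{P}^1_k$, and the composition $\widetilde{C}\to C\hookrightarrow X'_k\hookrightarrow A'_k$ would give a non-constant morphism from $\mathbb{P}^1_k$ to an abelian variety. This is impossible because an abelian variety contains no rational curves (for instance, since the pullback of an invariant $1$-form is a global regular $1$-form on $\mathbb{P}^1$, which is necessarily zero, and this forces the map to be constant). Hence $\gfrak_g(C)\ge 1$.

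Next I would rule out $\gfrak_g(C)=1$. In this case $\widetilde{C}$ is a smooth projective curve of genus $1$. Fixing a base point, the composed morphism $\nu:\widetilde{C}\to A'_k$ decomposes (after a translation) as a homomorphism of abelian varieties, whose image is a non-trivial abelian subvariety $E$ of $A'_k$ of dimension at most $1$, hence an elliptic curve. Then the image $\nu(\widetilde{C})$ is a translate of $E$, and since $\nu$ factors through $C$ and both $C$ and $\nu(\widetilde{C})$ are irreducible curves with the same underlying set, $C$ is itself a translate of an elliptic curve in $A'_k$, contained in $X'_k$.

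Finally I would derive the desired contradiction under each of (i) and (ii). Under (i) this directly contradicts the hypothesis that $X'$ contains no elliptic curves over $\kk^{alg}$. Under (ii), $A'$ is simple over $\kk^{alg}$ and has dimension $n\ge 3$, so $A'_k$ contains no abelian subvariety of dimension $1$, and in particular no (translate of an) elliptic curve; contradiction. Therefore $\gfrak_g(C)\ge 2$, as claimed. The argument is essentially immediate once the reduction to the normalization is set up, so I do not anticipate any real obstacle — the main point is simply to keep careful track of what ``elliptic curve in $A'_k$'' means and to apply the appropriate clause of the hypotheses of Theorem \ref{ThmMain}.
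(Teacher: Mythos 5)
Your proof is correct and follows essentially the same argument as the paper: rule out genus $0$ because an abelian variety contains no rational curves, then rule out genus $1$ by observing that the normalization $\widetilde{C}\to A'_k$ is (up to translation) a non-constant homomorphism of abelian varieties, so $C$ is a translate of an elliptic subvariety, contradicting the hypotheses of Theorem~\ref{ThmMain}. The only difference is presentational: you spell out explicitly that case (ii) (simplicity of $A'$) forbids one-dimensional abelian subvarieties, whereas the paper subsumes both cases under "$X'_k$ contains no elliptic curves by assumption," having already recorded this consequence of simplicity in its discussion of the hypotheses.
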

\begin{proof} Since $C\subseteq A'_k$ we have $\gfrak_g(C)\ge 1$. If it is $1$, then we get $\nu_{C/A'_k}:\widetilde{C}\to A'_k$ where $\widetilde{C}$ is an elliptic curve. Thus, $\nu_{C/A'_k}$ is, up to translation, a non-constant morphism of abelian varieties, and we get that its image $C$ is is an elliptic curve. But $X'_k$ contains no elliptic curves by assumption. 
\end{proof}
%%
%%

%%%%%%
%%%%%%
%%%%%%

\subsection{Choice of differentials and canonical divisor} \label{SecChoiceDiff} Recall that  we are assuming $\rk(G)=1$.

\begin{lemma}\label{LemmaChoice1PS} Let $\ttt$ be a choice of $R$-local parameters for $A$ at $e$. There is a $1$-parameter subgroup $\gamma=(\ttt,\uu)$ for $A$ such that $\Gamma\cap U_e\subseteq \im(\gamma)$. Moreover, $\gamma$ is uniquely determined, up to equivalence, by the condition that $\im(\gamma)\cap \Gamma$ strictly contains $e$.
\end{lemma}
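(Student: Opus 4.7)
The plan is to first locate a topological generator of $\Gamma\cap U_e$, then feed it into Lemma \ref{Lemma1PSgen} to produce $\gamma$, and finally check that closedness of $\im(\gamma)$ automatically delivers the containment $\Gamma\cap U_e\subseteq \im(\gamma)$ from the containment of the generator.

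First I would observe that $U_e=\ker(\red)$ is open and closed in $A(K)$ with finite index, since its quotient embeds in the finite group $A'(\kk)$. Hence $G\cap U_e$ has finite index in $G$, so $\rk(G\cap U_e)=1$. Moreover, by Lemma \ref{LemmaKeyLogExp} the map $\widetilde{\Log}^\ttt$ realizes $U_e$ as isomorphic to the additive group $\Tcal=\pfrak\times\cdots\times\pfrak\subseteq K^n$, which is torsion-free. Thus $G\cap U_e$ is a torsion-free abelian group of rank $1$, hence $G\cap U_e=\langle\xi\rangle$ for some $\xi\in U_e\setminus\{e\}$. I would also note that, since $U_e$ is clopen, $\Gamma\cap U_e$ coincides with the topological closure of $G\cap U_e$ in $U_e$: if $x\in\Gamma\cap U_e$ is a limit of elements of $G$, then openness of $U_e$ forces the tail of the sequence to lie in $G\cap U_e$.

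For existence, I apply Lemma \ref{Lemma1PSgen} to $\xi$ and the local parameters $\ttt$ to produce $\uu\in K^n$ with $|\uu|=1$ such that $\xi\in\im(\gamma)$ for $\gamma=(\ttt,\uu)$. By Lemma \ref{LemmaSaturated}, $\im(\gamma)$ is a Lie subgroup of $U_e$ whose image under $\widetilde{\Log}^\ttt$ is the closed saturated $R$-line $\pfrak\cdot\uu\subseteq\Tcal$; in particular $\im(\gamma)$ is closed in $U_e$. Since it is a subgroup containing $\xi$, it contains $\langle\xi\rangle=G\cap U_e$, and by closedness it contains the closure $\Gamma\cap U_e$. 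This yields the existence assertion, and also shows that $\xi\in\im(\gamma)\cap\Gamma\setminus\{e\}$, so $\gamma$ satisfies the nontriviality condition.

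For uniqueness, suppose $\gamma'=(\ttt,\uu')$ is another $1$-parameter subgroup with $\im(\gamma')\cap\Gamma\supsetneq\{e\}$. Pick a nontrivial $\eta\in\im(\gamma')\cap\Gamma$; since $\im(\gamma')\subseteq U_e$, we have $\eta\in\Gamma\cap U_e\subseteq\im(\gamma)$ by the existence part just proven. Thus $\{e\}\subsetneq\im(\gamma)\cap\im(\gamma')$, so Lemma \ref{LemmaEquiv1PS} gives $\gamma\sim\gamma'$.

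The only subtle point is the transition from the containment $\langle\xi\rangle\subseteq\im(\gamma)$ (which is built into Lemma \ref{Lemma1PSgen}) to the containment $\Gamma\cap U_e\subseteq\im(\gamma)$ of the full $p$-adic closure. The key input making this automatic is that $\pfrak\cdot\uu$ is already a closed $R$-submodule of $\Tcal$ (Lemma \ref{LemmaSaturated}), so no additional analysis of the topological closure is required.
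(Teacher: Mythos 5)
Your proof is correct and takes essentially the same approach as the paper: produce $\gamma$ via Lemma \ref{Lemma1PSgen} applied to a nontrivial element of $\Gamma\cap U_e$, use the closedness of $\im(\gamma)$ from Lemma \ref{LemmaSaturated} to get the containment $\Gamma\cap U_e\subseteq\im(\gamma)$, and deduce uniqueness from Lemma \ref{LemmaEquiv1PS}. You supply slightly more detail than the paper (showing $G\cap U_e\cong\Z$ and that $\Gamma\cap U_e$ is the closure of $G\cap U_e$), which the paper leaves implicit.
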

\begin{proof} By Lemma \ref{Lemma1PSgen} and the fact that $\rk(G)=1$,  there is a $1$-parameter subgroup $\gamma$ of the form $(\ttt,\uu)$ for the given $\ttt$, such that $\im(\gamma)\cap \Gamma$ strictly contains $\{e\}$. By Lemma \ref{LemmaSaturated} and the fact that $\Gamma$ is the $p$-adic closure of the rank $1$ group $G$, the non-triviality of $\im(\gamma)\cap \Gamma$ is equivalent to $\Gamma\cap U_e\subseteq \im(\gamma)$. By Lemma \ref{LemmaEquiv1PS}, the equivalence class of $\gamma$ is unique for this last property.
\end{proof}

In view of Lemma \ref{LemmaChoice1PS} and Lemma \ref{LemmaH1PS}, the group $G$ uniquely determines a $K$-linear hyperplane $\Hcal=\Hcal(\gamma)\subseteq H^0(A, \Omega^1_{A/K})$.

\begin{lemma}\label{LemmaDiff1} There are $\omega_1,\omega_2\in \Hcal$ such that
\begin{itemize}
\item[(i)] $\omega_1,\omega_2$ are $K$-linearly independent.
\item[(ii)] $\omega_1,\omega_2\in H^0(\Acal,\Omega^1_{\Acal/R})$.
\item[(iii)] The differentials $\omega'_1,\omega'_2\in H^0(A',\Omega^1_{A'/\kk})$ obtained by reducing $\omega_1,\omega_2$ modulo $\pfrak$, are $\kk$-linearly independent.
\end{itemize}
\end{lemma}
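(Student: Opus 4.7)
The plan is to use a saturation argument over $R$, exploiting the fact that for an abelian scheme the global $1$-forms form a free $R$-module that base-changes cleanly to both fibres.

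First I would record the structural fact that $\Omega^1_{\Acal/R}$ is the pullback along $\Acal \to \Spec R$ of the locally free sheaf $e^*\Omega^1_{\Acal/R}$ of rank $n$, and hence $N := H^0(\Acal,\Omega^1_{\Acal/R})$ is a free $R$-module of rank $n$. Moreover, flat base change yields canonical identifications $N\otimes_R K = H^0(A,\Omega^1_{A/K})$ and $N\otimes_R \kk = H^0(A',\Omega^1_{A'/\kk})$. In particular, every $K$-basis of $H^0(A,\Omega^1_{A/K})$ coming from a basis of $N$ reduces to a $\kk$-basis of $H^0(A',\Omega^1_{A'/\kk})$.

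Next, set $M := \Hcal \cap N$, viewed as an $R$-submodule of $N$. The key observation is that $M$ is \emph{saturated} in $N$: if $\omega\in N$ and $\varpi^k\omega\in M$ for some $k\ge 0$, then $\varpi^k\omega\in \Hcal$, and since $\Hcal$ is a $K$-linear subspace of $N\otimes_R K$, we get $\omega\in \Hcal$, hence $\omega\in M$. Therefore $N/M$ is a finitely generated torsion-free $R$-module, hence free, and $M$ is itself a free $R$-module. Since $\Hcal$ is a hyperplane and the map $M\otimes_R K\to \Hcal$ is easily seen to be an isomorphism, $M$ has rank $n-1\ge 2$.

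Pick any $R$-basis $\omega_1,\omega_2,\ldots,\omega_{n-1}$ of $M$ and retain the first two. By construction $\omega_1,\omega_2\in \Hcal \cap H^0(\Acal,\Omega^1_{\Acal/R})$, giving conditions (i) and (ii). For (iii), since $N/M$ is free, the inclusion $M\hookrightarrow N$ splits, so reducing mod $\pfrak$ yields an injection $M\otimes_R\kk \hookrightarrow N\otimes_R\kk = H^0(A',\Omega^1_{A'/\kk})$; the images $\omega_1',\omega_2'$ of $\omega_1,\omega_2$ form part of a $\kk$-basis of $M\otimes_R\kk$, and so are $\kk$-linearly independent in $H^0(A',\Omega^1_{A'/\kk})$. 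There is no serious obstacle here; the only subtlety is verifying the saturation statement, which follows directly from $\Hcal$ being a $K$-subspace.
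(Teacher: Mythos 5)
Your proof is correct and follows essentially the same approach as the paper: identify $H^0(\Acal,\Omega^1_{\Acal/R})$ as a free $R$-module of rank $n$ and work with the intersection of $\Hcal$ with this $R$-lattice, using that $R$ is a DVR. Your explicit use of saturation to get a split inclusion $M\hookrightarrow N$, hence an injection $M\otimes_R\kk\hookrightarrow N\otimes_R\kk$, is in fact a cleaner and tighter way to secure condition (iii) than the paper's argument via primitivity of the chosen vectors (which, as written, really does need $\omega_1,\omega_2$ to be part of an $R$-basis of the saturated sublattice $\Hcal\cap H^0(\Acal,\Omega^1_{\Acal/R})$, exactly as you arrange).
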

\begin{proof} The ring $R$ is a DVR, hence $H^0(\Acal,\Omega^1_{\Acal/R})\simeq R^n$. Thus, the problem is reduced to showing the following: Given $n\ge 3$ and a $K$-linear map $f:K^n\to K$, there are $v_1,v_2\in \ker(f)\cap R^n$ which are linearly independent and such that their images in $\kk^n$ are $\kk$-linearly independent. For this, we may assume that $f$ is $R$-valued on $R^n$, and it follows that $\rk_R(\ker(f|_{R^n}))=n-1$. Thus, we can take $v_1,v_2\in \ker(f)\cap R^n$ which are $K$-linearly independent. After scaling, we can assume that $v_1,v_2\in R^n$ are primitive. If there are $b_1,b_2\in R^\times$ with $b_1 v_1+b_2 v_2\in \pfrak R^n\subseteq R^n$ then there are $c_1,c_2\in R$ such that $b_1 v_1+b_2 v_2=\varpi c_1 v_1+\varpi c_2v_2$ which is not possible since $v_1,v_2$ are $K$-linearly independent. Hence the images of  $v_1,v_2$ in $\kk^n$ are $\kk$-linearly independent.
\end{proof}

From now on we fix a choice of $\omega_1,\omega_2$ as in Lemma \ref{LemmaDiff1} and let $\omega'_1,\omega'_2\in H^0(A',\Omega^1_{A'/\kk})$ be their reductions modulo $\pfrak$. Let $\iota:X'\to A'$ be the inclusion map. For $j=1,2$ we let $u_j=\iota^\bullet(\omega'_j)\in H^0(X',\Omega^1_{X'/\kk})$ be the restriction of $\omega'_j$ to $X'$.

\begin{lemma}\label{LemmaDiff2} The $2$-form   $u_1\wedge u_2\in H^0(X',\Omega^2_{X/\kk})$ is not the zero section.
\end{lemma}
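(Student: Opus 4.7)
The plan is to reduce the statement to a direct application of Lemma \ref{Lemma2forms} applied to $X'_k \subseteq A'_k$ after base change to $k=\kk^{alg}$.

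First I would observe that it suffices to show $(u_1 \wedge u_2) \otimes_\kk k \ne 0$ in $H^0(X'_k, \Omega^2_{X'_k/k})$, since base change along the faithfully flat map $\kk \to k$ is injective on global sections of coherent sheaves. Moreover, pullback of differentials is compatible with base change, so if $\iota_k : X'_k \to A'_k$ denotes the base-changed inclusion, then $(u_j)_k = \iota_k^\bullet((\omega'_j)_k)$ for $j=1,2$. Thus the problem reduces to proving nonvanishing of $\iota_k^\bullet((\omega'_1)_k)\wedge \iota_k^\bullet((\omega'_2)_k)$ over the algebraically closed field $k$.

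Next I would verify the hypothesis $(\omega'_1)_k \wedge (\omega'_2)_k \ne 0$ in $H^0(A'_k, \Omega^2_{A'_k/k})$ needed to apply Lemma \ref{Lemma2forms}. This is immediate from the fact that on an abelian variety the cotangent bundle is trivial and global $1$-forms are translation invariant: the evaluation map $H^0(A'_k, \Omega^1_{A'_k/k}) \otimes_k \mathcal{O}_{A'_k} \to \Omega^1_{A'_k/k}$ is an isomorphism, so two $k$-linearly independent global $1$-forms wedge to a nowhere-vanishing $2$-form. The $k$-linear independence of $(\omega'_1)_k, (\omega'_2)_k$ follows from the $\kk$-linear independence of $\omega'_1, \omega'_2$ given in Lemma \ref{LemmaDiff1}(iii).

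Finally I would check that the numerical hypotheses of Lemma \ref{Lemma2forms} are satisfied in each of the two cases of Theorem \ref{ThmMain}, using Lemma \ref{LemmaHyps} to transfer the data from the generic to the special fibre. In case (i), Lemma \ref{LemmaHyps} tells us that $X'$ (hence $X'_k$) is an ample divisor on $A'_k$, it contains no elliptic curves over $k$ by hypothesis, and $c_1^2(X'_k) = c_1^2(X)$, so the inequality $p > (128/9)\, c_1^2(X)^2$ from Theorem \ref{ThmMain}(i) gives exactly the inequality \eqref{Eqn2formi} needed for Lemma \ref{Lemma2forms}(i). In case (ii), $A'$ (hence $A'_k$) is simple, and Lemma \ref{LemmaHyps} provides an ample divisor $H'$ on $A'$ with $\deg_{H'}(X') = \deg_H(X)$, $\cdeg_{H'}(X') = \cdeg_H(X)$, and $\deg((H')^n)_{A'} = \deg(H^n)_A$; the bound in Theorem \ref{ThmMain}(ii) then becomes exactly \eqref{Eqn2formii} for $X'_k \subseteq A'_k$ with respect to $H'_k$. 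No step here looks genuinely hard: the main thing to be careful about is the faithfully flat base change argument at the start and the straightforward dictionary between the hypotheses of Theorem \ref{ThmMain} and those of Lemma \ref{Lemma2forms} via Lemma \ref{LemmaHyps}.
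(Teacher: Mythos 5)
Your proposal is correct and follows essentially the same route as the paper's proof: use Lemma \ref{LemmaDiff1}(iii) together with the triviality of $\Omega^1_{A'/\kk}$ to get $\omega'_1 \wedge \omega'_2 \ne 0$ on $A'$, then apply Lemma \ref{Lemma2forms} to $X' \subseteq A'$ after base change to $k$, with the hypotheses supplied by Lemma \ref{LemmaHyps}. You have simply spelled out in more detail the base-change and hypothesis-matching steps that the paper leaves implicit.
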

\begin{proof} Since $A'$ is an abelian variety,  (iii) in Lemma \ref{LemmaDiff1} shows that $\omega'_1\wedge \omega'_2\in H^0(A',\Omega^2_{A'/\kk})$ is not the zero section. By the assumptions (i) and (ii) in Theorem \ref{ThmMain} and by Lemma \ref{LemmaHyps}, we can apply Lemma \ref{Lemma2forms} to $\omega'_1,\omega'_2$ on $A'$ and the surface $X'\subseteq A'$ after base change to $k$. 
\end{proof}

 By Lemma \ref{LemmaDiff2} we can define $D=\divi_{X'}(u_1\wedge u_2)$ on $X'$. Lemmas \ref{LemmaHyps}, \ref{LemmaGenus2},  and \ref{LemmaGTA} give:
 
 \begin{lemma}\label{LemmaAAA}  $D$ is effective, ample,  and it is a canonical divisor of $X'$ defined over $\kk$. In particular, $D$ is numerically effective and $D> 0$.
\end{lemma}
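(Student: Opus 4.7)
My plan is to verify the four stated properties of $D$ in sequence, using the results cited in the hint together with the two preceding lemmas.

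First, since $\Omega^2_{X'/\kk}$ is (by definition) the canonical sheaf of the smooth surface $X'$, any non-zero global section of it has a divisor of zeros that is an effective canonical divisor. By Lemma \ref{LemmaDiff2}, $u_1 \wedge u_2 \in H^0(X', \Omega^2_{X'/\kk})$ is not the zero section, so $D = \divi_{X'}(u_1 \wedge u_2)$ is an effective canonical divisor on $X'$. Because $u_1$ and $u_2$ are obtained by restricting to $X'$ the forms $\omega_1', \omega_2' \in H^0(A', \Omega^1_{A'/\kk})$ (which are already defined over $\kk$), the section $u_1 \wedge u_2$ is defined over $\kk$, and hence so is $D$.

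For ampleness, the plan is to apply Lemma \ref{LemmaGTA} to $X'_k$ (the base change to the algebraic closure). By Lemma \ref{LemmaHyps}, $X'$ is a smooth projective surface of general type, and the same is true after base change to $k$. By Lemma \ref{LemmaGenus2}, every irreducible curve in $X'_k$ has geometric genus at least $2$; in particular, $X'_k$ contains no smooth curve of genus $0$. Lemma \ref{LemmaGTA} then gives that the canonical sheaf of $X'_k$ is ample. Ampleness descends along faithfully flat base change, so $K_{X'}$ is ample on $X'$, and therefore the canonical divisor $D$ is ample on $X'$. Ample divisors are numerically effective, so $D$ is nef.

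Finally, to see $D > 0$ (i.e.\ $D$ is non-zero), I combine effectiveness with the fact that $D^2 = c_1^2(X') = c_1^2(X) \ge 1$ by Lemma \ref{LemmaHyps}; in particular $D$ cannot be the zero divisor. This establishes all assertions of the lemma. No single step looks delicate: the main substantive input is the ampleness of the canonical class, which is already encapsulated in Lemma \ref{LemmaGTA} once the no-genus-$\le 1$-curves condition of Lemma \ref{LemmaGenus2} is invoked.
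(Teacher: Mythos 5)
Your proof is correct and follows exactly the route the paper intends: the paper's proof is simply the one-line citation ``Lemmas \ref{LemmaHyps}, \ref{LemmaGenus2}, and \ref{LemmaGTA} give:'', and your argument is precisely the expansion of that citation (Lemma \ref{LemmaDiff2} for nonvanishing and effectiveness, Lemma \ref{LemmaHyps} for general type and $c_1^2(X')\ge 1$, Lemma \ref{LemmaGenus2} to rule out genus-$0$ curves in $X'_k$, and Lemma \ref{LemmaGTA} plus descent of ampleness along $X'_k\to X'$).
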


Let $Z=\supp(D)$ and let $C_1,...,C_\ell$ be the irreducible components of $Z$ over $k$. Thus, $Z$ is a (possibly singular and reducible) curve defined over $\kk$ and we have $D=\sum_{j=1}^\ell a_j C_j$ as divisors on $X'_k=X'\otimes k$ for certain integers $a_j\ge 1$. Let us write $\nu_j=\nu_{C_j/X'_k}:\widetilde{C}_j\to X'_k$ for $j=1,...,\ell$.

\begin{lemma}\label{LemmaChoicew} There is $b\in \kk$ such that the differential $w=u_1+bu_2\in H^0(X',\Omega^1_{X'/\kk})$ satisfies that no map $\nu_j:\widetilde{C}_j\to X'_k$ is $w$-integral.
\end{lemma}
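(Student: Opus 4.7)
The plan is to show that for each irreducible component $C_j$ of $Z$ there is at most one value $b\in k$ making $\nu_j$ be $(u_1+bu_2)$-integral, and then to check that the total number of such bad values in $\kk$ is strictly less than $|\kk|$, so that a good $b\in\kk$ exists.

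First I would rephrase $\omega$-integrality as a linear condition in $b$. Since $u_m=\iota^\bullet(\omega'_m)$ and $\nu_{C_j/A'_k}=\iota_k\circ\nu_j$, Lemma \ref{LemmaCompositionInt} yields
\[
\nu_j^\bullet(u_1+bu_2)=\nu_{C_j/A'_k}^\bullet(\omega'_1+b\omega'_2)
\]
after base change to $k$. Hence $b$ is bad for the index $j$ exactly when $\omega'_1+b\omega'_2$ lies in $K_j:=\ker\nu_{C_j/A'_k}^\bullet$.

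The crucial step is controlling $\dim K_j$ through Lemma \ref{LemmaSemiInj}, applied with $D$ as the effective canonical divisor containing $C_j$. Under hypothesis (i) of Theorem \ref{ThmMain}, Lemma \ref{LemmaHyps} gives that $X'_k$ is ample in $A'_k$ and that $p>c_1^2(X)$, so part (i) of Lemma \ref{LemmaSemiInj} forces $\dim_k K_j\le 1$. Under hypothesis (ii), $A'_k$ is simple and the quantitative bound on $p$ carries over (again by Lemma \ref{LemmaHyps}), so part (ii) of the same lemma gives $K_j=0$. In either case, since $\omega'_1,\omega'_2$ are $\kk$-linearly independent (Lemma \ref{LemmaDiff1}(iii)), the $2$-plane they span meets the (at most) $1$-dimensional $K_j$ in at most a line; therefore at most one $b_j\in k$ is bad for this $j$.

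To finish, I would bound the number $\ell$ of irreducible components of $Z$. By Lemma \ref{LemmaAAA}, $D$ is a positive nef canonical divisor on $X'_k$, so Lemma \ref{LemmaCanonicalGenusBd} gives $\sum_j a_j(\gfrak_a(C_j)-1)\le c_1^2(X')$. Lemma \ref{LemmaGenus2} applies in both cases of Theorem \ref{ThmMain} (either $X'$ contains no elliptic curves by direct assumption, or else $A'$ is simple and hence so is $A'_k$), giving $\gfrak_g(C_j)\ge 2$; combined with Lemma \ref{LemmaHironakaGenus} this yields $\gfrak_a(C_j)\ge 2$. Thus each term of the sum is at least $1$, so $\ell\le c_1^2(X')=c_1^2(X)$. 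By Lemma \ref{LemmaHyps}, $c_1^2(X)<(p-2)/3<p\le q$, so the set of bad $b\in\kk$ has cardinality at most $\ell<q=|\kk|$ and a good $b\in\kk$ exists. The main obstacle is precisely the non-simultaneous vanishing of $\nu_j^\bullet(u_1)$ and $\nu_j^\bullet(u_2)$, i.e.\ the kernel bound for $K_j$; this is where the nontrivial intersection-theoretic inputs packaged in Lemma \ref{LemmaSemiInj} are indispensable.
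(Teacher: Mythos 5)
Your proof is correct and follows essentially the same approach as the paper's: both bound the number $\ell$ of irreducible components of $\supp D$ by $c_1^2(X')<p$ and then invoke Lemma \ref{LemmaSemiInj} to control $\ker(\nu_{C_j/A'_k}^\bullet)$. The paper bounds $\ell$ via ampleness of $D$ (through $(C_j.D)\ge1$) and phrases the conclusion as a pigeonhole contradiction over $\F_p$, while you bound $\ell$ via the genus inequality $\gfrak_a(C_j)\ge2$ and count the bad $b$'s directly---these are cosmetic differences.
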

\begin{proof} By Lemmas \ref{LemmaHyps} and \ref{LemmaAAA} (especially, ampleness of $D$) we have 
$$
\ell \le \sum_{j=1}^\ell a_j \le \sum_{j=1}^\ell a_j\cdot (C_j.D)_{X'_k}=c_1^2(X')<p
$$
Given $b\in \kk$ let $w_b=u_1+bu_2\in H^0(X',\Omega^1_{X'/\kk})$. If for every $b\in \F_p\subseteq \kk$ we have that some $\nu_{j}$ is $w_b$-integral, the fact that $\ell<p$ implies that for some $j_0$ there are $b\ne b'$ in $\kk$ such that $w_b,w_{b'}\in \ker(\nu_{j_0}^\bullet)$. Hence, $\omega_1+b\omega_2$ and $\omega_1+b'\omega_2$ are in $\ker (\nu_{C_j/A'_k}^\bullet)$. Since $b\ne b'$ we deduce $\dim_k \ker (\nu_{C_j/A'_k}^\bullet)\ge 2$. By the assumptions (i) and (ii) in Theorem \ref{ThmMain} and Lemma \ref{LemmaHyps}, this contradicts Lemma \ref{LemmaSemiInj}.
\end{proof}

From now on, we fix a choice of $b\in \kk$ and the corresponding $w=u_1+bu_2\in H^0(X',\Omega^1_{X'/\kk})$ as in Lemma \ref{LemmaChoicew}.

%%%%%%
%%%%%%
%%%%%%

\subsection{Bounds from overdetermined $\omega$-integrality}\label{SecBoundsOver} %

Given $x\in X'(k)$ we let $m_{X',\omega'_1,\omega'_2}(x)$ be the supremum (possibly infinite) of all integers $m\ge 0$ with the property that there is a closed immersion $\phi : V_m^k\to X'_k$ supported at $x$ which is $\omega$-integral for both $\omega=u_1,u_2$. Here we recall from Section \ref{SecOver} that $V^k_m=\Spec k[z]/(z^{m+1})$. We observe that, fixing the ambient abelian variety $A'$ over $\kk$,  the quantity $m_{X',\omega'_1,\omega'_2}(x)$ only depends on our choices of $X'$, $\omega'_1$, $\omega'_2$, and $x$. As $X'$ is given  and $\omega'_1,\omega'_2$ are fixed, we may simply write $m(x)=m_{X',\omega'_1,\omega'_2}(x)$ (except in the proof of Lemma \ref{LemmaXY} below, where a reduction argument will require to consider other choices of $X'$).

\begin{lemma}\label{Lemmamx0} If $x\notin Z$ then $m(x)=0$.
\end{lemma}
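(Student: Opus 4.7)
The plan is to reduce directly to Corollary \ref{Corom0}. First, I would pass to the base change $X'_k := X' \otimes_\kk k$, which is a smooth irreducible surface over the algebraically closed field $k$ (these properties are inherited from $X'$ being geometrically irreducible and smooth over $\kk$, as built into the hypotheses of Theorem \ref{ThmMain}). The differentials $u_1,u_2 \in H^0(X',\Omega^1_{X'/\kk})$ pull back to elements of $H^0(X'_k,\Omega^1_{X'_k/k})$, and by Lemma \ref{LemmaDiff2} their wedge $u_1\wedge u_2$ remains non-zero after this base change. Consequently the divisor $\divi_{X'_k}(u_1\wedge u_2)$ is well-defined and equals $D\otimes k$, whose support is $Z\otimes k$; in particular, the hypothesis $x\notin Z$ translates into the statement that $x$ is not in the support of $\divi_{X'_k}(u_1\wedge u_2)$.

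Next, I would unwind the definition of $m(x)$. Every integer $m\ge 0$ counted by $m(x)$ is, by definition, accompanied by a closed immersion $\phi:V^k_m\to X'_k$ supported at $x$ which is $\omega$-integral for both $\omega=u_1$ and $\omega=u_2$. Applying Corollary \ref{Corom0} with $S=X'_k$, $\omega_1=u_1$, $\omega_2=u_2$, and the point $x$ lying outside the support of $\divi_{X'_k}(u_1\wedge u_2)$, I conclude that necessarily $m=0$. Taking the supremum then yields $m(x)=0$.

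There is essentially no real obstacle in this argument; the lemma is a direct translation of Corollary \ref{Corom0} to the present notation. The only things to verify carefully are the base-change compatibilities, namely smoothness of $X'_k$, the non-vanishing of $u_1\wedge u_2$ on $X'_k$ (already supplied by Lemma \ref{LemmaDiff2}), and the identification of $\supp(\divi_{X'_k}(u_1\wedge u_2))$ with $Z\otimes k$, all of which are routine. The reason for isolating this statement as a separate lemma is that it will be used later in the point-counting step, where residue disks of points $x\in X'(\kk)\setminus Z$ will be shown, via \eqref{EqnIntroU}, to contribute at most a single rational point each.
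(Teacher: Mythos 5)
Your proof is correct and follows the same route as the paper: the paper's one-line proof invokes Theorem \ref{ThmOver} together with Remark \ref{RmkOver1}, which in turn just says this degenerate case is Corollary \ref{Corom0}, exactly the reduction you perform. The base-change bookkeeping you spell out (smoothness of $X'_k$, non-vanishing of $u_1\wedge u_2$ from Lemma \ref{LemmaDiff2}, and $\supp(\divi_{X'_k}(u_1\wedge u_2))=Z\otimes k$) is accurate and matches what the paper leaves implicit in the definition of $m(x)$.
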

\begin{proof} By Theorem \ref{ThmOver}. See Remark \ref{RmkOver1} for details in the case $x\notin Z=\supp(D)$.
\end{proof}
\begin{lemma}\label{Lemmamx1} If $x\in Z$ then 
$$
m(x)\le \sum_{j=1}^\ell \sum_{y\in \nu_j^{-1}(x)} a_j\cdot (\ord_y(\nu_j^\bullet(w))+1).
$$
\end{lemma}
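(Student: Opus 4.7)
My plan is to obtain this as an immediate consequence of Theorem \ref{ThmOver}, applied to the surface $S = X'_k$, the two differentials $u_1, u_2 \in H^0(X'_k, \Omega^1_{X'_k/k})$ (i.e.\ the base-change to $k$ of the $u_j$ defined earlier), and the auxiliary $1$-form $\omega_0 = w = u_1 + bu_2$. All the preparatory work has already been done: the hypothesis $u_1 \wedge u_2 \neq 0$ required by Theorem \ref{ThmOver} is exactly Lemma \ref{LemmaDiff2}; the decomposition $D = \sum_{j=1}^\ell a_j C_j$ of $\divi_{X'_k}(u_1\wedge u_2)$ and the maps $\nu_j : \widetilde{C}_j \to X'_k$ have been fixed just before the statement of this lemma; and $w$ is of the form $c_1 u_1 + c_2 u_2$ with $c_1 = 1$ and $c_2 = b \in \kk \subseteq k$, as required by the hypothesis of Theorem \ref{ThmOver}.

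With this in hand, the argument runs as follows. By the very definition of $m(x)$, for every integer $m$ in the (non-empty) set whose supremum defines $m(x)$, there exists a closed immersion $\phi : V_m^k \to X'_k$ supported at $x$ which is $\omega$-integral for both $\omega = u_1$ and $\omega = u_2$. Theorem \ref{ThmOver}, applied with $\omega_0 = w$, then yields
\[
m \;\le\; \sum_{j=1}^\ell \sum_{y \in \nu_j^{-1}(x)} a_j \cdot \bigl(\ord_y(\nu_j^\bullet(w)) + 1\bigr).
\]
Since this upper bound does not depend on $m$, taking the supremum over all admissible $m$ gives the desired inequality for $m(x)$.

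The one point that actually requires the earlier preparation (and without which the bound would be vacuous) is that the right-hand side must be finite. This is where Lemma \ref{LemmaChoicew} enters: it was designed precisely so that none of the maps $\nu_j$ is $w$-integral, which forces $\nu_j^\bullet(w) \in H^0(\widetilde{C}_j, \Omega^1_{\widetilde{C}_j/k})$ to be a non-zero regular differential on the smooth projective curve $\widetilde{C}_j$. Consequently $\ord_y(\nu_j^\bullet(w)) < \infty$ at every $y \in \nu_j^{-1}(x)$, each inner sum has finitely many terms (by Lemma \ref{LemmaBounddelta} applied to $(C_j, x)$, say), so the total is finite, and the bound on $m(x)$ is meaningful. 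There is no genuine obstacle here: this lemma is essentially a packaging of Theorem \ref{ThmOver} together with the specific choice of $w$ made in Lemma \ref{LemmaChoicew}.
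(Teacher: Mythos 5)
Your proposal is correct and takes exactly the same approach as the paper, which proves this lemma by a one-line citation of Theorem \ref{ThmOver} with $\omega_0 = w = u_1 + bu_2$. Your additional remarks about why the bound is finite (via Lemma \ref{LemmaChoicew}) are useful context but are not needed for the proof itself; the paper addresses finiteness separately in Lemma \ref{Lemmamxp}.
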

\begin{proof} By Theorem \ref{ThmOver}, after choosing $\omega_0=w=u_1+bu_2$.
\end{proof}
\begin{lemma}\label{Lemmamxp} For every $x\in X'(k)$ we have $m(x)\le p-3$. In particular, $m(x)$ is finite.
\end{lemma}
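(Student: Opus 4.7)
The plan is to bound $m(x)$ by $3c_1^2(X)$ using the tools developed so far, and then invoke Lemma \ref{LemmaHyps} to conclude. First I would dispose of the trivial case: if $x\notin Z=\supp(D)$, then Lemma \ref{Lemmamx0} gives $m(x)=0$, and since $p\ge 7$ by Lemma \ref{LemmaHyps}, we get $m(x)\le p-3$. So the interesting case is $x\in Z$, where Lemma \ref{Lemmamx1} provides the starting estimate
$$
m(x)\le \sum_{j:\, x\in C_j} a_j\cdot \sum_{y\in \nu_j^{-1}(x)} \bigl(\ord_y(\nu_j^\bullet(w))+1\bigr).
$$

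Next I would estimate each term on the right. Since $\nu_j^\bullet(w)\neq 0$ by our choice of $w$ from Lemma \ref{LemmaChoicew}, and $\widetilde{C}_j$ is a smooth projective curve of geometric genus $\gfrak_g(C_j)\ge 2$ by Lemma \ref{LemmaGenus2}, the divisor of $\nu_j^\bullet(w)$ has degree $2\gfrak_g(C_j)-2\ge 2$, so $\sum_{y\in \nu_j^{-1}(x)}\ord_y(\nu_j^\bullet(w))\le 2\gfrak_g(C_j)-2$. Also, by Lemma \ref{LemmaBounddelta}, the fibre has size $\#\nu_j^{-1}(x)\le \delta(C_j,x)+1$. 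Adding these and applying Hironaka's formula (Lemma \ref{LemmaHironakaGenus}) in the form $\gfrak_g(C_j)+\delta(C_j,x)\le \gfrak_a(C_j)$, each term is at most $2\gfrak_a(C_j)-1$, so
$$
m(x)\le \sum_{j=1}^\ell a_j\bigl(2\gfrak_a(C_j)-1\bigr)=2\sum_j a_j\gfrak_a(C_j)-\sum_j a_j.
$$

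The last step is to collapse this into $3c_1^2(X)$ using the ampleness of $D$ and the canonical genus bound. By Lemma \ref{LemmaAAA}, $D=\sum a_jC_j$ is a nef, ample, canonical divisor on $X'$, so $D.C_j\ge 1$ for each $j$, giving
$$
\sum_j a_j\le \sum_j a_j(D.C_j)=D^2=c_1^2(X).
$$
Lemma \ref{LemmaCanonicalGenusBd} applied to $D$ yields $\sum_j a_j(\gfrak_a(C_j)-1)\le c_1^2(X)$, whence $\sum_j a_j\gfrak_a(C_j)\le c_1^2(X)+\sum_j a_j$. Substituting,
$$
m(x)\le 2c_1^2(X)+2\sum_j a_j-\sum_j a_j = 2c_1^2(X)+\sum_j a_j\le 3c_1^2(X).
$$
By Lemma \ref{LemmaHyps}, $c_1^2(X)<(p-2)/3$, so $3c_1^2(X)<p-2$, and since both sides are integers we conclude $m(x)\le p-3$, as desired.

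The main (mild) obstacle is calibrating the arithmetic so that the final constant is exactly $3$: the bound $\sum a_j\le c_1^2(X)$ coming from ampleness of the canonical $D$ is what makes $2c_1^2(X)+\sum a_j\le 3c_1^2(X)$ match the hypothesis $c_1^2(X)<(p-2)/3$ from Lemma \ref{LemmaHyps}. Without invoking ampleness of $D$ one would only get $4c_1^2(X)$, which would be insufficient.
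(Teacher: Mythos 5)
Your proof is correct and takes essentially the same approach as the paper, using the same chain of lemmas (\ref{Lemmamx0}, \ref{Lemmamx1}, \ref{LemmaBounddelta}, \ref{LemmaHironakaGenus}, \ref{LemmaCanonicalGenusBd}, \ref{LemmaHyps}) to reach the bound $m(x)\le 3c_1^2(X)<p-2$. One small remark on your closing comment: ampleness of $D$ is not actually needed to land on the constant $3$ --- the paper bounds $\#\nu_j^{-1}(x)\le\gfrak_a(C_j)-1$ directly (combining $\gfrak_g(C_j)\ge 2$ with Hironaka), giving the per-component estimate $3(\gfrak_a(C_j)-1)$ and then $3c_1^2(X)$ from Lemma \ref{LemmaCanonicalGenusBd} alone, which only requires $D$ to be nef; equivalently, in your version the inequality $\sum_j a_j\le\sum_j a_j(\gfrak_a(C_j)-1)\le c_1^2(X)$ already follows from $\gfrak_a(C_j)\ge 2$, so the detour through $D.C_j\ge 1$ is superfluous.
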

\begin{proof} By Lemma \ref{Lemmamx0} we may assume $x\in Z$. Lemma \ref{Lemmamx1} gives
$$
m(x)\le \sum_{j=1}^\ell  a_j\cdot \deg_{\widetilde{C}_j}(\nu_j^\bullet(w)) + \sum_{j=1}^\ell a_j\cdot \# \nu_j^{-1}(x).
$$
Since $w$ is chosen as in Lemma \ref{LemmaChoicew} we get $\deg_{\widetilde{C}_j}(\nu_j^\bullet(w))=2\gfrak_g(C_j)-2\le 2\gfrak_a(C_j)-2$. On the other hand, Lemma \ref{LemmaGenus2} gives $\gfrak_g(C_j)\ge 2$ for each $j$, and in view of Lemmas \ref{LemmaHironakaGenus} and \ref{LemmaBounddelta} we get
$$
\#\nu^{-1}_j(x)\le \delta(C_j,x)+1\le \gfrak_a(C_j)-1.
$$
Using Lemmas \ref{LemmaCanonicalGenusBd}, \ref{LemmaHyps}, and \ref{LemmaAAA} this gives $m(x)\le 3\cdot  \sum_{j=1}^\ell a_j\cdot (\gfrak_a(C_j)-1)\le 3c_1^2(X')<p-2$.
\end{proof}
%%
%%

%%%%%%
%%%%%%
%%%%%%

\subsection{Bounds on residue disks}\label{SecResDisk} %

Every $\xi\in A(K)$ extends to a section $\sigma_\xi:\Spec R\to \Acal$ because $\Acal\to \Spec R$ is proper. This determines a reduction map $\red: A(K)\to A'(\kk)$ which is a group morphism. For $x\in A'(\kk)$ we let $U_x=\red^{-1}(x)\subseteq A(K)$ be the corresponding residue disk. In particular, $U_e=\ker(\red)$ as in Section \ref{SecLocLin}. The goal of this paragraph is to show the following result (under the assumptions of Theorem \ref{ThmMain}), which can be of independent interest:

\begin{proposition}\label{PropKeyU} Let $\xi\in \Gamma\cap X(K)$ and  $x=\red(\xi)\in X'(\kk)$. Then $\Gamma\cap X(K)\cap U_x$ is finite and
$$
\# \Gamma\cap X(K)\cap U_x\le 1+ m(x)\cdot  \left(1-\frac{\efrak}{p-1}\right)^{-1}.
$$
In particular, if $m(x)=0$, then $U_x$ contains at most one point of $\Gamma\cap X(K)$.
\end{proposition}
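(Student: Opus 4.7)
The plan is to transport everything to the identity residue disk, parametrize $\Gamma\cap U_e$ by the analytic $1$-parameter subgroup given by Lemma \ref{LemmaChoice1PS}, and reduce the counting to a zero-count for a one-variable $p$-adic power series on $\pfrak$. Since $\xi\in\Gamma$, translation by $-\xi$ gives a bijection between $\Gamma\cap X(K)\cap U_x$ and $\Gamma\cap X_\xi(K)\cap U_e$, where $X_\xi=\tau_{-\xi}(X)$ has integral model $\Xcal_\xi=\tau_{-\xi}(\Xcal)$ (using properness of $\Acal/R$ to extend $\xi$ to a section). I will apply Lemma \ref{LemmaChoiceLocParam} to obtain $R$-local parameters $\ttt=(t_1,\dots,t_n)$ at $e$ with $t_1,\dots,t_{n-2}$ cutting out $\Xcal_\xi$ along $e$, and then Lemma \ref{LemmaChoice1PS} produces $\uu\in K^n$ with $|\uu|=1$ such that $\gamma=(\ttt,\uu)$ is a $1$-parameter subgroup in the equivalence class $\Hcal$ fixed in Section \ref{SecChoiceDiff}, satisfying $\Gamma\cap U_e\subseteq\tilde\gamma(\pfrak)$; in particular $\omega_1,\omega_2\in\Hcal(\gamma)$. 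Setting $h_j(z)=\Exp^\ttt_j(u_1z,\dots,u_nz)=\sum_{h\ge 1}P^\ttt_{j,h}(\uu)\,z^h$ for $j=1,\dots,n-2$ gives
$$
\#\Gamma\cap X(K)\cap U_x\le \#\{z\in\pfrak:h_1(z)=\dots=h_{n-2}(z)=0\}\le \#\{z\in\pfrak:h_i(z)=0\}
$$
for any single $i$ with $h_i\not\equiv 0$.

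The main obstacle is producing a small integer $N$ and index $i\le n-2$ with $|P^\ttt_{i,N}(\uu)|=1$. Upper bounds on the coefficients are immediate from Lemma \ref{LemmaCvExp}: $|P^\ttt_{j,h}(\uu)|\le q^{\efrak(h-1)/(p-1)}=M^{h-1}$ for $M=q^{\efrak/(p-1)}$, and $P^\ttt_{j,h}(\uu)\in R$ whenever $h\le p-1$. I will locate a non-trivial reduction through $m(x)$: if $P^\ttt_{j,h}(\uu)\in\pfrak$ for all $j\le n-2$ and all $h\le m\le p-2$, the reductions $\bar h_j$ vanish modulo $z^{m+1}$, which forces the integral jet $\tilde\gamma'_m:V'_m\to A'$ of Lemma \ref{LemmaTruncate1ps} to factor through $\Xcal_\xi\otimes\kk=\tau_{-x}(X')$. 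Translating by $x$ and composing with the inclusion $X'\hookrightarrow A'$ then yields a closed immersion $\psi:V'_m\to X'$ supported at $x$; Lemma \ref{Lemmawintgamma} gives $\omega'_1,\omega'_2$-integrality of $\tilde\gamma'_m$, and translation invariance of the $\omega'_j$ on $A'$ together with functoriality of $\iota^\bullet$ promotes this to $u_1$- and $u_2$-integrality of $\psi$. After base change to $k$, $\psi$ is a valid witness in the definition of $m(x)$, so $m\le m(x)$. Taking the contrapositive at $m=m(x)+1$, which is at most $p-2$ by Lemma \ref{Lemmamxp}, produces the required $i\le n-2$ and $h_*\le N:=m(x)+1$ with $|P^\ttt_{i,h_*}(\uu)|=1$.

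To finish I will apply Lemma \ref{LemmaMVzeros} to $H=\Exp^\ttt_i$ at the unit vector $\uu$, with integer $N=m(x)+1$, constant $M=q^{\efrak/(p-1)}$, and radius $r=1/q$. The hypothesis $r<M^{-1}$ reduces to $\efrak<p-1$, which follows from \eqref{Eqnplarge}; condition (i) of the lemma is exactly the coefficient bound above, and (ii) comes from the previous paragraph. A direct substitution gives
$$
\#\Gamma\cap X(K)\cap U_x\le \nn_0(h_i,1/q)\le \frac{N-\efrak/(p-1)}{1-\efrak/(p-1)}=1+m(x)\cdot\Bigl(1-\frac{\efrak}{p-1}\Bigr)^{-1},
$$
which is the claimed bound and implies finiteness of $\Gamma\cap X(K)\cap U_x$; the case $m(x)=0$ collapses to the statement that $U_x$ contains at most one point of $\Gamma\cap X(K)$.
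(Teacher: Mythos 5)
Your proof is correct and follows essentially the same route as the paper's: parametrize $\Gamma\cap U_e$ via the $1$-parameter subgroup of Lemma \ref{LemmaChoice1PS} with local parameters adapted to the (translated) surface, use $\omega$-integrality of the reduced infinitesimal jets $\tilde\gamma'_m$ to pin down a small index $N\le m(x)+1$ with a unit coefficient, and then apply the zero estimate of Lemma \ref{LemmaMVzeros} with the coefficient bounds from Lemma \ref{LemmaCvExp}. The only presentational difference is that you carry the translation $\tau_{-\xi}$ through the entire argument (restoring $X'$ at the end by translating the jet by $x$ and invoking translation invariance of the $\omega'_j$), whereas the paper first isolates the reduction to the case $\xi=e$ in a separate lemma (Lemma \ref{LemmaXY}); this is cosmetic, not substantive.
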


Recall that we are assuming $\rk(G)=1$. In addition, we can make the following reduction in the proof of Proposition \ref{PropKeyU}.

\begin{lemma}\label{LemmaXY} It suffices to prove Proposition \ref{PropKeyU} under the assumption $\xi=e$.
\end{lemma}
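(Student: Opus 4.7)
The plan is to reduce to the case $\xi = e$ by translating the whole picture by $-\xi$. Since $\Acal \to \Spec R$ is proper, $\xi \in A(K)$ extends uniquely to a section $\sigma_\xi : \Spec R \to \Acal$; let $\tau : \Acal \to \Acal$ denote the $R$-automorphism of translation by $-\sigma_\xi$, and set $\Ycal = \tau(\Xcal)$, $Y = \Ycal_K$, $Y' = \Ycal_\kk$. Then $\Ycal$ is again an integral, smooth, proper subscheme of $\Acal$ of relative dimension $2$ over $R$ with geometrically irreducible fibres.

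Since $\Gamma$ is a subgroup of $A(K)$ containing $\xi$, translation by $-\xi$ preserves $\Gamma$; and since $\red : A(K) \to A'(\kk)$ is a group morphism, the reduction of $\tau$, which is translation by $-x$, maps $U_x$ bijectively onto $U_e$. Consequently
$$
\# \bigl(\Gamma \cap X(K) \cap U_x\bigr) \;=\; \# \bigl(\Gamma \cap Y(K) \cap U_e\bigr),
$$
and $e \in \Gamma \cap Y(K)$. Thus Proposition \ref{PropKeyU} for the pair $(X,\xi)$ is equivalent to Proposition \ref{PropKeyU} for the pair $(Y,e)$, once we check that the hypotheses of Theorem \ref{ThmMain} and the invariant $m(\cdot)$ transfer correctly.

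For the hypotheses, note that $A$, $A'$, $G$, $\Gamma$, and the hyperplane $\Hcal \subseteq H^0(A,\Omega^1_{A/K})$ together with its chosen elements $\omega_1, \omega_2$ are all unchanged. The surface $Y$ is a geometric translate of $X$, so $c_1^2(Y) = c_1^2(X)$, $Y$ is of general type iff $X$ is, and $Y'$ contains an elliptic curve over $k$ iff $X'$ does (any curve in $Y'_k$ is a translate of one in $X'_k$, and translates of elliptic curves are elliptic curves). In case (ii), the same ample divisor $H$ on $A$ works because $\deg_H(Y) = \deg_H(X)$ and $\cdeg_H(Y) = \cdeg_H(X)$, as translation preserves numerical (in fact, algebraic) equivalence classes of cycles. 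Hence both variants (i) and (ii) go through verbatim.

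Finally, for the invariant $m(\cdot)$, the translation $\tau' : A' \to A'$ by $-x$ preserves each invariant differential on $A'$, so $(\tau')^\bullet \omega'_j = \omega'_j$ for $j=1,2$; consequently the restrictions of $\omega'_1, \omega'_2$ to $X'_k$ correspond under $\tau'$ to their restrictions to $Y'_k$. Composition with $\tau'_k$ then sets up a bijection between closed immersions $V_m^k \to X'_k$ supported at $x$ that are $u_j$-integral for $j=1,2$ and closed immersions $V_m^k \to Y'_k$ supported at $e$ with the analogous integrality property, yielding $m_{X',\omega'_1,\omega'_2}(x) = m_{Y',\omega'_1,\omega'_2}(e)$. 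The bound claimed for $(X,\xi)$ thus follows from that claimed for $(Y,e)$. The only potential obstacle is purely bookkeeping --- ensuring that every choice made in Sections \ref{SecChoiceDiff} and \ref{SecBoundsOver} is genuinely translation-equivariant --- but this holds because all of those choices are built from invariant differentials on $A$ and from group-theoretic data attached to $\Gamma$.
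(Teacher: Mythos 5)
Your proof is correct and takes essentially the same route as the paper's: translate $\Xcal$ by $-\sigma_\xi$ to get $\Ycal$, observe that $\Gamma$ and the residue-disk count are preserved because $\xi\in\Gamma$ and $\red$ is a group morphism, and use the translation invariance of $\omega'_1,\omega'_2$ (together with the fact that the choices of Lemmas \ref{LemmaChoice1PS} and \ref{LemmaDiff1} depend only on $\Acal$ and $G$, not on $\Xcal$) to conclude $m_{X',\omega'_1,\omega'_2}(x)=m_{Y',\omega'_1,\omega'_2}(e)$. The only difference is that you spell out the verification that the hypotheses of Theorem \ref{ThmMain} transfer to $\Ycal$, which the paper states without elaboration.
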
 
\begin{proof} Let $\xi_0\in \Gamma\cap X(K)$ and consider the translation $\Ycal=\Xcal- \sigma_{\xi_0}(\Spec R)$. Note that the generic and special fibres of $\Ycal$ are $Y=X-\xi_0$ and $Y'=X'-x_0$ where $x_0=\red(\xi_0)$. The assumptions of Theorem \ref{ThmMain} also hold for $\Ycal$ instead of $\Xcal$. Since $\Gamma-\xi_0=\Gamma$, we find 
$$
\# \Gamma\cap X(K)\cap U_{x_0} = \# \Gamma \cap Y(K)\cap U_e.
$$ 
On the other hand, as the differentials $\omega'_1,\omega'_2\in H^0(A',\Omega^1_{A'/\kk})$ are translation invariant, we have 
$$
m_{X',\omega'_1,\omega'_2}(x_0)=m_{Y',\tau^*\omega'_1,\tau^*\omega'_2}(e)=m_{Y',\omega'_1,\omega'_2}(e)
$$ 
where $\tau:A'\to A'$ is the translation map $P\mapsto P+x_0$. The differential $\omega'_j$ is determined by $\omega_j\in H^0(A, \Omega^1_{A/K})$ (for $j=1,2$), which in turn is chosen according to Lemmas \ref{LemmaChoice1PS} and  \ref{LemmaDiff1}. Thus, the choice of $\omega'_1,\omega'_2$  only depends on $\Acal$ and $G$, not on the particular choice of $\Xcal$. Therefore, if Proposition \ref{PropKeyU} holds in the case $\xi=e$ for the given $\Acal$ and $G$, and for every choice of $\Xcal$ as in Theorem \ref{ThmMain}, then we can in particular apply it to $\Ycal$ obtaining 
$$
\# \Gamma\cap X(K)\cap U_{x_0} = \# \Gamma \cap Y(K)\cap U_e\le m_{Y',\omega'_1,\omega'_2}(e)=m_{X',\omega'_1,\omega'_2}(x_0).
$$
\end{proof}

For the rest of the current Section \ref{SecResDisk}, let us assume that $e\in \Gamma\cap X(K)$ in order to show Proposition \ref{PropKeyU} for $\xi=e$. This is enough, by Lemma \ref{LemmaXY}.

Let $t_1,...,t_n\in \mfrak_{\Acal, e}$ be a system of $R$-local parameters for $\Acal$ along $\sigma_e$ with the property that $t_1, ..., t_{n-2}$ are a system of local equations for $\Xcal$ along $\sigma_e$. This is possible by Lemma \ref{LemmaChoiceLocParam}. Let $t'_j$ be the restriction of $t_j$ to $A'$. Then $t'_1,...t'_{n-2}$ is a system of local equations for $X'\subseteq A'$ at $e$. In particular, $t_1,...,t_{n-2}$ restricted to $A$ vanish on $X(K)\cap U_e$.

Write $\ttt=(t_1,...,t_n)$ and let $\uu\in K^n$ with $|\uu |=1$ be such that $\gamma=(\ttt,\uu)$ be a $1$-parameter subgroup of $A$ with $\Gamma\cup U_e\subseteq \im(\gamma)$. By Lemma \ref{LemmaChoice1PS}, the vector $\uu$  exists, $\gamma$ is unique up to equivalence, and $\gamma$ corresponds to the hyperplane $\Hcal\subseteq H^0(A,\Omega^1_{A/K})$ from Section \ref{SecChoiceDiff}. We get
$$
\# \Gamma\cap X(K)\cap U_e \le \# X(K)\cap \im(\gamma).
$$

By Lemma \ref{LemmaKeyLogExp} (which is applicable by \eqref{Eqnplarge}), the definition of the map $\tilde{\gamma}:\pfrak\mapsto U_e$ (cf. \eqref{Eqntildegamma}), and the fact that $t_1,...,t_{n-2}$ vanish on $X(K)\cap U_e$, we deduce that for each $j=1,...,n-2$
$$
 \# X(K)\cap \im(\gamma)\le \#\{z_0\in \pfrak : \Exp_j^\ttt(z_0\cdot \uu)=0\}.
$$
Therefore, for each $j=1,..., n-2$ we obtain
\begin{equation}\label{EqnBdUn0}
\# \Gamma\cap X(K)\cap U_e \le \nn_0(\Exp_j^\ttt(z\cdot \uu), 1/q).
\end{equation}
Let $P^\ttt_{j,h}(\xx)\in K[x_1,...,x_n]$ be the homogeneous part of degree $h$ in $\Exp_j^\ttt(\xx)$. Note that $P^\ttt_{j,0}(\xx)=0$.

\begin{lemma}\label{LemmaBdNm} There are positive integers integers  $j_0\le n-2$ and $N\le m(e)+1$ with $|P^{\ttt}_{j_0,N}(\uu)|\ge 1$.
\end{lemma}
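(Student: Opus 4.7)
My plan is to argue by contradiction. Suppose that $|P^{\ttt}_{j,h}(\uu)| < 1$ holds for every $1 \le j \le n-2$ and every $1 \le h \le m(e)+1$; I aim to produce a closed immersion $V^k_{m(e)+1} \hookrightarrow X'_k$ supported at $e$ that is simultaneously $u_1$- and $u_2$-integral, which contradicts the definition of $m(e)$.

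First I would set $m = m(e) + 1$. By Lemma \ref{Lemmamxp} we have $m(e) \le p-3$, hence $m \le p-2 < p$, and Lemmas \ref{LemmaTruncate1ps} and \ref{LemmaClosedImm} then provide a closed immersion $\tilde{\gamma}'_m : V'_m \to A'$ supported at $e$ whose action on the special-fibre parameters is given by $(\tilde{\gamma}'_m)^\#(t'_j) \equiv \sum_{h=1}^{m}(P^{\ttt}_{j,h}(\uu) \bmod \pfrak)\cdot z^h \pmod{z^{m+1}}$ in $E'_m$ (using $P^{\ttt}_{j,0}=0$ together with \eqref{Eqninf1psP}). Under the contradiction hypothesis these expressions vanish for $j = 1,\dots,n-2$, and since $t'_1,\dots,t'_{n-2}$ has been arranged in Section \ref{SecResDisk} (via Lemma \ref{LemmaChoiceLocParam}) to be a system of local equations for $X' \subseteq A'$ at $e$, the morphism $\tilde{\gamma}'_m$ must factor through a closed immersion $\phi' : V'_m \to X'$ supported at $e$.

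Next I would verify the $u_i$-integrality of $\phi'$ for $i=1,2$. By Lemma \ref{LemmaDiff1}(ii) both $\omega_1, \omega_2 \in \Hcal(\gamma)$ extend to global sections of $\Omega^1_{\Acal/R}$, and since $m \le p-2$, Lemma \ref{Lemmawintgamma} gives that $\tilde{\gamma}'_m$ is $\omega'_i$-integral for $i=1,2$. Functoriality (Lemma \ref{LemmaCompositionInt}) combined with $u_i = \iota^\bullet(\omega'_i)$ then transfers $u_i$-integrality to $\phi'$. Base changing to $k = \kk^{alg}$ produces the desired closed immersion $V^k_m \hookrightarrow X'_k$ supported at $e$ that is $u_i$-integral for both $i$, forcing $m \le m(e)$ and contradicting $m = m(e)+1$.

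Since all the substantive machinery has been packaged into the preparatory Sections \ref{SecLogExp} and \ref{Sec1PS}, no serious obstacle remains; the only delicate point is the numerical alignment between the required jet length $m(e)+1$ and the constraint $m \le p-2$ needed to invoke Lemma \ref{Lemmawintgamma}, and this is precisely what the bound $m(x) \le p-3$ from Lemma \ref{Lemmamxp} was set up to guarantee.
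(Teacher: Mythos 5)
Your proposal is correct and follows essentially the same route as the paper's proof. The only organizational difference is that the paper first establishes the general claim ``for any $0\le m_0\le p-2$, if $|P^{\ttt}_{j,h}(\uu)|<1$ for all $0\le h\le m_0$ and $1\le j\le n-2$ then $m_0\le m(e)$'' and then invokes it twice (once with $m_0=p-2$ to secure existence of $N$, once with $m_0=N-1$ to get $N\le m(e)+1$), whereas you instantiate the same factoring-and-integrality argument directly at $m_0=m(e)+1$ and close the contrapositive in one step; both are the same underlying computation.
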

\begin{proof} Let $0\le m_0\le p-2$ be an integer satisfying $|P^{\ttt}_{j,h}(\uu)|< 1$ for each $0\le h\le m_0$ and for each $1\le j\le n-2$. We claim that $m_0\le m(e)$.

Since $\omega_1,\omega_2$ are chosen as in Lemma \ref{LemmaDiff1}, we see that Lemmas \ref{LemmaClosedImm} and \ref{Lemmawintgamma} give a closed immersion $\tilde{\gamma}'_{m_0}: V'_m\to A'$ which is $\omega'_i$-integral for $i=1,2$. Let us prove that $\tilde{\gamma}'_{m_0}$ factors through a closed immersion $\phi_{m_0}:V'_{m_0}\to X'$ supported at $x=e$. Indeed, let $G_{j,m_0}(z)\in K[z]$ be the truncation of $\Exp^\ttt_{j}(z\cdot \uu)$ up to degree $m_0$, that is,
$$
G_{j,m}(z)=\sum_{h=0}^m  P^{\ttt}_{j,h}(\uu)\cdot z^h\in K[z].
$$
Our assumption $|P^{\ttt}_{j,h}(\uu)|< 1$ for all $0\le h\le m_0$ and $1\le j\le n-2$ implies that $G_{j,m_0}(z)\in R[z]$ for all $1\le j\le n-2$, and in fact, in this case $G_{j,m_0}(z)\equiv 0 \bmod \pfrak$. In view of Lemma \ref{LemmaTruncate1ps} which gives the construction of $\tilde{\gamma}'_{m_0}$, reducing  \eqref{Eqninf1psP} modulo $\pfrak$ we get  that for each $1\le j\le n-2$
$$
(\tilde{\gamma}'_{m_0})^\#(t'_j)= G_{j,m_0}(z) \bmod (z^{m_0+1},\varpi)=0.
$$
As $t'_1,...,t'_{n-2}$ are local equations for $X'$ at $e$, this shows that $\tilde{\gamma}'_{m_0}$ factors through the inclusion $\iota:X'\to A'$. We obtain the claimed closed immersion $\phi_{m_0}:V'_{m_0}\to X'$ satisfying $\iota\circ \phi_{m_0}=\tilde{\gamma}'_{m_0}$.

Since $\tilde{\gamma}'_{m_0}$ is $\omega'_i$-integral for $i=1,2$, we obtain that $\phi_{m_0}$ is $u_i$-integral for $i=1,2$. The same holds for the base change of $\phi_{m_0}$ to $k$, which is a closed immersion $V^k_{m_0}\to X'_k$ supported at $x=e$. Thus, by definition of $m_{X',\omega'_1,\omega'_2}(x)$ (cf. Section \ref{SecBoundsOver}) we finally get $m_0\le m(e)$, as claimed.

Suppose now that $|P^{\ttt}_{j,h}(\uu)|< 1$ for each $0\le h\le p-2$ and $1\le j\le n-2$. Taking $m_0=p-2$ in our initial claim we would get $p-2\le m(e)$, which is not possible by Lemma \ref{Lemmamxp}. 

Therefore, there is some $0\le h_0\le p-2$ and some $1\le j_0\le n-2$ such that $|P^{\ttt}_{j_0,h_0}(\uu)|\ge 1$. Let $N$ be the least value of such an $h_0$ and choose any $1\le j_0\le n-2$ satisfying $|P^{\ttt}_{j_0,N}(\uu)|\ge 1$. Thus, $N$ exists and $N\le p-2$. Also, $N\ge 1$ since $P^{\ttt}_{j,0}(\uu)=0$ for each $j$. Taking $m_0=N-1\le p-3$ we see that $|P^{\ttt}_{j,h}(\uu)|< 1$ for each $h\le m_0$ and  $j\le n-2$, and out initial claim implies $N-1=m_0\le m(e)$.
\end{proof}

\begin{proof}[Proof of Proposition \ref{PropKeyU}] As proved in Lemma \ref{LemmaXY}, we may assume that $e\in \Gamma\cap X(K)$ and that $\xi=e$. Let $j_0$ and $N$ be as in Lemma \ref{LemmaBdNm}, let $M=p^{[K:\Q_p]/(p-1)}$, and let $r=1/q$. Let us write $\lambda=\efrak/(p-1)$ and observe that $\lambda<1$ by \eqref{Eqnplarge}. We remark that $M>1$, and $r<M^{-1}$ because
\begin{equation}\label{EqnFracMr}
\frac{\log M}{\log r^{-1}} =\frac{\log M}{\log q} = \frac{[K:\Q_p]}{(p-1)\ffrak} = \lambda<1.
\end{equation}
If $c_{j,\alpha}\in K$ is the coefficient of $\xx^\alpha$ in $\Exp^\ttt_{j}(\xx)$, Lemma \ref{LemmaCvExp} gives $|c_{j,\alpha}| \le M^{\| \alpha\|-1}$. Hence, we can apply Lemma \ref{LemmaMVzeros} with these choices to the power series $\Exp^\ttt_{j_0}(\xx)$ obtaining
$$
\nn_0(\Exp_{j_0}^\ttt(z\cdot \uu), 1/q)\le \left(N-\lambda\right)\left(1-\lambda\right)^{-1}
$$
where we used \eqref{EqnFracMr}. By \eqref{EqnBdUn0} and Lemma \ref{LemmaBdNm} we conclude
$$
\# \Gamma\cap X(K)\cap U_e \le  \left(m(e)+1-\lambda\right)\left(1-\lambda\right)^{-1}=1+ m(e)\cdot \left(1-\lambda\right)^{-1}.
$$
\end{proof}

%%%%%%
%%%%%%
%%%%%%

\subsection{Adding over residue disks} 

\begin{proof}[Proof of Theorem \ref{ThmMain}] By Proposition \ref{PropKeyU} and  Lemma \ref{Lemmamx0} we obtain
\begin{equation}\label{EqnFinalU}
\begin{aligned}
\# \Gamma\cap X(K)& =\sum_{x\in X'(\kk)} \# \Gamma \cap X(K)\cap U_x= \# X'(\kk) + \left(1-\frac{\efrak}{p-1}\right)^{-1}\sum_{x\in X'(\kk)} m(x)\\
&= \# X'(\kk) + \left(1-\frac{\efrak}{p-1}\right)^{-1}\sum_{x\in Z(\kk)} m(x).
\end{aligned}
\end{equation}
Recall from Lemma \ref{LemmaChoicew} that for each $j=1,...,\ell$ we have that $\nu_j^\bullet(w)$ is not the zero differential. By Lemma \ref{Lemmamx1} we have have 
\begin{equation}\label{EqnFinalm}
\begin{aligned}
\sum_{x\in Z(\kk)} m(x)&\le \sum_{x\in Z(\kk)} \sum_{j=1}^\ell \sum_{y\in \nu_j^{-1}(x)} a_j\cdot (\ord_y(\nu_j^\bullet(w))+1)\\
&\le \sum_{j=1}^\ell a_j\cdot \deg_{\widetilde{C}_j}(\nu_j^\bullet (w)) + \sum_{x\in Z(\kk)} \sum_{j=1}^\ell a_j\cdot \#\nu_j^{-1}(x).
\end{aligned}
\end{equation}
Lemma \ref{LemmaCanonicalGenusBd} (which is applicable by Lemma \ref{LemmaAAA}) and Lemma  \ref{LemmaHyps} give
\begin{equation}\label{EqnFinalS1}
\sum_{j=1}^\ell a_j\cdot \deg_{\widetilde{C}_j}(\nu_j^\bullet (w)) = \sum_{j=1}^\ell a_j\cdot (2 \gfrak_g({C}_j) -2)\le 2c_1^2(X')=2c_1^2(X).
\end{equation}
Let $D_1,..., D_s$ be the irreducible components of $Z$ over $\kk$ and let $J_1,...,J_s$ be the partition of $\{1,2,...,\ell\}$ determined by the condition $D_i\otimes k=\cup_{j\in J_i} C_j$. Since each $D_i$ is defined over $\kk$, there are integers $b_i\ge 0$ for $1\le i\le s$ such that $a_j=b_i$ for each $j\in J_i$. By Lemma \ref{LemmaWeilBound}   we obtain
$$
\begin{aligned}
\sum_{x\in Z(\kk)} \sum_{j=1}^\ell a_j\cdot \#\nu_j^{-1}(x) & =\sum_{i=1}^s b_i \cdot \sum_{x\in D_i(\kk)} \sum_{j\in J_i} \#\nu_j^{-1}(x) \\
& \le \sum_{i=1}^s b_i \cdot \left(  (q+1)\cdot \#J_i + 2q^{1/2}\cdot \sum_{j\in J_i} \gfrak_g(C_j) \right)\\
&= (q+1)\sum_{j=1}^\ell a_j + 2q^{1/2}\cdot \sum_{j=1}^\ell a_j\cdot \gfrak_g(C_j)\\
&= (q+2q^{1/2}+1)\sum_{j=1}^\ell a_j + 2q^{1/2}\cdot \sum_{j=1}^\ell a_j\cdot (\gfrak_g(C_j)-1).
\end{aligned}
$$
From Lemmas \ref{LemmaGenus2} and \ref{LemmaCanonicalGenusBd} (applicable by Lemma \ref{LemmaAAA}) we deduce
\begin{equation}\label{EqnFinalS2}
\sum_{x\in Z(\kk)} \sum_{j=1}^\ell a_j\cdot \#\nu_j^{-1}(x) \le (q+4q^{1/2}+1)\sum_{j=1}^\ell a_j\cdot (\gfrak_g(C_j)-1)\le (q+4q^{1/2}+1)c_1^2(X).
\end{equation}
Using \eqref{EqnFinalS1} and \eqref{EqnFinalS2} in \eqref{EqnFinalm} we get
$$
\sum_{x\in Z(\kk)} m(x)\le (q+4q^{1/2}+3)c_1^2(X)
$$
which together with \eqref{EqnFinalU} gives the result.
\end{proof}

%%%%%%%%%%%%%%%%%%%%%%%%%%%%%%%%%%%%%%
%%%%%%%%%%%%%%%%%%%%%%%%%%%%%%%%%%%%%%
%%%%%%%%%%%%%%%%%%%%%%%%%%%%%%%%%%%%%%
%%%%%%%%%%%%%%%%%%%%%%%%%%%%%%%%%%%%%%
%%%%%%%%%%%%%%%%%%%%%%%%%%%%%%%%%%%%%%
%%%%%%%%%%%%%%%%%%%%%%%%%%%%%%%%%%%%%%

\section{Applications for rational points} \label{SecApplications}

%%%%%%%%%
%%%%%%%%%
%%%%%%%%%

\subsection{Rational points on surfaces} \label{SecApp1} We begin with the following generalization of Theorems \ref{ThmIntroA} and \ref{ThmIntroB} to the case of an arbitrary number field.

\begin{theorem}\label{ThmFinalMain} Let $F$ be a number field, let $p$ be a prime number, let $\pfrak$ be a prime of $F$ above $p$, let $\efrak$ be the ramification index of $\pfrak$, let $\kk$ be the residue field of $\pfrak$, and let $q=\#\kk$. We assume that 
$$
p>\max\{\efrak+1, \exp(\efrak/\exp(1))\}.
$$
Let $X$ be a smooth projective surface contained in an abelian variety $A$ of dimension $n\ge 3$, both defined over $F$ and having good reduction at $\pfrak$. Let $X'$ and $A'$ be the corresponding reductions modulo $\pfrak$. Suppose  $\rk A(F)\le 1$ and that either of the following conditions holds:
\begin{itemize}
\item[(i)] $n=3$, $X$ is of general type, $X'$ contains no elliptic curves over $\kk^{alg}$, and 
$$
p> (128/9)\cdot c_1^2(X)^2.
$$ 
\item[(ii)] $A'$ is simple over $\kk^{alg}$ and there is an ample divisor $H$ on $A$ such that
$$
p> \max\left\{3 c_1^2(X)+2,\frac{n!\cdot (3\deg_H(X) + \cdeg_H(X))^n}{n^n\cdot \deg(H^n)}\right\}.
$$
\end{itemize}
Then $X(F)$ is finite and we have
$$
\# X(F) \le \# X'(\kk) + \left(1-\frac{\efrak}{p-1}\right)^{-1}\cdot (q+4q^{1/2}+3)\cdot c_1^2(X).
$$
\end{theorem}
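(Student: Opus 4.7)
The plan is to deduce Theorem~\ref{ThmFinalMain} directly from Theorem~\ref{ThmMain} via base change to the $\pfrak$-adic completion of $F$. Let $K = F_\pfrak$, with ring of integers $R$, residue field $\kk$, and absolute ramification index $\efrak$. The hypothesis $p > \max\{\efrak+1,\exp(\efrak/\exp(1))\}$ is exactly \eqref{Eqnplarge}. The good reduction assumption provides an abelian scheme $\Acal \to \Spec R$ with generic fibre $A_K$ and special fibre $A'$, together with a smooth proper integral closed subscheme $\Xcal \subseteq \Acal$ of relative dimension $2$ with geometrically irreducible fibres, recovering $X_K$ and $X'$ on the generic and special fibres respectively.

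Next, I would take the finitely generated group $G = A(F)$, viewed inside $A(K)$ via the embedding $F \hookrightarrow K$; the Mordell--Weil theorem ensures $G$ is finitely generated, and the hypothesis $\rk A(F) \le 1$ gives $\rk G \le 1$. Letting $\Gamma \subseteq A(K)$ denote the $p$-adic closure of $G$, the inclusions $X(F) \subseteq A(F) = G \subseteq \Gamma$ and $X(F) \subseteq X(K)$ yield
\[
X(F) \;\subseteq\; \Gamma \cap X(K).
\]

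I would then verify that the hypotheses of Theorem~\ref{ThmMain} transfer. Extension of scalars from $F$ to $K$ preserves being of general type, geometric simplicity of $A'$, absence of elliptic curves in $X'_{\kk^{alg}}$, the first Chern number $c_1^2(X)$, ampleness of a divisor $H$ on $A$, and the intersection numbers $\deg_H(X)$, $\cdeg_H(X)$, $\deg(H^n)$. Thus condition (i) or (ii) of the present theorem is precisely condition (i) or (ii) of Theorem~\ref{ThmMain} after base change.

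Applying Theorem~\ref{ThmMain} to the quadruple $(\Acal,\Xcal,G,\Gamma)$ gives finiteness of $\Gamma \cap X(K)$ together with
\[
\# (\Gamma \cap X(K)) \;\le\; \# X'(\kk) + \left(1-\tfrac{\efrak}{p-1}\right)^{-1}(q+4q^{1/2}+3)\,c_1^2(X),
\]
and the inclusion displayed above transfers this bound to $\#X(F)$. There is no substantive obstacle here: all the $p$-adic analytic input, the $1$-parameter subgroup construction, and the overdetermined $\omega$-integrality estimate are already carried out in Theorem~\ref{ThmMain}, so the present statement only requires routine base-change bookkeeping to check that the global hypotheses become local ones.
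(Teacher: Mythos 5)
Your proposal is correct and follows exactly the paper's own one-line proof: base change to $K = F_\pfrak$, take $G = A(F)$, and invoke Theorem~\ref{ThmMain}. You simply spell out the routine bookkeeping (Mordell--Weil for finite generation, preservation of the hypotheses under extension of scalars, and the inclusion $X(F)\subseteq\Gamma\cap X(K)$) that the paper leaves implicit.
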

\begin{proof} By Theorem \ref{ThmMain}, after base-change to the completion $K=F_\pfrak$ and taking $G=A(F)$. 
\end{proof}
%%
%%

%%%%%%%%%
%%%%%%%%%
%%%%%%%%%

\subsection{Primes of geometrically simple reduction} \label{SecApp2} The next result is due to Chavdarov \cite{Chavdarov}.
\begin{lemma}\label{LemmaCha} Let $A$ be an abelian variety over $\Q$ of odd dimension $n$ satisfying $\End (A_\C)=\Z$. There is a set of primes $\Pcal$ of density $1$ in the primes such that for every $p\in \Pcal$ the reduction of $A$ modulo $p$ is geometrically simple.
\end{lemma}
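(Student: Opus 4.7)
The plan is to apply Chavdarov's framework, which combines Serre's big image theorem with the Chebotarev density theorem and a group-theoretic counting argument in $\Sp_{2n}(\F_\ell)$. For each prime $\ell$, let $\rho_\ell\colon G_\Q \to \mathrm{GSp}(T_\ell A) \cong \mathrm{GSp}_{2n}(\Z_\ell)$ be the $\ell$-adic Galois representation attached to the Tate module of $A$. Since $n$ is odd and $\End(A_\C) = \Z$, Serre's open image theorem for abelian varieties with trivial geometric endomorphism ring applies: there exists $\ell_0 = \ell_0(A)$ such that for every prime $\ell \geq \ell_0$, the image of the mod-$\ell$ reduction $\bar\rho_\ell$ contains the symplectic group $\Sp_{2n}(\F_\ell)$.

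Next, I would invoke Chavdarov's counting estimates in $\Sp_{2n}(\F_\ell)$. These assert that the proportion of elements $g \in \Sp_{2n}(\F_\ell)$ whose characteristic polynomial $P_g(t) \in \F_\ell[t]$ is either reducible, or has splitting field whose Galois group is strictly smaller than the full hyperoctahedral Weyl group $W(C_n) = (\Z/2\Z)^n \rtimes S_n$, is $O(\ell^{-c})$ for some absolute constant $c > 0$. The proof identifies the bad elements with $\F_\ell$-points of proper closed subvarieties of $\Sp_{2n}$ and applies Lang--Weil type bounds. Combining this with the Chebotarev density theorem applied to $\bar\rho_\ell$, the (upper) density of primes $p$ of good reduction at which $\bar\rho_\ell(\Frob_p)$ is bad is bounded by the same proportion. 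Since reducibility of the Frobenius polynomial $P_p \in \Z[t]$ over $\Q$ (or non-maximality of its Galois group) forces the corresponding property of $P_p \bmod \ell$ for every $\ell \geq \ell_0$, the upper density of primes where these conditions fail is $O(\ell^{-c})$ for every such $\ell$, and hence equals $0$.

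This produces a set $\Pcal$ of primes of density $1$ such that, for every $p \in \Pcal$, the characteristic polynomial $P_p$ of Frobenius is $\Q$-irreducible and the Galois group of its splitting field over $\Q$ equals $W(C_n)$. For such $p$, Honda--Tate theory shows that $A_p$ is simple over $\F_p$ with endomorphism algebra the CM field $\Q(\pi_p)$, where $\pi_p$ is a root of $P_p$. To upgrade simplicity to geometric simplicity, one must rule out a decomposition of $A_p \otimes \F_{p^m}$ for any $m \geq 1$; this occurs precisely when some ratio $\pi/\pi'$ of distinct roots of $P_p$ is a root of unity. The maximality of the Galois group of the splitting field, together with the Weil bound $|\pi| = p^{1/2}$ and the fact that the only roots of unity lying in a generic degree-$2n$ CM field are $\pm 1$, rules out every such multiplicative relation among the roots, yielding geometric simplicity of $A_p$.

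The main technical obstacle is Chavdarov's counting estimate for elements with non-generic characteristic polynomial in $\Sp_{2n}(\F_\ell)$: this is the heart of his paper and requires a careful stratification of the symplectic group by the splitting-type of the characteristic polynomial, combined with Lang--Weil bounds to control the $\F_\ell$-points of the exceptional strata. Once these estimates are in place, the above argument packages Serre's big image theorem and Chebotarev density into the desired density $1$ statement.
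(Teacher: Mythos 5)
The paper does not prove this lemma; it simply attributes it to Chavdarov's paper \cite{Chavdarov} (and points to \cite{Zywina} for the general Murty--Patankar context). Your proposal is a correct high-level reconstruction of Chavdarov's argument: Serre's big mod-$\ell$ image theorem (which is precisely where the hypotheses ``$n$ odd and $\End(A_\C)=\Z$'' enter), the Lang--Weil style count of elements of $\Sp_{2n}(\F_\ell)$ with non-generic characteristic polynomial, Chebotarev to transfer this to Frobenius elements, and then Honda--Tate plus the root-of-unity-ratio criterion to pass from irreducibility of $P_p$ with maximal splitting Galois group $W(C_n)$ to geometric simplicity of $A_p$. The only place you should be slightly more careful is the final step: the claim that ``the only roots of unity in a generic degree-$2n$ CM field are $\pm1$'' is not quite the right formulation --- what one actually needs is that a Galois-stable multiplicative relation $\pi_i^k=\pi_j^k$ among distinct roots, combined with $G\cong W(C_n)$ and the bound $\phi(m)\mid[L:\Q]=2^n n!$ on roots of unity in the splitting field $L$, forces $j=\bar\imath$ and hence is already accounted for by $\pi_i\bar\pi_i=p$; Chavdarov (and Zywina) carry this out precisely.
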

With this at hand, we have:

\begin{proof}[Proof of Corollary \ref{CoroDim3}] Let $\Pcal$ be the set of primes afforded by Lemma \ref{LemmaCha} applied to $A$ (as $\dim A=3$ is odd), discarding the primes of bad reduction for $X$. For every $p\in \Pcal$ we have that $X\otimes \F_p^{alg}$ does not contain elliptic curves since $A\otimes \F_p^{alg}$ is simple. We conclude by Theorem \ref{ThmIntroA}.
\end{proof}

From the argument it is clear that we can get a similar result over number fields and abelian varieties of dimension $n\ge 3$ using Theorem \ref{ThmFinalMain}, provided that a suitable extension of Chavdarov's theorem is applicable to $A$. The general problem regarding abundance of primes of geometrically simple reduction is a conjecture of Murty and Patankar \cite{MurtyPatankar}. See \cite{Zywina} and the references therein for the formulation of this conjecture and available results.

%%%%%%%%%
%%%%%%%%%
%%%%%%%%%

\subsection{Quadratic points in curves} \label{SecApp3} We write $\equiv$ for numerical equivalence.

Let $C$ be a smooth,  irreducible, projective curve of genus $g\ge 2$ with Jacobian $J$, over an algebraically closed field $k$ of characteristic different from $2$. The symmetric square of $C$,  denoted by $C^{(2)}$, is a smooth, irreducible, projective surface. The quotient map $\pi:C\times C\to C^{(2)}$ is finite of degree $2$ ramified along the diagonal $\Delta\subseteq C\times C$. Let $D=\pi(\Delta)$ and for $\xi \in C(F^{alg})$ we let $V_\xi=\pi(\{\xi\}\times C)$. As we vary $\xi$, all the divisors $V_\xi$ are numerically equivalent to each other and this numerical equivalence class  is denoted by $V$.  Using adjunction for $\pi$ (see \cite{Hartshorne} Proposition II.8.20) and intersections on $C\times C$ we find

\begin{lemma}\label{LemmaKSymm2} We have $K_{C^{(2)}}\equiv (2g-2)V-D/2$. Moreover, $V^2=1$, $(D.V)=2$, $D^2=4-4g$, and $c_1^2(C^{(2)})=(4g-9)(g-1)$.
\end{lemma}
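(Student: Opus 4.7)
The plan is to pull everything back to $C\times C$ via the quotient map $\pi:C\times C\to C^{(2)}$, compute intersection numbers there (where $C\times C$ is a product and everything is transparent), and transfer the information to $C^{(2)}$ using the projection formula and the fact that $\pi^*$ is injective on numerical equivalence classes (since $\pi$ is finite of degree $2$). Let $F_1=\{*\}\times C$ and $F_2=C\times\{*\}$ denote the two fiber classes on $C\times C$. The basic identities I will use are $\pi^*V\equiv F_1+F_2$, $\pi^*D=2\Delta$ (the factor $2$ coming from the ramification of $\pi$ along $\Delta$), and $F_1^2=F_2^2=0$, $F_1.F_2=1$, $\Delta.F_i=1$.

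For the canonical class, since $\pi$ is a finite degree-$2$ cover of smooth surfaces ramified along the smooth divisor $\Delta$ (and $\cha(k)\ne 2$), the Riemann--Hurwitz formula (cf.\ \cite{Hartshorne}, Proposition II.8.20 applied to $\pi$) gives $K_{C\times C}\equiv \pi^*K_{C^{(2)}}+\Delta$. Since $K_{C\times C}\equiv (2g-2)(F_1+F_2)=(2g-2)\pi^*V=\pi^*((2g-2)V)$ and $\Delta=\pi^*(D/2)$ (numerically), I conclude
\[
\pi^*K_{C^{(2)}}\equiv \pi^*\bigl((2g-2)V-D/2\bigr),
\]
and injectivity of $\pi^*$ on numerical classes yields the formula for $K_{C^{(2)}}$.

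For intersection numbers I use the projection formula $(\pi^*\alpha.\pi^*\beta)=2(\alpha.\beta)$. So $V^2=\tfrac12(F_1+F_2)^2=\tfrac12\cdot 2=1$, and $(D.V)=\tfrac12(2\Delta.(F_1+F_2))=\Delta.F_1+\Delta.F_2=2$. For $D^2=\tfrac12(2\Delta)^2=2\Delta^2$, I compute $\Delta^2$ by adjunction on $C\times C$: since $\Delta\cong C$ has $\deg K_\Delta=2g-2$ and $K_{C\times C}|_\Delta$ has degree $(2g-2)(F_1.\Delta+F_2.\Delta)=4g-4$, the relation $K_\Delta=(K_{C\times C}+\Delta)|_\Delta$ gives $\Delta^2=(2g-2)-(4g-4)=2-2g$, hence $D^2=4-4g$.

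Finally, the first Chern number follows by expanding the expression for $K_{C^{(2)}}$:
\[
c_1^2(C^{(2)})=\bigl((2g-2)V-D/2\bigr)^2=(2g-2)^2V^2-(2g-2)(D.V)+D^2/4,
\]
which equals $(2g-2)^2-2(2g-2)+(1-g)=4g^2-13g+9=(4g-9)(g-1)$. The only non-routine step is identifying the ramification contribution so that $\pi^*K_{C^{(2)}}=K_{C\times C}-\Delta$; everything else is formal manipulation with the projection formula and adjunction.
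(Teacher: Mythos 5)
Your proposal is correct and takes essentially the same approach the paper indicates: pull back along the degree-$2$ quotient $\pi:C\times C\to C^{(2)}$, use the ramification/adjunction formula for $\pi$ together with $\pi^*V\equiv F_1+F_2$ and $\pi^*D=2\Delta$, and transfer intersection numbers via the projection formula. All the arithmetic checks out, including $c_1^2=(2g-2)^2-2(2g-2)+(1-g)=(4g-9)(g-1)$.
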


Let $d_0$ be degree $2$ divisor on $C$. The map $C\times C\to J$ given by $(x_1,x_2)\mapsto [x_1+x_2-b_0]$ induces a  map $j:C^{(2)}\to J$ birational onto its image (since $g\ge 2$). Let $\Theta$ be a Theta divisor in $J$ and recall that it is ample.  We define $\theta=j^*\Theta$ on $C^{(2)}$. By Lemma 7 in \cite{KouvidakisTAMS93} and Lemma \ref{LemmaKSymm2} we have
 
 \begin{lemma}\label{LemmaThetaC2} $D\equiv 2((g+1)V-\theta)$. Hence, $\theta\equiv (g+1)V-D/2$ and $K_{\overline{C}^{(2)}}\equiv \theta + (g-3)V$.
\end{lemma}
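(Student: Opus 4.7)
The plan is to deduce all three statements from a single input: the pullback formula for the theta divisor under the Abel--Jacobi type map $j:C^{(2)}\to J$. Specifically, Lemma 7 of \cite{KouvidakisTAMS93} computes the numerical class of $j^*\Theta$ on $C^{(2)}$ in terms of the natural basis coming from $V$ and $D$, and yields the relation
$$
D\equiv 2\bigl((g+1)V-\theta\bigr).
$$
This gives the first assertion directly.

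For the second assertion, I would simply solve the first equation for $\theta$: dividing by $2$ (which is allowed modulo numerical equivalence since $C^{(2)}$ is smooth and $\mathrm{char}(k)\neq 2$, so $D/2$ makes sense numerically as in Lemma \ref{LemmaKSymm2}), one obtains $\theta\equiv (g+1)V-D/2$.

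For the third assertion, substitute the expression $D/2\equiv (g+1)V-\theta$ into the formula $K_{C^{(2)}}\equiv (2g-2)V-D/2$ provided by Lemma \ref{LemmaKSymm2}. This yields
$$
K_{C^{(2)}}\equiv (2g-2)V-\bigl((g+1)V-\theta\bigr)=\theta+(g-3)V,
$$
which is the desired identity.

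There is no real obstacle here: the content of the lemma lies entirely in Kouvidakis's computation of $j^*\Theta$, and once that is invoked, the remaining two statements are formal algebraic manipulations combined with the canonical class formula from Lemma \ref{LemmaKSymm2}. The only point requiring a brief justification is the division by $2$ to pass from $D\equiv 2((g+1)V-\theta)$ to $\theta\equiv (g+1)V-D/2$, which is valid because numerical equivalence on the smooth projective surface $C^{(2)}$ takes values in a torsion-free group tensored with $\Q$ in the relevant sense used throughout Lemma \ref{LemmaKSymm2}.
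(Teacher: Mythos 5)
Your proposal matches the paper's proof: the paper also derives the lemma directly from Lemma 7 of Kouvidakis together with Lemma \ref{LemmaKSymm2}, and the remaining two assertions are the same elementary algebraic manipulations you carry out. Nothing to add.
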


\begin{corollary} \label{LemmaSymm2GT} For every curve $C$ of genus $g\ge 3$  we have that $C^{(2)}$ has ample canonical divisor. In particular, $C^{(2)}$ is a surface of general type.
\end{corollary}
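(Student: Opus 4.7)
The approach is to verify the Nakai--Moishezon ampleness criterion for $K_{C^{(2)}}$. From Lemma \ref{LemmaThetaC2} I have $K_{C^{(2)}}\equiv \theta + (g-3)V$, and Lemma \ref{LemmaKSymm2} gives $K_{C^{(2)}}^2 = (4g-9)(g-1)>0$ for $g\ge 3$. So it remains only to show $K_{C^{(2)}}\cdot E > 0$ for every irreducible curve $E\subset C^{(2)}$.

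The key auxiliary step is the claim that $V$ is itself an ample class on $C^{(2)}$. Since $V^2=1>0$ by Lemma \ref{LemmaKSymm2}, by Nakai--Moishezon it is enough to establish $V\cdot E>0$ for every irreducible $E$. For this I would use that the family $\{V_\xi\}_{\xi\in C}$ covers $C^{(2)}$, since every point $[x+y]$ lies on $V_x$. Pulling $E$ back to $C\times C$ via $\pi$ and projecting to the first factor, the resulting image in $C$ is either all of $C$---in which case every $V_\xi$ meets $E$, so $V\cdot E\ge 1$---or a finite set, which by irreducibility forces $E=V_{\xi_0}$ for some $\xi_0$, giving $V\cdot E = V^2 = 1$. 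Separately, $\theta=j^*\Theta$ is nef, being the pullback of the ample class $\Theta$ on $J$.

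For $g\ge 4$ the conclusion is then immediate: $K_{C^{(2)}} = \theta+(g-3)V$ is a sum of a nef class and a positive multiple of the ample class $V$, so for every irreducible $E$ one has $K_{C^{(2)}}\cdot E \ge (g-3)V\cdot E \ge g-3>0$, and combined with $K_{C^{(2)}}^2>0$ this yields ampleness of $K_{C^{(2)}}$.

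The delicate case is $g=3$, where $K_{C^{(2)}}\equiv \theta$, and this is where I expect the main technical obstacle to lie. One must upgrade $\theta$ from nef to ample, which is equivalent to finiteness of the birational morphism $j:C^{(2)}\to W_2\subset J$. A curve contracted by $j$ would parametrize a positive-dimensional linear system of effective degree-two divisors on $C$, i.e.\ a $g^1_2$, so by the classical characterization of hyperellipticity the morphism $j$ is finite precisely when $C$ is non-hyperelliptic. Under the non-hyperelliptic hypothesis used in the intended application (Corollary \ref{CoroQuadratic3}) the divisor $\theta$ is therefore ample, completing the proof.
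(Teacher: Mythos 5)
Your proposal is essentially correct and is in fact more careful than the paper's two-line argument, which reads: ``Since $\theta$ is ample and all the $V_{\xi}$ are effective, we get the result from Lemma \ref{LemmaThetaC2}.'' You correctly supply the two ingredients the paper leaves implicit: (1) that $V$ is ample, not merely effective (effectivity alone would not let one add it to an ample class and conclude ampleness; nefness, established via the moving family argument, is what is really being used), and (2) that $\theta$ is nef in general and ample precisely when $j:C^{(2)}\to J$ is finite, i.e.\ when $C$ is non-hyperelliptic.

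Your observation about $g=3$ is the important one and deserves emphasis: the paper's assertion that ``$\theta$ is ample'' is \emph{false} when $C$ is hyperelliptic, and indeed the corollary as literally stated fails in that case. If $C$ is hyperelliptic of genus $3$ and $R\subset C^{(2)}$ is the rational curve parametrizing the $g^1_2$, then $\theta\cdot R = 0$, so $K_{C^{(2)}}\equiv\theta$ is nef but not ample (by adjunction $R$ is a $(-2)$-curve). The conclusion ``general type'' is still correct in that case (a minimal surface with $K$ nef and $K^2>0$ is of general type), but ``ample canonical divisor'' is not. Since the paper only invokes this corollary in Corollary \ref{CoroQuadratic3}, which does assume $C$ non-hyperelliptic, nothing downstream is affected; but the corollary's hypotheses should either exclude hyperelliptic curves or require $g\ge 4$ for the ampleness claim. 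Your split into $g\ge 4$ (where $(g-3)V$ already forces $K\cdot E>0$, even for the hyperelliptic $g^1_2$ curve) and $g=3$ (where non-hyperellipticity is genuinely needed) is exactly the right bookkeeping.
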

\begin{proof} Since $\theta$ is ample and all the $V_{\xi}$ are effective, we get the result from Lemma \ref{LemmaThetaC2}.
\end{proof}

By Lemmas \ref{LemmaKSymm2} and \ref{LemmaThetaC2} (namely, $\theta\equiv (g+1)V-D/2$) we find

\begin{lemma}\label{LemmaThetaK} $(\theta.K_{C^{(2)}})=2g(g-2)$. 
\end{lemma}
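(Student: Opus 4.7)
The proof will be a short, direct intersection-number computation using the numerical equivalences and intersection numbers already established in Lemmas \ref{LemmaKSymm2} and \ref{LemmaThetaC2}. No new geometric input is needed; the main task is just to unwind the formulas correctly.

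First I would expand $(\theta.K_{C^{(2)}})$ using the expression $K_{C^{(2)}}\equiv \theta + (g-3)V$ from Lemma \ref{LemmaThetaC2}, obtaining
\[
(\theta.K_{C^{(2)}}) = \theta^2 + (g-3)\,(\theta.V).
\]
Then I would compute each of the two pieces separately by means of $\theta \equiv (g+1)V - D/2$ together with the intersection numbers $V^2=1$, $(D.V)=2$, $D^2=4-4g$ from Lemma \ref{LemmaKSymm2}. Specifically,
\[
(\theta.V) = (g+1)V^2 - \tfrac{1}{2}(D.V) = (g+1) - 1 = g,
\]
and
\[
\theta^2 = (g+1)^2 V^2 - (g+1)(D.V) + \tfrac{1}{4}D^2 = (g+1)^2 - 2(g+1) + (1-g) = g(g-1).
\]

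Finally I would substitute these back to get
\[
(\theta.K_{C^{(2)}}) = g(g-1) + (g-3)\,g = g\bigl((g-1)+(g-3)\bigr) = 2g(g-2),
\]
which is the desired equality. There is no real obstacle here; the only thing to be careful about is tracking the signs and the factor $1/2$ in $\theta \equiv (g+1)V - D/2$ when expanding $\theta^2$.
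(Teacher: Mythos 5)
Your proof is correct and follows essentially the same approach as the paper: a direct intersection-number computation from Lemmas \ref{LemmaKSymm2} and \ref{LemmaThetaC2} (the paper simply states the result with a pointer to $\theta\equiv (g+1)V-D/2$, which amounts to expanding $\theta.K_{C^{(2)}}$ in the $V,D$ basis as you do). Your intermediate computations $(\theta.V)=g$ and $\theta^2=g(g-1)$ check out, and the final substitution gives $2g(g-2)$ as required.
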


Let $W_2=j(C^{(2)})$. It is clear that

\begin{lemma}\label{LemmanonHypEll} If $C$ is not hyperelliptic, then $j$ is injective, in which case $C^{(2)}\simeq W_2$.
\end{lemma}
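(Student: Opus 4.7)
My plan is to prove the two assertions in order: first that $j$ is injective on geometric points under the non-hyperelliptic hypothesis, and then, via an unramifiedness computation, upgrade this to the statement that $j$ is a closed immersion, yielding the isomorphism $C^{(2)}\simeq W_2$.

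For injectivity, I would argue as follows. Suppose that $j(x_1+x_2)=j(y_1+y_2)$ for two distinct unordered pairs $\{x_1,x_2\}\ne\{y_1,y_2\}$. Then the two effective degree-$2$ divisors $D=x_1+x_2$ and $D'=y_1+y_2$ are linearly equivalent yet distinct, so $h^0(\Ocal_C(D))\ge 2$. This produces a $g^1_2$ on $C$ and hence forces $C$ to be hyperelliptic, contradicting the hypothesis.

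For the closed immersion, at a geometric point of $C^{(2)}$ represented by an effective degree-$2$ divisor $D$ on $C$, I would use the translation-invariant identification $T_0 J\simeq H^0(C,\Omega^1_C)^\vee$ together with the classical description of the codifferential of the Abel--Jacobi map: the codifferential of $j$ at $D$ is identified with the restriction map
$$
r_D:H^0(C,\Omega^1_C)\longrightarrow H^0(D,\Omega^1_C|_D).
$$
The target has dimension $\deg D=2$, so $r_D$ is surjective iff $h^0(\Omega^1_C(-D))=g-2$. Riemann--Roch combined with Serre duality gives
$$
h^0(\Omega^1_C(-D))=g-3+h^0(\Ocal_C(D)),
$$
so the failure of surjectivity would force $h^0(\Ocal_C(D))\ge 2$, which again produces a $g^1_2$ and contradicts non-hyperellipticity.

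Combining the two steps, $j$ is a proper injective morphism with surjective codifferential everywhere, hence a closed immersion, and therefore induces an isomorphism $C^{(2)}\xrightarrow{\sim}W_2=j(C^{(2)})$. The only mildly delicate step in this plan is the identification of the codifferential of $j$ with the restriction map $r_D$, which needs some care at non-reduced $D=2x$, but this is a standard computation for the Abel--Jacobi map on symmetric powers; everything else is formal.
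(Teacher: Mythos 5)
The paper gives no proof of this lemma---it is simply asserted with ``It is clear that,'' treating the statement as classical. Your argument is correct and fills in the standard details: distinct linearly equivalent effective degree-$2$ divisors would yield $h^0(\Ocal_C(D))\ge 2$ and hence a $g^1_2$, contradicting non-hyperellipticity, which gives injectivity on geometric points; and the identification of the codifferential of $j$ at $D$ with the evaluation map $H^0(C,\Omega^1_C)\to H^0(D,\Omega^1_C|_D)$, combined with Riemann--Roch and Serre duality, shows $j$ is unramified. You are right that injectivity alone would not yield $C^{(2)}\simeq W_2$ (Frobenius and cuspidal-cubic normalizations are bijective but not isomorphisms), and that properness, injectivity, and unramifiedness together give a closed immersion; this extra step, which the paper elides, is genuinely needed. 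The caveat you flag about non-reduced $D=2x$ is the correct place to be careful, and the standard description of $T_D C^{(2)}\simeq H^0(D,\Ocal_D(D))$ (dual to $H^0(D,\Omega^1_C|_D)$) handles it uniformly.
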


Now consider a field $F$ of characteristic different from $2$, not necessarily algebraically closed and suppose that $C$ is defined over $F$. If the divisor $d_0$ is defined over $F$ then so is the map $j:C^{(2)}\to J$ and its image $W_2$. However, $\Theta$ is only defined in some algebraic extension of $F$ in general. Nevertheless, in this setting we have

\begin{lemma}\label{LemmaTheta} Assuming the existence of an effective degree $2$ divisor $d_0$ on $C$ defined over $F$, there is a divisor $H$ on $J$ defined over $F$ with $H\equiv 2\Theta$. In particular, $H$ is ample.
\end{lemma}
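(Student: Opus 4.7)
The plan is to avoid exhibiting a theta divisor $\Theta$ itself over $F$ and instead to construct a line bundle on $J$ in the class $2[\Theta]$ directly from data that is automatically $F$-rational, namely the Poincar\'e bundle and the principal polarization of the Jacobian. The hypothesis that a degree~$2$ divisor $d_0$ exists over $F$ is inherited from the surrounding context and will play no role in this argument.

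First I would observe that the Poincar\'e line bundle $\mathcal{P}$ on $J\times J^\vee$ is canonical, hence defined over $F$, and that the principal polarization $\phi_\Theta:J\to J^\vee$ of the Jacobian is intrinsic to $J/F$ (it is the standard isomorphism $\mathrm{Pic}^0(C)\cong \mathrm{Pic}^0(\mathrm{Pic}^0(C))$, constructed without reference to any choice of theta divisor). Set
$$
\mathcal{M}:=(\mathrm{id}_J\times\phi_\Theta)^*\mathcal{P}\quad\text{on }J\times J,\qquad \mathcal{L}:=\Delta^*\mathcal{M}\quad\text{on }J,
$$
where $\Delta:J\to J\times J$ is the diagonal. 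Both $\mathcal{M}$ and $\mathcal{L}$ are line bundles defined over $F$ by construction.

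Second, a standard seesaw argument (cf.\ Mumford, \emph{Abelian Varieties}, \S8) identifies, after base change to $\overline{F}$,
$$
\mathcal{M}\;\cong\;m^*\mathcal{O}(\Theta)\otimes p_1^*\mathcal{O}(-\Theta)\otimes p_2^*\mathcal{O}(-\Theta),
$$
obtained by comparing restrictions to the slices $\{0\}\times J$ and $J\times\{0\}$ (both trivial) and to the variable fibre $\{x\}\times J$ (where both bundles restrict to the degree-zero class $\phi_\Theta(x)=t_x^*\Theta\otimes\Theta^{-1}$). Pulling back via $\Delta$ then gives $\mathcal{L}\cong [2]^*\mathcal{O}(\Theta)\otimes\mathcal{O}(-2\Theta)$ over $\overline{F}$. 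Since $\Theta$ is symmetric up to translation (so $[-1]^*\Theta\equiv\Theta$ in $\mathrm{NS}(J)$), Mumford's multiplication formula yields $[2]^*\Theta\equiv 4\Theta$, whence $\mathcal{L}\equiv 2\Theta$ in $\mathrm{NS}(J)$.

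Finally, $2\Theta$ is ample, so $\mathcal{L}$ is ample by Nakai--Moishezon (ampleness being a numerical property). For ample line bundles on abelian varieties one has $h^0(J,\mathcal{L})=\chi(\mathcal{L})>0$, and the formation of global sections commutes with flat base change, so a nonzero $F$-rational section of $\mathcal{L}$ exists; taking $H$ to be its divisor of zeros produces the desired $F$-rational divisor with $H\equiv 2\Theta$, and ampleness of $H$ is automatic. The main technical point in this program is the seesaw identification of $\mathcal{M}$ with $m^*\Theta-p_1^*\Theta-p_2^*\Theta$; this is classical but is the only nontrivial ingredient, and it is essential because it is what transfers the $F$-rationality of the Poincar\'e bundle into an explicit numerical class.
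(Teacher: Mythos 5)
Your proof is correct, but it takes a genuinely different route from the paper's, and in fact it proves a stronger statement: the existence of an $F$-rational divisor numerically equivalent to $2\Theta$ does not require the hypothesis on $d_0$ at all. The paper's argument is concrete and uses $d_0$ essentially: it writes $d_0 = x_1 + x_2$, builds a theta translate $\Theta_i$ from each point $x_i$ via the $(g-1)$-fold pointwise sum of $j(V_{x_i})$, and then takes $H = \Theta_1 + \Theta_2$, which is Galois-stable precisely because $x_1$ and $x_2$ are either $F$-rational or swapped by the Galois action. Your argument instead exploits that the Poincar\'e bundle on $J\times J^\vee$ and the canonical principal polarization $\phi_\Theta$ are intrinsic to the Jacobian (hence defined over $F$ by descent, since all theta divisors are translates of one another and therefore give the same $\phi_\Theta$), and then extracts the class of $2\Theta$ via the Mumford bundle $\Lambda(\Theta) = m^*\Theta - p_1^*\Theta - p_2^*\Theta$ and the diagonal pullback, using the multiplication formula $[2]^*\Theta \equiv 4\Theta$. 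Each step checks out, including the base-change compatibility of $H^0$ to produce an $F$-rational effective divisor. What the paper's approach buys is brevity and elementarity --- no Poincar\'e bundle, no seesaw, just Galois descent on an explicit sum of two effective divisors. What your approach buys is generality and conceptual clarity: it makes visible the well-known fact that $2\Theta$ is always $F$-rational for a Jacobian (more generally for any abelian variety whose polarization is defined over the base field), exposing the $d_0$ hypothesis as superfluous for this lemma (it is of course used elsewhere in the corollary, to ensure $C^{(2)}(\Q)\neq\emptyset$).
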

\begin{proof} Write $d_0=x_1+x_2$ where $x_1,x_2$ are either $F$-rational or of degree $2$ over $F$ in which case they are Galois conjugate. Let $\Theta_i$ be the $(g-1)$-fold pointwise sum of $j(V_{x_i})\subseteq J$ with itself, for $i=1,2$. Then the divisor $H=\Theta_1+\Theta_2$ has the required properties.  
\end{proof}

\begin{proof}[Proof of Corollary \ref{CoroQuadratic}] We keep the previous notation in the case $F=\Q$. We can assume the existence of the effective degree $2$ divisor $d_0$ on $C$ defined over $\Q$, for otherwise $C^{(2)}(\Q)$ is empty. Let us apply Theorem \ref{ThmIntroB} to $A=J$, $n=g$,  and $X=W_2\subseteq J$, which is a smooth projective surface satisfying $W_2\simeq C^{(2)}$ (Lemma \ref{LemmanonHypEll}). Furthermore, the reduction of $C^{(2)}$ modulo $p$ is $(C')^{(2)}$ and the isomorphism $C^{(2)}\simeq X$ induced by $j$ reduces to an isomorphism $(C')^{(2)}\simeq X'$ with $X'$ the reduction of $X$, because $C'\otimes \F_p^{alg}$ is not hyperelliptic. The divisor $H$ is taken as in Lemma \ref{LemmaTheta}. 

By the Poincar\'e formulas (see for instance \cite{KouvidakisTAMS93}) we have $\deg(H^g)=2^g\deg(\Theta^g)=2^g\cdot g!$ and $\deg(H^2.X)=4\deg(\Theta^2.W_2)=4g(g-1)$. By Lemma \ref{LemmaThetaK} we obtain $\deg(H.K_X)=2\deg(\Theta.K_{W_2})=2 (\theta.K_{C^{(2)}})=4g(g-2)$. Therefore,
$$
\frac{g!\cdot (3\deg(H^{2}.X) + \deg(H.K_X))^g}{g^g\cdot \deg(H^g)} = \frac{g!\cdot (4g(4g-5))^g}{g^g\cdot 2^g\cdot g!}=(8g-10)^g.
$$
In view of Lemma \ref{LemmaKSymm2} and the fact that $3c_1^2(X)+2=3(4g-9)(g-1)+2<(8g-10)^g$ for $g\ge 3$, the result follows from Theorem \ref{ThmIntroB}.
\end{proof}

\begin{proof}[Proof of Corollary \ref{CoroQuadratic3}] Similar to the previous argument, using Theorem \ref{ThmIntroA} instead of Theorem \ref{ThmIntroB}. Here, $X=W_2\simeq C^{(2)}$ is of general type by Lemma \ref{LemmaSymm2GT}. Lemma \ref{LemmaKSymm2} and the fact that $g=3$ give $c_1^2(X)=c_1^2(C^{(2)})=6$. Thus, $(128/9)c_1^2(X)^2= 512$ and the least prime $p>512$ is $521$.
\end{proof}

%%%%%%%%%%%%%%%%%%%%%%%%%%%%%%%%%%%%%%
%%%%%%%%%%%%%%%%%%%%%%%%%%%%%%%%%%%%%%
%%%%%%%%%%%%%%%%%%%%%%%%%%%%%%%%%%%%%%
%%%%%%%%%%%%%%%%%%%%%%%%%%%%%%%%%%%%%%
%%%%%%%%%%%%%%%%%%%%%%%%%%%%%%%%%%%%%%
%%%%%%%%%%%%%%%%%%%%%%%%%%%%%%%%%%%%%%

\section{Acknowledgments}
We would like to thank Natalia Garcia-Fritz for answering numerous questions regarding branches of curves and $\omega$-integrality. Initially, our results were proved for surfaces contained in abelian threefolds, and we thank Peter Sarnak for asking a question that led us to address the general case.

J.C. was supported by ANID Doctorado Nacional 21190304 and H.P. was supported by FONDECYT Regular grant 1190442.
%
%
%

%%%%%%%%%%%%%%%%%%%%%%%%%%%%%%%%%%%%%%
%%%%%%%%%%%%%%%%%%%%%%%%%%%%%%%%%%%%%%
%%%%%%%%%%%%%%%%%%%%%%%%%%%%%%%%%%%%%%

\end{document}